\newcommand{\BibTeX}{{\scshape Bib}\kern-.08em\TeX}
\newcommand{\T}{\S\kern .15em\relax }
\newcommand{\AMS}{$\mathcal{A}$\kern-.1667em\lower.5ex\hbox
        {$\mathcal{M}$}\kern-.125em$\mathcal{S}$}
\theoremstyle{plain}
\newtheorem*{montheo}{\textsc{Théorème}}
\newtheorem*{montheo1}{\textsc{Théorème 1}}
\newtheorem*{montheo2}{\textsc{Théorème 2}}
\newtheorem*{mapropo}{\textsc{Proposition}}
\newtheorem*{meshypos}{\textsc{Hypothèses}}
\newtheorem*{monlem}{\textsc{Lemme}}
\newtheorem*{monlem1}{\textsc{Lemme 1}}
\newtheorem*{monlem2}{\textsc{Lemme 2}}
\newtheorem*{monlem3}{\textsc{Lemme 3}}
\newtheorem*{monlem4}{\textsc{Lemme 4}}
\newtheorem*{marema}{\textsc{Remarque}}
\newtheorem*{marema1}{\textsc{Remarque 1}}
\newtheorem*{marema2}{\textsc{Remarque 2}}
\newtheorem*{marema3}{\textsc{Remarque 3}}
\newtheorem*{marema4}{\textsc{Remarque 4}}
\newtheorem*{notation}{\textsc{Notation}}
\def\ES#1{\EuScript{#1}}
\def\wt#1{\widetilde{#1}}
\def\bs#1{\boldsymbol{#1}}
\def\cad{c'est--\`a--dire\ }
\title[LE LEMME FONDAMENTAL POUR L'ENDOSCOPIE TORDUE]
{LE LEMME FONDAMENTAL POUR L'ENDOSCOPIE TORDUE: R\'EDUCTION AUX \'EL\'EMENTS UNIT\'ES}
\author{Bertrand Lemaire}
\address{Institut de Math\'ematiques de Luminy et
UMR 6206 du CNRS\\
Universit\'e Aix--Marseille II, Case Postale 907\\
F--13288 Marseille Cedex 9}
\email{Bertrand.Lemaire@univ-amu.fr}
\author{Colette M\oe glin}
\address{CNRS, Institut de Mathématiques de Jussieu\\ 
2 place Jussieu 75005 Paris}
\email{colette.moeglin@imj-prg.fr}
\author{Jean--Loup Waldspurger}
\address{CNRS, Institut de Mathématiques de Jussieu\\
2 place Jussieu 75005 Paris}
\email{jean-loup.waldspurger@imj-prg.fr}
\begin{document}
\def\smfbyname{}
\small

\begin{abstract}On prouve ici que le lemme fondamental pour l'endoscopie tordue, désormais établi pour les unités des algèbres de Hecke sphériques, entra\^{\i}ne le lemme fondamental pour tous les éléments de ces algèbres de Hecke. La démonstration, dont l'idée est due à Arthur, utilise le transfert, déjà établi lui aussi comme conséquence du lemme fondamental pour les unités.
\end{abstract}

\begin{altabstract}We show here that the fundamental lemma for twisted endoscopy, now proved for the unit elements in the spherical Hecke algebras, implies the fundamental lemma for all elements of these Hecke algebras. The proof, whose idea is due to Arthur, uses the transfer, which is known as a consequence of the fundamental lemma for the units.
\end{altabstract}

\subjclass{22E50}

\keywords{endoscopie tordue, lemme fondamental, transfert géométrique, transfert spectral, représentation elliptique, $R$--groupe}

\altkeywords{twisted endoscopy, fundamental lemma, geometric transfer, spectral transfer, elliptic representation, $R$--groupe}
\maketitle
\setcounter{tocdepth}{3}
\eject

\tableofcontents

\section{Introduction}
\noindent {\bf 1.1.} --- Le lemme fondamental est un résultat essentiel à la stabilisation de la formule des traces. Il est maintenant démontré pour les unités des algèbres de Hecke sphériques, pour l'endoscopie ordinaire comme pour l'endoscopie tordue, en toute caractéristique pourvu que la caractéristique résiduelle soit suffisamment grande. Pour l'endoscopie ordinaire, en caractéristique nulle, Hales a prouvé que le lemme fondamental pour les unités des algèbres de Hecke sphériques en presque toute caractéristique résiduelle implique le lemme fondamental pour tous les autres éléments de ces algèbres de Hecke en toute caractéristique résiduelle. Sa méthode est globale, et généralise celles de Clozel et Labesse pour le changement de base. Cette méthode utilise de manière cruciale une propriété bien connue des groupes non ramifiés de type adjoint, à savoir que les séries principales unitaires non ramifiées (\cad induites à partir d'un caractère unitaire non ramifié du Levi minimal) sont irréductibles. Dans le cas tordu, le groupe adjoint $G_{\rm AD}$ est naturellement remplacé par le groupe $G_\sharp=G/Z(G)^\theta$, ce qui rend difficile l'application de cette méthode.  

\vskip2mm
\noindent{\bf 1.2.} --- 
Depuis les travaux de Hales, un autre résultat a été démontré, pour l'endoscopie tordue en caractéristique nulle: le lemme fondamental pour les unités des algèbres de Hecke sphériques implique le transfert. La méthode est globale, et le résultat obtenu est semblable à celui de Hales: 
si le lemme fondamental pour les unités des algèbres de Hecke sphériques est vrai en presque toute caractéristique résiduelle, alors la conjecture de transfert est vraie en toute caractéristique résiduelle. On dispose donc aujourd'hui du lemme fondamental pour les unités en presque toute caractéristique résiduelle, et du transfert en toute caractéristique résiduelle. On démontre ici (pour l'endoscopie tordue en caractéristique nulle), de manière purement locale grâce au transfert --- qui, comme on vient de le dire, a été obtenu par voie globale ---, que le lemme fondamental pour les unités des algèbres de Hecke sphériques implique le lemme fondamental pour tous les autres éléments de ces algèbres de Hecke. 

\vskip2mm
\noindent{\bf 1.3.} --- Le résultat démontré ici (pour l'endoscopie tordue) est donc moins fort que celui de Hales (pour l'endoscopie ordinaire), puisque notre méthode ne permet pas d'ôter la restriction sur la caractéristique résiduelle. On doit très probablement pouvoir Žliminer cette restriction en appliquant la méthode de \cite[5]{HL}. Rappelons que cette méthode, globale, consiste à utiliser la formule des traces (simple) en sens inverse par rapport au sens habituel, \cad à déduire d'informations sur les traces une information sur les intégrales orbitales. Une autre possibilité serait d'établir le lemme fondamental pour les unités des algèbres de Hecke sphériques dans le seul cas où on l'utilise ici: celui où la donnée endoscopique elliptique non ramifiée a pour groupe sous--jacent un tore (voir 1.7 et 1.8). C'est ce que l'on fait ici dans le cas du groupe $G=GL(n)$ tordu par l'automorphisme extérieur $g\mapsto {^{\rm t}g^{-1}}$. Dans ce cas, on démontre que le lemme fondamental pour tous les éléments des algèbres de Hecke sphériques est vrai, sans restriction sur la caractéristique résiduelle --- cf. \ref{le cas de GL(n) tordu}. 

Une fois le lemme fondamental établi pour tous les éléments des algèbres de Hecke sphériques en toute caractéristique résiduelle, on pourra ensuite le ramener à la caractéristique non nulle par la méthode des corps proches. En caractéristique non nulle, signalons aussi le travail récent d'Alexis Bouthier, qui prouve sous certaines hypothèses (groupe dérivé simplement connexe, et caractéristique première à l'ordre du groupe de Weyl) le lemme fondamental pour tous les éléments des algèbres de Hecke sphériques. 

Ces considérations faites, revenons à la caractéristique nulle. 
Le lemme fondamental pour tous les éléments des algèbres de Hecke sphériques en presque toute caractéristique résiduelle est utilisé dans \cite{Stab X} pour achever la stabilisation de la formule des traces tordues. 

\vskip2mm
\noindent{\bf 1.4.} --- Pour l'endoscopie tordue, les travaux fondamentaux de Kottwitz--Shelstad et Labesse ont été repris (et modifiés sur certains points) dans \cite{Stab I}. Ce sont les notions et les notations de cet article que l'on utilise ici. 
Soit $F$ une extension finie d'un corps $p$--adique ${\Bbb Q}_p$. Soit $G$ un groupe algébrique réductif connexe défini et quasi--déployé sur $F$, et déployé sur une extension non ramifiée de $F$. 
L'action galoisienne sur le groupe dual $\hat{G}$ est donc donnée par celle d'un élément de Frobenius $\phi\in W_F$, où $W_F$ est le groupe de Weil de $F$. Soient aussi $\wt{G}$ un $G$--espace tordu défini sur $F$, et $\omega$ un caractère\footnote{Dans tout ce papier, on appelle {\it caractère} un homomorphisme continu dans ${\Bbb C}^\times$.} de $G(F)$. On suppose vérifiées les hypothèses suivantes:
\begin{itemize}
\item l'ensemble $\wt{G}(F)$ des points $F$--rationnels de $\wt{G}$ n'est pas vide;
\item le $F$--automorphisme $\theta$ de $Z(G)$ défini par $\wt{G}$ est d'ordre fini;
\item le $G(F)$--espace tordu $\wt{G}(F)$ possède un sous--espace hyperspécial $(K,\wt{K})$.   
\item le caractère $\omega$ est trivial sur $Z(G;F)^\theta$;
\item la classe de cohomologie $\bs{a}\in {\rm H}^1(W_F, Z(\hat{G}))$ associée à $\omega$ est non ramifiée;  
\end{itemize}

Soit $\bs{G}'=(G',\ES{G}', \tilde{s})$ une donnée endoscopique pour $(\wt{G},\bs{a})$ --- cf. \ref{données endoscopiques}. On suppose que cette donnée est non ramifiée --- cf. \ref{données endoscopiques nr}. Cela entra\^{\i}ne que le groupe $G'$ (qui est défini et quasi--déployé sur $F$) se déploie sur une extension non ramifiée de $F$. Cela entra\^{\i}ne aussi que le $L$--groupe ${^LG'}$ est isomorphe à $\ES{G}'$. On peut donc, dans cette situation non ramifiée, faire l'économie des données auxiliaires. \`A la donnée $\bs{G}'$ est associé un $G'$--espace tordu $\wt{G}'$, défini sur $F$ et à torsion intérieure --- cf. \ref{espaces endoscopiques}. Au sous--espace hyperspécial $(K,\wt{K})$ de $\wt{G}(F)$ sont associés un sous--espace hyperspécial $(K'\!,\wt{K}')$ de $\wt{G}'(F)$, bien défini à conjugaison près par $G'_{\rm AD}(F)$, et un facteur de transfert normalisé
$$
\Delta: \ES{D}(\bs{G}') \rightarrow {\Bbb C}^\times.
$$
Ici, $\ES{D}(\bs{G}')$ est l'ensemble des couples $(\delta,\gamma)\in \wt{G}'(F)\times \wt{G}(F)$ formés d'éléments semisimples dont les classes de conjugaison (sur $\overline{F}$) se correspondent, et tels que $\gamma$ est fortement régulier --- 
cf. \ref{classes de conjugaison}. Pour $\gamma\in \wt{G}(F)$ fortement régulier (semi--simple), 
on définit l'intégrale orbitale ordinaire $I^{\wt{G}}(\gamma,f,\omega)$ d'une fonction 
$f\in C^\infty_{\rm c}(\wt{G}(F))$. Pour $\delta\in \wt{G}'(F)$ fortement $\wt{G}$--régulier, on définit l'intégrale orbitale stable $S^{\wt{G}'\!}(\delta,f')$ d'une fonction $f'\in C_{\rm c}^\infty(\wt{G}'(F))$, et l'intégrale orbitale endoscopique
$$
I^{\wt{G},\omega}(\delta,f)= d_\theta^{1/2}\sum_{\gamma}d_{\gamma}^{-1}\Delta(\delta,\gamma) I^{\wt{G}}(\gamma,f,\omega)
$$
d'une fonction $f\in C^\infty_{\rm c}(\wt{G}(F))$; où $\gamma$ parcourt les éléments de $\wt{G}(F)$ tels que $(\delta,\gamma)\in \ES{D}(\bs{G}')$, modulo conjugaison par $G(F)$. On renvoie à \ref{transfert nr} pour des définitions précises. On dit qu'une fonction $f'\in C^\infty_{\rm c}(\wt{G}'(F))$ est un transfert de $f\in C^\infty_{\rm c}(G(F))$ si pour tout $\delta\in \wt{G}'(F)$ fortement $\wt{G}$--régulier, on a l'égalité
$$
S^{\wt{G}'\!}(\delta,f')= I^{\wt{G},\omega}(\delta,f).
$$

Soit $\bs{1}_{\wt{K}}$ la fonction caractéristique de $\wt{K}$, et soit $\bs{1}_{\wt{K}'}$ la fonction caractéristique de $\wt{K}'$. 
Le lemme fondamental pour les unités des algèbres de Hecke sphériques (\ref{lemme fondamental}, théorème 1) dit que $\bs{1}_{\wt{K}'}$ est un transfert de $\bs{1}_{\wt{K}}$. Soit $\mathcal{H}_K$ l'algèbre de Hecke formée des fonctions sur $G(F)$ qui sont bi--invariantes par $K$ et à support compact, et soit $\mathcal{H}_{K'}$ l'algèbre de Hecke formée des fonctions $f'$ sur $G'(F)$ qui sont bi--invariantes par $K'$ et à support compact. Via les isomorphismes de Satake, on a un homomorphisme naturel $b: \mathcal{H}_K\rightarrow \mathcal{H}_{K'}$. Le lemme fondamental pour tous les éléments des algèbres de Hecke sphériques (\ref{lemme fondamental}, théorème 2) dit que pour toute fonction $f\in \mathcal{H}_K$, la fonction $b(f)*\bs{1}_{\wt{K}'}= \bs{1}_{\wt{K}'}* b(f)$ sur $\wt{G}'(F)$ est un transfert de $f*\bs{1}_{\wt{K}}$ (resp. de $\bs{1}_{\wt{K}}* \omega^{-1}f$). 

On démontre dans cet article que le théorème 1 implique le théorème 2.   

\vskip2mm
{\bf 1.5.} --- Décrivons brièvement la démonstration proposée ici. 
On commence par se ramener au cas où la donnée $\bs{G}'$ est elliptique (par descente parabolique). Ensuite, suivant l'idée d'Arthur, on traduit le théorème 2 en termes spectraux, grâce au transfert géométrique. Pour cela, on introduit l'espace $\bs{D}(\wt{G}(F),\omega)$ des distributions sur $\wt{G}(F)$ qui sont des combinaisons linéaires (finies, à coefficients complexes) de caractères de représentations $G(F)$--irréductibles tempérées de $(\wt{G}(F),\omega)$ --- cf. \ref{caractères tempérés}. Pour $\bs{G}'$, on définit de la même manière l'espace $\bs{D}(\wt{G}'(F))$, en prenant pour $\omega$ le caractère trivial de $G'(F)$, et l'on note $\bs{SD}(\wt{G}'(F))$ le sous--espace de $\bs{D}(\wt{G}'(F))$ formé des distributions qui sont stables. Soit $\bs{D}^K(\wt{G}(F),\omega)$ le sous--espace de $\bs{D}(\wt{G}'(F),\omega)$ engendré par les caractères des représentations $G(F)$--irréductibles tempérées de $(\wt{G}(F),\omega)$ qui sont $K$--sphériques, et soit $\bs{D}^{K'}(\wt{G}'(F))$ le sous--espace  de $\bs{D}(\wt{G}'(F))$ défini de la même manière. \`A un paramètre  tempéré et non ramifié $\varphi': W_F \rightarrow {^LG'}$ est associé un paramètre tempéré et non ramifié $
\varphi : W_F \buildrel \varphi'\over{\longrightarrow} {^LG'}\simeq \ES{G}'\hookrightarrow {^LG}.
$
Via les isomorphismes de Satake, on en déduit un homomorphisme de transfert spectral (sphérique)
$$
\bs{\rm t}=\bs{\rm t}_{\bs{G}'}: \bs{D}^{K'}(\wt{G}'(F))\rightarrow \bs{D}^K(\wt{G}(F),\omega).
$$
D'autre part, d'après les résultats de \cite{Moe} qui généralisent au cas tordu ceux de \cite{A}, le transfert géométrique $f\mapsto f'$ définit dualement un homomorphisme de transfert spectral (usuel)
$$
\bs{\rm T}=\bs{\rm T}_{\bs{G}'}: \bs{SD}(\wt{G}'(F))\rightarrow \bs{D}(\wt{G}(F),\omega).
$$
Pour $\Theta'\in \bs{D}(\wt{G}'(F))$, la distribution $\bs{\rm T}(\Theta')$ sur $\wt{G}(F)$ est donnée par $\bs{\rm T}(\Theta')(f)= \Theta'(f')$ pour toute paire de fonctions $(f,f')\in C^\infty_{\rm c}(\wt{G}(F))\times C^\infty_{\rm c}(\wt{G}'(F))$ telle que $f'$ est un transfert de $f$. 
Le lemme fondamental pour tous les éléments des algèbres de Hecke sphériques équivaut à la commutativité du diagramme suivant 
$$
\xymatrix{
\bs{SD}(\wt{G}'(F)) \ar@{^{(}->}[r] \ar[d]_{\bs{\rm T}} & \bs{D}(\wt{G}'(F)) \ar[r]^{p'^{K'}} & \bs{D}^{K'}(\wt{G}'(F))\ar[d]^{\bs{\rm t}}\\
\bs{D}(\wt{G}(F),\omega) \ar[rr]^{p^K} && \bs{D}^K(\wt{G}(F),\omega)
}\leqno{(1)}
$$
où les homomorphismes $p^K$ et $p^{K'}$ sont les projections naturelles. 

On se ramène ensuite à la partie elliptique de la théorie. Soit $\bs{D}_{\rm ell}(\wt{G}(F),\omega)$ le sous--espace de $\bs{D}(\wt{G}(F),\omega)$ engendré par les représentations elliptiques au sens d'Arthur --- précisément, leur variante tordue \cite{W}. On définit de la même manière le sous--espace $\bs{D}_{\rm ell}(\wt{G}'(F))$ de $\bs{D}(\wt{G}'(F))$, et l'on pose $\bs{SD}_{\rm ell}(\wt{G}'(F))= \bs{SD}(\wt{G}'(F))\cap \bs{D}_{\rm ell}(\wt{G}'(F))$. L'homomorphisme $\bs{\rm T}$ envoie $\bs{SD}_{\rm ell}(\wt{G}'(F))$ dans $\bs{D}_{\rm ell}(\wt{G}(F),\omega)$, et pour prouver la commutativité du diagramme (1), on est ramené à prouver celle du diagramme suivant
$$
\xymatrix{
\bs{SD}_{\rm ell}(\wt{G}'(F)) \ar@{^{(}->}[r] \ar[d]_{\bs{\rm T}} & \bs{D}(\wt{G}'(F)) \ar[r]^{p'^{K'}} & \bs{D}^{K'}(\wt{G}'(F))\ar[d]^{\bs{\rm t}}\\
\bs{D}_{\rm ell}(\wt{G}(F),\omega) \ar[rr]^{p^K} && \bs{D}^K(\wt{G}(F),\omega)
}.\leqno{(2)}
$$

\vskip2mm
\noindent{\bf 1.6.} --- 
Une représentation $G(F)$--irréductible tempérée de $(\wt{G}(F),\omega)$ est dite {\it de support non ramifié} si 
la représentation de $G(F)$ sous--jacente est une sous--représentation d'une série principale induite à partir d'un caractère unitaire et non ramifié du Levi minimal, et elle est dite {\it de support ramifié} sinon. On note $\bs{D}^{\rm nr}(\wt{G}(F),\omega)$ le sous--espace de $\bs{D}(\wt{G}(F),\omega)$ engendré par les caractères $G(F)$--irréductibles tempérés de $(\wt{G}(F),\omega)$ qui sont de support non ramifié, et $\bs{D}^{\rm ram}(\wt{G}(F),\omega)$ celui engendré par ceux qui sont de support ramifié. Pour $*={\rm nr},\, {\rm ram}$, on pose 
$$
\bs{D}_{\rm ell}^{*}(\wt{G}(F),\omega)=\bs{D}_{\rm ell}(\wt{G}(F),\omega)\cap 
\bs{D}^{*}(\wt{G}(F),\omega).
$$
Alors on a la décomposition
$$
\bs{D}_{\rm ell}(\wt{G}(F),\omega)=
\bs{D}_{\rm ell}^{\rm nr}(\wt{G}(F),\omega)\oplus \bs{D}_{\rm ell}^{\rm ram}(\wt{G}(F),\omega).
\leqno{(3)}
$$
On définit de la même manière les sous--espaces $\bs{D}^{\rm nr}(\wt{G}'(F))$ et $\bs{D}^{\rm ram}(\wt{G}'(F))$ de $\bs{D}(\wt{G}'(F))$, et 
pour $*={\rm nr},\, {\rm ram}$, on pose 
$$
\bs{SD}^*_{\rm ell}(\wt{G}'(F))= \bs{D}^*(\wt{G}'(F))\cap \bs{SD}_{\rm ell}(\wt{G}'(F)).
$$
Alors on a la décomposition
$$
\bs{SD}_{\rm ell}(\wt{G}'(F))=\bs{SD}^{\rm nr}_{\rm ell}(\wt{G}'(F))\oplus 
\bs{SD}^{\rm ram}_{\rm ell}(\wt{G}'(F)).\leqno{(4)}
$$
L'astuce, déjà utilisée par Arthur pour les formes tordues du groupe linéaire, consiste à remarquer que les sous--représentations irréductibles d'une série principale tempérée non ramifiée de $G(F)$ sont permutées transitivement par $G_{\rm AD}(F)$. Puisque $\wt{G}'$ est à torsion intérieure, cela a comme conséquence que l'espace $\bs{SD}_{\rm ell}^{\rm nr}(\wt{G}'(F))$ est nul si $G'$ n'est pas un tore. Pour prouver la commutativité du diagramme (2), on est donc ramené à le faire dans le cas où le groupe $G'$ est un tore, et à prouver aussi (dans tous les cas) que $\bs{\rm T}(\bs{SD}_{\rm ell}^{\rm ram}(\wt{G}'(F))$ est contenu dans $\bs{D}_{\rm ell}^{\rm ram}(\wt{G}(F),\omega)$. Pour ce faire, on dispose de deux décompositions de l'espace $\bs{D}_{\rm ell}(\wt{G}(F),\omega)$: celle donnŽe par les caractères elliptiques d'Arthur --- précisément, leur variante tordue \cite{W} --- , et celle donnée par les transferts des éléments de $\bs{SD}_{\rm ell}(\wt{G}'(F))$ pour $\bs{G}'= (G',\ES{G}',\tilde{s})$ parcourant {\it toutes} les données endoscopiques elliptiques pour $(\wt{G},\bs{a})$. On prouve que sur l'espace $\bs{D}_{\rm ell}^{\rm nr}(\wt{G}(F),\omega)$, ces deux dŽcompositions s'expriment l'une en fonction de l'autre. En d'autres termes, on prouve la paramŽtrisation endoscopique des caractres elliptiques non ramifiés de $(\wt{G}(F),\omega)$.

\vskip2mm
\noindent{\bf 1.7.} --- L'espace $\bs{D}_{\rm ell}(\wt{G}(F),\omega)$ se décompose en somme directe de droites $D_\tau$, où $\tau$ parcourt les classes de conjugaison de triplets elliptiques essentiels $(M,\sigma,\tilde{r})$ pour $(\wt{G}(F),\omega)$ --- cf. \ref{triplets elliptiques essentiels}. Ici $M$ est un $F$--Levi de $G$, $\sigma$ est une représentation irréductible et de la série discrète de $M(F)$, et $\tilde{r}$ est un élément régulier du $R$--groupe tordu $R^{\smash{\wt{G},\omega}}(\sigma)$. Un tel triplet $\tau$ se relève en un triplet $\bs{\tau}=(M,\sigma,\tilde{\bs{r}})$, où $\tilde{\bs{r}}$ est un relèvement de $\tilde{r}$ dans le $R$--groupe tordu $\ES{R}^{\wt{G},\omega}(\lambda)$, lequel définit une distribution $\Theta_{\bs{\tau}}\in \bs{D}_{\rm ell}(\wt{G}(F),\omega)$ qui engendre l'espace $D_\tau$. L'élément $\Theta_{\bs{\tau}}$ appartient à $\bs{D}_{\rm ell}^{\rm nr}(\wt{G}(F),\omega)$ si le triplet $\bs{\tau}$ est {\it non ramifié}, \cad de la forme $(T,\lambda,\tilde{\bs{r}})$ pour un caractère unitaire et non ramifié $\lambda$ de $T(F)$ où $T$ est un $F$--Levi minimal de $G$, et il appartient à $\bs{D}_{\rm ell}^{\rm ram}(\wt{G}(F),\omega)$ sinon. On note $\ES{E}_{\rm ell}^{\rm nr}$ l'ensemble de ces triplets $\bs{\tau}$ qui sont non ramifiés. 

D'autre part, toute distribution $\Theta\in \bs{D}_{\rm ell}(\wt{G}(F),\omega)$ se décompose (de manière unique) en
$$
\Theta= \sum_{\bs{G}'\in \mathfrak{E}} \bs{\rm T}_{\bs{G}'}( \Theta^{\bs{G}'}), \quad \Theta^{\bs{G}'}\in \bs{SD}_{\rm ell}(\wt{G}'(F)),\leqno{(5)}
$$
où $\mathfrak{E}$ est un ensemble de représentants des classes d'isomorphisme de données endoscopiques elliptiques pour $(\wt{G},\bs{a})$ --- cf. \ref{décomposition endoscopique}. Pour les données $\bs{G}'$ qui sont ramifiées, il faut bien sûr fixer des données auxiliaires, mais nous ignorons cela dans cette introduction. Notons que l'espace $\bs{D}_{\rm ell}(\wt{G}(F),\omega)$, et 
chacun des espaces $\bs{SD}_{\rm ell}(\wt{G}'(F))$ pour $\bs{G}'\in \mathfrak{E}$, sont munis de produits scalaires hermitiens définis positifs, et la décomposition (5) préserve ces produits scalaires --- cf. \ref{produits scalaires elliptiques}. Soit $\mathfrak{E}_{\rm nr}\subset \mathfrak{E}$ le sous--ensemble formé des données non ramifiées, et soit $\mathfrak{E}_{\rm t-nr}\subset \mathfrak{E}_{\rm nr}$ le sous--ensemble formé des données non ramifiées telles que le groupe sous--jacent est un tore. 

Remarquons tout d'abord que si $\bs{D}_{\rm ell}^{\rm nr}(\wt{G}(F),\omega)=\{0\}$, le diagramme (2) est trivialement commutatif. 
On suppose donc que l'espace $\bs{D}_{\rm ell}^{\rm nr}(\wt{G}(F),\omega)$ n'est pas nul, ce qui implique que l'ensemble $\mathfrak{E}_{\rm t-nr}$ n'est pas vide (en fait, ces deux conditions sont Žquivalentes). Pour séparer les données $\bs{T}'\in \mathfrak{E}_{\rm t-nr}$ et les éléments de l'espace $\bs{SD}_{\rm ell}^{\rm nr}(\wt{T}'(F))= \bs{D}^{\rm nr}(\wt{T}'(F))$, on dispose essentiellement de trois ingrédients: le lemme fondamental pour les unités des algèbres de Hecke sphériques, l'homomorphisme naturel $\xi_{Z}:Z(G)\rightarrow T'$ (qui permet de comparer les caractres centraux), et le caractère $\omega_{\bs{T}'}$ de $G_\sharp(F)$ associé à $\bs{T}'$. On sait en effet que, pour $\bs{G}'\in \mathfrak{E}$, le transfert $\bs{\rm T}_{\bs{G}'}(\Theta')$ d'une distribution $\Theta'\in \bs{SD}(\wt{G}'(F))$ est un vecteur propre pour l'action (par conjugaison) de $G_\sharp(F)$ relativement à un caractère $\omega_{\bs{G}'}$ de ce groupe. Supposons pour commencer que le groupe adjoint $\hat{G}_{\rm AD}$ est simple. Alors on démontre que:
\begin{itemize}
\item le groupe $\hat{G}_{\rm AD}$ est isomorphe à $PGL(n,{\Bbb C})$ avec actions triviales de $\hat{\theta}$ et $\phi$;
\item pour $\bs{T}'_1,\, \bs{T}'_2\in \mathfrak{E}_{\rm t-nr}$, si $\omega_{\bs{T}'_1}= \omega_{\bs{T}'_2}$ alors $\bs{T}'_1=\bs{T}'_2$; 
\item pour $\bs{T}'=(T',\ES{T}',\tilde{s})\in \mathfrak{E}_{\rm t-nr}$, si $(\chi'_1,\tilde{\chi}'_1)$, $(\chi'_2,\tilde{\chi}'_2)$ sont deux caractères affines de $\wt{T}'(F)$ tels que les caractères $\chi'_1$, $\chi'_2$ de $T'(F)$ soient unitaires et non ramifiés, et vérifient $\chi'_1\circ \xi_Z = \chi'_2\circ \xi_Z$, alors à homothétie près, $\tilde{\chi}'_1$ et $\tilde{\chi}'_2$ se déduisent l'un de l'autre par un automorphisme de la donnée $\bs{T}'$. 
\end{itemize}
Ici on appelle {\it caractère affine} une représentation de dimension $1$. Un caractère affine 
$(\chi,\tilde{\chi}')$ de $\wt{T}'(F)$ est donc la donnée d'un caractère $\chi'$ de $T'(F)$ et d'un prolongement $\tilde{\chi}'$ de $\chi'$ à l'espace $\wt{T}'(F)$. Si de plus $\tilde{\chi}'$ est à valeurs dans ${\Bbb U}$ (ce qui implique que $\chi$ est unitaire), on dit qu'il est unitaire. \`A toute paire $(\bs{T}',\tilde{\chi}')$ formé d'une donnée $\bs{T}'\in \mathfrak{E}_{\rm t-nr}$ et d'un caractère affine unitaire et non ramifié $\tilde{\chi}'$ de $\wt{T}'(F)$, est associé un triplet $\bs{\tau}_{\tilde{\chi}'}\in \ES{E}_{\rm ell}^{\rm nr}$, bien défini à conjugaison près, et donc une distribution $\Theta_{\bs{\tau}_{\tilde{\chi}'}}\in \bs{D}_{\rm ell}^{\rm nr}(\wt{G}(F),\omega)$ --- cf. \ref{l'application tau}. Cette distribution est elle aussi un vecteur propre pour l'action de $G_\sharp(F)$ relativement à un caractère de ce groupe, et l'on vérifie que ce caractère est précisément $\omega_{\chi'}$. Comme à conjugaison près, tout triplet $\bs{\tau}=(T,\lambda,\tilde{\bs{r}})\in \ES{E}_{\rm ell}^{\rm nr}$ est obtenu de cette manière, on obtient que pour une paire $(\bs{T}',\tilde{\chi}')$ comme ci--dessus, le transfert usuel $\bs{\rm T}_{\bs{T}'}(\tilde{\chi}')$ de $\tilde{\chi}'$ s'écrit
$$
\bs{\rm T}_{\bs{T}'}(\tilde{\chi}')= c\,\Theta_{\bs{\tau}_{\tilde{\chi}'}} + \Theta^{\rm ram}\leqno{(6)}
$$
pour une constante $c\in {\Bbb C}$ et une distribution $\Theta^{\rm ram}\in \bs{D}_{\rm ell}^{\rm ram}(\wt{G}'(F),\omega)$. 
Le lemme fondamental pour les unités des algèbres de Hecke sphériques appliqué à la donnée $\bs{T}'$ entra\^{\i}ne que cette constante $c$ n'est pas nulle. On a même $c=1$ (du fait de la normalisation de  
l'application $(\bs{T}',\tilde{\chi}')\mapsto \bs{\tau}_{\tilde{\chi}'}$). En calculant les produit scalaires $(\bs{\rm T}_{\bs{T}'}(\tilde{\chi}'), \bs{\rm T}_{\bs{T}'}(\tilde{\chi}' ))_{\rm ell}$ et $(\Theta_{\bs{\tau}_{\tilde{\chi}'}},\Theta_{\bs{\tau}_{\tilde{\chi}'}})_{\rm ell}$, on montre qu'ils sont égaux. On a donc $\Theta^{\rm ram}=0$ et
$$
\bs{\rm T}_{\bs{T}'}(\tilde{\chi}')= \Theta_{\bs{\tau}_{\tilde{\chi}'}}.\leqno{(7)}
$$
On en déduit alors facilement l'égalité 
$p^K(\Theta_{\bs{\tau}})= \bs{\rm t}_{\bs{T}'}(\tilde{\chi}')$ et la commutativité du diagramme (2) pour la distribution $\tilde{\chi}'\in \bs{D}^{\rm nr}(\wt{T}'(F))=\bs{SD}_{\rm ell}^{\rm nr}(\wt{T}'(F))$. 

Comme conséquence de l'égalité (7), on obtient aussi que l'espace $\bs{D}_{\rm ell}^{\rm ram}(\wt{G}(F),\omega)$ est somme (directe) des sous--espaces $\bs{\rm T}_{\bs{T}'}(\bs{D}^{\rm ram}(\wt{T}'(F)))$ pour $\bs{T}'\in \mathfrak{E}_{\rm t-nr}$ et des sous--espaces $\bs{\rm T}_{\bs{G}'}(\bs{SD}_{\rm ell}(\wt{G}'(F))$ pour $\bs{G}'\in \mathfrak{E}\smallsetminus \mathfrak{E}_{\rm t-nr}$. En particulier pour $\bs{G}'\in \mathfrak{E}_{\rm nr}$, on a l'inclusion
$$
\bs{\rm T}_{\bs{G}'}(\bs{SD}_{\rm ell}^{\rm ram}(\wt{G}'(F))\subset \bs{D}_{\rm ell}^{\rm ram}(\wt{G}(F),\omega).
$$Cela démontre la commutativité du diagramme (2) en général (sous l'hypothèse que $\hat{G}_{\rm AD}$ est simple). 

\vskip2mm
\noindent{\bf 1.8.} --- Reste à supprimer l'hypothèse que $\hat{G}_{\rm AD}$ est simple. On suppose toujours $\mathfrak{E}_{\rm t-nr}\neq \emptyset$. Alors $\hat{G}_{\rm AD}$ est isomorphe ˆ un produit de groupes de type $A_{n-1}$, avec actions de $\hat{\theta}$ et $\phi$ par permutations des facteurs de ce produit. Comme dans le cas o $\hat{G}_{\rm AD}$ est simple, les caractres $\omega_{\bs{T}'}$ de $G_\sharp(F)$ permettent de sŽparer les donnŽes $\bs{T'}\in \mathfrak{E}_{\rm t-nr}$ dans la dŽcomposition (5). En revanche, pour $\bs{T}'\in \mathfrak{E}_{\rm t-nr}$ fixŽ, la restriction à $Z(G;F)$ ne permet pas de sŽparer les caractres unitaires et non ramifiés de $T'(F)$. On introduit pour cela le sous--groupe $C$ de $Z(G)\times G$ formŽ des $(z,g)$ tels que pour tout 
$\sigma\in \Gamma_F$, on ait
$$
g\sigma(g)^{-1}\in Z(G),\quad (1-\theta)(g\sigma(g)^{-1})= z\sigma(z)^{-1}.
$$
Il contient le sous--groupe $Z(G)^*=\{(1-\theta)(z),z):z\in Z(G)\}$, 
et on pose $\ES{C}=C/Z(G)^*$. Un ŽlŽment $(z,g)\in C$ dŽfinit un $F$--automorphisme 
$
(g'\mapsto {\rm Int}_{g^{-1}}(g'),\gamma\mapsto z {\rm Int}_{g^{-1}}(\gamma))
$
de $(G,\wt{G})$, qui ne dŽpend que de l'image 
de $(z,g)$ dans $\ES{C}$. Cela dŽfinit un homomorphisme injectif de $\ES{C}$ dans l'opposŽ du groupe des $F$--automorphismes de $(G,\wt{G})$. On a des plongements naturels $Z(G;F)\hookrightarrow \ES{C}$ et $G_\sharp(F)\hookrightarrow \ES{C}$, mais $\ES{C}$ est en général plus gros que le sous--groupe engendré par $Z(G;F)$ et $G_\sharp(F)$. Pour $\bs{T}'\in \mathfrak{E}_{\rm t-nr}$, on démontre que:
\begin{itemize}
\item si $(\chi',\tilde{\chi}')$ un caractre affine de $\wt{T}(F)$ tel que le caractère $\chi'$ de $T'(F)$ soit unitaire et non ramifié, alors le transfert $\bs{\rm T}_{\bs{T}'}(\tilde{\chi}')$ est un vecteur propre pour l'action du groupe $\ES{C}$ relativement à un caractre $\omega_{\chi'}$ de ce groupe;
\item si $(\chi'_1,\tilde{\chi}'_1)$, $(\chi'_2,\tilde{\chi}'_2)$ sont deux caractres affines de $\wt{T}'(F)$ tels que les caractres $\chi'_1$, $\chi'_2$ de $T'(F)$ soient unitaires et non ramifiŽs, et vŽrifient $\omega_{\chi'_1}=\omega_{\chi'_2}$, alors ˆ homothŽtie prs, $\tilde{\chi}'_1$ et $\tilde{\chi}'_2$ se dŽduisent l'un de l'autre par un automorphisme de la donnŽe $\bs{T}'$.
\end{itemize}
Pour une paire $(\bs{T}',\tilde{\chi}')$ formé d'une donnée $\bs{T}'\in \mathfrak{E}_{\rm t-nr}$ et d'un caractère affine unitaire et non ramifié $\tilde{\chi}'$, l'élément $\Theta_{\bs{\tau}_{\tilde{\chi}'}}\in \bs{D}_{\rm ell}^{\rm nr}(\wt{G}'(F),\omega)$ est encore un vecteur propre pour l'action du groupe $\ES{C}$ relativement au caractère $\omega_{\chi'}$. Grâce aux propriétés ci--dessus, on peut, de la même manière que dans le cas où $\hat{G}_{\rm AD}$ est simple, obtenir la décomposition (6), puis l'égalité (7), et la commutativité du diagramme (2). 

Cela démontre, en général, que si le lemme fondamental pour les unités des algèbres de Hecke sphériques est vrai (théorème 1 de 2.8) --- plus précisément, s'il est vrai pour les données $\bs{T}'\in \mathfrak{E}_{\rm t-nr}$ ---, alors le lemme fondamental pour tous les éléments de ces  algèbres de Hecke l'est aussi (théorème 2 de 2.8). 

\section{\'Enoncé du théorème}

\subsection{Les objets; hypothèses et conventions}\label{objets}
Soit $F$ un corps commutatif localement compact non archimédien de caractéristique nulle. On fixe une cl\^oture algébrique $\overline{F}$ de $F$, et l'on note $\Gamma_F$ le groupe de Galois de l'extension $\overline{F}/F$, $I_F\subset \Gamma_F$ son groupe d'inertie, et $W_F\subset \Gamma_F$ son groupe de Weil. Soit $F^{\rm nr}/F$ la sous--extension non ramifiée maximale de $\overline{F}/F$ (ainsi $I_F$ est le groupe de Galois de $\overline{F}/F^{\rm nr}$). Soit $v_F$ la valuation sur $\overline{F}$ normalisée par $v_F(F^\times)={\Bbb Z}$. On fixe un élément de Frobenius $\phi\in W_F$, \cad tel que la restriction de $\phi$ à $F^{\rm nr}$ soit l'automorphisme de Frobenius de l'extension $F^{\rm nr}/F$. 

Pour une variété algébrique $V$ définie sur $F$, on identifie $V$ à l'ensemble $V(\overline{F})$ de ses points $\overline{F}$--rationnels, et l'on munit l'ensemble $V(F)$ de ses points $F$--rationnels de la topologie définie par $F$.

Soit $G$ un groupe algébrique réductif connexe défini sur $F$, et soit $\tilde{G}$ un $G$--espace (algébrique) tordu. On rappelle que $\widetilde{G}$ est une variété algébrique affine définie sur $F$, munie de deux actions de $G$ définies sur $F$, l'une à gauche et l'autre à droite,
$$
G\times \tilde{G},\, (g,\delta)\mapsto g\delta,\quad 
\widetilde{G}\times G\rightarrow \widetilde{G},\, (\delta,g)\mapsto \delta g,
$$
commutant entre elles, et telles que pour $\delta\in \widetilde{G}$, les applications
$$
G\rightarrow \widetilde{G},\, g\mapsto g\delta,\quad G\rightarrow \widetilde{G},\, g\mapsto \delta g,
$$
soient des isomorphismes de variétés algébriques. On note $\delta \mapsto {\rm Int}_\delta$ l'application de $\widetilde{G}$ dans le groupe des automorphismes de $G$ définie par
$$
{\rm Int}_\delta(g)\delta = \delta g,\quad g\in G.
$$
Pour un élément $\delta\in \wt{G}$, on note $G^\delta$ le commutant de $\delta$ dans $G$ (\cad l'ensemble des points fixes de ${\rm Int}_\delta$) et $G_\delta=G^{\delta,\circ}$ sa composante neutre. 

Pour $\delta\in \wt{G}$, l'automorphisme ${\rm Int}_\delta$ de $G$ définit par fonctorialité des automorphismes de divers objets attachés à $G$; par exemple du revêtement simplement connexe $G_{\rm SC}$ du groupe dérivé de $G$, ou de son groupe adjoint $G_{\rm AD}$. Quand cet automorphisme ne dépend pas de $\delta$, on le note $\theta$ comme dans \cite{Stab I}. Ainsi on a un automorphisme $\theta$ du centre $Z(G)$ de $G$. On note $Z(G;F)^\theta$ le groupe des points $F$--rationnels de $Z(G)$ qui sont fixés par $\theta$.
 
\begin{meshypos}
{\rm 
On suppose que:
\begin{itemize}
\item Le groupe $G$ est quasi--déployé sur $F$ et déployé sur $F^{\rm nr}$.
\item L'ensemble $\widetilde{G}(F)$ des points $F$--rationnels de $\widetilde{G}$ n'est pas vide.
\item L'automorphisme $\theta$ de $Z(G)$ est d'ordre fini.
\end{itemize}
}
\end{meshypos}

\subsection{Le $L$--groupe}\label{L-groupe} 
Puisque le groupe $G$ est quasi--déployé sur $F$, on peut fixer une paire de Borel épinglée 
$\ES{E}=(B,T,\{E_\alpha\}_{\alpha\in \Delta})$ de $G$ définie sur $F$:
\begin{itemize}
\item $(B,T)$ est une paire de Borel de $G$ définie sur $F$;
\item $\Delta$ est la base relativement à $B$ du système de racines $\Sigma=\Sigma(G,T)$ de $G$; 
\item $E_\alpha$ est un élément non nul de l'algèbre de Lie $\mathfrak{u}_\alpha$ du sous--groupe radiciel $U_\alpha$ de $G$ correspondant à la racine $\alpha$, les $E_\alpha$ étant choisis de telle manière que l'ensemble $\{E_\alpha\}_{\alpha\in \Delta}$ soit $\Gamma_F$--stable.
\end{itemize}
\vskip1mm
Soit $Z(\wt{G},\ES{E})$ l'ensemble des éléments $\delta\in \wt{G}$ tels que l'automorphisme ${\rm Int}_\delta$ de $G$ stabilise $\ES{E}$. Cet ensemble n'est pas vide, et c'est un espace tordu sous $Z(G)$, défini sur $F$. On note 
$\theta_{\ES{E}}$, ou encore $\theta$, le $F$--automorphisme de $G$ défini par 
$\theta= {\rm Int}_\epsilon$ pour un (i.e. pour tout) $\epsilon\in Z(\wt{G},\ES{E})$. Cet automorphisme est quasi--semisimple (puisqu'il stabilise $(B,T)$), et même semisimple puisqu'il est d'ordre fini (d'après l'hypothèse de finitude sur l'automorphisme $\theta$ de $Z(G)$) et que $F$ est de caractéristique nulle. 
Notons que l'ensemble $Z(\wt{G},\ES{E};F)$ des points $F$--rationnels de $Z(\wt{G},\ES{E})$ peut être vide.

On note $X(T)$ le groupe des caractères algébriques de $T$, $\check{X}(T)$ le groupe des cocaractères algébriques de $T$, $\check{\Sigma}=\check{\Sigma}(G,T)$ l'ensemble des coracines de $G$, $\check{\Delta}$ la base de $\check{\Sigma}$ relativement à $B$, et $W=W^G(T)$ le groupe de Weyl $N_G(T)/T$; où $N_G(T)$ est le normalisateur de $T$ dans $G$. On a donc les inclusions
$$
\Delta\subset \Sigma \subset X(T),\quad \check{\Delta}\subset \check{\Sigma}\subset \check{X}(T).
$$
L'action de $\Sigma_F$ sur $G$ induit une action sur $X(T)$, $\check{X}(T)$, $\Sigma, \check{\Sigma}$, $\Delta$, $\check{\Delta}$ et $W$, et $\theta=\theta_{\ES{E}}$ induit un automorphisme de chacun de ces objets. Rappelons que le sous--groupe $W^\theta$ de $W$ formé des points fixes sous $\theta$, vérifie les propriétés: 
\begin{itemize}
\item un élément $w\in W$ appartient à $W^\theta$ si et seulement s'il stabilise 
$T^\theta$ ou $T^{\theta,\circ}$;
\item pour $\epsilon\in Z(\wt{G},\ES{E})$, $W^\theta$ s'identifie au groupe de Weyl $W^{G_\epsilon}(T_\epsilon)$ de $G_\epsilon$, où $T_\epsilon$ est le tore maximal $T^{\theta,\circ}$ de $G_\epsilon$. 
\end{itemize}

\vskip1mm
Soit $\hat{G}$ le groupe dual de $G$. Il est muni d'une action algébrique de 
$\Gamma_F$, notée $\sigma\mapsto \sigma_G$, et d'une paire de Borel épinglée 
$\hat{\ES{E}}=(\hat{B},\hat{T},\{\hat{E}_\alpha\}_{\alpha\in \hat{\Delta}})$ qui est définie sur $F$, \cad $\Gamma_F$--stable. Puisque $G$ est déployé sur $F^{\rm nr}$, l'action de $\Gamma_F$ sur $\hat{G}$ se factorise à travers $\Gamma_F/I_F$. Elle est donc entièrement déterminée par la donnée de l'automorphisme $\phi_G$ de $\hat{G}$. 

On pose ${^L{G}}=\hat{G}\rtimes W_F$. 
On a des isomorphismes en dualité
$$
\jmath: \check{X}(T)\rightarrow X(\hat{T}),\quad \hat{\jmath}: \check{X}(\hat{T})\rightarrow X(T),
$$
qui sont $\Gamma_F$--équivariants. Il existe un unique automorphisme $\hat{\theta}$ de $\hat{G}$ qui stabilise $\hat{\ES{E}}$ et vérifie, pour tout $x\in \check{X}(T)$ et tout 
$y\in \check{X}(\hat{T})$,
$$
\jmath(\theta\circ x)= \jmath(x)\circ \hat{\theta},\quad \hat{\jmath}(\hat{\theta}\circ y)=  \hat{\jmath}(y)\circ \theta.
$$
Le groupe de Weyl $W^{\smash{\hat{G}}}(\hat{T})=N_{\smash{\hat{G}}}(\hat{T})$ de $\hat{G}$ s'identifie par dualité au groupe de Weyl $W$ de $G$, et on a l'égalité $W^{\hat{\theta}}=W^{\theta}$. 
Précisons cette identification. Un automorphisme $u$ du tore $T$ définit fonctoriellement un automorphisme $\hat{u}$ de $\hat{T}$ (pour $u=\theta\vert_T$, on a $\hat{u}= \hat{\theta}\vert_{\hat{T}}$). L'application $u\mapsto \hat{u}$ est contravariante: on a $(u_1\circ u_2)\,\hat{}= \hat{u}_2 \circ \hat{u}_1$. En particulier elle induit un isomorphisme de $W$ sur le groupe opposé de  $W^{\hat{G}}(\hat{T})$. On identifie $W$ à $W^{\hat{G}}(\hat{T})$ par l'application $w\mapsto \hat{w}^{-1}$. 
L'automorphisme $\hat{\theta}$ commute à l'action de $\Gamma_F$ sur $\hat{G}$, donc définit un automorphisme 
${^L\theta}$ de ${^LG}$: pour $(g,w)\in \hat{G}\rtimes W_F$, on a
$$
{^L\theta}(g\rtimes w)= \hat{\theta}(g)\rtimes w.
$$
On pose ${^L\wt{G}}={^LG}{^L\theta}$. C'est un espace tordu sous ${^LG}$.

\begin{marema}
{\rm Soit $\hat{\ES{E}}_1$  une autre paire de Borel épinglée de $\hat{G}$, pas forcément définie sur $F$. Choisissons un élément $y\in \hat{G}_{\rm SC}$ tel que ${\rm Int}_{y^{-1}}(\hat{\ES{E}}_1)= \hat{\ES{E}}$, et pour $\sigma\in \Gamma_F$, posons
$$
\sigma_{1,G}= {\rm Int}_y \circ \sigma_G \circ {\rm Int}_{y^{-1}}.
$$
La paire de Borel épinglée $\hat{\ES{E}}_1$ est $\Gamma_F$--stable pour l'action $\sigma \mapsto \sigma_{1,G}$ de $\Sigma_F$ sur $\hat{G}$, et ${^LG}$ est encore le produit semidirect 
$\hat{G}\rtimes W_F$ pour cette nouvelle action: l'application
$$
g\rtimes w\mapsto 
g w_G(y)y^{-1}\rtimes y$$
envoie un produit semidirect sur l'autre. Posons $\hat{\theta}_1= y\hat{\theta}(y)^{-1}\hat{\theta}\in \hat{G}\hat{\theta}$. On note encore $\hat{\theta}_1$ l'automorphisme 
${\rm Int}_{y\hat{\theta}(y)^{-1}}\circ \hat{\theta}$ de $\hat{G}$. Alors $\hat{\theta}_1$ stabilise $\ES{E}_1$, commute à l'action galoisienne $\sigma\mapsto \sigma_{1,G}$, et on a l'égalité $\hat{G}\hat{\theta}_1= \hat{G}\hat{\theta}$. Notons que $y$ n'est défini qu'à multiplication près par un élément de 
$Z(\hat{G}_{\rm SC})$. 
}
\end{marema} 

\subsection{Données endoscopiques}\label{données endoscopiques} On fixe 
aussi un caractère $\omega$ de $G(F)$, \cad un homomorphisme continu de $G(F)$ dans ${\Bbb C}^\times$. D'après un théorème de Langlands, ce caractère correspond à une classe de cohomologie $\boldsymbol{a}\in {\rm H}^1(W_F,Z(\hat{G}))$ 
--- cf. \cite[1.13]{Stab I}. Notons que pour que la théorie ne soit pas vide, il faut que le caractère $\omega$ soit trivial sur $Z(G;F)^\theta$, ce que l'on supposera à partir de \ref{données endoscopiques nr}. 

Une {\it donnée endoscopique pour $(\wt{G},\boldsymbol{a})$} est un triplet $\boldsymbol{G}'=(G',\ES{G}',\tilde{s})$ vérifiant 
les conditions:
\begin{itemize}
\item $G'$ est un groupe réductif connexe défini et quasi--déployé sur $F$.
\item $\tilde{s}=s\hat{\theta}$ est un élément semisimple de $\hat{G}\hat{\theta}$.
\item $\ES{G}'$ est un sous--groupe fermé de ${^LG}$ tel que $\ES{G}'\cap \hat{G}= \hat{G}_{\tilde{s}}$ (la composante neutre du commutant $\hat{G}^{\tilde{s}}$ de $\tilde{s}$ dans $\hat{G}$).  
\item La suite
$$
1\rightarrow \hat{G}_{\tilde{s}} \rightarrow \ES{G}'\rightarrow W_F \rightarrow 1
$$
est exacte et scindée (\cad qu'il existe une section continue $W_F\rightarrow \ES{G}'$), où la flèche $\ES{G}'\rightarrow W_F$ est la restriction à $\ES{G}'$ de la projection naturelle ${^LG}\rightarrow W_F$.
\item Fixée une paire de Borel épinglée $\hat{\ES{E}}'$ de $\hat{G}_{\tilde{s}}$, pour chaque $w\in W_F$, on peut choisir un élément $\tilde{g}'_w= (g'_w, w)\in \ES{G}'$ tel que l'automorphisme ${\rm Int}_{\tilde{g}'_w}$ de $\hat{G}_{\tilde{s}}$ stabilise $\hat{\ES{E}}'$. L'application 
$w\mapsto {\rm Int}_{\tilde{g}'_w}$ s'étend en une action de $\Gamma_F$ sur $\hat{G}_{\tilde{s}}$, 
et l'on suppose que $\hat{G}_{\tilde{s}}$ muni de cette action est un groupe dual de $G'$. On peut 
donc poser $\hat{G}'= \hat{G}_{\tilde{s}}$ et noter $\sigma \mapsto \sigma_{G'}$ cette action.
\item Il existe un cocycle $a:W_F\rightarrow Z(\hat{G})$ dans la classe de cohomologie $\boldsymbol{a}$ tel que pour tout $(g',w)\in \ES{G'}$, on ait l'égalité
$$
{\rm Int}_{\tilde{s}}(g',w)=(g' a(w),w).
$$ 
\end{itemize}

\vskip1mm Un isomorphisme entre deux données endoscopiques $(G'_1,\ES{G}'_1,\tilde{s}_1)$ et 
$(G'_2,\ES{G}'_2,\tilde{s}_2)$ pour $(\wt{G},\boldsymbol{a})$ est par définition un élément $x\in \hat{G}$ tel que
$$
x\ES{H}'_1x^{-1}= \ES{H}'_2,\quad x\tilde{s}_1x^{-1}\in Z(\hat{G})\tilde{s}_2.
$$ De l'isomor\-phisme 
${\rm Int}_{x^{-1}}: \hat{G}'_2\rightarrow\hat{G}'_1$ se déduit par dualité un isomorphisme
$$
\alpha_x: G'_1\rightarrow G'_2,
$$
défini sur $F$; en fait une classe de tels isomorphismes modulo l'action de $G'_{1,{\rm AD}}(F)$ ou de $G'_{2,{\rm AD}}(F)$. En particulier, pour une seule donnée endoscopique $\boldsymbol{G}'$ pour 
$(\wt{G},\boldsymbol{a})$, le groupe ${\rm Aut}(\boldsymbol{G}')$ des automorphismes de $\boldsymbol{G}'$ 
contient $\hat{G}'$. On note ${\rm Out}(\boldsymbol{G}')$ le sous--groupe de ${\rm Out}_F(G')= 
{\rm Aut}_F(G')/G'_{\rm AD}(F)$ formé des images des $\alpha_x$ dans ${\rm Out}_F(G')$ pour $x\in {\rm Aut}(\boldsymbol{G}')$. D'après \cite[2.1]{KS1}, on a une suite exacte courte
$$
1 \rightarrow [Z(\hat{G})/(Z(\hat{G})\cap \hat{G}')]^{\Gamma_F}\rightarrow {\rm Aut}(\boldsymbol{G}')/\hat{G}'\rightarrow {\rm Out}(\boldsymbol{G}')\rightarrow 1.
$$ 

\vskip1mm
Soit $\boldsymbol{G}'=(G',\ES{G}',\tilde{s})$ une donnée endoscopique pour $(\wt{G},\boldsymbol{a})$. Fixons 
une paire de Borel épinglée $\hat{\ES{E}}=(\hat{B},\hat{T}, \{\hat{E}_\alpha\}_{\alpha \in \hat{\Delta}})$  de $\hat{G}$ telle que l'automorphisme ${\rm Int}_{\tilde{s}}$ de $\hat{G}$ stabilise $(\hat{B},\hat{T})$. Posons $
\hat{B}'=\hat{B}\cap \hat{G}'$, $\hat{T}'=\hat{T}\cap \hat{G}'$, 
et complétons $(\hat{B}',\hat{T}')$ en une paire de Borel épinglée $\hat{\ES{E}}'=(\hat{B}',\hat{T}',\{\hat{E}'_{\alpha'}\}_{\alpha' \in \hat{\Delta}'})$ de $\hat{G}'=\hat{G}_{\tilde{s}}$. Deux telles paires $\hat{\ES{E}}$ et $\hat{\ES{E}}'$ --- i.e. telles que ${\rm Int}_{\tilde{s}}$ stabilise $(\hat{B},\hat{T})$ et 
$(\hat{B}',\hat{T}')=(\hat{B}\cap \hat{G}',\hat{T}\cap\hat{G}')$ --- sont dites {\it compatibles}. Quitte à modifier l'action $\sigma\mapsto \sigma_G$ de $\Gamma_F$ sur $\hat{G}$ et l'isomorphisme ${^LG}\simeq \hat{G}\rtimes W_F$ comme dans la remarque de \ref{L-groupe}, on peut supposer que:
\begin{itemize}
\item la paire de Borel épinglée $\hat{\ES{E}}$ est définie sur $F$;
\item l'automorphisme $\hat{\theta}$ de $\hat{G}$ stabilise $\hat{\ES{E}}$ et commute à l'action galoisienne $\sigma\mapsto \sigma_G$.
\end{itemize}
Puisque l'élément $\tilde{s}$ stabilise $(\hat{B},\hat{T})$, il s'écrit $\tilde{s}=s\hat{\theta}$ pour un élément $s\in \hat{T}$. On construit comme ci--dessus l'action $\sigma\mapsto \sigma_{G'}$ de $\Gamma_F$ sur $\hat{G}'$ qui stabilise $\hat{\ES{E}}'$. L'égalité $\hat{T}'= \hat{T}^{\hat{\theta},\circ}$ identifie le groupe de Weyl $W'$ de $\hat{G}'$ (ou de $G'$) à un sous--groupe du groupe des invariants $W^{\hat{\theta}}=W^\theta$ du groupe de Weyl $W$ de $\hat{G}$ (ou de $G$). Le plongement $\hat{\xi}: \hat{T}'\hookrightarrow \hat{T}$ n'est en général pas équivariant pour les actions galoisiennes: il existe un cocycle $\sigma\mapsto \alpha_{G'}(\sigma)$ de $\Gamma_F$ dans $W^\theta$ tel que
$$
\alpha_{G'}(\sigma) \circ \sigma(\hat{\xi})= \hat{\xi},\quad \sigma(\hat{\xi})= \sigma_G\circ \hat{\xi}\circ \sigma_{G'}^{-1};
$$
où l'on a identifié $\alpha_{G'}(\sigma)$ à l'automorphisme ${\rm Int}_{\alpha_{G'}(\sigma)}$ de 
$\hat{T}$.

\subsection{Espaces endoscopiques tordus}\label{espaces endoscopiques} Continuons avec les notations de \ref{données endoscopiques}. Soit $(B',T')$ une paire de Borel de $G'$ définie sur $F$. Le tore $\hat{T}^{\hat{\theta},\circ}$ est dual de $T/(1-\theta_{\ES{E}})(T)$. Du plongement $\hat{\xi}: \hat{T}'\rightarrow \hat{T}$ se déduit par dualité un homomorphisme
$$
\xi: T \rightarrow T/(1-\theta_{\ES{E}})(T)\simeq T'
$$
vérifiant
$$
\sigma(\xi)= \xi \circ {\rm Int}_{\alpha_{G'}}(\sigma),\quad \sigma \in \Gamma_F;
$$
où l'on a identifié $\alpha_{G'}(\sigma)$ à l'automorphisme ${\rm Int}_{\alpha_{G'}(\sigma)}$ de 
$T$. D'après \cite[1.7]{Stab I}, on a l'inclusion
$$
\xi(Z(G))\subset Z(G').
$$
Notons $\ES{Z}(\wt{G},\ES{E})$ le quotient de $Z(\wt{G},\ES{E})$ par l'action de $Z(G)$ par conjugaison. C'est un espace principal homogène (à gauche et à droite) sous
$$
\ES{Z}(G)=Z(G)/(1-\theta)(Z(G)).
$$
La restriction de $\xi$ à $Z(G)$, notée
$$
\xi_Z:Z(G)\rightarrow Z(G'),
$$ se quotiente en un homomorphisme
$$
\xi_{\ES{Z}}: \ES{Z}(G)\rightarrow Z(G')
$$
qui est $\Gamma_F$--équivariant. On pose
$$
\wt{G}'= G'\times_{\ES{Z}(G)}\ES{Z}(\wt{G},\ES{E}),
$$
\cad que $\wt{G}'$ est le quotient de $G'\times \ES{Z}(\wt{G},\ES{E})$ par la relation 
d'équivalence
$$
(g'\xi_{\ES{Z}}(z),\tilde{z})= (g', z\tilde{z}),\quad z\in \ES{Z}(G).
$$
Les actions à gauche et à droite de $G'$ sur $G'\times \ES{Z}(\wt{G},\ES{E})$ se descendent en des actions sur $\wt{G}'$, et l'action de $\Gamma_F$ sur $G'\times \ES{Z}(\wt{G},\ES{E})$ se descend en une action sur $\wt{G}'$. Cela fait de $\wt{G}'$ un espace tordu sous $G'$, défini sur $F$ et à torsion intérieure. Notons que l'ensemble $\wt{G}'(F)$ des points $F$--rationnels de $\wt{G}'$ peut être vide 
\cite[1.7, rem. (2)]{Stab I}.

\begin{marema}
{\rm 
Pour une paire de Borel épinglée $\ES{E}_1$ de $G$, pas forcément définie sur $F$, on définit comme ci--dessus le $\ES{Z}(G)$--espace tordu $\ES{Z}(\wt{G},\ES{E}_1)$. Comme 
dans \cite[1.2]{Stab I}, on peut introduire la limite inductive $\ES{Z}(\wt{G})$ des $\ES{Z}(\wt{G},\ES{E}_1)$ sur toutes les paires de Borel épinglées $\ES{E}_1$ de $G$. C'est un espace tordu sous $\ES{Z}(G)$, et il est défini sur $F$ pour l'action de $\Gamma_F$ définie dans loc.~cit. On peut définir l'espace tordu $\wt{G}'$ indépendamment  du choix de $\ES{E}$ (cf. loc.~cit.), mais puisque $G$ est quasi--déployé sur $F$, ce n'est pas vraiment nécessaire ici: on peut identifier $\ES{Z}(\wt{G})$ à $\ES{Z}(\wt{G},\ES{E})$ et l'action de $\Gamma_F$ sur $\ES{Z}(\wt{G})$ à l'action naturelle sur $\ES{Z}(\wt{G},\ES{E})$. 
}
\end{marema}

\subsection{Classes de conjugaison semisimples; correspondance}\label{classes de conjugaison} D'après l'hypo\-thèse de finitude sur l'automorphisme $\theta$ de $Z(G)$, les éléments quasi--semisimples de $\wt{G}$ sont semisimples. Rappelons qu'un élément semisimple $\gamma\in \wt{G}$ est dit 
{\it fortement régulier} si le centralisateur $G^\gamma$ est abélien et si le centralisateur connexe $G_\gamma$ est un tore.

Soit $\gamma_1\in \wt{G}$ un élement semisimple. L'automorphisme ${\rm Int}_{\gamma_1}$ de $G$ stabilise une paire de Borel $(B_1,T_1)$ de $G$. Choisissons un $g\in G_{\rm SC}$ tel que ${\rm Int}_g(B,T)= (B_1,T_1)$. L'élément $\gamma= {\rm Int}_{g^{-1}}(\gamma_1)$ stabilise $(B,T)$. D'après  \cite[1.3]{Stab I}, il s'écrit $\gamma=t \epsilon$ pour un $t\in T$ et un $\epsilon\in Z(\wt{G},\ES{E})$. Soient:
\begin{itemize}
\item $\bar{t}$ l'image de $t$ dans 
$[T/(1-\theta_{\ES{E}})(T)]/W^{\theta_{\ES{E}}}$;
\item $\bar{\epsilon}$ l'image de $\epsilon$ dans $\ES{Z}(\wt{G},\ES{E})$;
\item $\bar{\gamma}$ l'image de $(\bar{t},\bar{\epsilon})$ dans 
$[T/(1-\theta_{\ES{E}})(T)]/W^{\theta_{\ES{E}}}\times_{\ES{Z}(G)}\ES{Z}(\wt{G},\ES{E})$.
\end{itemize}
D'après \cite[1.2, 1.8]{Stab I}, l'élément $\bar{\gamma}$ est bien défini (en particulier il ne dépend 
pas du choix de $g$), et l'application $\gamma_1\mapsto \bar{\gamma}$ se quotiente en une bijection de l'ensemble des classes de $G$--conjugaison d'éléments semisimples de $\wt{G}$ sur 
$$[T/(1-\theta_{\ES{E}})(T)]/W^{\theta_{\ES{E}}}\times_{\ES{Z}(G)}\ES{Z}(\wt{G},\ES{E}).
$$
De plus cette bijection est définie sur $F$. Notons qu'une classe de conjugaison  semisimple dans $\wt{G}$ peut être définie sur $F$ mais ne contenir aucun élément $F$--rationnel. 

\vskip1mm
Soit $\boldsymbol{G}'=(G',\ES{G}',\tilde{s})$ une donnée endoscopique pour $(\wt{G},\bs{a})$. Reprenons les constructions de \ref{données endoscopiques} et \ref{espaces endoscopiques}. On a une paire de Borel $(B',T')$ de $G'$ définie sur $F$, et un isomorphisme $T'\simeq T/(1-\theta_{\ES{E}})(T)$ via lequel le groupe de Weyl $W'=W^{G'}(T')$ de $G'$ s'identifie à un sous--groupe de $W^{\theta_{\ES{E}}}$. Puisque $\wt{G}'$ est à torsion intérieure, on a $\ES{Z}(G')= Z(G')$, et l'on pose
$$
\ES{Z}(\wt{G}',\ES{E})= Z(G')\times_{\ES{Z}(G)}\ES{Z}(\wt{G},\ES{E}).
$$
On en déduit une application surjective \cite[1.8]{Stab I}
$$
(T'/W') \times_{Z(G')} \ES{Z}(\wt{G}',\ES{E}) \rightarrow [T/(1-\theta_{\ES{E}})(T)]/W^{\theta_{\ES{E}}}\times_{\ES{Z}(G)}\ES{Z}(\wt{G},\ES{E}),
$$
\cad une surjection de l'ensemble des classes de conjugaison semisimples dans $\wt{G}'$ sur l'ensemble des classes de conjugaison semisimples dans $\wt{G}$. Cette application est définie sur $F$. Notons que restreinte aux éléments invariants sous $\Gamma_F$, \cad aux classes de conjugaison définies sur $F$, cette application n'est en générale plus surjective. 

On note $\ES{D}(\bs{G}')$ l'ensemble des couples $(\delta,\gamma)\in \wt{G}'(F)\times \wt{G}(F)$ formés d'éléments semisimples dont les classes de conjugaison (sur $\overline{F}$) se correspondent via la surjection ci--dessus, et tels que $\gamma$ est fortement régulier dans $\wt{G}$ (on dit alors que $\delta$ est fortement $\wt{G}$--régulier). On sait qu'un élément $\delta\in \wt{G}'$ fortement 
$\wt{G}$--régulier est a fortiori fortement régulier dans $\wt{G}'$ \cite[lemma 3.3.C]{KS1}. 

La donnée $\bs{G}'$ est dite {\it relevante} si l'ensemble $\ES{D}(\bs{G}')$ n'est pas vide.    

\subsection{Données endoscopiques non ramifiées}\label{données endoscopiques nr} On suppose de plus que la classe de cohomologie $\boldsymbol{a}\in {\rm H}^1(W_F,Z(\hat{G}))$ correspondant à $\omega$ est non ramifiée, \cad qu'elle provient par inflation d'un élément de ${\rm H}^1(W_F/I_F,Z(\hat{G}))$. On dira aussi que le caractère $\omega$ est non ramifié. Soit $\boldsymbol{G}'=(G',\ES{G}',\tilde{s})$ une donnée endoscopique pour $(\wt{G},\bs{a})$. Supposons que la donnée $\boldsymbol{G}'$ est {\it non ramifiée}, \cad que l'inclusion suivante est vérifiée:
$$
I_F\subset \ES{G}'.
$$ 
Cela entraîne que le $F$--groupe (quasi--déployé) $G'$ est déployé sur $F^{\rm nr}$ \cite[6.2]{Stab I}. 
Normalisons les actions galoisiennes 
$\sigma\mapsto \sigma_G$ sur $\hat{G}$ et $\sigma\mapsto \sigma_{G'}$ sur $\hat{G}'$ comme en 
\ref{données endoscopiques}, \cad en choisissant des paires de Borel épinglées compatibles $\hat{\ES{E}}$ de $\hat{G}$ et $\hat{\ES{E}}'$ de $\hat{G}'$, et en imposant que les actions galoisiennes préservent ces paires. Ces actions sont déterminées par la donnée des automorphismes $\phi_G$ de $\hat{G}$ et $\phi_{G'}$ de $\hat{G}'$. Soit un élément $h_\phi=(h,\phi)\in \ES{G}'$ tel que 
${\rm Int}_{h_\phi}$ agisse comme $\phi_{G'}$ sur $\hat{G}'$. Alors $\ES{G'}$ est le produit 
semidirect $(\hat{G}'\times I_F)\rtimes \langle  h_\phi \rangle$, et l'application
$$
\ES{G}'\rightarrow {^L{G}'},\, (x,w)h_\phi^n \mapsto (x, w\phi^n),\quad (x,w)\in \hat{G}'\times I_F,\, n\in {\Bbb Z},
$$
est un isomorphisme \cite[6.3]{Stab I}. Cela entraîne que dans cette situation non ramifiée, on peut faire l'économie des données auxiliaires. D'autre part on a un automorphisme $\hat{\theta}$ de $\hat{G}$ qui stabilise $\hat{\ES{E}}$ et commute à l'action galoisienne $\sigma\mapsto \sigma_G$. \'Ecrivons $\tilde{s}=s\hat{\theta}$, et posons $\bs{h}= h \phi$. Alors la condition
$$
{\rm Int}_{\tilde{s}}(g',w)= (a'(w)g',w),\quad (g',w)\in \ES{G}',
$$
équivaut à
$$
\tilde{s}\bs{h} = a(\phi)\bs{h} \tilde{s};\leqno{(1)}
$$
où l'égalité est vue dans $\hat{G}W_F\hat{\theta}= \hat{G}\hat{\theta}W_F$.

\vskip1mm 
Rappelons qu'à une paire de Borel épinglée $\ES{E}_1$ de $G$ définie sur $F$, la théorie de Bruhat--Tits associe un sous--groupe (compact maximal) hyperspécial $K_{\ES{E}_1}$ de $G(F)$. Précisément, à $\ES{E}_1$ est associé un schéma en groupes lisse $\ES{K}_{\ES{E}_1}$ défini sur l'anneau des entiers $\mathfrak{o}$ de $F$, de fibre générique $G$, et le groupe $K_{\ES{E}_1}=\ES{K}_{\ES{E}_1}(\mathfrak{o})$ des points $\mathfrak{o}$--rationnels de $\ES{K}_{\ES{E}_1}$ est un sous--groupe hyperspécial de $G(F)$. De plus, notant $\mathfrak{o}^{\rm nr}$ l'anneau des entiers de $F^{\rm nr}$, le sous--groupe $\ES{K}_{\ES{E}_1}(\mathfrak{o}^{\rm nr})$ de $G(F^{\rm nr})$ ne dépend que de $K= K_{\ES{E}_1}$, et on le note $K^{\rm nr}$. 

Soit $K$ un sous--groupe hyperspécial de $G(F)$. Posons
$$
N_{\smash{\wt{G}(F)}}(K)=\{\delta\in \wt{G}(F): {\rm Int}_\delta(K)=K\}.
$$
Si $N_{\smash{\wt{G}(F)}}(K)$ n'est pas vide, alors c'est un espace principal homogène sous $Z(G;F)K$, et tout élément $\delta\in N_{\smash{\wt{G}(F)}}(K)$ définit un $K$--espace tordu $\wt{K}=K\delta=\delta K$. Si de plus il existe une paire de Borel épinglée $\ES{E}_1$ de $G$ définie sur $F$ telle que $K= K_{\ES{E}_1}$ et $\wt{K} \cap K^{\rm nr}Z(\wt{G},\ES{E}_1; F^{\rm nr})$ ne soit pas vide, alors on dit que $\wt{K}$ est un {\it sous--espace hyperspécial} de $\wt{G}(F)$. 

\begin{marema}
{\rm Les paires de Borel épinglées de $G$ définies sur $F$, et donc aussi les sous--groupes hyperspéciaux de $G(F)$, forment une seule orbite sous l'action de $G_{\rm AD}(F)$ par conjugaison. Notons que cela n'implique pas que si $N_{\smash{\wt{G}(F)}}(K)\neq \emptyset$ pour un sous--groupe hyperspécial $K$ de $G(F)$, alors $N_{\smash{\wt{G}(F)}}(K_1)\neq \emptyset$ pour tout sous--groupe hyperspécial $K_1$ de $G(F)$. On renvoie au lemme 1 de \ref{action du groupe C} pour une variante de ce résultat de conjugaison pour les sous--espaces hyperspéciaux de $\wt{G}(F)$. 
} 
\end{marema}

\begin{meshypos}{\rm On suppose que:
\begin{itemize}
\item Le caractère $\omega$ est trivial sur $Z(G;F)^\theta$;
\item La classe de cohomologie $\bs{a}\in {\rm H}^1(W_F,Z(\hat{G}))$ est non ramifiée.
\item $\wt{G}(F)$ possède un sous--espace hyperspécial.
\end{itemize}}
\end{meshypos}

Fixons un sous--espace hyperspécial $(K,\wt{K})$ de $\wt{G}(F)$. Par définition, il existe une paire de Borel épinglée $\ES{E}_1$ de $G$ définie sur $F$ telle que $K=K_{\ES{E}_1}$, l'ensemble $N_{\smash{\wt{G}(F)}}(K)$ n'est pas vide, et $\wt{K}= K\delta = \delta K$ pour un élément $\delta\in N_{\smash{\wt{G}(F)}}(K)$ de la forme
$$
\delta = k \epsilon, \quad k\in K^{\rm nr},\, \epsilon \in Z(\wt{G},\ES{E}_1; F^{\rm nr}).
$$
Quitte à remplacer la paire $\ES{E}$ fixée en \ref{L-groupe} par une autre paire de Borel épinglée de $G$ définie sur $F$, on peut supposer que $\ES{E}_1=\ES{E}$. 

Soit $(\wt{B},\wt{T})$ la paire de Borel de $\wt{G}$ associée à $(B,T)$, \cad que $\wt{B}$ est le normalisateur de $B$ dans $\wt{G}$, et $\wt{T}$ est le normalisateur de $(B,T)$ dans $\wt{G}$. 
Fixé un élément $\epsilon\in Z(\wt{G},\ES{E})$, on a
$$
\wt{B}=B\epsilon,\quad \wt{T}=T\epsilon.
$$
En particulier les ensembles $\wt{B}$ et $\wt{T}$ ne sont pas vides, $\wt{B}$ est un $B$--espace tordu défini sur $F$, et $\wt{T}$ est un $T$--espace tordu défini sur $F$. 
Notons que, même si $Z(\wt{G},\ES{E};F)$ est vide, les ensembles $\wt{B}(F)$ et $\wt{T}(F)$ ne sont pas vides. De plus on a la décomposition en produit semi--direct
$$
\wt{B}(F)=\wt{T}(F)\rtimes U_B(F),
$$
où $U_B$ est le radical unipotent de $B$. Soit $T(F^{\rm nr})_1$ le plus grand sous--groupe borné de $T(F^{\rm nr})$, \cad l'ensemble des $t\in T(F^{\rm nr})$ tels que $v_F(x(t))=0$ pour tout caractère $x \in X(T)$. On a $T(F^{\rm nr})_1= T(F^{\rm nr})\cap K^{\rm nr}$, et d'après \cite[6.1.(5)]{Stab I}, l'intersection
$$
Z(\wt{G},\ES{E};F^{\rm nr})\cap T(F^{\rm nr})_1\wt{K}
$$
n'est pas vide. L'espace $\wt{K}$ s'écrit donc
$$
\wt{K}=K\delta_\circ=\delta_\circ K
$$
pour un élément $\delta_\circ\in \wt{T}(F)$ de la forme 
$$
\delta_\circ= t\epsilon,\quad t\in T(F^{\rm nr})_1,\,\epsilon\in Z(\wt{G},\ES{E};F^{\rm nr}).
$$
Le $F$--automorphisme $\theta_\circ={\rm Int}_{\delta_\circ}$ de $G$ est semisimple (car il stabilise $(B,T)$) 
et sa restriction à $T$ co\"{\i}ncide avec celle de $\theta_{\ES{E}}$. 

Soit 
$\bs{G}'=(G',\ES{G}',\tilde{s})$ une donnée endoscopique non ramifiée pour $(\wt{G},\bs{a})$. D'après le lemme de \cite[6.2]{Stab I}, la donnée $\bs{G}'$ est relevante. 
D'après \cite[6.2, 6.3]{Stab I}, à $(K,\wt{K})$ sont associés un sous--espace hyperspécial $(K',\wt{K}')$ de 
$\wt{G}'(F)$, bien défini à conjugaison près par un élément de $G'_{\rm AD}(F)$, et un facteur de transfert normalisé
$$
\Delta:\ES{D}(\bs{G}')\rightarrow {\Bbb C}^\times.
$$
D'après \cite[theorem 5.1.D]{KS1}, pour $(\delta,\gamma)\in \ES{D}(\bs{G}')$ et $g\in G(F)$, 
on a
$$
\Delta(\delta ,g^{-1}\gamma g)= \omega(g)\Delta(\delta,\gamma).\leqno{(2)}
$$
Posons $G_\sharp= G/Z(G)^\theta$. C'est un groupe réductif connexe défini sur $F$, et le groupe $G_\sharp(F)$ de ses points $F$--rationnels opère par conjugaison sur $\wt{G}(F)$. D'après \cite[2.7]{Stab I}, à $\bs{G}'$ est associé un caractère $\omega'=\omega_{\bs{G}'}$ de $G_\sharp(F)$ tel que pour 
$(\delta,\gamma)\in \ES{D}(\bs{G}')$ et $g\in G_\sharp(F)$, 
on ait
$$
\Delta(\delta ,{\rm Int}_{\smash{g^{-1}}}(\gamma))= \omega'(g)\Delta(\delta,\gamma).\leqno{(3)}
$$
Rappelons que $\omega$ est trivial sur $Z(G;F)^\theta=Z(G)^\theta(F)$. La projection naturelle $G\rightarrow G_\sharp$ induit un homomorphisme injectif $G(F)/Z(G;F)^\theta\rightarrow G_\sharp(F)$, et la 
restriction de $\omega'$ ˆ l'image de $G(F)$ dans $G_\sharp(F)$ co\"{\i}ncide avec $\omega$.

\subsection{Transfert endoscopique non ramifié}\label{transfert nr}On note $C^\infty_{\rm c}(\wt{G}(F))$ l'espace vectoriel des fonctions complexes sur $\wt{G}(F)$ qui sont localement constantes et à support compact. Fixons une mesure de Haar $dg$ sur $G(F)$. Pour 
$\gamma\in \wt{G}(F)$ fortement régulier, on pose 
$$
D^{\wt{G}}(\gamma)= \vert \det (1- {\rm ad}_\gamma; \mathfrak{g}/\mathfrak{g}_\gamma) \vert_F
$$
où:
\begin{itemize}
\item $\mathfrak{g}$ est l'algèbre de Lie de $G$;
\item $\mathfrak{g}_\gamma $ est l'algèbre de Lie de $G_\gamma$;
\item ${\rm ad}_\gamma$ est l'endomorphisme de $\mathfrak{g}$ déduit de l'automorphisme ${\rm Int}_\gamma$ de $G(F)$;
\item $\vert\;\vert_F$ est la valeur absolue normalisée sur $F$, \cad telle que $\vert x\vert_F =q^{-v_F(x)}$ où $q$ est le cardinal du corps résiduel de $F$.
\end{itemize} 
Fixons une mesure de Haar $dg_\gamma$ sur $G_\gamma(F)$, et notons $d\bar{g}_\gamma$ 
la mesure quotient ${dg\over dg_\gamma}$ sur $G_\gamma(F)\backslash G(F)$. Soit 
$f\in C^\infty_{\rm c}(\wt{G}(F))$. Si le caractère $\omega$ est trivial sur $G_\gamma(F)$, on pose
$$
I^{\wt{G}}(\gamma,\omega, f)= D^{\wt{G}}(\gamma)^{1/2}\int_{G_\gamma(F)\backslash G(F)}
\omega(g)f(g^{-1}\gamma g) d\bar{g}_\gamma.
$$
Sinon, on pose $I^{\wt{G}}(\gamma,\omega, f)=0$. On note $\bs{I}(\wt{G}(F),\omega)$ le quotient de 
$C^\infty_{\rm c}(\wt{G}(F))$ par le sous--espace annulé par toutes les distributions $I^{\wt{G}}(\gamma,\omega,\cdot)$ pour $\gamma\in \wt{G}(F)$ fortement régulier.

Soit $\bs{G}'=(G',\ES{G}',\tilde{s})$ une donnée endoscopique non ramifiée pour $(\wt{G},\bs{a})$. 
On note $C^\infty_{\rm c}(\wt{G}'(F))$ l'espace vectoriel des fonctions complexes sur $\wt{G}'(F)$ qui sont localement constan\-tes et à support compact. Fixons une mesure de Haar $dg'$ sur $G'(F)$. Pour 
$\delta\in \wt{G}'(F)$ fortement $\wt{G}$--régulier, on fixe une mesure de Haar $dg'_\delta$ sur 
$G_\delta(F)$, et l'on note $d\bar{g}'_\delta$ la mesure quotient ${dg'\over dg'_\delta}$ sur 
$G'_\delta(F)\backslash G'(F)$. Soit $f'\in C^\infty_{\rm c}(\wt{G}'(F))$. On pose
$$
I^{\wt{G}'}(\delta,f')= D^{\wt{G}'}(\delta)^{1/2} \int_{G'_\delta(F)\backslash G'(F)}f'(g'^{-1}\delta g')d\bar{g}'_\delta,
$$
et
$$
S^{\wt{G}'\!}(\delta,f')= \sum_{\delta'}I^{\wt{G}}(\delta',f')
$$
où $\delta'$ parcourt les éléments de la classe de conjugaison stable de $\gamma$ modulo conjugaison par $G'(F)$. On note $\bs{SI}(\wt{G}'(F))$ le quotient de $C^\infty_{\rm c}(\wt{G}'(F))$ par le sous--espace annulé par les distributions $S^{\wt{G}'\!}(\delta,\cdot)$ pour $\delta\in \wt{G}'(F)$ fortement régulier (dans $\wt{G}'$).

\vskip1mm
Soit un élément $\delta\in G'(F)$ fortement $\wt{G}$--régulier. Pour $\gamma\in G(F)$ tel que $(\delta,\gamma)\in \ES{D}(\bs{G}')$, on a un homomorphisme naturel
$$
G_\gamma(F)\rightarrow G'_\delta(F)
$$
qui est un homéomorphisme local, et un revêtement sur son image (cf. \cite[2.4]{Stab I}). On fixe des mesures de Haar $dg_\gamma$ sur $G_\gamma(F)$ et $dg'_\delta$ sur $G'_\delta(F)$ qui se correspondent par cet homomorphisme. On pose
$$
d_{\theta}= \vert \det (1-\theta_{\ES{E}}; \mathfrak{t}/\mathfrak{t}_{\theta_{\ES{E}}})\vert_F 
$$
où $\mathfrak{t}$ est l'algèbre de Lie de $T$, et $\mathfrak{t}_{\theta_{\ES{E}}}$ est l'algèbre de Lie de $T_{\theta_{\ES{E}}}=T^{\theta_{\ES{E}},\circ}$ (\cad la sous--algèbre de Lie de $\mathfrak{t}$ formée des éléments fixés par $\theta_{\ES{E}}$). 
Pour $f\in C^\infty_{\rm c}(\wt{G}(F))$, on pose
$$
I^{\wt{G},\omega}(\delta,f)= d_\theta^{1/2}\sum_\gamma d_\gamma^{-1}\Delta(\delta,\gamma)I^{\wt{G}}(\gamma,\omega,f)
$$
où $\gamma$ parcourt les éléments de $\wt{G}(F)$ tels que $(\delta,\gamma)\in \ES{D}(\bs{G}')$, modulo conjugaison par $G(F)$, et $d_\gamma = [G^\gamma(F):G_\gamma(F)]$. Pour 
$f'\in C^\infty_{\rm c}(\wt{G}'(F))$, on dit que $f'$ est un transfert de $f$ si pour tout $\delta\in \wt{G}'(F)$ fortement $\wt{G}$--régulier, on a l'égalité
$$
S^{\wt{G}'\!}(\delta,f')= I^{\wt{G},\omega}(\delta,f).
$$
D'après la conjecture de transfert maintenant démontrée --- pour n'importe quelle donnée endoscopique pour $(\wt{G},\bs{a})$, pas seulement pour celles qui sont non ramifiées! ---, on sait que toute fonction $f\in C^\infty_{\rm c}(\wt{G}(F))$ admet un transfert $f'\in C^\infty_{\rm c}(\wt{G}'(F))$. 

Par passage aux quotients, on peut voir le transfert comme une application linéaire
$$
\bs{I}(\wt{G}(F),\omega)\rightarrow \bs{SI}(\wt{G}'(F)), \,\bs{f}\mapsto \bs{f}'=\bs{f}^{\wt{G}'}.\leqno{(1)}
$$
Rappelons que le facteur de transfert a été normalisé grâce au choix de l'espace hyperspécial $(K,\wt{K})$. Il convient de normaliser aussi les mesures de Haar $dg$ sur $G(F)$ et $dg'$ sur $G'(F)$: on suppose désormais que
$$
{\rm vol}(K,dg)={\rm vol}(K',dg')=1,
$$
où $(K',\wt{K}')$ est un sous--espace hyperspécial de $\wt{G}'(F)$ associé à $(K,\wt{K})$. 

\subsection{Le lemme fondamental}\label{lemme fondamental}
Soit $\bs{G}'=(G',\ES{G}',\tilde{s})$ une donnée endoscopique non ramifiée pour $(\wt{G},\bs{a})$, et soit $(K',\wt{K}')$ un sous--espace hyperspécial de $\wt{G}'(F)$ associé à $(K,\wt{K}')$. On note $\bs{1}_{\wt{K}}\in C^\infty_{\rm c}(\wt{G}(F))$ la fonction caractéristique de $\wt{K}$, et 
$\bs{1}_{\wt{K}'}\in C^\infty_{\rm c}(\wt{G}'(F))$ la fonction caractéristique de $\wt{K}'$. 

Si la caractŽristique rŽsiduelle de $F$ est assez grande, alors le théorème suivant (lemme fondamental pour les unités) est maintenant démontré, 
gr‰ce au rŽsultat de Ngo Bao Chau en caractŽristique non nulle.

\begin{montheo1}
La fonction $\bs{1}_{\wt{K}'}$ est un transfert de $\bs{1}_{\wt{K}}$.
\end{montheo1}

Notons $\mathcal{H}_{K}$ l'algèbre des fonctions complexes sur $G(F)$ qui sont bi--invariantes par $K$ et à support compact, et $\mathcal{H}_{K'}$ l'algèbre des fonctions complexes sur $G'(F)$ qui sont bi--invariantes par $K'$ et à support compact. Rappelons que $\phi\in W_F$ est un élément de Frobenius. Notons 
$\hat{\mathcal{H}}_\phi$ l'algèbre des fonctions polynomiales sur $\hat{G}\rtimes \phi \subset {^LG}$ qui sont invariantes par conjugaison par $\hat{G}$, et $\hat{\mathcal{H}}'_\phi$ l'algèbre des fonctions polynomiales sur $\ES{G}'\cap (\hat{G}\rtimes \phi)\simeq \hat{G}'\rtimes \phi \subset {^L{G}'}$ qui sont invariantes par conjugaison par $\hat{G}'$. Soit $b:\mathcal{H}_K\rightarrow \mathcal{H}_{K'}$ l'homomorphisme d'algèbres qui rend commutatif le diagramme
$$
\xymatrix{
\mathcal{H}_K \ar[r]^{\simeq} \ar[d]_b & \hat{\mathcal{H}}_\phi \ar[d]^{\hat{b}}\\
\mathcal{H}_{K'} \ar[r]^{\simeq} & \hat{\mathcal{H}}'_\phi
},
$$
où les flèches horizontales sont les isomorphismes de Satake, et $\hat{b}$ est l'homomorphisme de restriction. L'algèbre $\mathcal{H}_K$ opère par convolution à gauche et à droite sur $C^\infty_{\rm c}(\wt{G}(F))$, et l'algèbre $\mathcal{H}_{K'}$ opère par convolution à gauche et à droite sur $C^\infty_{\rm c}(\wt{G}'(F))$.

\begin{montheo2}
Soit $f\in \mathcal{H}_K$, et soit $f'=b(f)\in \mathcal{H}_{K'}$. Alors $f'*\bs{1}_{\wt{K}'}= \bs{1}_{\wt{K}'}* f'$ est un transfert de 
$f* {\bf 1}_{\wt{K}}$ (et de $\bs{1}_{\wt{K}}* \omega^{-1}f)$.
\end{montheo2}

On prouve dans ce papier que le théorème 1 implique le théorème 2.

\subsection{Descente parabolique}\label{descente parabolique}Commen\c{c}ons par quelques 
rappels \cite[3.1]{Stab I}. Soit $(P,M)$ une paire parabolique de $G$ (pas forcément définie sur $F$). On note 
$\wt{P}=N_{\smash{\wt{G}}}(P)$ le normalisateur $\{\gamma\in \wt{G}: {\rm Int}_\gamma(P)=P\}$ de $P$ dans $\wt{G}$, et $\wt{M}_P=N_{\smash{\wt{G}}}(P,M)$ le normalisateur de $(P,M)$ dans $\wt{G}$, \cad l'ensemble $\{\gamma \in \wt{G}: {\rm Int}_\gamma(P,M)=(P,M)\}$. Si $\wt{P}$ n'est pas vide, alors $\wt{M}_P$ ne l'est pas non plus, et l'on dit que $(\wt{P},\wt{M}_P)$ est une {\it paire parabolique de $\wt{G}$}. En particulier (toujours si $\wt{P}\neq \emptyset $), on a la décomposition en produit semi--direct
$$
\wt{P}= \wt{M}_P\rtimes U_P,\leqno{(1)}
$$
où $U_P$ est le radical unipotent de $P$. Notons que $\wt{M}_P$ dépend vraiment de la paire $(P,M)$, 
et pas seulement de la composante de Levi $M$ (sauf si $\wt{G}$ est à torsion intérieure, auquel cas $\wt{M}_P$ est l'ensemble des $\gamma\in \wt{G}$ tels que ${\rm Int}_\gamma\in M/Z(G)\subset G_{\rm AD}$). 
Supposons de plus que la paire $(P,M)$ est définie sur $F$. Si $\wt{P}$ n'est pas vide, alors 
la paire $(\wt{P},\wt{M}_P)$ et la décomposition (1) sont définies sur $F$, $\wt{P}(F)$ et 
$\wt{M}_P(F)$ ne sont pas vides, et on a la décomposition en produit semi--direct
$$
\wt{P}(F)=\wt{M}_P(F)\rtimes U_P(F). \leqno{(2)}
$$
En ce cas (si $\wt{P}\neq \emptyset$), on dit que $(\wt{P},\wt{M}_P)$ est une paire parabolique de $\wt{G}$ définie sur $F$. 

Pour alléger l'écriture, on appelle simplement {\it $F$--Levi de $G$} la composante de Levi d'une paire parabolique de $G$ définie sur $F$, et {\it $F$--Levi de $\wt{G}$} la composante de Levi d'une paire parabolique de $\wt{G}$ définie sur $F$. 

Rappelons qu'on a fixé en \ref{L-groupe} une paire de Borel épinglée $\ES{E}=(B,T,\{E_\alpha\}_{\alpha\in \Delta})$ de $G$ définie sur $F$, et en \ref{données endoscopiques nr} un sous--espace hyperspécial $(K,\wt{K})$ de $\wt{G}(F)$ tel que $K= K_{\ES{E}}$ et l'ensemble 
$\wt{K} \cap K^{\rm nr}Z(\wt{G},\ES{E}; F^{\rm nr})$ n'est pas vide, ce qui assure que l'ensemble $\wt{K} \cap T(F^{\rm nr})_1Z(\wt{G},\ES{E}; F^{\rm nr})$ n'est pas vide. 
Une paire parabolique $(P,M)$ de $G$ est dite standard si elle contient $(B,T)$, et semi--standard si $M$ contient $T$. Un sous--groupe parabolique de $P$ de $G$ est dit standard (reps. semi--standard) s'il contient $B$ (reps. $T$), et une composante de Levi de $P$ est dite semi--standard si elle contient $T$. Posons $\wt{T}=N_{\smash{\wt{G}}}(B,T)$. Puisque $\wt{T}$ n'est pas vide, $(\wt{B},\wt{T})$ est une paire parabolique (minimale) de $\wt{G}$, et elle est définie sur $F$. On définit de la même manière, en rempla\c{c}ant la paire $(B,T)$ par $(\wt{B},\wt{T})$, les notions de paire parabolique standard (resp. semi--standard), de 
sous--espace parabolique standard (reps. semi--standard), et de composante de Levi semi--standard, de $\wt{G}$. L'application $(P,M)\mapsto (\wt{P},\wt{M}_P)$ est une bijection entre:
\begin{itemize}
\item les paires paraboliques standards de $G$ qui sont définies sur $F$ et $\theta_{\ES{E}}$--stables;
\item les paires paraboliques standards de $\wt{G}$ qui sont définies sur $F$;
\item les classes de $G$--conjugaison de paires paraboliques $\wt{G}$ qui sont définies sur $F$.
\end{itemize}

\begin{marema}
{\rm Fixons une paire de Borel épinglée $\hat{\ES{E}}=\{\hat{B},\hat{T},\{\hat{E}_\alpha\}_{\alpha\in \hat{\Delta}})$ de $\hat{G}$. Modifions l'action 
$\sigma \mapsto \sigma_G$ de $\Gamma_F$ sur $\hat{G}$ et l'isomorphisme ${^LG}\simeq \hat{G}\rtimes W_F$ de telle manière que la paire $\hat{\ES{E}}$ soit définie sur $F$ --- cf. \ref{données endoscopiques}. On a aussi un automorphisme $\hat{\theta}$ de $\hat{G}$ qui stabilise $\hat{\ES{E}}$ et commute à l'action galoisienne $\sigma\mapsto \sigma_G$. Une paire parabolique de $\hat{G}$ est dite standard si elle contient $(\hat{B},\hat{T})$. On a aussi une bijection $(P,M)\mapsto (\hat{P},\hat{M})$ entre:
\begin{itemize}
\item les paires paraboliques standards de $G$ qui sont définies sur $F$ et $\theta_{\ES{E}}$--stables;
\item les paires paraboliques standards de $\hat{G}$ qui sont définies sur $F$ et $\hat{\theta}$--stables.
\end{itemize}}
\end{marema}

Soit $(P,M)$ une paire parabolique de $G$, semi--standard et définie sur $F$. Posons $\wt{M}=\wt{M}_P$, et supposons que $\wt{M}$ est semi--standard. Puisque l'ensemble $\wt{M}$ contient $\wt{T}$, il n'est pas vide, et $(\wt{P},\wt{M})$ est une paire parabolique de $\wt{G}$, semi--standard et définie sur $F$. La paire $\ES{E}$ donne par restriction une paire de Borel épinglée $\ES{E}_M=(B\cap M,T,\{E_\alpha\}_{\alpha\in \Delta_M})$ de $M$, elle aussi définie sur $F$. On a
$$
Z(\wt{M},\ES{E}_M)=Z(M)Z(\wt{G},\ES{E})
$$
et le $F$--automorphisme de $M$ défini par $\wt{M}$ et $\ES{E}_M$ (cf. \ref{L-groupe}) n'est autre que la restriction de $\theta_{\ES{E}}$ à $M$. On le note $\theta_{\ES{E},M}$. Puisque $\theta_{\ES{E}}$ est d'ordre fini, 
$\theta_{\ES{E},M}$ l'est aussi, et l'automorphisme $\theta_M$ de $Z(M)$ défini par $\wt{M}$ 
(qui est la restriction de $\theta_{\ES{E},M}$ à $Z(M)$) est d'ordre fini. L'espace tordu $(M,\wt{M})$ vérifie donc les trois hypothèses de \ref{objets}. Le sous--groupe hyperspécial $K_M=K_{\ES{E}_M}$ de $M(F)$ associé à $\ES{E}_M$ est donné par
$$
K_M=K\cap M(F).
$$
Puisque le sous--espace hyperspécial $\wt{K}$ de $\wt{G}(F)$ est de la forme $\wt{K}=K\delta_\circ= \delta_\circ K$ pour un $\delta_\circ\in \wt{T}(F)\subset \wt{M}(F)$ de la forme $\delta_0=t\epsilon$ avec $t\in T(F^{\rm nr})_1$ et $\epsilon\in Z(\wt{G},\ES{E};F^{\rm nr})$, 
l'ensemble 
$$
\wt{K}_M =\wt{K}\cap \wt{M}(F)\;(= K_M\delta_\circ = \delta_\circ K_M)
$$
est un sous--espace hyperspécial de $\wt{M}(F)$.

\begin{notation}
{\rm On note $\ES{L}(\wt{T},\omega)$ l'ensemble des $F$--Levi semi--standards $\wt{M}$ de 
$\wt{G}$ tels que le caractère $\omega$ est trivial sur $Z(M; F)^{\theta_M}$.
}
\end{notation}

\vskip1mm Soit $\wt{M}\in \ES{L}(\wt{T},\omega)$. Choisissons une paire de Borel $(P,M)$ de $G$, 
semi--standard et définie sur $F$, telle que $\wt{M}=\wt{M}_P$. Puisque $K$ est le sous--groupe hyperspécial $K_{\ES{E}}$ de $G(F)$ associé à $\ES{E}$, il est en bonne position par rapport à $(P,M)$, \cad qu'on a la décomposition
$$
P(F)\cap K = (M(F)\cap K)(U_P(F)\cap K).
$$
Posons $U=U_P$, et soient $dk$, $dm$ et $du$ les mesures de Haar sur $K$, $M(F)$, $U(F)$, normalisées par $K$, \cad telles que
$$
{\rm vol}(K,dk)= {\rm vol}(M(F)\cap K,dm)={\rm vol}(U(F)\cap K,du)=1.
$$
On définit l'homomorphisme {\it terme constant (suivant $(\wt{P},\omega)$)}
$$
C^\infty_{\rm c}(\wt{G}(F))\rightarrow C^\infty_{\rm c}(\wt{M}(F)),\, f\mapsto f_{\wt{P},\omega}
$$
par la formule
$$
f_{\wt{P},\omega}=\int\!\!\!\int_{U(F)\times K} \omega^{-1}(k)f(k^{-1}u^{-1}\gamma uk)dudk.
$$
Pour $\gamma\in \wt{M}(F)$ fortement régulier dans $\wt{G}$, on a la formule de descente
$$
I^{\wt{G}}(\gamma,\omega,f)= I^{\wt{M}}(\gamma,\omega,f_{\wt{P},\omega}),\quad 
f\in C^\infty_{\rm c}(\wt{G}(F)),\leqno{(3)}
$$
pourvu que l'on choisisse la même mesure de Haar $dg_\gamma = dm_\gamma$ 
sur $G_\gamma(F)=M_\gamma(F)$. Pour définir la mesure quotient sur $M_\gamma(F)\backslash M(F)$, on prend bien sûr $dm$ comme mesure de Haar sur $M(F)$. Choisissons une autre paire de Borel $(Q,M)$ de $G$, semi--standard et définie sur $F$, telle que $\wt{M}_Q=\wt{M}$. On définit de la même manière l'homomorphisme terme constant (suivant $\wt{Q}$)
$$
C^\infty_{\rm c}(\wt{G}(F))\rightarrow C^\infty_{\rm c}(\wt{M}(F)),\, f\mapsto f_{\wt{Q},\omega}.
$$
Pour $\gamma\in \wt{M}(F)$ fortement régulier dans $\wt{G}$, on a aussi la formule de descente
$$
I^{\wt{G}}(\gamma,\omega,f)= I^{\wt{M}}(\gamma,\omega,f_{\wt{Q},\omega}),\quad 
f\in C^\infty_{\rm c}(\wt{G}(F)).
$$
Par passage aux quotients, on obtient un homomorphisme
$$
\bs{I}(\wt{G},\omega)\rightarrow \bs{I}(\wt{M},\omega),\, \bs{f}\mapsto \bs{f}_{\wt{M},\omega},
\leqno{(4)}
$$
qui ne dépend pas du choix de la paire parabolique $(P,M)$ de $G$, semi--standard et définie sur $F$,  telle que $\wt{M}_P= \wt{M}$. On peut aussi vérifier que si l'on remplace $K$ par un sous--groupe (compact maximal) spécial $K_1$ de $G(F)$ en bonne position par rapport à $(P,M)$ dans la définition de l'homomorphisme terme constant (suivant $\wt{P}$), alors par passage aux quotients, on obtient le même homomorphisme (4).

\subsection{Réduction aux données endoscopiques elliptiques}\label{réduction aux données elliptiques}Rappelons qu'une donnée endoscopique $\bs{G}'=(G',\ES{G}',\tilde{s})$ pour $(\wt{G},\bs{a})$ est dite {\it elliptique} si on a 
l'égalité
$$
Z(\hat{G}')^{\Gamma_F,\circ}=[ Z(\hat{G})^{\hat{\theta}}]^{\Gamma_F,\circ}.
$$

Soit $\bs{G}'=(G',\ES{G}',\tilde{s})$ une donnée endoscopique non ramifiée pour $(\wt{G},\bs{a})$. 
On fixe des paires de Borel épinglées compatibles 
$\hat{\ES{E}}=(\hat{B},\hat{T},\{\hat{E}_\alpha\}_{\alpha\in \hat{\Delta}})$ de $\hat{G}$ 
et $\hat{\ES{E}}'=
(\hat{B}',\hat{T}',\{\hat{E}'_\alpha\}_{\alpha\in \hat{\Delta}'})$ de $\hat{G}'=\hat{G}_{\tilde{s}}$, et l'on normalise les actions galoisiennes $\sigma\mapsto \sigma_G$ sur $\hat{G}$ et $\sigma\mapsto \sigma_{G'}$ sur $\hat{G}'$ de manière à ce qu'elles préservent ces paires --- cf. \ref{données endoscopiques} . On a aussi un automorphisme $\hat{\theta}$ de $\hat{G}$ qui préserve $\hat{\ES{E}}$ et commute à l'action galoisienne $\sigma \mapsto \sigma_G$. 

Notons $\hat{M}$, $\ES{M}$, $\wt{\ES{M}}$ les commutants de $Z(\hat{G}')^{\Gamma_F,\circ}$ dans $\hat{G}$, ${^LG}$, ${^L{\wt{G}}}$. Le groupe $\hat{M}$ est un sous--groupe de Levi semi--standard --- \cad contenant $\hat{T}$ --- de $\hat{G}$, qui est dŽfini sur $F$ et $\hat{\theta}$--stable. Fixons un cocaractre $x\in \check{X}(Z(\hat{G}')^{\Gamma_F,\circ})$ en position gŽnŽrale. Il dŽtermine un sous--groupe parabolique $\hat{P}$ de $\hat{G}$, engendrŽ par $\hat{M}$ et les sous--groupes radiciels de $\hat{G}$ associŽs aux racines $\alpha$ de $\hat{T}$ telles que $\langle \alpha, x\rangle >0$. Posons $\ES{P}= \hat{P}\ES{M}$ et $\wt{\ES{P}}= \hat{P}\wt{\ES{M}}$. D'aprs \cite[3.4]{Stab I}, le couple $(\wt{\ES{P}},\wt{\ES{M}})$ est une paire parabolique de ${^L\wt{G}}$: on a
$$
\ES{P}= \hat{P}\rtimes W_F,\quad \ES{M}= \hat{M}\rtimes W_F,
$$
$\wt{\ES{P}}$ est le normalisateur de $\ES{P}$ dans ${^L{\wt{G}}}$, et $\wt{\ES{M}}$ est le normalisateur de $(\ES{P},\ES{M})$ dans ${^L{\wt{G}}}$; de plus l'ensemble 
$\wt{\ES{P}}$ n'est pas vide, car il contient $\tilde{s}$. D'aprs \cite[3.1, (4)]{Stab I}, la paire $(\hat{P},\hat{M})$ est conjuguŽe dans $\hat{G}$ ˆ une paire parabolique standard de $\hat{G}$, dŽfinie sur $F$ et $\hat{\theta}$--stable. Quitte ˆ effectuer une telle conjugaison, on peut supposer que la paire $(\hat{P},\hat{M})$ est elle--mme standard, dŽfinie sur $F$ et $\hat{\theta}$--stable. Alors on a
$$
\wt{\ES{P}}=(\hat{P}\rtimes W_F){^L\theta},\quad \wt{\ES{M}}=(\hat{M}\rtimes W_F){^L\theta}.
$$
À $(\hat{P},\hat{M})$ correspond une paire parabolique standard $(P,M)$ de $G$ qui est définie sur $F$ et $\theta_{\ES{E}}$--stable (cf. la remarque de \ref{descente parabolique}). Soit $(\wt{P},\wt{M})$ la paire parabolique standard de $\wt{G}$ associée à $(P,M)$, \cad telle que $\wt{P}=N_{\smash{\wt{G}}}(P)$ et $\wt{M}=N_{\smash{\wt{G}}}(P,M)$. Elle 
est définie sur $F$, et l'on a
$$
\wt{P}=P\delta_\circ,\quad \wt{M}= M\delta_\circ.
$$
D'après \ref{descente parabolique}, l'espace tordu $(M,\wt{M})$ vérifie les trois hypothèses de \ref{objets}, et
$$
\wt{K}_M = \wt{K}\cap \wt{M}(F)\;(= K_M\delta_\circ = \delta_\circ K_M)
$$
est un sous--espace hyperspécial de $\wt{M}(F)$. 
Le groupe $\ES{M}$ s'identifie au $L$--groupe ${^LM}$, et $\wt{\ES{M}}$ s'identifie au $L$--espace tordu ${^L{\wt{M}}}= {^LM}{^L\theta}$. Notons $\boldsymbol{a}_M$ l'image de $\boldsymbol{a}$ par l'homomorphisme naturel
$$
{\rm H}^1(W_F,Z(\hat{G}))\rightarrow {\rm H}^1(W_F,Z(\hat{M})),
$$
\cad la classe de cohomologie (non ramifiée) correspondant au caractère $\omega$ 
restreint à $M(F)$. 

Puisque ${\rm Int}_{\tilde{s}}$ stabilise $(\hat{B},\hat{T})$, l'élément $\tilde{s}=s\hat{\theta}$ appartient à $\hat{T}\hat{\theta}\subset \hat{M}\hat{\theta}$. De plus on a les égalités
$$
\hat{G}'= \hat{M}_{\tilde{s}},\quad Z(\hat{G}')^{\Gamma_F,\circ}= [Z(\hat{M})^{\hat{\theta}}]^{\Gamma_F,\circ}.
$$
Comme $\ES{G}'\cap \hat{G}= \hat{M}_{\tilde{s}}$ est contenu dans $\hat{M}$, le groupe $\ES{G}'$ est contenu dans $\ES{M}={^LM}$. Par conséquent $\boldsymbol{G}'$ s'identifie à une donnée endoscopique elliptique (non ramifiée) pour $(\wt{M},\bs{a}_M)$. Notons $\ES{D}^{\wt{M}}(\bs{G}')$ 
le sous--ensemble de $\wt{G}'(F)\times \wt{M}(F)$ obtenu en rempla\c{c}ant $\wt{G}$ par $\wt{M}$ dans la définition de $\ES{D}(\bs{G}')$. D'après le lemme de \cite[6.2]{Stab I}, l'ensemble 
$\ES{D}^{\wt{M}}(\bs{G}')$ n'est pas vide. 
D'après \cite[6.2, 6.3]{Stab I}, à $(K_M,\wt{K}_M)$ 
sont associés un sous--espace hyperspécial $(K'_M,\wt{K}'_M)$ de $\wt{G}'(F)$, bien défini à conjugaison près par un élément de $G'_{\rm AD}(F)$, et un facteur de transfert normalisé
$$
\Delta^{\wt{M}}: \ES{D}^{\wt{M}}(\bs{G}')\rightarrow {\Bbb C}^\times.
$$
On peut prendre $(K'_M,\wt{K}'_M)=(K',\wt{K}')$. En rempla\c{c}ant $\wt{G}$ par $\wt{M}$ dans la relation (2) de \ref{données endoscopiques nr}, on voit que le caractère $\omega$ est trivial sur 
$Z(M;F)^{\theta_M}$. La paire $(\wt{M},\bs{a}_M)$ vérifie donc les trois hypothèses de \ref{données endoscopiques nr}. En particulier le $F$--Levi $\wt{M}$ de $\wt{G}$ appartient à $\ES{L}(\wt{T},\omega)$.

Soit $(\delta,\gamma)\in \wt{G}'(F)\times \wt{M}(F)$ tel que $\gamma$ est fortement régulier dans $\wt{M}$. Si la classe de $G'$--conjugaison de $\delta$ correspond à la classe de $M$--conjugaison de $\gamma$ (par la correspondance de \ref{classes de conjugaison} pour 
$(\wt{G}',\wt{M})$), alors elle correspond aussi à la classe de $G$--conjugaison de $\gamma$, et $\gamma$ est fortement régulier dans $\wt{G}$. On a donc l'inclusion $
\ES{D}^{\wt{M}}(\bs{G}')\subset \ES{D}(\bs{G}')$. 
Inversement, pour $(\delta,\gamma)\in \ES{D}(\bs{G}')$ tel que $\gamma\in \wt{M}(F)$, il existe un $n\in N_{G(F)}(\wt{M})$ tel que $(\delta, {\rm Int}_n(\gamma))\in \ES{D}^{\wt{M}}(\bs{G}')$. On vérifie (cf. \cite[6.3]{Stab I}) que le facteur de transfert normalisé $\Delta^{\wt{M}}: \ES{D}^{\wt{M}}(\bs{G}')\rightarrow {\Bbb C}^\times$ co\"{\i}ncide avec la restriction à $\ES{D}^{\wt{M}}(\bs{G}')$ du facteur de transfert normalisé $\Delta: \ES{D}(\bs{G}')\rightarrow {\Bbb C}^\times$. On en déduit que pour toute fonction $f\in C^\infty_{\rm c}(\wt{G}(F))$, si une fonction $f'\in C^\infty_{\rm c}(\wt{G}'(F))$ est un transfert de $f_{\wt{P},\omega}$, alors c'est aussi un transfert de $f$. Autrement dit l'homomorphisme de transfert
$$
\bs{I}(\wt{G}(F),\omega)\rightarrow \bs{SI}(\wt{G}'(F)),\, \bs{f}\mapsto \bs{f}^{\wt{G}'}
$$
co\"{\i}ncide avec la composée de l'homomorphisme terme constant
$$
\bs{I}(\wt{G}(F),\omega)\rightarrow \bs{I}(\wt{M}(F),\omega),\, \bs{f}\mapsto \bs{f}_{\wt{M},\omega}
$$
et de l'homomorphisme de transfert
$$
\bs{I}(\wt{M}(F),\omega)\rightarrow \bs{SI}(\wt{G}'(F)),\, \bs{h}\mapsto \bs{h}^{\wt{G}'}.
$$

\vskip1mm
On définit comme en \ref{descente parabolique} l'homomorphisme terme constant (suivant $(P,\omega)$)
$$
C^\infty_{\rm c}(G(F))\rightarrow C^\infty_{\rm c}(M(F)),\, f\mapsto f_{P,\omega}.
$$
Notons $\mathcal{H}_{K_M}$ l'algèbre des fonctions complexes sur $M(F)$ qui sont bi--invariantes par $K_M$ et à support compact, et $\hat{\mathcal{H}}_\phi^M$ l'algèbre des fonctions polynomiales sur $\hat{M}\rtimes \phi \subset {^LM}$ qui sont invariantes par conjugaison par $\hat{M}$. On a un diagramme commutatif
$$
\xymatrix{
\mathcal{H}_K \ar[r]^{\simeq} \ar[d]_a & \hat{\mathcal{H}}_\phi \ar[d]^{\hat{a}}\\
\mathcal{H}_{K_M} \ar[r]^{\simeq} & \hat{\mathcal{H}}_\phi^M
}
$$
où les flèches horizontales sont les isomorphismes de Satake, la flèche verticale de gauche est donnée par l'homomorphisme terme constant, et la flèche verticale de droite est donnée par la restriction à $\hat{M}\rtimes \phi$ des fonctions sur $\hat{G}\rtimes \phi$. On a aussi un diagramme commutatif
$$
\xymatrix{
\mathcal{H}_{K_M} \ar[r]^{\simeq} \ar[d]_{b_M} & \hat{\mathcal{H}}_\phi^M \ar[d]^{\hat{b}_M}\\
\mathcal{H}_{K'} \ar[r]^{\simeq} & \hat{\mathcal{H}}'_\phi
}
$$
obtenu en rempla\c{c}ant $G$ par $M$ dans le diagramme de \ref{lemme fondamental}. Puisque 
$\hat{b}= \hat{b}_M\circ \hat{a}$, on a $b= b_M\circ a$. 
Soit $f\in \mathcal{H}_K$, et soit $f'=b(f) = b_M(f_{P,\omega})$. On a
$$
(f*{\bf 1}_{\wt{K}})_{\wt{P},\omega}= f_{P,\omega}* {\bf 1}_{\wt{K}_M},\quad 
({\bf 1}_{\wt{K}} *\omega^{-1}f)_{\wt{P},\omega}= {\bf 1}_{\wt{K}_M}*\omega^{-1}f_{P,\omega}.
$$
On en déduit que si $f'*{\bf 1}_{\wt{K}'}={\bf 1}_{\wt{K}'}* f'$ est un transfert de 
$f_{P,\omega}*{\bf 1}_{\wt{K}_M}$ (resp. de ${\bf 1}_{\wt{K}_M}*\omega^{-1}f_{P,\omega}$), alors c'est un 
transfert de $f*{\bf 1}_{\wt{K}}$ (resp. de ${\bf 1}_{\wt{K}}* \omega^{-1}f$). Il suffit donc de prouver le théorème 2 dans le cas où $\bs{G}'$ est une donnée endoscopique non ramifiée {\it elliptique} pour 
$(\wt{G},\bs{a})$.  

\section{Version spectrale du théorème}

\subsection{Caractères tempérés de $(\wt{G}(F),\omega)$}\label{caractères tempérés} 
Toutes les représentations considérées ici sont supposées lisses et à valeurs dans le groupe des automorphismes d'un espace vectoriel sur ${\Bbb C}$. 
On appelle  représentation de $(\wt{G}(F),\omega)$ la donnée d'une représentation $(\pi,V)$ de $G(F)$ et d'une application
$$\tilde{\pi}:\wt{G}(F)\rightarrow {\rm Aut}_{\Bbb C}(V)
$$
vérifiant
$$
\tilde{\pi}(g\delta g')= \omega(g')\pi(g) \tilde{\pi}(\delta)\pi(g'),\quad g,\, g'\in G(F),\, \delta\in \wt{G}(F).
$$
Ainsi pour chaque $\delta\in \wt{G}(F)$, l'opérateur $\pi(\delta)$ entrelace $\omega\pi=\pi\otimes \omega$ et $\pi^\delta = \pi\circ {\rm Int}_\delta$. 

Soit $(\pi,\tilde{\pi})$ --- on dira aussi simplement $\tilde{\pi}$ --- une représentation de $(\wt{G}(F),\omega)$, d'espace $V$. Pour $z\in {\Bbb C}^\times$, on note $(\pi,z\tilde{\pi})$ la représentation de 
$(\wt{G}(F),\omega)$ donnée par
$$(z\tilde{\pi})(\gamma)= 
z \tilde{\pi}(\gamma),\quad \gamma\in \wt{G}(F).
$$
Pour $f\in C^\infty_{\rm c}(\wt{G}(F))$, on note $\tilde{\pi}(fd\gamma)\in {\rm End}_{\Bbb C}(V)$ l'opérateur défini par
$$
\tilde{\pi}(fd\gamma)(v)=\int_{\wt{G}(F)}f(\gamma)\tilde{\pi}(\gamma)d\gamma;
$$
où $d\gamma$ est la mesure positive $G(F)$--invariante (à gauche et à droite) sur $\wt{G}(F)$ normalisée par $\wt{K}$, \cad telle que ${\rm vol}(\wt{K},d\gamma)=1$. Si la représentation $\pi$ de $G(F)$ est admissible, alors cet opérateur est de rang fini, et l'on peut définir sa trace $\Theta_{\tilde{\pi}}= {\rm tr}\,\tilde{\pi}$, qui est une distribution sur $\wt{G}(F)$:
$$
\Theta_{\tilde{\pi}}(f)= {\rm tr}(\tilde{\pi}(fd\gamma)),\quad f\in C^\infty_{\rm c}(\wt{G}(F)).
$$
Pour $z\in {\Bbb C}^\times$, on a
$$
\Theta_{z\tilde{\pi}}=z\Theta_{\tilde{\pi}}.
$$
Si la représentation $\pi$ est de longueur finie (\cad admissible et de type fini), alors on sait \cite{Le} que la distribution $\Theta_{\tilde{\pi}}$ est donnée par une fonction localement constante, disons $\theta_{\tilde{\pi}}$, sur $\wt{G}_{\rm reg}(F)=\{\gamma\in \wt{G}(F): D_{\wt{G}}(\gamma)\neq 0\}$. Puisque $F$ est de caractéristique nulle, on sait aussi \cite{C} que cette fonction $\theta_{\tilde{\pi}}$ est localement intégrable sur $\wt{G}(F)$: pour toute fonction $f\in C^\infty_{\rm c}(\wt{G}(F))$, on a l'égalité
$$
\Theta_{\tilde{\pi}}(f)= \int_{\wt{G}(F)}f(\gamma)\theta_{\tilde{\pi}}(\gamma)d\gamma;
$$
où l'intégrale est absolument convergente. Notons que la distribution $\Theta_{\tilde{\pi}}$ dépend du choix de la mesure $d\gamma$, mais que la fonction $\theta_{\tilde{\pi}}$ n'en dépend pas.

Une représentation $(\pi,\tilde{\pi})$ de $\wt{G}(F)$ est dite unitaire s'il existe un produit scalaire hermitien défini positif sur l'espace de $\pi$ tel que pour tout $\gamma\in \wt{G}(F)$, l'opérateur $\tilde{\pi}(\gamma)$ soit unitaire. Si $(\pi,\tilde{\pi})$ est unitaire, alors pour tout $z\in {\Bbb U}$, la représentation $(\pi,z\tilde{\pi})$ est encore unitaire. Si $\pi$ est irréductible et tempérée, on peut toujours trouver un $z\in{\Bbb C}^\times$ tel que $(\pi,z\tilde{\pi})$ soit unitaire --- on dit alors que $(\pi,z\tilde{\pi})$ est $G(F)$--irréductible et tempérée.

Pour un caractère $\lambda$ de $T(F)$, \cad un homomorphisme continu $\lambda:T(F)\rightarrow {\Bbb C}^\times$, on note $i_B^G(\lambda)$ la série principale associée à $\lambda$, \cad l'induite parabolique normalisée de $\lambda$ à $G(F)$ suivant $B(F)$. Si le caractère $\lambda$ est unitaire, alors $i_B^G(\lambda)$ est une représentation semisimple et tempérée de $G(F)$. 

Soit $K_1$ un sous--groupe hyperspécial de $G(F)$. Une représentation irréductible $(\pi,V)$ de $G(F)$ est dite {\it $K_1$--sphérique} si le sous--espace $V^{K_1}$ de $V$ formé des vecteurs $K_1$--invariants, est non nul. Soit $\pi$ une représentation irréductible $K_1$--sphérique de $G(F)$. Alors $\pi$ est isomorphe à un sous--quotient d'une série principale $i_B^G(\lambda)$ pour un caractère non ramifié $\lambda$ de $T(F)$ uniquement déterminé à conjugaison près par $N_G(T)(F)$, et si de plus $\pi$ est tempérée, alors ce caractère $\lambda$ est unitaire.

\begin{marema1}
{\rm Soit $(\pi,\tilde{\pi})$ une représentation de $(\wt{G}(F),\omega)$, et soit 
$K_1$ un sous--groupe hyperspécial de $G(F)$. Puisque le caractère $\omega$ de $G(F)$ est trivial sur $K_1$, si $\pi$ est irréductible et $K_1$--sphérique, alors pour $\delta_1\in N_{\smash{\wt{G}(F)}}(K_1)$, l'automorphisme $\tilde{\pi}(\delta_1)$ de l'espace $V$ de $\pi$ induit par restriction un automorphisme de $V^{K_1}$.
}
\end{marema1}

\begin{marema2}
{\rm 
Soit $\lambda$ un caractère unitaire de $T(F)$, et soit $\pi$ une sous--représentation irréductible de $i_B^G(\lambda)$. Si $\pi$ se prolonge en une représentation $(\pi,\tilde{\pi})$ de $(\wt{G}(F),\omega)$,  alors l'ensemble
$$
N_{\wt{G}(F),\omega}(\lambda)=\{\delta\in \wt{G}(F): {\rm Int}_\delta(T)=T,\, \lambda^\delta = \omega \lambda\}
$$
n'est pas vide, et c'est un espace principal homogène pour la multiplication à gauche (ou ˆ droite) de
$$
N_{G(F)}(\lambda)=\{g\in G(F): {\rm Int}_g(T)=T,\, \lambda^g = \lambda\}.
$$
Notons qu'il n'est en général pas possible de prolonger $\lambda$ en un caractère $\tilde{\lambda}$ de $\wt{T}(F)$, où (rappel) $\wt{T}$ est le normalisateur de $(B,T)$ dans $\wt{G}$.
}
\end{marema2}

On note:
\begin{itemize}
\item $\bs{D}(\wt{G}(F),\omega)$ l'espace des distributions sur $\wt{G}(F)$ qui sont des combinaisons linéaires (finies, à coefficients complexes) de traces $\Theta_{\tilde{\pi}}$ où $(\pi,\tilde{\pi})$ est une représentation de $(\wt{G}(F),\omega)$ telle que $\pi$ est une représentation irréductible tempérée de $G(F)$;
\item $\bs{D}^{\rm nr}(\wt{G}(F),\omega)$ l'espace des distributions sur $\wt{G}(F)$ qui sont des combinaisons linéaires de traces $\Theta_{\tilde{\pi}}$ où $(\pi,\tilde{\pi})$ est une représentation de $(\wt{G}(F),\omega)$ telle que $\pi$ est une sous--représentation irréductible d'une série principale $i_B^G(\lambda)$ pour un caractère unitaire non ramifié $\lambda$ de $T(F)$;
\item $\bs{D}^K(\wt{G}(F),\omega)$ l'espace des distributions sur $\wt{G}(F)$ qui sont des combinaisons linéaires de traces $\Theta_{\tilde{\pi}}$ où $(\pi,\tilde{\pi})$ est une représentation de $(\wt{G}(F),\omega)$ telle que $\pi$ est une repré\-sentation irréductible tempérée $K$--sphérique de $G(F)$.
\end{itemize}
On a donc les inclusions
$$
\bs{D}^K(\wt{G}(F),\omega)\subset \bs{D}^{\rm nr}(\wt{G}(F),\omega)\subset \bs{D}(\wt{G}(F),\omega).
$$
On note aussi:
\begin{itemize}
\item $\bs{D}^{\rm ram}(\wt{G}(F),\omega)$ l'espace des distributions sur $\wt{G}(F)$ qui sont des combinaisons linéaires de traces $\Theta_{\tilde{\pi}}$ où $(\pi,\tilde{\pi})$ est une représentation de $(\wt{G}(F),\omega)$ telle que $\pi$ est une représentation irréductible tempérée de $G(F)$ qui n'est isomorphe à aucune sous--représentation irréductible d'une série principale $i_B^G(\lambda)$ pour un caractère unitaire non ramifié $\lambda$ de $T(F)$.
\end{itemize}
On a la décomposition
$$
\bs{D}(\wt{G}(F),\omega)= \bs{D}^{\rm nr}(\wt{G}(F),\omega)\oplus \bs{D}^{\rm ram}(\wt{G}(F),\omega).\leqno{(1)}
$$
Notons que $\bs{D}(\wt{G}(F),\omega)$ est aussi l'espace des distribution sur $\wt{G}(F)$ qui sont des combinaisons linéaires de traces $\Theta_{\tilde{\pi}}$ où $\tilde{\pi}$ est une représentation $G(F)$--irréductible tempérée de $(\wt{G}(F),\omega)$. 
Pour $*= {\rm ram},\,{\rm nr},\, K$, On note
$$p^*: \bs{D}(\wt{G}(F),\omega)\rightarrow  \bs{D}^*(\wt{G}(F),\omega)
$$
la projection naturelle (application linéaire) qui envoie $\Theta_{\tilde{\pi}}$ sur $\Theta_{\tilde{\pi}}$ si $\tilde{\pi}$ est une représentation $G(F)$--irréductible tempérée de $(\wt{G}(F),\omega)$ telle que 
$\Theta_{\tilde{\pi}}\in \bs{D}^*(\wt{G}(F),\omega)$, et sur $0$ sinon.

\subsection{Paramètres de Langlands}\label{paramètres}Soit $\lambda$ un caractère non ramifié de $T(F)$, \cad un homomorphisme de $T(F)/T(F)_1$ dans ${\Bbb C}^\times$, où
$$
T(F)_1=T(F)\cap T(F^{\rm nr})_1
$$
est le sous--groupe compact maximal de $T(F)$. 
Pour $t\in T(F^{\rm nr})$, notons $\nu(t)$ l'élément de ${\rm Hom}(X(T),{\Bbb Z})$ donné par
$$
\nu(t)(\chi)= v_F(\chi(t)).
$$
L'application $t\mapsto \nu(t)$ ainsi définie induit un isomorphisme de $T(F)/T(F)_1$ sur
$$
{\rm Hom}(X(T),{\Bbb Z})^\phi \simeq \check{X}(T)^\phi \simeq X(\hat{T})^\phi.
$$
On en déduit isomorphisme
$$
{\rm Hom}(T(F)/T(F)_1,{\Bbb C}^\times)\buildrel \simeq\over{\longrightarrow} {\rm Hom}(X(\hat{T})^\phi,{\Bbb C}^\times)= \hat{T}/(1-\phi)(\hat{T}),\leqno{(1)}
$$
qui n'est autre que la restriction de 
la correspondance de Langlands locale
$${\rm Hom}(T(F),{\Bbb C}^\times)\simeq {\rm H}^1(W_F,\hat{T})
$$ pour le tore $T$ (ici ${\rm Hom}$ désigne le groupe des homomorphismes continus, et ${\rm H}^1$ le groupe des classes de $1$--cocycles continus). \`A un caractère non ramifié $\lambda$ de $T(F)$ est associé via l'isomorphisme (1) un $L$--homomorphisme (continu) $\varphi_\lambda^T:W_F \rightarrow {^LT}$, bien défini à conjugaison près par $\hat{T}$, et un $L$--homomorphisme
$$
\varphi_\lambda: W_F \xrightarrow{\varphi^T} {^LT}\hookrightarrow {^LG}.
$$
Par construction, le paramètre $\varphi_\lambda$ est non ramifié (\cad que son image contient $I_F$), et il est tempéré (\cad que son image est bornée) si et seulement si le caractère $\lambda$ est unitaire. D'autre part, pour un caractère unitaire non ramifié $\lambda$ de $T(F)$, la série principale $i_B^G(\lambda)$ a une unique sous--représentation irréductible $K$--sphérique. On en déduit une bijection, disons $\varphi\mapsto \pi_\varphi$, entre:
\begin{itemize}
\item les classes de $\hat{G}$--conjugaison de paramètres de Langlands $\varphi: W_F\rightarrow {^LG}$ qui sont tempérés et non ramifiés;
\item les classes d'isomorphisme de représentations irréductibles de $G(F)$ qui sont tempérées et $K$--sphériques.
\end{itemize}
Via l'isomorphisme de Satake ${\mathcal H}_K\rightarrow \hat{\mathcal H}_\phi,\, f\mapsto \hat{f}$, cette bijection est donnée par
$$
\Theta_{\pi_\varphi}(f)= \hat{f}(\varphi(\phi)),\quad f\in {\mathcal H}_K;\leqno{(2)}
$$
où $\Theta_{\pi_\varphi}$ est la distribution ${\rm tr}\,\pi_\varphi$ sur $G(F)$ définie par la mesure de Haar $dg$ (normalisée par ${\rm vol}(K,dg)=1$).

\vskip1mm
Soit $\pi$ une représentation irréductible de $G(F)$, tempérée et $K$--sphérique, de paramètre 
$\varphi:W_F\rightarrow {^LG}$. Supposons que $\pi$ se prolonge en une représentation $(\pi,\tilde{\pi})$ de $(\wt{G}(F),\omega)$. \'Ecrivons $\wt{K}=K\delta =\delta K$ pour un $\delta\in N_{\smash{\wt{G}(F)}}(K)$ --- par exemple $\delta=\delta_\circ$. L'opérateur $\tilde{\pi}(\delta)$ entrelace $\omega\pi$ et $\pi^\delta$. Les représentations $\omega\pi$ et $\pi^\delta$ de $\wt{G}(F)$ sont encore irréductibles, tempérées et $K$--sphériques. Or $\omega\pi$ a pour paramètre $a\cdot \varphi$ où $a:W_F\rightarrow Z(\hat{G})$ est un cocycle dans la classe de cohomologie $\bs{a}$, et $\pi^\delta$ a pour paramètre ${^L\theta}\circ\varphi$. Posant
$$
\wt{S}_{\varphi,a}= \{\tilde{g}\in \hat{G}\hat{\theta}: {\rm Int}_{\tilde{g}}\circ \varphi = a\cdot \varphi\},
$$
on obtient une bijection entre:
\begin{itemize}
\item les classes de $\hat{G}$--conjugaison de paramètres de Langlands $\varphi: W_F\rightarrow {^LG}$ qui sont tempérés, non ramifiés, et tels que $\wt{S}_{\varphi,a}$ n'est pas vide;
\item les classes d'isomorphisme de représentations irréductibles de $G(F)$ qui sont tempérées, $K$--sphériques, et se prolongent en une représentation de $(\wt{G}(F),\omega)$.
\end{itemize}

\begin{marema1}
{\rm 
Pour une représentation irréductible tempérée $K$--sphérique $(\pi,V)$ de $G(F)$, on sait que l'espace $V^K$ est de dimension $1$. Si de plus $\pi$ se prolonge en une représentation $\tilde{\pi}$ de $(\wt{G}(F),\omega)$, on peut normaliser $\tilde{\pi}$ en imposant que la restriction de 
$\tilde{\pi}(\delta)$ à $V^K$ soit l'identité, \cad que $\Theta_{\tilde{\pi}}({\bf 1}_{\wt{K}})=1$. 
Alors pour toute classe de $\hat{G}$--conjugaison de paramètres tempérés non ramifiés 
$\varphi:W_F\rightarrow {^LG}$ tels que $\wt{S}_{\varphi,a}$ n'est pas vide, notant $\tilde{\pi}_\varphi$ le prolongement normalisé de $\pi_\varphi$ à 
$(\wt{G}(F),\omega)$, on a l'égalité
$$
\Theta_{\tilde{\pi}_\varphi}(f*{\bf 1}_{\wt{K}})= \hat{f}(\varphi(\phi)),\quad f\in {\mathcal H}_K.
$$
}
\end{marema1}

\begin{marema2}
{\rm 
Supposons que $\wt{G}$ est à torsion intérieure, \cad tel que pour tout (i.e. pour un) $\delta\in \wt{G}$, l'automorphisme ${\rm Int}_\delta$ de $G$ est intérieur. Supposons aussi que $\omega=1$. L'ensemble $Z(\wt{G},\ES{E})$ attaché à la paire de Borel épinglée $\ES{E}$ de $G$ est en fait indépendant de $\ES{E}$ (c'est l'ensemble des $\delta\in \wt{G}$ tels que ${\rm Int}_\delta$ est  l'identité). En particulier $\theta_{\ES{E}}=1$ et $\hat{\theta}=1$. \'Ecrivons $\wt{K}= K\delta_\circ =\delta_\circ K$, $\delta_\circ \in \wt{T}(F)$ --- cf. \ref{données endoscopiques nr}. Puisque la restriction de ${\rm Int}_{\delta_\circ}$ à $T$ co\"{\i}ncide avec celle de $\theta_{\ES{E}}=1$, c'est l'identité. Ainsi tout caractère unitaire $\lambda$ de $T(F)$ se prolonge en un caractère $\wt{\lambda}$ de $\wt{T}(F)$, et toute représentation irréductible tempérée $K$--sphérique de $G(F)$ se prolonge à $\wt{G}(F)$. Cela correspond au fait que pour un paramètre tempéré non ramifié $\varphi:W_F\rightarrow {^LG}$, l'ensemble
$$
S_\varphi=\{g\in \hat{G}: {\rm Int}_g\circ \varphi =\varphi\}
$$
n'est pas vide (d'ailleurs c'est un groupe!). 
}
\end{marema2}

\subsection{Transfert spectral}\label{transfert spectral}
Soit $\bs{G}'=(G',\ES{G}',\tilde{s})$ une donnée endoscopique elliptique non ramifiée pour 
$(\wt{G},\bs{a})$, et soit $(K',\wt{K}')$ un sous--espace hyperspécial de $\wt{G}'(F)$ associé à 
$(K,\wt{K})$. On définit comme en \ref{paramètres} (pour $\omega=1$) les espaces
$$
\bs{D}^{K'}(\wt{G}'(F))\subset \bs{D}^{\rm nr}(\wt{G}'(F))\subset \bs{D}(\wt{G}'(F))
$$
et pour $*={\rm ram},\, {\rm nr},\, K'$, on note
$$
p'^*: \bs{D}(\wt{G}'(F))\rightarrow \bs{D}^*(\wt{G}'(F))
$$
la projection naturelle. 
On fixe comme en \ref{données endoscopiques nr} un isomorphisme $\ES{G}'\simeq {^LG'}$. Soit $\varphi':W_F\rightarrow {^LG'}$ un $L$--homomorphisme tempéré et non ramifié. On lui associe un $L$--homomorphisme
$$
\varphi: W_F\buildrel \varphi'\over{\longrightarrow} {^LG'}
\simeq\ES{G}'\hookrightarrow {^LG}.
$$
Soit $\pi'$ une représentation irréductible tempérée $K'$--sphérique de $G'(F)$ associée à $\varphi'$, que l'on prolonge en une représentation $(\pi',\tilde{\pi}')$ de $\wt{G}'(F)$. C'est possible d'après la remarque 2 de \ref{paramètres}, puisque $\wt{G}'$ est à torsion intérieure. Rappelons que pour $(g',w)\in \ES{G}'$, on a
$${\rm Int}_{\tilde{s}}(g',w)= (g'a(w),w).
$$
Par conséquent l'ensemble $\wt{S}_{\varphi,a}$ n'est pas vide (il contient $\tilde{s}$). Soit $\pi$ une représentation irréductible tempérée $K$--sphérique associée à $\varphi$, que l'on prolonge en une représentation $(\pi,\tilde{\pi})$ de 
$(\wt{G}(F),\omega)$. On normalise ces prolongements de telle manière que
$$
\Theta_{\wt{\pi}}({\bf 1}_{\wt{K}})=\Theta_{\wt{\pi}'}({\bf 1}_{\wt{K}'}).
$$
D'après les remarques 1 et 2 de \ref{paramètres}, on a l'égalité
$$
\Theta_{\wt{\pi}}(f*{\bf 1}_{\wt{K}})= \Theta_{\wt{\pi}'}(b(f)*{\bf 1}_{{\wt K}'}),\quad f\in {\mathcal H}_K.
\leqno{(1)}
$$
L'égalité (1) définit par linéarité une application de transfert spectral (sphérique)
$$
\bs{\rm t}=\bs{\rm t}_{\bs{G}'}^{K,K'}: \bs{D}^{K'}(\wt{G}'(F))\rightarrow \bs{D}^{K}(\wt{G}(F),\omega).
$$

\vskip1mm
D'autre part, notons $\bs{SD}(\wt{G}'(F))$ le sous--espace de $\bs{D}(\wt{G}(F))$ formé des distributions qui sont stables. D'après \cite{Moe}, qui généralise au cas tordu le résultat d'Arthur \cite{A}, l'application de transfert géométrique
$$
\bs{I}(\wt{G}(F),\omega)\rightarrow \bs{SI}(\wt{G}'(F))
$$
définit dualement une application de transfert spectral (usuel)
$$
\bs{\rm T}=\bs{\rm T}_{\bs{G}'}: \bs{SD}(\wt{G}'(F))\rightarrow \bs{D}(\wt{G}(F),\omega).
$$
Pour toute paire $(f,f')\in C^\infty_{\rm c}(\wt{G}(F))\times C^\infty_{\rm c}(\wt{G}'(F))$ tel que $f'$ soit un transfert de $f$ (\ref{transfert nr}), on a l'égalité
$$
\bs{\rm T}(\Theta')(f) = \Theta'(f'),\quad \Theta'\in \bs{SD}(\wt{G}'(F)).\leqno{(2)}
$$

\begin{marema}
{\rm 
Ces deux homomorphismes de transfert spectral
$$\bs{\rm t}: \bs{D}^{K'}(\wt{G}'(F))\rightarrow \bs{D}^K(\wt{G}(F),\omega),\quad 
\bs{\rm T}: \bs{SD}(\wt{G}'(F))\rightarrow \bs{D}(\wt{G}(F),\omega),
$$
dépendent du choix du sous--espace hyperspécial $(K,\wt{K})$ de $\wt{G}(F)$, qui détermine 
la classe de conjugaison dans $G'_{\rm AD}(F)$ du sous--espace hyperspécial $(K',\wt{K}')$ de 
$\wt{G}'(F)$, et le facteur de transfert normalisé $\Delta: \ES{D}(\bs{G}')\rightarrow {\Bbb C}^\times$. 
}
\end{marema}

Le théorème 2 de \ref{lemme fondamental} pour la donnée $\bs{G}'=(G',\ES{G}',\tilde{s})$ équivaut au théorème suivant.

\begin{montheo}
Le diagramme suivant
$$
\xymatrix{
\bs{SD}(\wt{G}'(F)) \ar@{^{(}->}[r] \ar[d]_{\bs{\rm T}} & \bs{D}(\wt{G}'(F)) \ar[r]^{p'^{K'}} & \bs{D}^{K'}(\wt{G}'(F))\ar[d]^{\bs{\rm t}}\\
\bs{D}(\wt{G}(F),\omega) \ar[rr]^{p^K} && \bs{D}^K(\wt{G}(F),\omega)
}
$$
est commutatif.
\end{montheo}

\subsection{Transfert spectral elliptique}\label{transfert spectral elliptique}Reprenons les notations et les définitions de \ref{descente parabolique}. 
Soit $\wt{M}\in \ES{L}(\wt{T},\omega)$. Choisissons une paire parabolique $(P,M)$ de $G$, semi--standard et définie sur $F$, telle $\wt{M}=\wt{M}_P$. Pour une représentation 
$(\sigma,\tilde{\sigma})$ de $(\wt{M}(F),\omega)$, notons $i_{\wt{P}}^{\wt{G}}(\sigma,\wt{\sigma})$, ou simplement $i_{\wt{P}}^{\wt{G}}(\tilde{\sigma})$, l'induite parabolique normalisée de $(\sigma,\tilde{\sigma})$ à $\wt{G}(F)$ suivant $\wt{P}(F)$. C'est une représentation de $(\wt{G}(F),\omega)$, et la représentation de $G(F)$ sous--jacente est l'induite parabolique normalisée $i_P^G(\sigma)$ de $\sigma$ à $G(F)$ suivant $P(F)$. Soit $(Q,M)$ une autre paire parabolique semi--standard de $G$,  définie sur $F$ et telle que $\wt{M}_Q=\wt{M}$. On définit de la même maniere l'induite parabolique normalisée $i_{\wt{Q}}^{\wt{G}}(\sigma,\tilde{\sigma})$ de $(\sigma,\tilde{\sigma})$ à $\wt{G}(F)$ suivant $\wt{Q}(F)$. Si de plus la représentation $\sigma$ de $G(F)$ est admissible, alors on a l'égalité des traces
$$
\Theta_{\smash{i_{\wt{P}}^{\wt{G}}(\tilde{\sigma})}}=\Theta_{\smash{i_{\wt{Q}}^{\wt{G}}(\tilde{\sigma})}}.
$$
D'où une application linéaire
$$
\bs{i}_{\wt{M}}^{\wt{G}}: \bs{D}(\wt{M}(F),\omega)\rightarrow \bs{D}(\wt{G}(F),\omega),
$$
donnée par
$$
\bs{i}_{\wt{M}}^{\wt{G}}(\Theta_{\tilde{\sigma}})= \Theta_{\smash{i_{\wt{P}}^{\wt{G}}(\tilde{\sigma}})}
$$
pour toute représentation $(\sigma,\tilde{\sigma})$ de $(\wt{M}(F),\omega)$ telle que $\sigma$ est une représentation irréductible tempérée de $M(F)$, et toute paire parabolique semi--standard $(P,M)$ de $G$, définie sur $F$ et telle que $\wt{M}_P=\wt{M}$. 

Dans \cite[2.12]{W} est défini un sous--espace $\bs{D}_{\rm ell}(\wt{G},\omega)$ de $\bs{D}(\wt{G}(F),\omega)$, identifié dans \cite[1.7]{Moe} à l'espace des distributions sur $\wt{G}(F)$ qui sont des combinaisons linéaires de traces $\Theta_{\tilde{\pi}}$ où $\tilde{\pi}$ est une représentation 
$G(F)$--irréductible {\it supertempérée} de $(\wt{G}(F),\omega)$. Pour $\wt{M}\in \ES{L}(\wt{T},\omega)$, on a aussi un sous--espace $\bs{D}_{\rm ell}(\wt{M}(F),\omega)$ de $\bs{D}(\wt{M}(F),\omega)$. Le groupe
$$
N_{G(F)}(\wt{M})=\{g\in G(F): {\rm Int}_g(\wt{M})=\wt{M}\}
$$
opère naturellement sur $\bs{D}(\wt{M}(F),\omega)$: pour $n\in N_{G(F)}(\wt{M})$ et $\Theta\in \bs{D}(\wt{M}(F),\omega)$, on note ${^n\Theta}$ la distribution sur $\wt{M}(F)$ donnée par
$$
{^n\Theta}(f)= \omega^{-1}(n)\Theta(f^n),\quad f^n= f\circ {\rm Int}_n,\quad f\in C^\infty_{\rm c}(\wt{M}(F),\omega).
$$
Cette action se factorise en une action de
$$
W^G(\wt{M})= N_{G(F)}(\wt{M})/M(F)
$$
sur $\bs{D}(\wt{M}(F),\omega)$, qui stabilise $\bs{D}_{\rm ell}(\wt{M}(F),\omega)$. On note 
$\bs{D}_{\rm ell}(\wt{M}(F),\omega)^{W^G(\wt{M})}$ le sous--espace de $\bs{D}_{\rm ell}(\wt{M}(F),\omega)$ formé des invariants sous $W^G(\wt{M})$. Posons $\wt{W}=W^G(\wt{T})$. Ce groupe opère naturellement, \cad par conjugaison, sur l'ensemble $\ES{L}(\wt{T},\omega)$. D'après la proposition de \cite[2.12]{W}, on a la décomposition en somme directe
$$
\bs{D}(\wt{G}(F),\omega)= \oplus_{\wt{M}\in \ES{L}(\wt{T},\omega)/\wt{W}}\;
\bs{i}_{\wt{M}}^{\wt{G}}(\bs{D}_{\rm ell}(\wt{M}(F),\omega)^{W^G(\wt{M})}),\leqno{(1)}
$$
où $ \ES{L}(\wt{T},\omega)/\wt{W}$ désigne un système de représentants des 
$\wt{W}$--orbites dans $\ES{L}(\wt{T},\omega)$.

\begin{marema1} 
{\rm Supposons que $\wt{G}$ est à torsion intérieure, et que $\omega=1$. Alors à tout $F$--Levi 
$M$ de $G$ est associé un $F$--Levi $\wt{M}$ de $\wt{G}$: on choisit un sous--groupe parabolique $P$ de $G$, défini sur $F$ et de composante de Levi $M$, et l'on pose 
$\wt{M}=N_{\smash{\wt{G}}}(P,M)$. Cet espace $\wt{M}$ dépend seulement de $M$, et pas 
du choix de $P$. 
Ainsi l'ensemble $\ES{L}(\wt{T})$ des $F$--Levi semi--standards 
de $\wt{G}$ s'identifie à l'ensemble $\ES{L}(T)$ des $F$--Levi semi--standards de $G$, et $\wt{W}$  est le 
groupe de Weyl $W$ de $G$. Pour $M\in \ES{L}(T)$, on pose
$$
\bs{SD}_{\rm ell}(\wt{M}(F))= \bs{SD}(\wt{M}(F))\cap \bs{D}_{\rm ell}(\wt{M}(F)).
$$
Le groupe $W^G(\wt{M})$ stabilise $\bs{SD}_{\rm ell}(\wt{M}(F))$. On note 
$\bs{SD}_{\rm ell}(\wt{M}(F))^{W^G(\wt{M})}$ le sous--espace de $\bs{SD}_{\rm ell}(\wt{M}(F))$ formé des invariants sous $W^G(\wt{M})$. Alors d'après le corollaire de \cite[2.1]{Moe}, la décomposition (1) entra\^{\i}ne la décomposition
$$
\bs{SD}(\wt{G}(F))= \oplus_{M\in \ES{L}(T)/W} \;
\bs{i}_{\wt{M}}^{\wt{G}}(\bs{SD}_{\rm ell}(\wt{M}(F))^{W^G(\wt{M})}).\leqno{(2)}
$$
}
\end{marema1}

Soit $\bs{G}'=(G',\ES{G}',\tilde{s})$ une donnée endoscopique elliptique non ramifiée pour $(\wt{G},\bs{a})$. On pose $\hat{G}'=\hat{G}_{\tilde{s}}$. 
On fixe des paires de Borel épinglées compatibles 
$\hat{\ES{E}}=(\hat{B},\hat{T},\{\hat{E}_\alpha\}_{\alpha\in \hat{\Delta}})$ de $\hat{G}$ 
et $\hat{\ES{E}}'=
(\hat{B}',\hat{T}',\{\hat{E}'_\alpha\}_{\alpha\in \hat{\Delta}'})$ de $\hat{G}'$, et l'on normalise les actions galoisiennes $\sigma\mapsto \sigma_G$ sur $\hat{G}$ et $\sigma\mapsto \sigma_{G'}$ sur $\hat{G}'$ de manière à ce qu'elles préservent ces paires --- cf. \ref{données endoscopiques}. On a aussi un automorphisme $\hat{\theta}$ de $\hat{G}$ qui préserve $\hat{\ES{E}}$ et commute à l'action galoisienne $\sigma \mapsto \sigma_G$. Rappelons que 
l'élément $\tilde{s}$ appartient à $\hat{T}\hat{\theta}$. Soit $(B',T')$ une paire de Borel de $G'$ définie sur $F$, et soit $\wt{T}'$ le normalisateur de $(B',T')$ dans $\wt{G}'$. Puisque $\wt{G}'$ est à torsion intérieure, l'espace $\wt{T}'$ ne dépend pas du sous--groupe de Borel $B'$ de $G'$, défini sur $F$ et de composante de Levi $T'$. En fait on a
$$\wt{T}'= T'\times_{\ES{Z}(G)}\ES{Z}(\wt{G},\ES{E}))\;(=T'\times_{Z(G')}\ES{Z}(\wt{G}',\ES{E})).$$
D'après la remarque 1, on a décomposition en somme directe
$$
\bs{SD}(\wt{G}'(F))= \oplus_{M'\in \ES{L}(T')/W'} 
\bs{i}_{\wt{M}'}^{\wt{G}'}(\bs{SD}_{\rm ell}(\wt{M}'(F))^{W^{G'}(\wt{M}')}),\leqno{(2)'}
$$
où $\wt{M}'$ est le $F$--Levi semi--standard de $\wt{G}'$
associé à $M'$ (il est donné par $\wt{M}'= N_{\smash{\wt{G}'}}(P'\!,M')$ pour un sous--groupe parabolique 
$P'$ de $G'$, défini sur $F$ et de composante de Levi $M'$). 
D'après le théorème de \cite[3]{Moe}, le transfert spectral (usuel) $\bs{\rm T}_{\bs{G}'}(\Theta')$ 
d'une distribution $\Theta' \in \bs{SD}_{\rm ell}(\wt{G}'(F))$ appartient à $\bs{D}_{\rm ell}(\wt{G}(F),\omega)$. D'où une application de transfert spectral elliptique
$$
\bs{\rm T}_{\bs{G}'\!,{\rm ell}}: \bs{SD}_{\rm ell}(\wt{G}'(F))\rightarrow \bs{D}_{\rm ell}(\wt{G}(F),\omega).
$$
D'ailleurs dans \cite{A} puis \cite{Moe}, c'est cette application de transfert spectral elliptique qui est établie en premier, et ensuite étendue à $\bs{SD}(\wt{G}'(F))$ grâce aux décompositions (1) et $(2)'$. 

Soit $M'\in \ES{L}(T')$. Quitte à remplacer $M'$ par un conjugué par un élément de $W'$, on peut supposer que c'est un $F$--Levi standard de $G'$, \cad qu'il existe un sous--groupe parabolique $P'$ de $G'$, défini sur $F$ et de composante de Levi $M'$, qui contient $B'$ (on a donc $P'=M'U_{B'}$). 
Soit $\wt{M}'= N_{\smash{\wt{G}'}}(P'\!,M')$ le $F$--Levi de $\wt{G}'$ associé à $M'$. 
Soit $(\hat{P}'\!,\hat{M}')$ la paire parabolique standard --- \cad contenant $(\hat{B}',\hat{T}')$ --- de 
$\hat{G}'$ associée à $(P'\!,M')$. Elle est définie sur $F$. Rempla\c{c}ons $\hat{G}'$ par $\hat{M}'$ dans les constructions de \ref{réduction aux données elliptiques}. Notons $\hat{M}$, $\ES{M}$, $\wt{\ES{M}}$ les commutants de $Z(\hat{M}')^{\Gamma_F,\circ}$ dans $\hat{G}$, ${^LG}$, ${^L{\wt{G}}}$. Le groupe $\hat{M}$ est un sous--groupe de Levi semi--standard de $\hat{G}$, dŽfini sur $F$ et $\hat{\theta}$--stable. Fixons un cocaractre $x\in \check{X}(Z(\hat{M}')^{\Gamma_F,\circ})$ en position gŽnŽrale. Il dŽtermine un sous--groupe parabolique $\hat{P}$ de $\hat{G}$, engendrŽ par $\hat{M}$ et les sous--groupes radiciels de $\hat{G}$ associŽs aux racines $\alpha$ de $\hat{T}$ telles que $\langle \alpha, x\rangle >0$. Posons $\ES{P}= \hat{P}\ES{M}$ et $\wt{\ES{P}}= \hat{P}\wt{\ES{M}}$. D'aprs \cite[3.4]{Stab I}, le couple $(\wt{\ES{P}},\wt{\ES{M}})$ est une paire parabolique de ${^L\wt{G}}$: on a
$$
\ES{P}= \hat{P}\rtimes W_F,\quad \ES{M}= \hat{M}\rtimes W_F,
$$
$\wt{\ES{P}}$ est le normalisateur de $\ES{P}$ dans ${^L{\wt{G}}}$, $\wt{\ES{M}}$ est le normalisateur de $(\ES{P},\ES{M})$ dans ${^L{\wt{G}}}$, et 
$\wt{\ES{P}}$ n'est pas vide (il contient $\tilde{s}$). D'aprs \cite[3.1, (4)]{Stab I}, la paire $(\hat{P},\hat{M})$ est conjuguŽe dans $\hat{G}$ ˆ une paire parabolique standard de $\hat{G}$, dŽfinie sur $F$ et $\hat{\theta}$--stable. Quitte ˆ effectuer une telle conjugaison, on peut supposer que la paire $(\hat{P},\hat{M})$ est elle--mme standard, dŽfinie sur $F$ et $\hat{\theta}$--stable. Alors on a
$$
\wt{\ES{P}}=(\hat{P}\rtimes W_F){^L\theta},\quad \wt{\ES{M}}=(\hat{M}\rtimes W_F){^L\theta}.
$$

À $(\hat{P},\hat{M})$ correspond une paire parabolique standard $(P,M)$ de $G$, qui est définie sur $F$ et $\theta_{\ES{E}}$--stable (cf. la remarque de \ref{descente parabolique}). Soit $(\wt{P},\wt{M})$ la paire parabolique standard de $\wt{G}$ associée à $(P,M)$, \cad que $\wt{P}=N_{\smash{\wt{G}}}(P)$ et $\wt{M}=N_{\smash{\wt{G}}}(P,M)$. Elle 
est définie sur $F$, et l'on a
$$
\wt{P}=P\delta_\circ,\quad \wt{M}= M\delta_\circ.
$$
L'espace tordu $(M,\wt{M})$ vérifie les trois hypothèses de \ref{objets}.
Le groupe $\ES{M}$ s'identifie au $L$--groupe ${^LM}$, et $\wt{\ES{M}}$ s'identifie au $L$--espace tordu ${^L{\wt{M}}}= {^LM}{^L\theta}$. Notons $\boldsymbol{a}_M$ l'image de $\boldsymbol{a}$ par l'homomorphisme naturel
$$
{\rm H}^1(W_F,Z(\hat{G}))\rightarrow {\rm H}^1(W_F,Z(\hat{M})),
$$
\cad la classe de cohomologie (non ramifiée) correspondant au caractère $\omega$ 
restreint à $M(F)$. Posons $\ES{M}'= \ES{G}'\cap \ES{M}$. Alors $\bs{M}'=(M',\ES{M}',\tilde{s})$ est une donnée endoscopique non ramifiée pour $(\wt{M},\bs{a}_M)$. Puisque
$$
Z(\hat{M}')^{\Gamma_F,\circ}=[ Z(\hat{M})^{\hat{\theta}}]^{\Gamma_F,\circ},
$$
cette donnée est elliptique. D'après \ref{descente parabolique}, 
la paire $(K_M,\wt{K}_M)$ définie par
$$
K_M = K\cap M(F),\quad \wt{K}_M= \wt{K}\cap \wt{M}(F)\;(=K_M\delta_\circ =\delta_\circ K_M),
$$
est un sous--espace hyperspécial de $\wt{M}(F)$. Soit $\ES{D}^{\wt{M}}(\bs{M}')$ 
le sous--ensemble de $\wt{M}'(F)\times \wt{M}(F)$ obtenu en rempla\c{c}ant $(\wt{G},\bs{G}')$ par 
$(\wt{M},\bs{M}')$ dans la définition de $\ES{D}(\bs{G}')$. D'après le lemme de \cite[6.2]{Stab I}, l'ensemble 
$\ES{D}^{\wt{M}}(\bs{M}')$ n'est pas vide. 
D'après \cite[6.2, 6.3]{Stab I}, à $(K_M,\wt{K}_M)$ 
sont associés un sous--espace hyperspécial $(K'_M,\wt{K}'_M)$ de $\wt{M}'(F)$, bien défini à conjugaison près par $M'_{\rm AD}(F)$, et un facteur de transfert normalisé
$$
\Delta^{\wt{M}}: \ES{D}^{\wt{M}}(\bs{M}')\rightarrow {\Bbb C}^\times.
$$
En rempla\c{c}ant $\wt{G}$ par $\wt{M}$ dans la relation (2) de \ref{données endoscopiques nr}, on voit que le caractère $\omega$ est trivial sur 
$Z(M;F)^{\theta_M}$. La paire $(\wt{M},\bs{a}_M)$ vérifie donc les trois hypothèses de \ref{données endoscopiques nr}. En particulier le $F$--Levi $\wt{M}$ de $\wt{G}$ appartient à $\ES{L}(\wt{T},\omega)$. 

En rempla\c{c}ant $(\wt{G},\bs{G}')$ par $(\wt{M},\bs{M}')$, $(K,\wt{K})$ par $(K_M,\wt{K}_M)$ et $(K',\wt{K}')$ par $(K'_M,\wt{K}'_M)$, dans les définitions de $\bs{\rm t}$, $\bs{\rm T}$, $p^K$, $p'^{K'}$, on définit les homomorphismes 
de transfert sphérique et usuel
$$
\bs{\rm t}^{\wt{M}} = \bs{\rm t}_{\bs{M}'}^{\wt{M};K_M,K'_M}:\bs{D}^{K'_M}(\wt{M}'(F))\rightarrow 
\bs{D}^{K_M}(\wt{M}(F),\omega),
$$
$$
\bs{\rm T}^{\wt{M}}=\bs{\rm T}^{\wt{M}}_{\bs{M}'}:\bs{SD}(\wt{M}'(F))\rightarrow 
\bs{D}(\wt{M}(F),\omega),
$$
et les projections naturelles
$$
p^{K_M}:\bs{D}(\wt{M}(F),\omega) \rightarrow \bs{D}^{K_M}(\wt{M}(F),\omega),
$$
$$
p'^{K'_M}:  \bs{D}(\wt{M}'(F))\rightarrow \bs{D}^{K'_M}(\wt{M}'(F)).
$$

\begin{monlem}
On a:
\begin{enumerate}
\item[(i)]$\bs{\rm t}\circ p'^{K'} \circ \bs{i}_{\wt{M}'}^{\wt{G}'}=p^K \circ \bs{i}_{\wt{M}}^{\wt{G}}\circ 
\bs{\rm t}^{\wt{M}}\circ p'^{K'_M}:\bs{D}(\wt{M}'(F))\rightarrow \bs{D}^K(\wt{G}(F),\omega)$.
\item[(ii)]$\bs{\rm T}\circ \bs{i}_{\wt{M}'}^{\wt{G}'}= \bs{i}_{\wt{M}}^{\wt{G}}\circ \bs{\rm T}^{\wt{M}}: 
\bs{SD}(\wt{M}'(F))\rightarrow \bs{D}(\wt{G}(F),\omega)$.
\item[(iii)]$p^K\circ \bs{i}_{\wt{M}}^{\wt{G}}= 
p^K\circ \bs{i}_{\wt{M}}^{\wt{G}}\circ p^{K_M}:\bs{D}(\wt{M}(F))\rightarrow \bs{D}^K(\wt{G}(F),\omega)$
\end{enumerate}
\end{monlem}

\begin{proof}Prouvons (i). L'homomorphisme de transfert spectral sphérique $\bs{\rm t}^{\wt{M}}$ est donné par
$$
\bs{\rm t}^{\wt{M}}(\Theta')(h* \bs{1}_{\wt{K}_M})=\Theta'(b_M(h)*\bs{1}_{\wt{K}'_M}),\quad \Theta'\in \bs{D}^{K'_M}(\wt{M}'(F)),\, h\in \mathcal{H}_{K_M},
$$
où $b_M: \mathcal{H}_{K_M}\rightarrow \mathcal{H}_{K'_M}$ est l'homomorphisme défini comme en 
\ref{lemme fondamental}, en rempla\c{c}ant $(\wt{G},\bs{G}')$ par $(\wt{M},\bs{M}')$ et $(K,K')$ par $(K_M,K'_M)$. Pour $\Theta'\in \bs{D}(\wt{M}'(F))$ et $f\in \mathcal{H}_{K}$, on a
\begin{eqnarray*}
p^K\circ \bs{i}_{\wt{M}}^{\wt{G}}\circ\bs{\rm t}^{\wt{M}}\circ p'^{K'_M}(\Theta')(f* \bs{1}_{\wt{K}})
&=&\bs{i}_{\wt{M}}^{\wt{G}}\circ\bs{\rm t}^{\wt{M}}\circ p'^{K'_M}(\Theta')(f* \bs{1}_{\wt{K}})\\
&=&\bs{\rm t}^{\wt{M}}\circ p'^{K'_M}(\Theta')((f* \bs{1}_{K})_{\wt{P},\omega})\\
&=& \bs{\rm t}^{\wt{M}}\circ p'^{K'_M}(\Theta')(f_{P,\omega}*\bs{1}_{\wt{K}_M})\\
&=& p'^{K'_M}(\Theta')(b_M(f_{P,\omega})*\bs{1}_{\wt{K}'_M})\\
&=& \Theta'(b_M(f_{P,\omega})*\bs{1}_{\wt{K}'_M}).
\end{eqnarray*}
On a les diagrammes commutatifs suivants
$$
\xymatrix{
\mathcal{H}_K \ar[r]^{\simeq} \ar[d]_b & \hat{\mathcal{H}}_\phi \ar[d]^{\hat{b}}\\
\mathcal{H}_{K'} \ar[r]^{\simeq} & \hat{\mathcal{H}}'_\phi
},\quad \xymatrix{
\mathcal{H}_{K_M} \ar[r]^{\simeq} \ar[d]_{b_M} & \hat{\mathcal{H}}_\phi^M \ar[d]^{\hat{b}_M}\\
\mathcal{H}_{K'_M} \ar[r]^{\simeq} & \hat{\mathcal{H}}^{M'}_\phi
},
$$
où les flèches horizontales sont les isomorphismes de Satake, et les flèches $\hat{b}$ et $\hat{b}_M$ sont les homomorphismes de restriction. On a aussi les diagrammes commutatifs suivants
$$
\xymatrix{
\mathcal{H}_K \ar[r]^{\simeq} \ar[d]_a & \hat{\mathcal{H}}_\phi \ar[d]^{\hat{a}}\\
\mathcal{H}_{K_M} \ar[r]^{\simeq} & \hat{\mathcal{H}}_\phi^M
},\quad 
\xymatrix{
\mathcal{H}_{K'} \ar[r]^{\simeq} \ar[d]_{a'} & \hat{\mathcal{H}}'_\phi \ar[d]^{\hat{a}'}\\
\mathcal{H}_{K'_M} \ar[r]^{\simeq} & \hat{\mathcal{H}}_\phi^{M'}
},
$$
où les flèches horizontales sont les isomorphismes de Satake, les flèches $a$ et $a'$ sont les homomorphismes terme constant (suivant $(P,\omega)$ et suivant $P'$), et les flèches $\hat{a}$ et $\hat{a}'$ 
sont les homomorphismes de restriction. Puisque $\hat{a}'\circ\hat{b}= \hat{b}_M\circ \hat{a}$, on a
$$
a'\circ b = b_M\circ a.
$$
Par conséquent $b_M(h_{P,\omega})= b(h)_{P'}$, et
\begin{eqnarray*}
\Theta'(b_M(f_{P,\omega})*\bs{1}_{\wt{K}'_M})&=&
 \Theta'(b(f)_{P'}*\bs{1}_{\wt{K}'_M})\\
 &=& \Theta'((b(f)* \bs{1}_{\wt{K}'})_{\wt{P}'})\\
 &=& \bs{i}_{\wt{M}'}^{\wt{G}'}(\Theta')(b(f)* \bs{1}_{\wt{K}'})\\
 &=& p'^{K'}\circ \bs{i}_{\wt{M}'}^{\wt{G}'}(\Theta')(b(h)* \bs{1}_{\wt{K}'})\\
 &=& \bs{\rm t}\circ p'^{K'}\circ \bs{i}_{\wt{M}'}^{\wt{G}'}(\Theta')(h*\bs{1}_{\wt{K}}),
\end{eqnarray*}
ce qui démontre (i). 

Prouvons (ii). L'homomorphisme de transfert spectral usuel $\bs{\rm T}^{\wt{M}}$ est donné par
$$
\bs{\rm T}^{\wt{M}}(\Theta')(h)= \Theta'(h'),\quad \Theta'\in \bs{SD}(\wt{M}'(F)),\, h\in C^\infty_{\rm c}(\wt{M}(F)),
$$
où $h'\in C^\infty_{\rm c}(\wt{M}'(F))$ est un transfert de $h$. Pour $\Theta'\in \bs{SD}(\wt{G}'(F))$ et 
$f\in C^\infty_{\rm c}(\wt{G}(F))$, si $f'\in C^\infty_{\rm c}(\wt{G}'(F))$ est un transfert de $f$, alors le terme constant $f'_{\wt{P}'}\in C^\infty_{\rm c}(\wt{M}'(F))$ de $f'$ (suivant $\wt{P}'$) est un transfert de $f_{\wt{P},\omega}$, et
\begin{eqnarray*}
\bs{i}_{\wt{P}}^{\wt{G}}\circ \bs{\rm T}^{\wt{M}}(\Theta')(f) &=& \bs{\rm T}^{\wt{M}}\circ \Theta'(f_{\wt{P},\omega})\\
&=& \Theta'(f'_{\wt{P}'})\\
&=& \bs{i}_{\wt{P}'}^{\wt{G}'}(\Theta')(f')\\
& = & \bs{\rm T}\circ \bs{i}_{\wt{P}'}^{\wt{G}'}(\Theta')(f).
\end{eqnarray*}

Prouvons (iii). Pour $\Theta\in \bs{D}(\wt{M}(F))$ et $f\in \mathcal{H}_{K}$, puisque 
$(f*\bs{1}_K)_{\wt{P},\omega}= f_{P,\omega}* \bs{1}_{\wt{K}_M}$ est dans 
$\mathcal{H}_{K_M}*\bs{1}_{\wt{K}_M}$, on a
\begin{eqnarray*}
p^K\circ \bs{i}_{\wt{M}}^{\wt{G}}(\Theta)(f* \bs{1}_K)&=&\Theta((f*\bs{1}_K)_{\wt{P},\omega})\\
&=& p^{K_M}\circ \Theta ((f*\bs{1}_K)_{\wt{P},\omega})\\
&=& p^K\circ \bs{i}_{\wt{M}}^{\wt{G}}\circ p^{K_M}(\Theta)(f* \bs{1}_K),
\end{eqnarray*}
ce qui démontre (iii). 
\end{proof}

D'après le lemme, si le diagramme suivant
$$
\xymatrix{
\bs{SD}_{\rm ell}(\wt{M}'(F)) \ar@{^{(}->}[r] \ar[d]_{\bs{\rm T}^{\wt{M}}} & \bs{D}(\wt{M}'(F)) \ar[r]^{p'^{K'_M}} & \bs{D}^{K'_M}(\wt{M}'(F))\ar[d]^{\bs{\rm t}^{\wt{M}}}\\
\bs{D}_{\rm ell}(\wt{M}(F),\omega) \ar[rr]^{p^{K_M}} && \bs{D}^{K_M}(\wt{M}(F),\omega)
}
\leqno{(3)}
$$
est commutatif, alors le diagramme suivant
$$
\xymatrix{
\bs{i}_{\wt{M}'}^{\wt{G}'}(\bs{SD}_{\rm ell}(\wt{M}'(F))) \ar@{^{(}->}[r] \ar[d]_{\bs{\rm T}} & 
\bs{i}_{\wt{M}'}^{\wt{G}'}(\bs{D}(\wt{M}'(F))) \ar[r]^{p'^{K'}} & \bs{D}^{K'}(\wt{G}'(F)))\ar[d]^{\bs{\rm t}}\\
\bs{i}_{\wt{M}}^{\wt{G}}(\bs{D}_{\rm ell}(\wt{M}(F),\omega)) \ar[rr]^{p^{K}} && 
\bs{D}^{K}(\wt{G}(F),\omega))
}
\leqno{(4)}
$$
l'est aussi.

Rappelons que le groupe de Weyl $W'$ de $G'$ s'identifie à 
un sous--groupe de $W^{\theta_{\ES{E}}}$. Comme $W^{\theta_{\ES{E}}}$ est contenu dans 
$\wt{W}= N_{G(F)}(\wt{T})/T(F)$, l'application
$$
\ES{L}(T')\rightarrow \ES{L}(\wt{T},\omega),\, M'\mapsto \wt{M}
$$
induit par passage aux quotients une application
$$
\ES{L}(T')/W'\rightarrow \ES{L}(\wt{T},\omega)/\wt{W},\, M'\mapsto \wt{M}. 
$$
D'après les décompositions (1) et $(2)'$, si pour chaque $F$--Levi $M'\in \ES{L}(T')$, le diagramme (3) est commutatif, alors le diagramme du théorème de \ref{transfert spectral} l'est aussi (i.e. le théorème est vrai). On est donc ramené à prouver que le diagramme suivant
$$
\xymatrix{
\bs{SD}_{\rm ell}(\wt{G}'(F)) \ar@{^{(}->}[r] \ar[d]_{\bs{\rm T}} & \bs{D}(\wt{G}'(F)) \ar[r]^{p'^{K'}} & \bs{D}^{K'}(\wt{G}'(F))\ar[d]^{\bs{\rm t}}\\
\bs{D}_{\rm ell}(\wt{G}(F),\omega) \ar[rr]^{p^{K}} && \bs{D}^{K}(\wt{G}(F),\omega)
}
\leqno{(5)}
$$
est commutatif. 

\begin{marema2}
{\rm   
Soit $a :W_F\rightarrow Z(\hat{G})$ un cocycle dans la classe de cohomologie $\bs{a}$. 
Alors $a_M: W_F \rightarrow Z(\hat{G})\hookrightarrow Z(\hat{M})$ est un cocycle dans la classe de cohomologie $\bs{a}_M$. 
D'après \ref{paramètres}, on a une bijection $\varphi_M\mapsto \pi_{\varphi_M}$ entre 
les classes de $\hat{M}$--conjugaison de $L$--homorphismes $\varphi_M: W_F\rightarrow {^LM}$ qui sont tempérés, non ramifiés, et tels que l'ensemble
$$
\wt{S}_{\varphi_M,a_M}=\{\wt{m}\in \hat{M}\hat{\theta}: 
{\rm Int}_{\wt{m}}\circ \varphi_M = a_M\cdot \varphi_M\}$$
n'est pas vide, et 
les classes d'isomorphisme de représentations irréductibles de $M(F)$ qui sont tempérées, $K_M$--sphériques et se prolongent en une représentation de $(\wt{M}(F),\omega)$. On note 
$\wt{\pi}_{\varphi_M}$ le prolongement de $\pi_{\varphi_M}$ à $(\wt{M}(F),\omega)$ normalisé 
comme dans la remarque (1) de \ref{paramètres}, \cad par 
$\Theta_{\wt{\pi}_{\varphi_M}}(\bs{1}_{\wt{K}_M})=1$. 
Pour un $L$--homomorphisme $\varphi_M: W_F\rightarrow {^LM}$, on note $\varphi_M^G:W_F\rightarrow {^LG}$ le $L$--homomorphisme obtenu en composant 
$\varphi_M$ avec ${^LM}\hookrightarrow {^LG}$. Si $\varphi_M$ est tempéré et non ramifié, alors 
$\varphi_M^G$ l'est aussi. D'autre part pour $\varphi=\varphi_M^G$, on a l'égalité
$$
\wt{S}_{\varphi_M, a_M} = \wt{S}_{\varphi,a}\cap \hat{M}\hat{\theta}.
$$
Si de plus $\wt{S}_{\varphi_M, a_M}$ n'est pas vide, alors $\wt{S}_{\varphi,a}$ ne l'est pas non plus, 
et l'on a
$$
p^K\circ \bs{i}_{\wt{M}}^{\wt{G}}(\Theta_{\wt{\pi}_{\varphi_M}})=\Theta_{\wt{\pi}_\varphi},
$$
où $\wt{\pi}_\varphi$ 
est le prolongement de $\pi_\varphi$ à $(\wt{G}(F),\omega)$ normalisé par 
$\Theta_{\wt{\pi}_\varphi}(\bs{1}_{\wt{K}})=1$.
}
\end{marema2}

\section{RŽduction au cas o la donnŽe endoscopique est un tore}

\subsection{Des décompositions}\label{le principe}Les arguments permettant 
de prouver la commutativitŽ du diagramme (5) de \ref{transfert spectral elliptique} ont été brièvement décrits dans l'introduction. Ces arguments utilisent des résultats fins qui seront prouvés dans la section 5, ainsi que les décompositions des espaces $\bs{D}_{\rm ell}(\wt{G}(F),\omega)$ et $\bs{SD}(\wt{G}'(F))$ en partie non ramifiée et partie ramifiée, qui seront démontrées plus loin. On donne ces décompositions dans ce numéro, de manière à fixer les notations. 

Pour $*=\;{\rm ram},\, {\rm nr}$, on pose
$$
\bs{D}_{\rm ell}^*(\wt{G},\omega)= \bs{D}_{\rm ell}(\wt{G}(F),\omega)\cap \bs{D}^*(\wt{G}(F),\omega).
$$
La dŽcomposition (1) de \ref{transfert spectral} donne par restriction une dŽcomposition (cf. \ref{triplets elliptiques essentiels})
$$
\bs{D}_{\rm ell}(\wt{G}(F),\omega)=\bs{D}_{\rm ell}^{\rm nr}(\wt{G}(F),\omega)\oplus \bs{D}_{\rm ell}^{\rm ram}(\wt{G}(F),\omega).\leqno{(1)}
$$
De mme, si $\bs{G}'=(G',\wt{G},\tilde{s})$ est une donnŽe endoscopique elliptique non ramifiŽe pour $(\wt{G},\bs{a})$, on pose 
$$
\bs{SD}_{\rm ell}^*(\wt{G}'(F))= \bs{SD}_{\rm ell}(\wt{G}'(F))\cap \bs{D}^*(\wt{G}'(F)),\quad *=\;{\rm ram},\,{\rm nr}.
$$
Alors on a la dŽcomposition (cf. \ref{le cas à torsion intérieure})
$$
\bs{SD}_{\rm ell}(\wt{G}'(F))= \bs{SD}_{\rm ell}^{\rm nr}(\wt{G}'(F))\oplus \bs{SD}_{\rm ell}^{\rm ram}(\wt{G}'(F)).\leqno{(2)}
$$
De plus, on distingue deux cas: ou bien $G'$ n'est pas un tore, auquel cas 
$\bs{SD}_{\rm ell}^{\rm nr}(\wt{G}'(F))=\{0\}$; ou bien $G'=T'$ est un tore, auquel cas pour 
$*={\rm nr},\,{\rm ram}$, on a $\bs{SD}_{\rm ell}^*(\wt{T}'(F))= \bs{D}^*(\wt{G}'(F))$ et la décomposition (2) n'est autre que la décomposition (1) de \ref{transfert spectral}. Les résultats fins de la section 5 concernent le cas où le groupe $G'$ est un tore. Ils sont utilisés en \ref{preuve} pour prouver 
la commutativité du diagramme (5) de \ref{transfert spectral elliptique} (pour tout $G'$). 

\subsection{$R$--groupes tordus}\label{R-groupes tordus}Rappelons les constructions de \cite[2.8--2.12]{W}. Soit $M$ un $F$--Levi semi--standard de $G$, et soit $\sigma$ une représentation irréductible et de la série discrète de $M(F)$. On note $\ES{N}^{\wt{G},\omega}(\sigma)$ l'ensemble (noté $\ES{N}^{\wt{G}}(\sigma)$ dans loc.~cit.) des couples $(A,\gamma)$ où $A$ est un automorphisme unitaire de l'espace $V_\sigma$ de $\sigma$ et $\gamma$ est un élément de $N_{\wt{G}(F)}(M)$ tels que $A$ entrelace $\sigma^\gamma = \sigma\circ {\rm Int}_\gamma$ et $\omega\sigma=\omega \otimes \sigma$, \cad tels que
$$
\sigma({\rm Int}_\gamma(m))\circ A = \omega(m)A\circ \sigma(m),\quad m\in M(F).
$$
Le groupe $\ES{N}^G(\sigma)$, défini de la même manière en rempla\c{c}ant $(\wt{G},\omega)$ par 
$(G,1)$, opère à gauche et à droite sur $\ES{N}^{\wt{G},\omega}(\sigma)$ par
\begin{eqnarray*}
\ES{N}^{G}(\sigma)\times \ES{N}^{\wt{G},\omega}(\sigma)\times \ES{N}^{G}(\sigma)&\rightarrow &\ES{N}^{\wt{G},\omega}(\sigma),\\ ((A',n'),(A,\gamma),(A'',n''))&\mapsto & (\omega(n'')A'\circ A\circ A'', n'\gamma n'').
\end{eqnarray*}
Cela fait de l'ensemble $\ES{N}^{\wt{G},\omega}(\sigma)$, s'il n'est pas vide, un espace tordu sous 
$\ES{N}^G(\sigma)$. 

Soit $P$ un sous--groupe parabolique de $G$ défini sur $F$ et de composante de Levi $M$. On peut former l'induite parabolique normalisée $\pi=i_P^G(\sigma)$ de $\sigma$ à $G(F)$. C'est une représentation tempérée de $G(F)$. 
\`A tout élément $(A,n)$ de $\ES{N}^{G}(\sigma)$ est associé en \cite[1.9--1.11]{W} un opérateur d'entrelacement normalisé $r_P(A,n)$ de l'espace $V_\pi$ de $\pi$, \cad tel que
$$
r_P(A,n)\circ \pi(g)=\pi(g)\circ r_P(A,n),\quad g\in G(F).
$$
L'application $(A,n)\mapsto r_P(A,n)$ est une représentation unitaire de $\ES{N}^G(\sigma)$, et si l'on remplace $P$ par un autre sous--groupe parabolique $P'$ de $G$ défini sur $F$ et de composante de Levi $M$, alors les représentations $r_P$ et $r_{P'}$ de $\ES{N}^G(\sigma)$ sont équivalentes. Le sous--groupe de $\ES{N}^G(\sigma)$ formé des $(A,n)$ tels que $r_P(A,n)$ est l'identité de $V_\pi$ ne dépend donc pas de $P$. On le note $\ES{N}^G_0(\sigma)$. Ce groupe 
$\ES{N}^G_0(\sigma)$ contient $M(F)$, identifié à un sous--groupe distingué de $\ES{N}^G(\sigma)$ via l'application $m\mapsto (\sigma(m),m)$. Notons $\ES{W}^G(\sigma)$ le groupe quotient 
$\ES{N}^G(\sigma)/M(F)$, et $W^G(\sigma)$ le groupe quotient $N_{G(F)}(\sigma)/M(F)$, où l'on a posé
$$
N_{G(F)}(\sigma)= \{g\in G(F): gMg^{-1}=M,\, \sigma^g \simeq \sigma\}.
$$
La projection sur le second facteur induit une suite exacte courte
$$
1\rightarrow {\Bbb U} \rightarrow \ES{W}^G(\sigma)\rightarrow W^G(\sigma)\rightarrow 1,\leqno{(1)}
$$
où (rappel) ${\Bbb U}$ est le groupe des nombres complexes de modules $1$. Via cette suite, le groupe quotient $\ES{N}^G_0(\sigma)/M(F)\subset \ES{W}^G(\sigma)$ s'identifie à un sous--groupe distingué de $W^G(\sigma)$, disons $W^G_0(\sigma)$. Posons
$$
\ES{R}^G(\sigma)= \ES{W}^G(\sigma)/W^G_0(\sigma),\quad R^G(\sigma)=W^G(\sigma)/W^G_0(\sigma).
$$
Alors $R^G(\sigma)$ est le $R$--groupe de $\sigma$, et la suite (1) induit 
une suite exacte courte
$$
1\rightarrow {\Bbb U}\rightarrow \ES{R}^G(\sigma)\rightarrow R^G(\sigma)\rightarrow 1.\leqno{(2)}
$$
La représentation $r_P$ de $\ES{N}^G(\sigma)$ se 
quotiente en une représentation de $\ES{R}^G(\sigma)$ telle que tout élément $z\in {\Bbb U}$ opère sur $V_\pi$ par l'homothétie des rapport $z$. Une représentation vérifiant une telle condition d'homothétie est appelée {\it ${\Bbb U}$--représentation}. On note ${\rm Irr}(\ES{R}^G(\sigma))$ l'ensemble des classes d'isomorphisme de ${\Bbb U}$--représentations irréductibles de $\ES{R}^G(\sigma)$. D'après la théorie du $R$--groupe, il existe une bijection $\rho\mapsto \pi_\rho$ de 
${\rm Irr}(\ES{R}^G(\sigma))$ sur l'ensemble $\Pi_\sigma$ des classes d'isomorphisme représentations irréductibles de $G(F)$ qui sont des sous--représentations de $\pi\;(=i_P^G(\sigma))$, de sorte que la représentation $r_P\otimes \pi$ de $\ES{R}^G(\sigma)\times G(F)$ se décompose en 
$$
r_P\otimes \pi \simeq \oplus_{\rho\in {\rm Irr}(\ES{R}^G(\sigma))}\, \rho \otimes \pi_\rho.\leqno{(3)}
$$
La correspondance $\rho\mapsto \pi_\rho$ ne dépend pas de $P$ --- cf. \cite[1.11]{W}. 

Supposons que l'ensemble $\ES{N}^{\wt{G},\omega}(\sigma)$ n'est pas vide. 
En \cite[2.8]{W} est défini une application 
$\wt{\nabla}_P$ de $\ES{N}^{\wt{G},\omega}(\sigma)$ dans le groupe des automorphismes unitaires de l'espace $V_\pi$ vérifiant, pour $(A,\gamma)\in \ES{N}^{\wt{G},\omega}(\sigma)$:
\begin{itemize}
\item pour $g\in G(F)$, on a
$$
\pi({\rm Int}_\gamma(g)) \circ \wt{\nabla}_P(A,\gamma) = \wt{\nabla}_P(A,\gamma) \circ (\omega\pi)(g); \leqno{(4)}
$$
\item pour 
$(A',n'),\, (A'',n'')\in \ES{N}^G(\sigma)$ tels que $(A',n')(A,\gamma)=(A,\gamma)(A'',n'')$, on a
$$
r_P(A',n')\circ \wt{\nabla}_P(A,\gamma)= \wt{\nabla}_P(A,\gamma)\circ r_P(A'',n'').\leqno{(5)}
$$
\end{itemize}
Les orbites dans $\ES{N}^{\wt{G},\omega}(\sigma)$ pour l'action de $M(F)\subset \ES{N}^G(\sigma)$ sont les mêmes que l'on considère l'action à gauche ou celle à droite. On note 
$\ES{W}^{\wt{G},\omega}(\sigma)$ l'ensemble de ces orbites. D'après (5), les orbites dans $\ES{W}^{\wt{G},\omega}(\sigma)$ pour l'action de $W^G_0(\sigma)\subset \ES{W}^G(\sigma)$ sont les mêmes que l'on considère l'action à gauche ou celle à droite. On note $\ES{R}^{\wt{G},\omega}(\sigma)$ l'ensemble de ces orbites. C'est un espace tordu sous $\ES{R}^G(\sigma)$. Posons 
$W^{\wt{G},\omega}(\sigma)= N_{\wt{G}(F),\omega}(\sigma)/M(F)$, où
$$
N_{\wt{G}(F),\omega}(\sigma)= \{\gamma\in \wt{G}(F): {\rm Int}_\gamma(M)=M,\, \sigma^\gamma\simeq \omega \sigma\}, 
$$
et $R^{\wt{G},\omega}(\sigma)= W^{\wt{G},\omega}(\sigma)/ W^G_0(\sigma)$. On a une application 
surjective $\ES{R}^{\wt{G},\omega}(\sigma) \rightarrow R^{\wt{G},\omega}(\sigma)$ dont les fibres sont isomorphes à ${\Bbb U}$. Si $g\in G(F)$ est tel que $M'=gMg^{-1}$ est semi--standard, alors 
notant $\sigma'$ la représentation ${^g\sigma}=\sigma\circ {\rm Int}_{g^{-1}}$ de $M'(F)$, l'application
$$
\ES{N}^{\wt{G},\omega}(\sigma)\rightarrow \ES{N}^{\wt{G},\omega}(\sigma'),\,
(A,\gamma)\mapsto (\omega(g)A,g\gamma g^{-1})
$$
est bijective, et se quotiente en des applications bijectives
$$
\ES{R}^{\wt{G},\omega}(\sigma)\rightarrow \ES{R}^{\wt{G},\omega}(\sigma'),
\quad R^{\wt{G},\omega}(\sigma)\rightarrow R^{\wt{G},\omega}(\sigma').
$$
On note encore ${\rm Int}_g$ toutes ces applications, et pour $\tilde{\bs{r}}\in \ES{R}^{\wt{G},\omega}(\sigma)$, on dit que les triplets $(M,\sigma, \tilde{\bs{r}})$ et $(M',\sigma', {\rm Int}_g(\tilde{\bs{r}}))$ 
sont conjugués (par $g$).

Une ${\Bbb U}$--représentation $\tilde{\rho}$ de $\ES{R}^{\wt{G},\omega}(\sigma)$ est par définition la donnée d'un couple $(\rho, \tilde{\rho})$ où $\rho$ est une ${\Bbb U}$--représentation unitaire de longueur finie de $\ES{R}^G(\sigma)$, et $\tilde{\rho}$ est une application de $\ES{R}^{\wt{G},\omega}(\sigma)$ dans le groupe des automorphismes unitaires de l'espace $V_\rho$ de $\rho$ vérifiant la condition
$$
\tilde{\rho}(\bs{r'}\tilde{\bs{r}}\bs{r}'')= \rho(\bs{r}')\tilde{\rho}(\tilde{\bs{r}})\rho(\bs{r}''),\quad 
\tilde{\bs{r}}\in \ES{R}^{\wt{G},\omega}(\sigma),\; \bs{r}',\,\bs{r}''\in \ES{R}^G(\sigma).
$$
On note ${\rm Irr}(\ES{R}^{\wt{G},\omega}(\sigma))$ l'ensemble des classes d'isomorphisme de ${\Bbb U}$--représentations  $(\rho,\tilde{\rho})$ de $\ES{R}^{\wt{G},\omega}(\sigma)$ qui sont 
$\ES{R}^G(\sigma)$--irréductibles, \cad telles que $\rho$ est irréductible. 
Si $(\rho,\tilde{\rho})$ est une ${\Bbb U}$--représentation de $\ES{R}^{\wt{G},\omega}(\sigma)$, alors pour $z\in {\Bbb U}$, $(\rho, z\tilde{\rho})$ en est une autre. Tout élément $\tilde{\bs{r}}$ de 
$\ES{R}^{\wt{G},\omega}(\sigma)$ détermine un ${\Bbb U}$--automorphisme ${\rm Int}_{\tilde{\bs{r}}}$ de $\ES{R}^G(\sigma)$ par la formule $\tilde{\bs{r}}\bs{r}= {\rm Int}_{\tilde{\bs{r}}}(\bs{r})\tilde{\bs{r}}$. La classe de cet 
automorphisme ${\rm Int}_{\tilde{\bs{r}}}$ modulo les automorphismes intérieurs ne dépend pas du choix de $\bs{\tilde{r}}$. On la note $\theta_{\ES{R}}$. L'application $(\rho,\tilde{\rho})\mapsto \rho$ induit une bijection entre:
\begin{itemize}
\item l'ensemble ${\rm Irr}(\ES{R}^{\wt{G},\omega}(\sigma))/{\Bbb U}$ des orbites de ${\Bbb U}$ dans 
${\rm Irr}(\ES{R}^{\wt{G},\omega}(\sigma))$;
\item l'ensemble ${\rm Irr}(\ES{R}^G(\sigma),\theta_{\ES{R}})$ des classes d'isomorphismes de ${\Bbb U}$--représentations irréductibles $\rho$ de $\ES{R}^G(\sigma)$ telles que $\rho\circ \theta_{\ES{R}}\simeq \rho$.
\end{itemize}

Reprenons la décomposition (3) de $r_P\otimes \pi$. Notons $\theta$ la classe de l'automorphisme ${\rm Int}_\gamma$ de $G(F)$ pour $\gamma\in \wt{G}(F)$ modulo les automorphismes intérieurs ${\rm Int}_g$ pour $g\in G(F)$. 
Soit 
$\rho\in {\rm Irr}(\ES{R}^G(\sigma))$. D'après (5), pour $(A,\gamma)\in \ES{N}^{\wt{G},\omega}(\sigma)$, l'automorphisme $\wt{\nabla}_P(A,\gamma)$ de $V_\pi$ envoie la composante $\rho$--isotypique $\rho\otimes \pi_\rho$ sur la composante $(\rho\circ \theta_{\ES{R}})$--isotypique 
$(\rho\circ \theta_{\ES{R}})\otimes \pi_{\rho \circ \theta_{\ES{R}}}$, et d'après (4), on a 
$\pi_{\rho\circ \theta_{\ES{R}}}\circ \theta \simeq \omega \otimes \pi_\rho$. 
En particulier, on a $\pi_\rho\circ \theta \simeq \omega\otimes \pi_\rho$ si et seulement si $\rho$ appartient à ${\rm Irr}(\ES{R}^G(\sigma),\theta_{\ES{R}})$. 
Supposons que tel est le cas, et choisissons un élément $\tilde{\rho}\in {\rm Irr}(\ES{R}^{\wt{G},\omega}(\sigma))$ dans la ${\Bbb U}$--orbite correspondant à $\rho$. Alors il existe une unique 
représentation $\tilde{\pi}_{\tilde{\rho}}$ de $(\wt{G}(F),\omega)$ prolongeant $\pi_\rho$ et telle que pour tout $(A,\gamma)\in \ES{N}^{\wt{G},\omega}(\sigma)$, la restriction de $\wt{\nabla}_P(A,\gamma)$ à l'espace de $\rho\otimes \pi_\rho$ soit égale à $\tilde{\rho}(\tilde{\bs{r}})\otimes \tilde{\pi}_{\tilde{\rho}}(\gamma)$, où $\tilde{\bs{r}}$ est l'image de $(A,\gamma)$ dans $\ES{R}^{\wt{G},\omega}(\sigma)$. \`A multiplication près par un élément de ${\Bbb U}$, la représentation ($G(F)$--irréductible, tempérée) $\tilde{\pi}_{\wt{\rho}}$ de $(\wt{G}(F),\omega)$ ne dépend que de $\rho$, et pas du choix de $\tilde{\rho}$. On obtient ainsi une bijection 
$\rho \mapsto [\tilde{\pi}_{\rho}]$ de ${\rm Irr}(\ES{R}^G(\sigma), \theta_{\ES{R}})$ sur 
les orbites de ${\Bbb U}$ dans l'ensemble $\wt{\Pi}_\sigma$ des classes d'isomorphisme de représentations tempérées $(\pi',\tilde{\pi}')$ de $(\wt{G}(F),\omega)$ telles que $\pi'$ est une sous--représentation irréductibles de $\pi\;(=i_P^G(\sigma))$. Comme pour la correspondance $\rho\mapsto \pi_\rho$, cette correspondance 
$\rho \mapsto [\tilde{\pi}_{\rho}]$ ne dépend pas du choix de $P$ (défini sur $F$ et de composante de Levi $M$).

\vskip1mm
En résumé, à un triplet $(M,\sigma, \rho)$ formé d'un $F$--Levi semi--standard $M$ de $G$, d'une représentation irréductible et de la série discrète $\sigma$ de $M(F)$ telle que $\ES{N}^{\wt{G},\omega}(\sigma)\neq\emptyset$, et d'un élément $\rho\in\ES{R}^G(\sigma, \theta_{\ES{R}})$, est associé un élément de $\wt{\Pi}_\sigma/{\Bbb U}$ --- \cad une ${\Bbb U}$--orbite de classes d'isomorphisme de représentations $G(F)$--irréductibles et tempérées de $(\wt{G}(F),\omega)$. Cette 
${\Bbb U}$--orbite détermine une droite dans $\bs{D}(\wt{G}(F),\omega)$, que l'on note $D_{M,\sigma,\rho}$. Deux tels triplets 
$(M,\sigma,\rho)$ et $(M',\sigma',\rho')$ sont dits conjugués par $g\in G(F)$ si $gMg^{-1}=M'$, 
${^g\sigma}\simeq \sigma'$ et ${^g\rho}= \rho'$, où ${^g\rho}=\rho\circ {\rm Int}_{g^{-1}}$, l'isomorphisme  ${\rm Int}_{g^{-1}}:\ES{R}^G(\sigma')\rightarrow \ES{R}^G(\sigma)$ provenant par passage au quotient de l'isomorphisme 
$(A,n)\mapsto (A, g^{-1}ng)$ de $\ES{N}^G(\sigma')$ sur $\ES{N}^G(\sigma)$. Notons que si $\tilde{\rho}\in {\rm Irr}(\ES{R}^{\wt{G},\omega}(\sigma))$ est un prolongement de $\rho$, alors ${^g\tilde{\rho}}=\tilde{\rho}\circ {\rm Int}_{g^{-1}}\in {\rm Irr}(\ES{R}^{\wt{G},\omega}(\sigma'))$ est un prolongement de $\rho'$, disons 
$\tilde{\rho}'$. De plus posant, posant $P'=gPg^{-1}$ et $\pi'= i_{P'}^G(\sigma')$, les représentations $\tilde{\pi}_{\tilde{\rho}}$ et $\tilde{\pi}'_{\tilde{\rho}'}$ de $(\wt{G}(F),\omega)$ sont isomorphes, où $\tilde{\pi}'_{\tilde{\rho}'}$ est le prolongement de $\pi'_{\rho'}$ à $(\wt{G}(F),\omega)$ associé à $\tilde{\rho}'$ comme ci--dessus. On obtient ainsi une 
décomposition
$$
\bs{D}(\wt{G}(F),\omega)=\oplus_{(M,\sigma,\rho)/{\rm conj.}\;}D_{M,\sigma,\rho}\leqno{(6)}
$$
où $(M,\sigma,\rho)$ parcourt les classes de $G(F)$--conjugaison de triplets comme ci--dessus.
  
\subsection{Triplets elliptiques essentiels}\label{triplets elliptiques essentiels}
La décomposition (6) de \ref{R-groupes tordus} n'est pas compatible à l'induction parabolique. Pour en déduire une décomposition de $\bs{D}_{\rm ell}(\wt{G}(F),\omega)$, il faut commencer par la modifier \cite[2.9]{W}. Soit $(M,\sigma)$ un couple formé d'un $F$--Levi semi--standard $M$ et d'une représentation irréductible et de la série discrète $\sigma$ de $M(F)$ telle que $\ES{N}^{\wt{G},\omega}(\sigma)$ n'est pas vide. Fixons un sous--groupe parabolique $P$ de $G$, défini sur $F$ et de composante de Levi $M$, et posons $\pi=i_P^G(\sigma)$. Un élément $(A,\gamma)\in \ES{N}^{\wt{G},\omega}(\sigma)$ définit un prolongement (tempéré) $\tilde{\pi}=\tilde{\pi}_{A,\gamma}$ de $\pi$ à $(\wt{G}(F),\omega)$ par la formule
$$
\tilde{\pi}(g\gamma)=\pi(g)\circ \wt{\nabla}(A,\gamma),\quad g\in G(F).
$$
Cette représentation, qui n'est en général pas irréductible (encore moins $G(F)$--irréductible), ne dépend pas vraiment de $(A,\gamma)$, mais seulement de l'image $\tilde{\bs{r}}$ de $(A,\gamma)$ dans $\ES{R}^{\wt{G},\omega}(\sigma)$. Elle 
ne dépend donc que du triplet $\bs{\tau}=(M,\sigma,\tilde{\bs{r}})$, et on peut la noter $\tilde{\pi}_{\bs{\tau}}$. La classe d'isomorphisme de $\tilde{\pi}_{\bs{\tau}}$ ne dépend en fait pas de $P$, mais seulement de la classe de $G(F)$--conjugaison du triplet $\bs{\tau}$ (cf. \ref{R-groupes tordus} pour cette notion de conjugaison). Pour $z\in {\Bbb U}$, notant $z\bs{\tau}$ le triplet $(M,\sigma, z\tilde{\bs{r}})$, on a $\tilde{\pi}_{z\bs{\tau}}=z\tilde{\pi}_{\bs{\tau}}$. S'il existe un $z\in {\Bbb U}\smallsetminus \{1\}$ tel que $z\tilde{\bs{r}}= \bs{r}^{-1}\tilde{\bs{r}}\bs{r}$ pour un $\bs{r}\in \ES{R}^G(\sigma)$ --- auquel cas on a $z\tilde{\pi}_{\bs{\tau}}\simeq \tilde{\pi}_{\bs{\tau}}$ et le caractère de $\tilde{\pi}_{\bs{\tau}}$ est nul ---, on dit que le triplet $\bs{\tau}$ est {\it inessentiel}. Dans le cas contraire, on dit que le triplet $\bs{\tau}$ est {\it essentiel}. On note $\ES{E}(\wt{G},\omega)$ l'ensemble 
des triplets $\bs{\tau}$ qui sont essentiels --- il est stable pour l'action de ${\Bbb U}$  ---, et $E(\wt{G},\omega)= \ES{E}(\wt{G},\omega)/{\Bbb U}$ le quotient de $\ES{E}(\wt{G},\omega)$ par l'action de ${\Bbb U}$. Ce quotient s'identifie à l'ensemble des triplets $(M,\sigma,\tilde{r})$ où 
$(M,\sigma)$ est un couple comme ci--dessus, et $\tilde{r}$ est un élément de $R^{\wt{G},\omega}(\sigma)$ qui se relève en un élément $\tilde{\bs{r}}$ de $\ES{R}^{\wt{G},\omega}(\sigma)$ tel que le triplet $(M,\sigma,\tilde{\bs{r}})$ est essentiel. 

D'après la proposition de \cite[2.9]{W}, pour $\bs{\tau}\in \ES{E}(\wt{G},\omega)$, la représentation $\tilde{\pi}_{\bs{\tau}}$ de $(\wt{G}(F),\omega)$ --- bien définie à isomorphisme près ---  a un caractère non nul. Ce caractère définit donc une droite dans $\bs{D}(\wt{G},\omega)$, que l'on note $D_{\bs{\tau}}$. Cette droite ne dépend que de la classe de $G(F)$--conjugaison de l'image 
du triplet $\bs{\tau}$ dans $E(\wt{G},\omega)$, et d'après loc.~cit., on a la décomposition
$$
\bs{D}(\wt{G}(F),\omega)= \bigoplus_{\tau\in E(\wt{G},\omega)/{\rm conj.}}D_\tau\leqno{(1)}
$$
où $\tau$ parcourt les classes de $G(F)$--conjugaison dans $E(\wt{G},\omega)$.

On note $A_G$ le plus grand tore de $G$ contenu dans $Z(G)$ et déployé sur $F$, et l'on pose 
$\ES{A}_G= \check{X}(A_G)\otimes_{\Bbb Z}{\Bbb R}$ où (rappel) $\check{X}(A_G)$ désigne le groupe des cocaractères algébriques de $A_G$. On note $A_{\smash{\wt{G}}}$ le sous--tore de $A_G$ tel que 
$\check{X}(A_{\smash{\wt{G}}})= \check{X}(A_G)^\theta$, où $\check{X}(A_G)^\theta$ désigne le sous--groupe de $\check{X}(A_G)$ formé des cocaractères qui sont invariants sous l'action de $\theta$, et l'on pose $\ES{A}_{\smash{\wt{G}}}=\check{X}(A_{\smash{\wt{G}}})\otimes_{\Bbb Z}{\Bbb R}$. On a donc $\ES{A}_{\smash{\wt{G}}}=\ES{A}_G^\theta$. 

Soit $(M,\sigma)$ un couple formé d'un 
$F$--Levi semi--standard $M$ de $G$, et d'une représentation irréductible et de la série discrète $\sigma$ de $M(F)$. Un élément $\tilde{w}\in W^{\wt{G},\omega}(\sigma)$ opère naturellement sur $\ES{A}_M$, et le sous--espace $\ES{A}_M^{\tilde{w}}\subset \ES{A}_M$ formé des points fixes sous $\tilde{w}$ contient $\ES{A}_{\smash{\wt{G}}}$. On pose
$$
W^{\wt{G},\omega}_{\rm reg}(\sigma)= \{\tilde{w}\in W^{\wt{G},\omega}(\sigma):  \ES{A}_M^{\tilde{w}}=
\ES{A}_{\smash{\wt{G}}}\}.
$$

Un triplet $\tau=(M,\sigma,\tilde{r})\in E(\wt{G},\omega)$ est dit {\it elliptique} si 
$W^G_0(\sigma)=\{1\}$ --- ce qui entra\^{\i}ne l'égalité $R^{\wt{G},\omega}(\sigma)= W^{\wt{G},\omega}(\sigma)$ --- et si $\tilde{r}\in W^{\wt{G},\omega}_{\rm reg}(\sigma)$. Un triplet $\bs{\tau}\in \ES{E}(\wt{G},\omega)$ est dit elliptique si son image dans $E(\wt{G},\omega)$ est elliptique. 
Soit $\ES{E}_{\rm ell}=\ES{E}_{\rm ell}(\wt{G},\omega)$ le sous--ensemble de $\ES{E}(\wt{G},\omega)$ formé des triplets qui sont elliptiques, et soit $E_{\rm ell}= \ES{E}_{\rm ell}/{\Bbb U}\subset E(\wt{G},\omega)$. L'ensemble $E_{\rm ell}$ est stable par conjugaison dans $G(F)$, et le sous--espace $\bs{D}_{\rm ell}(\wt{G}(F),\omega)$ de $\bs{D}(\wt{G}(F),\omega)$ introduit en \ref{transfert spectral elliptique} est défini par
$$
\bs{D}_{\rm ell}(\wt{G}(F),\omega)= \bigoplus_{\tau\in E_{\rm ell}/{\rm conj.}} D_\tau.
\leqno{(2)}
$$

\vskip1mm
Rappelons qu'on a fixé une mesure positive $G(F)$--invariante à gauche et à droite $d\gamma$ sur $\wt{G}(F)$. Pour $\bs{\tau}=(M,\sigma,\tilde{\bs{r}})\in \ES{E}_{\rm ell}$, notons $\Theta_{\bs{\tau}}$ le caractère $\Theta_{\wt{\pi}_{\bs{\tau}}}$, \cad la distribution sur $\wt{G}(F)$ donnée par
$$
\Theta_{\bs{\tau}}(f)= {\rm tr}(\tilde{\pi}_{\bs{\tau}}(fd\gamma)),\quad f\in C^\infty_{\rm c}(\wt{G}(F).
$$
Pour chaque $\rho \in {\rm Irr}(\ES{R}^G(\sigma, \theta_{\ES{R}}))$, choisissons un prolongement
$\tilde{\rho}\in {\rm Irr}(\ES{R}^{\wt{G},\omega}(\sigma))$ de $\rho$. Par définition de $\tilde{\pi}_{\bs{\tau}}$, on a l'égalité
$$
\Theta_{\bs{\tau}}= \sum_{\rho\in {\rm Irr}(\ES{R}^G(\sigma,\theta_{\ES{R}}))} {\rm tr}(\tilde{\rho}(\tilde{\bs{r}}))\Theta_{\tilde{\pi}_{\tilde{\rho}}}.\leqno{(3)}
$$
Ainsi, ou bien $M=T$ et $\sigma$ est un caractère non ramifié unitaire de $T(F)$, auquel cas tous les caractères $\Theta_{\tilde{\pi}_{\tilde{\rho}}}$ apparaissant dans l'égalité (3) sont des éléments de $\bs{D}_{\rm ell}^{\rm nr}(\wt{G}(F), \omega)$; ou bien l'une des deux conditions précédentes n'est pas vérifiée, auquel cas tous les caractères $\Theta_{\tilde{\pi}_{\tilde{\rho}}}$ apparaissant dans l'égalité (3) sont des éléments de $\bs{D}_{\rm ell}^{\rm ram}(\wt{G}(F), \omega)$. Notons $\ES{E}_{\rm ell}^{\rm nr}$ le sous--ensemble de $\ES{E}_{\rm ell}$ formé des triplets $\bs{\tau}=(T,\lambda,\tilde{\bs{r}})$ tels que $\lambda$ est un caractère non ramifié (unitaire) de $T(F)$, et posons 
$\ES{E}_{\rm ell}^{\rm ram}=\ES{E}_{\rm ell}\smallsetminus 
\ES{E}_{\rm ell}^{\rm nr}$. Pour $*=\;{\rm nr},\, {\rm ram}$, posons $E_{\rm ell}^*= \ES{E}_{\rm ell}^*/{\Bbb U}$. On a donc les décompositions
$$
\bs{D}_{\rm ell}^{\rm nr}(\wt{G}(F),\omega)= \bigoplus_{\tau\in E_{\rm ell}^{\rm nr}/{\rm conj.}} D_\tau,\quad \bs{D}_{\rm ell}^{\rm ram}(\wt{G}(F),\omega)= \bigoplus_{\tau\in E_{\rm ell}^{\rm ram}/{\rm conj.}} D_\tau.
\leqno{(4)}
$$
Cela démontre en particulier la décomposition (1) de \ref{le principe}.

\vskip1mm Fixons un caractère unitaire $\mu$ de $A_{\smash{\wt{G}}}(F)$. Soit $\bs{D}_\mu(\wt{G}(F),\omega)$ le sous--espace de $\bs{D}(\wt{G}(F),\omega)$ engendré par les distributions $\Theta_{\tilde{\pi}}$ où 
$(\pi,\tilde{\pi})$ est une représentation de $(\wt{G}(F),\omega)$ telle que $\pi$ est une représentation irréductible et tempérée de $G(F)$ de caractère central $\mu_\pi:Z(G;F)\rightarrow {\Bbb C}^\times$ prolongeant $\mu$. Posons
$$
\bs{D}_{\mu,{\rm ell}}(\wt{G}(F),\omega)= 
 \bs{D}_\mu(\wt{G}(F),\omega)\cap \bs{D}_{\rm ell}(\wt{G}(F),\omega).
 $$ L'espace $\bs{D}_{\mu,{\rm ell}}(\wt{G}(F),\omega)$ est muni d'un produit scalaire hermitien défini positif, défini à l'aide de la mesure $d\gamma$ sur $\wt{G}(F)$ mais qui n'en dépend pas \cite[7.3]{W}. On note $(\cdot , \cdot )_{\mu,{\rm ell}}$ ce produit (dans loc. cit., il est noté sans l'indice $\mu$). Pour $\bs{\tau}=(M,\sigma,\tilde{\bs{r}})\in \ES{E}(\wt{G},\omega)$, la représentation $\tilde{\pi}_{\bs{\tau}}$ a un caractère central $\mu_{\bs{\tau}}: Z(G;F)\rightarrow {\Bbb C}^\times$, qui n'est autre que la restriction à $Z(G;F)$ du caractère central $\mu_\sigma:Z(M;F)\rightarrow {\Bbb C}^\times$ de $\sigma$. Ce caractère $\mu_\tau$ vérifie l'égalité 
$$
(\mu_{\bs{\tau}}\circ (1-\theta))\cdot \omega =1.\leqno{(5)}
$$
On note $\ES{E}_\mu(\wt{G},\omega)$ le sous--ensemble de $\ES{E}(\wt{G},\omega)$ formé des triplets $\bs{\tau}$ tels que $\omega_{\bs{\tau}}$ prolonge $\mu$. On définit de la même manière 
les ensembles $E_\mu(\wt{G},\omega)$, $E_{\mu,{\rm ell}}$, etc. Les décompositions (2) et (3) restent vraies si l'on remplace l'indice $?_{\rm ell}$ par l'indice $?_{\mu,{\rm ell}}$. Bien sûr on a aussi la décomposition
 $$
 \bs{D}_{\rm ell}(\wt{G}(F),\omega)= \bigoplus_{\mu'}\bs{D}_{\mu'\!,{\rm ell}}(\wt{G}(F),\omega)
 $$
 où $\mu'$ parcourt les caractères unitaires de $A_{\smash{\wt{G}}}(F)$. On munit $ \bs{D}_{\rm ell}(\wt{G}(F),\omega)$ du produit scalaire hermitien défini positif qui est la somme directe des $(\cdot ,\cdot)_{\mu',{\rm ell}}$. On note $(\cdot ,\cdot)_{\rm ell}$ ce produit. 
 D'après le théorème de \cite[7.3]{W}, on a:
\begin{itemize}
\item pour $\tau_1,\,\tau_2\in E_{\mu,{\rm ell}}$ tels que $\tau_1$ et $\tau_2$ ne sont pas conjugués dans $G(F)$, les droites $D_{\tau_1}$ et $D_{\tau_2}$ sont orthogonales;
\item pour $\tau=(M,\sigma, \tilde{r})\in E_{\mu,{\rm ell}}$, si $\bs{\tau}$ est un relèvement de $\tau$ dans $\ES{E}(\wt{G},\omega)$, on a l'égalité
$$
(\Theta_{\bs{\tau}},\Theta_{\bs{\tau}})_{\rm ell}= \vert {\rm Stab}(R^G(\sigma),\tilde{r}) \vert \vert \det(1-\tilde{r}; \ES{A}_M/\ES{A}_{\smash{\wt{G}}})\vert,\leqno{(6)}
$$
où ${\rm Stab}(R^G(\sigma),\tilde{r})$ est le stabilisateur de $\tilde{r}$ dans 
$R^G(\sigma)$ (agissant par conjugaison).
\end{itemize}

\begin{marema1}
{\rm Notons $C^\infty_{\rm c,cusp}(\wt{G}(F))$ le sous--espace de $C^\infty_{\rm c}(\wt{G}(F))$ formé des fonctions qui sont {\it cuspidales} au sens de \cite[7.1]{W}, \cad telles que pour tout $\wt{M}\in \ES{L}(\wt{T},\omega)\smallsetminus \{\wt{G}\}$ et toute paire parabolique $(P,M)$ de $G$, semi--standard et définie sur $F$, telle que $\wt{M}_P= \wt{M}$, l'image du terme constant $f_{\wt{P},\omega}$ dans 
$\bs{I}(\wt{M}(F),\omega)$ est nulle. On note $\bs{I}_{\rm cusp}(\wt{G}(F),\omega)$ l'image de 
$C^\infty_{\rm c,cusp}(\wt{G}(F),\omega)$ dans $\bs{I}(\wt{G}(F),\omega)$. Pour un caractère unitaire $\mu$ de $A_{\smash{\wt{G}}}(F)$, on note $C^\infty_{\mu}(\wt{G}(F))$ l'espace des fonctions 
sur $\wt{G}(F)$, localement constantes et à support compact modulo $A_{\smash{\wt{G}}}(F)$, se transformant suivant $\mu^{-1}$ sur $A_{\smash{\wt{G}}}(F)$. On note $C^\infty_{\mu,{\rm cusp}}(\wt{G}(F))$ et $\bs{I}_{\mu,{\rm cusp}}(\wt{G}(F),\omega)$ les variantes à caractère central des espaces $C^\infty_{\rm c,cusp}(\wt{G}(F))$ et $\bs{I}_{\rm cusp}(\wt{G}(F),\omega)$ --- cf. \cite[7.2]{W}. L'espace $\bs{I}_{\mu,{\rm cusp}}(\wt{G}(F),\omega)$ est muni d'un produit scalaire hermitien défini positif $(\cdot ,\cdot)_{\mu,{\rm ell}}$, obtenu de la manière suivante. Fixons une mesure de Haar $da$ sur $A_{\smash{\wt{G}}}(F)$. Pour $f_1,\, f_2\in C^\infty_{{\rm c},{\rm cusp}} (\wt{G}(F))$, on définit l'expression $J^{\wt{G}}(\omega,f_1,f_2)$ comme en \cite[4.17]{Stab I}:
$$
J^{\wt{G}}(\omega,f_1,f_2)=\int_{\wt{G}(F)_{\rm ell}/{\rm conj.}}d_\gamma^{-1}
{\rm vol}(A_{\smash{\wt{G}}}(F))\backslash G_\gamma(F))\overline{I^{\wt{G}}(\gamma,\omega,f_1)}I^{\wt{G}}(\gamma,\omega,f_2)d\gamma\leqno{(7)}
$$
où (rappel) $d_\gamma=[G^\gamma(F):G_\gamma(F)]$ --- on renvoie à loc.~cit. pour les autres définitions. D'après \cite[6.6.(1)]{W}, l'intégrale est absolument convergente. Bien sûr on peut remplacer les fonctions $f_1$ et $f_2$ par leurs images dans $\bs{I}_{\rm cusp}(\wt{G}(F),\omega)$. Notons que l'expression (7) ne dépend que de $dg$ et $da$, et précisément varie comme $dg^2da^{-1}$. L'application
$$
p_\mu: C^\infty_{\rm c}(\wt{G}(F))\rightarrow C^\infty_\mu(\wt{G}(F))
$$
définie par
$$
p_\mu(f)(\gamma)= \int_{A_{\smash{\wt{G}}}(F)}f(a\gamma)\mu(a)da,\quad \gamma\in \wt{G}(F),
$$
est surjective. Elle induit, par passage aux quotients, un homomorphisme surjectif
$$
\bs{p}_\mu: \bs{I}(\wt{G}(F),\omega)\rightarrow \bs{I}_\mu (\wt{G}(F),\omega)
$$
qui envoie $\bs{I}_{\rm cusp}(\wt{G}(F),\omega)$ sur $\bs{I}_{\mu,{\rm cusp}} (\wt{G}(F),\omega)$. 
Pour $\bs{f}_{\!1},\, \bs{f}_{\!2}\in \bs{I}_{\mu,{\rm cusp}}(\wt{G}(F),\omega)$, on choisit des éléments $\bs{h}_1,\,\bs{h}_2\in \bs{I}_{\rm cusp}(\wt{G}(F),\omega)$ tels que $\bs{p}_\mu(\bs{h}_i)=\bs{f}_{\!i}$ 
($i=1,\,2$), et l'on pose
$$
(\bs{f}_{\!1},\bs{f}_{\!2})_{\mu,{\rm ell}}=\int_{A_{\smash{\wt{G}}}(F)}J^{\wt{G}}(\omega,\bs{h}_1,\bs{h}_2^{[a]})\mu(a)da; \leqno{(8)}
$$ 
où l'élément $\bs{h}_2^{[a]}\in \bs{I}_{\rm cusp}(\wt{G}(F),\omega)$ est défini par
$$
\bs{h}_2^{[a]}(\gamma) = \bs{h}_2(\gamma a)= \bs{h}_2(a\gamma),\quad \gamma\in \wt{G}(F).
$$
L'expression (8) est bien définie --- elle ne dépend pas des choix de $\bs{h}_1$ et $\bs{h}_2$ --- et c'est un produit scalaire hermitien défini positif sur $\bs{I}_{\mu,{\rm cusp}}(\wt{G}(F),\omega)$. De plus, ce produit ne dépend que de $dg$ et $da$, et varie comme $dg^2da^{-2}$. 
L'espace $\bs{D}_{\mu,{\rm ell}}(\wt{G}(F),\omega)$ s'identifie, de fa\c{c}on naturelle, à un sous--espace du dual linéaire de $\bs{I}_{\mu,{\rm cusp}}(\wt{G}(F),\omega)$: pour $\Theta\in \bs{D}_{\mu,{\rm ell}}(\wt{G}(F),\omega)$ et $\bs{f}\in \bs{I}_{\mu,{\rm cusp}}(\wt{G}(F),\omega)$, on choisit un élément $\bs{h}\in \bs{I}_{\rm cusp}(\wt{G}(F),\omega)$ tel que $\bs{p}_\mu(\bs{h})=\bs{f}$ et l'on pose $\langle \Theta, \bs{f}\rangle_\mu = \Theta(\bs{h})$. Pour $\Theta\in \bs{D}_{\mu,{\rm ell}}(\wt{G}(F),\omega)$, il existe un unique $\iota_\mu(\Theta)\in \bs{I}_{\mu,{\rm cusp}}(\wt{G}(F),\omega)$ tel que
$$
\langle \Theta,\bs{f}\rangle_\mu= (\iota_\mu(\Theta), \bs{f})_{\mu,{\rm ell}},\quad \bs{f}\in \bs{I}_{\mu,{\rm cusp}}(\wt{G}(F),\omega).
$$
L'application
$$
\iota_\mu : \bs{D}_{\mu,{\rm ell}}(\wt{G}(F),\omega)\rightarrow \bs{I}_{\mu,{\rm cusp}}(\wt{G}(F),\omega)
$$
ainsi définie est un isomorphisme antilinéaire. Il dépend de $dg$ et $da$, et varie comme $da dg^{-1}$. De plus, d'après \cite[7.2, 7.3]{W}, pour 
$\Theta_1,\, \Theta_2\in \bs{D}_{\mu,{\rm ell}}(\wt{G}(F),\omega)$, on a l'égalité
$$
(\iota_\mu(\Theta_2),\iota_\mu(\Theta_1))_{\mu,{\rm ell}}=  (\Theta_1,\Theta_2)_{\rm ell}.
\leqno{(9)}
$$
}
\end{marema1}

\begin{marema2}
{\rm Supposons que $(\wt{G},\bs{a})$ est à torsion intérieure, \cad que l'espace tordu $(G,\wt{G})$ est à torsion intérieure, et que le caractère $\omega$ est trivial. On supprime $\omega$ dans les notations précédentes, et l'on note $\bs{SI}_{\rm cusp}(\wt{G}(F))$ le sous--espace de $\bs{SI}(\wt{G}(F))$ formé des images des fonctions $f\in C^\infty_{\rm c,cusp}(\wt{G}(F))$. On verra plus loin (cf. la remarque 3 de \ref{décomposition endoscopique}) que l'espace $\bs{SI}_{\rm cusp}(\wt{G}(F))$ s'identifie à un sous--espace de $\bs{I}_{\rm cusp}(\wt{G}(F))$. Pour un caractère unitaire $\mu$ de $A_{\smash{\wt{G}}}(F)$, avec une définition naturelle de l'espace $\bs{SI}_\mu(\wt{G}(F))$, on note 
$\bs{SI}_{\mu,{\rm cusp}}(\wt{G}(F))$ le sous--espace de $\bs{SI}_\mu(\wt{G}(F))$ formé des images des fonctions $f\in C^\infty_{\mu,{\rm cusp}}(\wt{G}(F))$. Tout comme pour $\bs{SI}_{\rm cusp}(\wt{G}(F))$, l'espace $\bs{SI}_{\mu,{\rm cusp}}(\wt{G}(F))$ s'identifie à un sous--espace de $\bs{I}_{\mu,{\rm cusp}}(\wt{G}(F))$ --- voir plus loin (\ref{variante à caractère central}, lemme 1). Notons $\bs{SD}_{\mu,{\rm ell}}(\wt{G}(F))$ le sous--espace de $\bs{D}_{\mu,{\rm ell}}(\wt{G}(F))$ formé des distributions qui sont stables. L'isomorphisme antilinéaire $\iota_\mu: \bs{D}_{\mu,{\rm ell}}(\wt{G}(F))\rightarrow \bs{I}_{\mu,{\rm cusp}}(\wt{G}(F))$ de la remarque 1 induit par restriction un isomorphisme antilinéaire
$$
{\jmath}_\mu:\bs{SD}_{\mu,{\rm ell}}(\wt{G}(F))\rightarrow \bs{SI}_{\mu,{\rm cusp}}(\wt{G}(F)).
$$
Les produits scalaires hermitiens sur $\bs{D}_{\mu,{\rm ell}}(\wt{G}(F))$ et 
$\bs{I}_{\mu,{\rm cusp}}(\wt{G}(F))$ se restreignent en des produits scalaires hermitiens sur 
$\bs{SD}_{\mu,{\rm ell}}(\wt{G}(F))$ et 
$\bs{SI}_{\mu,{\rm cusp}}(\wt{G}(F))$, vérifiant l'égalité (9).
}
\end{marema2}

\subsection{DŽcomposition endoscopique de l'espace $\bs{D}_{\rm ell}(\wt{G}(F),\omega)$}
\label{décomposition endoscopique}
On a vu (\ref{transfert spectral} et \ref{transfert spectral elliptique}) que si $\bs{G}'=(G',\ES{G}',\tilde{s})$ est une donnée endoscopique elliptique non ramifiée pour $(\wt{G},\bs{a})$, on a un homomorphisme de transfert spectral (usuel)
$$
\bs{\rm T}_{{\bs G}'}:\bs{SD}(\wt{G}'(F))\rightarrow \bs{D}(\wt{G}(F),\omega),
$$
qui envoie $\bs{SD}_{\rm ell}(\wt{G}'(F))$ dans $\bs{D}_{\rm ell}(\wt{G}(F),\omega)$. 

Pour les données endoscopiques elliptiques relevantes pour $(\wt{G},\bs{a})$ qui sont {\it ramifiées}, \cad qui ne sont pas non ramifiées, on dispose encore d'un tel homomorphisme, mais sa définition est moins directe que dans le cas non ramifié. Soit $\bs{G}'=(G',\ES{G}',\tilde{s})$ une telle donnée. Choisissons des données auxiliaires $G'_1$, $\wt{G}'_1$, $C_1$, $\hat{\xi}_1$, et un facteur de transfert $\Delta_1: \ES{D}_1\rightarrow {\Bbb C}^\times$ comme en \cite[2.1]{Stab I}. Rappelons que $\ES{D}_1$ est l'ensemble des couples $(\delta_1,\gamma)\in 
\wt{G}'_1(F)\times \wt{G}(F)$ tels que $(\delta,\gamma)\in \ES{D}(\bs{G}')$, où $\delta$ est l'image 
de $\delta_1$ dans $\wt{G}'(F)$. Pour $c_1\in C_1(F)$ et $g\in G(F)$, on a l'égalité
$$
\Delta_1(c_1\delta_1, g^{-1}\gamma g)= \lambda_1(c_1)^{-1}\omega(g)\Delta(\delta_1,\gamma),\quad 
(\delta_1,\gamma)\in \ES{D}_1.
$$
Choisissons aussi des mesures de Haar $dg$ sur $G(F)$ et $dg'$ sur $G'(F)$. Ces choix permettent de définir, comme en \cite[2.4]{Stab I}, un homomorphisme de transfert géométrique
$$
\bs{I}(\wt{G}(F),\omega)\rightarrow \bs{SI}_{\lambda_1}(\wt{G}'_1(F)),\, \bs{f}\mapsto \bs{f}^{\wt{G}'_1}.\leqno{(1)}
$$
Ici $\lambda_1$ est le caractère de $C_1(F)$ défini par le plongement $\hat{\xi}_1: \ES{G}'\rightarrow {^L{G_1}}$ (cf. \cite[2.1]{Stab I}), et $\bs{SI}_{\lambda_1}(\wt{G}'_1(F))$ est le quotient de 
$C^\infty_{\lambda_1}(\wt{G}'_1(F))$ par le sous--espace annulé par les distributions 
$S^{\wt{G}'_1\!}(\delta_1,\cdot)$ pour $\delta_1\in \wt{G}'(F)$ fortement régulier; 
où $C^\infty_{\lambda_1}(\wt{G}'_1(F))$ est l'espace des fonctions sur $\wt{G}'_1(F)$ qui sont localement constantes et à support compact modulo $C_1(F)$, et se transforment suivant $\lambda_1^{-1}$ sur $C_1(F)$.

\begin{marema1}
{\rm 
Pour définir l'homomorphisme de transfert géométrique (1), on a choisi des mesures de Haar $dg$ sur $G(F)$ et $dg'$ sur $G'(F)$. Pour éviter d'avoir à choisir ces mesures, on peut y incorporer les espaces ${\rm Mes}(G(F))$ et ${\rm Mes}(G'(F))$, où ${\rm Mes}(G(F))$ désigne la droite complexe portée par une mesure de Haar sur $G(F)$, et voir le transfert comme une application linéaire \cite[2.4]{Stab I}
$$
\bs{I}(\wt{G}(F),\omega)\otimes {\rm Mes}(G(F))\rightarrow \bs{SI}_{\lambda_1}(\wt{G}'_1(F))\otimes {\rm Mes}(G'(F)).
$$
}
\end{marema1}

\begin{marema2}
{\rm Pour une donnée $\bs{G}'$ non ramifiée, on retrouve bien sûr l'homomorphisme de transfert géométrique (1) de \ref{transfert nr} en prenant $G'_1=G'$, $\wt{G}'_1=\wt{G}'$, $\hat{\xi}_1=\hat{\xi}: \ES{G}'\buildrel\simeq\over{\longrightarrow} {^LG}$, et en normalisant les mesures de Haar $dg$ sur $G(F)$ et $dg'$ sur $G'(F)$, et le facteur de transfert $\Delta: \ES{D}(\bs{G}')\rightarrow {\Bbb C}$, à l'aide du sous--espace hyperspécial $(K,\wt{K})$ de $G(F)$. Pour une donnée $\bs{G}'$ ramifiée, on peut aussi, comme dans \cite[2.5]{Stab I}, s'affranchir des données auxiliaires $G'_1$, $\wt{G}'_1$, $C_1$, $\hat{\xi}_1$ et $\Delta_1$ en prenant la limite inductive des espaces $\bs{SI}_{\lambda_1}(\wt{G}'(F))$ sur toutes ces données. Mais nous n'en avons pas vraiment besoin ici.
}
\end{marema2}

On fixe un ensemble de représentants $\mathfrak{E}=\mathfrak{E}(\wt{G},\bs{a})$ des classes d'isomorphisme de données endoscopiques elliptiques et relevantes pour $(\wt{G},\bs{a})$. On note 
$\mathfrak{E}_{\rm nr}$ le sous--ensemble de $\mathfrak{E}$ formé des données qui sont non ramifiées, et l'on pose $\mathfrak{E}_{\rm ram}= \mathfrak{E}\smallsetminus \mathfrak{E}_{\rm nr}$. 
Pour chaque $\bs{G}'\in \mathfrak{G}_{\rm ram}$, on fixe aussi des données auxiliaires 
$G'_1,\,\ldots ,\,\hat{\xi}_1$ de sorte que le caractère $\lambda_1$ de $C_1(F)$ soit unitaire, un facteur de transfert $\Delta_1:\ES{D}_1\rightarrow {\Bbb C}^\times$, et une mesure de Haar $dg'$ sur $G'(F)$. Sur $G(F)$, on prend la mesure Haar $dg$ normalisée par $K$. On note $\bs{D}_{\lambda_1}(\wt{G}'_1(F))$ l'espace des formes linéaires sur l'espace $C^\infty_{\lambda_1}(\wt{G}'_1(F))$ qui sont des combinaisons linéaires (finies, à coefficients complexes) de traces $\Theta_{\tilde{\pi}'_1}$ où $(\pi'_1,\tilde{\pi}'_1)$ est une représentation de $\wt{G}'_1(F)$ telle que $\pi'_1$ est une représentation irréductible et tempérée de $G'_1(F)$ se transformant suivant le caractère $\lambda_1$ sur $C_1(F)$. Pour une telle représentation $(\pi'_1,\tilde{\pi}'_1)$ de $\wt{G}'_1(F)$, la forme linéaire $\Theta_{\tilde{\pi}'_1}$ sur 
$C^\infty_{\lambda_1}(\wt{G}'_1(F))$ est définie par 
$$
\Theta_{\tilde{\pi}'_1}(f'_1)= {\rm tr}(\tilde{\pi}'_1(f'_1d\delta)),\quad 
f'_1\in C^\infty_{\lambda_1}(\wt{G}'_1(F)),
$$
où $d\delta$ est la mesure positive, $G'(F)$--invariante à gauche et à droite, sur $\wt{G}'(F)$ 
associée à $dg'$, et $\tilde{\pi}'_1(f'_1d\delta)$ est l'opérateur sur l'espace de $\pi'_1$ donné par
$$
\tilde{\pi}'_1(f'_1d\delta)= \int_{\wt{G}'(F)}f'_1(\delta)\tilde{\pi}'_1(\delta)d\delta.
$$
Puisque sur $C_1(F)$, la fonction $f'_1$ se transforme suivant $\lambda_1^{-1}$ et la représentation $\tilde{\pi}'_1$ se transforme suivant $\lambda_1$, l'intégrale ci--dessus est bien définie (d'ailleurs c'est une somme finie). 
L'espace $\bs{D}_{\lambda_1}(\wt{G}'_1(F))$ est un sous--espace de $\bs{D}(\wt{G}'_1(F))$. On pose
$$
\bs{SD}_{\lambda_1}(\wt{G}'_1(F))= \bs{D}_{\lambda_1}(\wt{G}'_1(F)
\cap \bs{SD}(\wt{G}'_1(F))$$
et
$$\bs{SD}_{\lambda_1,{\rm ell}}(\wt{G}'_1(F))= \bs{D}_{\lambda_1}(\wt{G}'_1(F)
\cap \bs{SD}_{\rm ell}(\wt{G}'_1(F)).$$
D'après \cite{Moe}, le transfert géométrique (1) définit dualement un homomorphisme de transfert spectral
$$
\bs{\rm T}_{\bs{G}'}:\bs{SD}_{\lambda_1}(\wt{G}'_1(F))\rightarrow \bs{D}(\wt{G}(F),\omega),\leqno{(2)}
$$
qui envoie $\bs{SD}_{\lambda_1,{\rm ell}}(\wt{G}'_1(F))$ dans $\bs{D}_{\rm ell}(\wt{G}(F),\omega)$. Comme dans le cas non ramifié, on note $\bs{\rm T}_{\bs{G}',{\rm ell}}$ la restriction de $\bs{\rm T}_{\bs{G}'}$ à $\bs{SD}_{\lambda_1,{\rm ell}}(\wt{G}'_1(F))$. 

Pour $\bs{G}'\in \mathfrak{E}$, on pose
$$
\bs{SD}_{\rm ell}(\bs{G}')=\left\{
\begin{array}{ll}
\bs{SD}_{\rm ell}(\wt{G}'(F))& \mbox{si $\bs{G}'\in \mathfrak{E}_{\rm nr}$}\\
\bs{SD}_{\lambda_1,{\rm ell}}(\wt{G}'_1(F)) & \mbox{si $\bs{G}'\in \mathfrak{E}_{\rm ram}$}
\end{array}
\right..
$$
De même, on pose
$$
\bs{SI}_{\rm cusp}(\bs{G}')=\left\{
\begin{array}{ll}
\bs{SI}_{\rm cusp}(\wt{G}'(F))& \mbox{si $\bs{G}'\in \mathfrak{E}_{\rm nr}$}\\
\bs{SI}_{\lambda_1,{\rm cusp}}(\wt{G}'_1(F)) & \mbox{si $\bs{G}'\in \mathfrak{E}_{\rm ram}$}
\end{array}
\right.,
$$
où l'indice ``cusp'' désigne le sous--espace formé des images dans $\bs{SI}(\wt{G}'(F))$ ou 
$\bs{SI}_{\lambda_1}(\wt{G}'_1(F))$ des fonctions dans $C^\infty_{\rm c}(\wt{G}'(F))$ ou $C^\infty_{\lambda_1}(\wt{G}'_1(F))$ qui sont cuspidales. 
D'après la proposition de \cite[4.11]{Stab I}, les homomorphismes de transfert géométrique (1) pour $\bs{G}\in \mathfrak{E}$, induisent un isomorphisme
$$
\bs{I}_{\rm cusp}(\wt{G}(F),\omega) \rightarrow \bigoplus_{\bs{G}'\in \mathfrak{E}}\bs{SI}_{\rm cusp}(\bs{G}')^{{\rm Aut}(\bs{G}')},\, \bs{f}\mapsto \oplus_{\bs{G}'} \bs{f}^{\bs{G}'},\leqno{(3)}
$$
où $X^{{\rm Aut}(\bs{G}')}\subset X$ est le sous--espace des invariants sous ${\rm Aut}(\bs{G}')$, et où l'on a posé $\bs{f}^{\bs{G}'}= \bs{f}^{\wt{G}'_1}$. 

\begin{marema3}
{\rm 
Dans le cas où $(\wt{G},\bs{a})$ est à torsion intérieure (cf. la remarque 2 de \ref{triplets elliptiques essentiels}), l'isomorphisme (3) identifie l'espace $\bs{SI}_{\rm cusp}(\bs{G}')= \bs{SI}_{\rm cusp}(\wt{G}(F))$ à un sous--espace de $\bs{I}_{\rm cusp}(\wt{G}(F))$. C'est le sous--espace formé 
des images des fonctions $f\in C^\infty_{c,{\rm cusp}}(\wt{G}(F))$ dont les intégrales orbitales sont constantes sur toute classe de conjugaison stable fortement régulière.
}
\end{marema3}

Dualement, on obtient que les homomorphismes de transfert spectral elliptique $\bs{\rm T}_{{\bs G}'\!,{\rm ell}}$ pour $\bs{G}'\in \mathfrak{E}$, induisent un homomorphisme
$$
\oplus_{{\bs G}'\in \mathfrak{G}}\bs{\rm T}_{{\bs G}'\!,{\rm ell}}:\bigoplus_{\bs{G}'\in \mathfrak{G}}\bs{SD}_{\rm ell}(\bs{G}')_{{\rm Aut}(\bs{G}')}\rightarrow \bs{D}_{\rm ell}(\wt{G}'(F),\omega),\leqno{(4)}
$$ 
où $X_{{\rm Aut}(\bs{G}')}$ est l'espace des coinvariants de ${\rm Aut}(\bs{G}')$ dans $X$. Puisque le groupe ${\rm Aut}(\bs{G}')$ agit sur $\bs{SD}_{\rm ell}(\bs{G}')$ par un quotient fini, l'espace des coinvariants $\bs{SD}_{\rm ell}(\bs{G}')_{{\rm Aut}(\bs{G}')}$ s'identifie au sous--espace des invariants $\bs{SD}_{\rm ell}(\bs{G}')^{{\rm Aut}(\bs{G}')}\subset \bs{SD}_{\rm ell}(\bs{G}')$, et l'on a une projection naturelle $\bs{SD}_{\rm ell}(\wt{G}'(F))\rightarrow \bs{SD}_{\rm ell}(\wt{G}'(F))^{{\rm Aut}(\bs{G}')}$. D'après \cite{Moe} --- voir aussi \ref{variante à caractère central} ---, l'homomorphisme (4) est un isomorphisme. En d'autres termes, pour $\bs{G}'\in \mathfrak{G}$, le transfert spectral elliptique $\bs{\rm T}_{{\bs G}'\!,{\rm ell}}: \bs{SD}_{\rm ell}(\bs{G}')\rightarrow \bs{D}_{\rm ell}(\wt{G}(F),\omega)$ se factorise à travers la projection sur $\bs{SD}_{\rm ell}(\bs{G}')^{{\rm Aut}(\bs{G}')}$, et toute distribution $\Theta\in \bs{D}_{\rm ell}(\wt{G}(F),\omega)$ se décompose de manière unique en
$$
\Theta = \sum_{{\bs G}'\in \mathfrak{G}}\bs{\rm T}_{{\bs G}'}(\Theta^{{\bs G}'})\leqno{(5)}
$$ 
où $\Theta^{{\bs G}'}$ est un élément de 
$\bs{SD}_{\rm ell}(\bs{G}')^{{\rm Aut}({\bs G}')}$. 

\begin{marema4}
{\rm Rappelons que l'on a noté $G_\sharp$ le groupe $G/Z(G)^\theta$. On a vu (\ref{données endoscopiques nr}) qu'à une donnée $\bs{G}'\in \mathfrak{E}_{\rm nr}$ est associé un caractère $\omega'=\omega_{{\bs G}'}$ de $G_\sharp(F)$ vérifiant l'égalité (3) de \ref{données endoscopiques nr}. De la même manière \cite[2.7]{Stab I}, à une donnée $\bs{G}'\in \mathfrak{E}_{\rm ram}$ est associé un caractère $\omega_{{\bs G}'}$ de $G_\sharp(F)$ vérifiant l'égalité
$$
\Delta(\delta_1,{\rm Int}_{g^{-1}}(\gamma))= \omega_{\bs{G}'}(g)\Delta_1(\delta_1,\gamma), \quad (\delta_1,\gamma)\in \ES{D}_1,\, g\in G_\sharp(F).
$$
On en déduit que pour $\bs{G}'\in \mathfrak{E}$ et $\Theta'\in \bs{SD}_{\rm ell}(\bs{G}')$, le transfert $\bs{\rm T}_{{\bs G}'}(\Theta')$ est une distribution sur $\wt{G}(F)$ vérifiant l'égalité
$$
\bs{\rm T}_{{\bs G}'}(\Theta')({^g\!f})= \omega_{{\bs G}'}(g)\bs{\rm T}_{{\bs G}'}(\Theta')(f),\quad 
f\in C^\infty_{\rm c}(\wt{G}(F)),\, g\in G_\sharp(F);
$$
où l'on a posé ${^g\!f}= f\circ {\rm Int}_{g^{-1}}$. Par conséquent, si $\Theta\in \bs{D}_{\rm ell}(\wt{G}(F),\omega)$ est une distribution sur $\wt{G}(F)$ se transformant suivant un caractère $\xi$ de $G_\sharp(F)$ sous l'action de $G_\sharp(F)$ par conjugaison, \cad telle que $\Theta({^g\!f})=\xi(g)\Theta(f)$ pour tout $f\in C^\infty_{\rm c}(\wt{G}(F))$ et tout $g\in G_\sharp(F)$, alors seules les données $\bs{G}'\in \mathfrak{E}$ telles que $\omega_{{\bs G}'}=\xi$ peuvent donner une contribution non triviale à la décomposition (5) de $\Theta$.
On sait, d'après \cite[2.1.(3)]{Stab VII}\footnote{Le numéro 2.1 de \cite{Stab VII} est écrit sous certaines hypothèses sur $F$, mais le résultat cité n'en dépend pas.} appliqué à $(\wt{M},\bs{a}_M)=(\wt{G},\bs{a})$, que pour $\bs{G}'\in \mathfrak{E}$, le caractère $\omega_{{\bs G}'}$ de $G_\sharp(F)$ est non ramifié si et seulement si $\bs{G}'\in \mathfrak{E}_{\rm nr}$. On obtient en particulier que si $\Theta\in \bs{D}_{\rm ell}(\wt{G}(F),\omega)$ est une distribution sur $\wt{G}(F)$ se transformant suivant un caractère {\it non ramifié} $\xi$ de $G_\sharp(F)$ sous l'action de $G_\sharp(F)$ par conjugaison, alors seules les données $\bs{G}'\in \mathfrak{E}_{\rm nr}$ (telles que $\omega_{{\bs G}'}=\xi$) peuvent donner une contribution non triviale à la décomposition 
(5) de $\Theta$.
}
\end{marema4}

\subsection{Variante à caractère central}\label{variante à caractère central}

\vskip1mm
On a aussi une variante à caractère central des isomorphismes (3) et (4) de 
\ref{décomposition endoscopique}. Soit $\mathcal{Z}$ un sous--groupe fermé de $Z(G;F)^\theta$ --- par exemple le groupe $A_{\smash{\wt{G}}}(F)$ ---, que l'on munit d'une mesure de Haar $dz$. Soit $\mu$ un caractère unitaire de $\mathcal{Z}$. On reprend, en les affublant d'un indice $\mathcal{Z}$, les définitions de la remarque 1 de \ref{triplets elliptiques essentiels}. On définit l'espace $C^\infty_{\mathcal{Z},\mu}(\wt{G}(F))$ des fonctions sur $\wt{G}(F)$, localement constantes et à support compact modulo $\mathcal{Z}$, telles que $f^{[z]}(\gamma)=\mu(z)^{-1}f(\gamma)$ pour tout $(z,\gamma)\in \mathcal{Z}\times \wt{G}(F)$, où l'on a posé $f^{[z]}(\gamma)= f(z\gamma)=f(\gamma z)$. On note $\bs{I}_{\mathcal{Z},\mu}(\wt{G}(F),\omega)$ le quotient de $C^\infty_{\mathcal{Z},\mu}(\wt{G}(F))$ par le sous--espace des fonctions dont toutes les $\omega$--intégrales orbitales fortement régulières sont nulles. On note $C^\infty_{\mathcal{Z},\mu,{\rm cusp}}(\wt{G}(F))$ le sous--espace de 
$C^\infty_{\mathcal{Z},\mu}(\wt{G}(F))$ formé des fonctions qui sont cuspidales, et $\bs{I}_{\mathcal{Z},\mu,{\rm cusp}}(\wt{G}(F),\omega)$ sa projection sur $\bs{I}_{\mathcal{Z},\mu}(\wt{G}(F),\omega)$. 
L'application
$$p_{\mathcal{Z},\mu}: C^\infty_{\rm c}(\wt{G}(F))\rightarrow C^\infty_{\mathcal{Z},\mu}(\wt{G}(F))
$$ définie par
$$
p_{\mathcal{Z},\mu}(f)(\gamma)= \int_{\mathcal{Z}}f(z\gamma)\mu(z)dz,\quad \gamma\in \wt{G}(F),
$$
est surjective. Elle induit, par passage aux quotients, un homomorphisme surjectif
$$
\bs{p}_{\mathcal{Z},\mu}: \bs{I}(\wt{G}(F),\omega)\rightarrow \bs{I}_{\mathcal{Z},\mu} (\wt{G}(F),\omega)
$$
qui envoie $\bs{I}_{\rm cusp}(\wt{G}(F),\omega)$ sur $\bs{I}_{\mathcal{Z},\mu,{\rm cusp}} (\wt{G}(F),\omega)$. 

On définit le sous--espace $\bs{D}_{\mathcal{Z},\mu}(\wt{G}(F),\omega)\subset \bs{D}(\wt{G}(F),\omega)$ engendré par les traces $\Theta_{\tilde{\pi}}$ des représentations $G(F)$--irréductibles tempérées $\tilde{\pi}$ de $(\wt{G}(F),\omega)$ telles que $\tilde{\pi}(z\gamma)= \mu(z)\tilde{\pi}(\gamma)$ pour tout $(z,\gamma)\in \mathcal{Z}\times  \wt{G}(F)$. On pose
$$\bs{D}_{\mathcal{Z},\mu,{\rm ell}}(\wt{G}(F),\omega)=
\bs{D}_{\mathcal{Z},\mu}(\wt{G}(F),\omega)\cap \bs{D}_{\rm ell}(\wt{G}(F),\omega).
$$
Pour $\Theta\in \bs{D}_{\mathcal{Z},\mu}(\wt{G}(F),\omega)$ et $\bs{f}\in \bs{I}_{\mathcal{Z},\mu}(\wt{G}(F),\omega)$, on choisit un élément $\bs{h}\in \bs{I}(\wt{G}(F),\omega)$ tel que $\bs{p}_{\mathcal{Z},\mu}(\bs{h})=\bs{f}$, et l'on pose 
$\langle \Theta,\bs{f}\rangle_{\mathcal{Z},\mu}= \Theta(\bs{h})$. Si $\Theta= \Theta_{\wt{\pi}}$ pour une représentation $(\pi,\tilde{\pi})$ de $(\wt{G}(F),\omega)$ telle que $\pi$ est irréductible, tempérée, et de caractère central prolongeant $\mu$, alors pour toute fonction $f\in C^\infty_{\mathcal{Z},\mu}(\wt{G}(F))$ se projetant sur $\bs{f}\in \bs{I}_{\mathcal{Z},\mu}(\wt{G}(F),\omega)$, on a simplement
$$
\langle \Theta, \bs{f}\rangle_{\mathcal{Z},\mu}= {\rm trace}\left(\int_{\mathcal{Z}\backslash \wt{G}(F)}\tilde{\pi}(\gamma)f(\gamma) {d\gamma\over dz}\right).
$$
L'application $\Theta\mapsto \langle \Theta ,\cdot \rangle_{\mathcal{Z},\mu}$ identifie 
$\bs{D}_{\mathcal{Z},\mu,{\rm ell}}(\wt{G}(F),\omega)$ à un sous--espace du dual linéaire de l'espace $\bs{I}_{\mathcal{Z},\mu,{\rm cusp}}(\wt{G}(F),\omega)$.

Soit $\bs{G}'\in \mathfrak{E}$. Rappelons que l'on a fixé des données auxiliaires 
$G'_1,\,\ldots ,\,\hat{\xi}_1$ de sorte que le caractère $\lambda_1$ de $C_1(F)$ soit unitaire, un facteur de transfert $\Delta_1:\ES{D}_1\rightarrow {\Bbb C}^\times$, et une mesure de Haar $dg'$ sur $G'(F)$ --- si $\bs{G}'\in \mathfrak{E}_{\rm nr}$, on a $G'_1=G'$ et $\lambda_1=1$. On a un homomorphisme naturel $Z(G)\rightarrow Z(G')$, de noyau $(1-\theta)(Z(G))$. On peut former le produit fibré
$$\mathfrak{Z}= Z(G'_1)\times_{Z(G')}Z(G).
$$ 
Un élément de $\mathfrak{Z}(F)$ est une paire $(z'_1,z)$ dans $Z(G'_1;F)\times Z(G;F)$ telle que les images de $z'_1$ et $z$ dans $Z(G')$ co\"{\i}ncident. L'application $c_1\mapsto (c_1,1)$ identifie $C_1(F)$ à un sous--groupe de $\mathfrak{Z}(F)$, et il existe un caractère unitaire $\lambda_\mathfrak{Z}$ de $\mathfrak{Z}(F)$ prolongeant $\lambda_1$ tel que
$$
\Delta_1(z'_1\delta_1, z \gamma)= \lambda_\mathfrak{Z}(z'_1,z)^{-1}\Delta_1(\delta,\gamma),\quad (\delta_1,\gamma)\in \ES{D}_1,\, (z'_1,z)\in \mathfrak{Z}(F).
$$

\begin{marema}
{\rm On veut définir un transfert géométrique pour des fonctions localement constantes sur $\wt{G}(F)$ qui sont non plus à support compact, mais dans l'espace $C^\infty_{\mathcal{Z},\mu}(\wt{G}(F))$. 
Le cas qui nous intéresse tout particulièrement (voir \ref{produits scalaires elliptiques}) 
est celui où $\mathcal{Z}\cap A_{\smash{\wt{G}}}(F)$ est d'indice fini dans $A_{\smash{\wt{G}}}(F)$. Notons que l'homomorphisme naturel $Z(G)\rightarrow Z(G')$ induit par restriction un homomorphisme $A_{\smash{\wt{G}}}\rightarrow A_{G'}$ qui n'est en général pas injectif,  mais seulement de noyau fini.
}
\end{marema}

On note $\mathcal{Z}'$ l'image de $\mathcal{Z}$ dans $Z(G';F)$ par l'homomorphisme naturel $Z(G)\rightarrow Z(G')$, et $\mathcal{Z}'_1$ son image réciproque dans $Z(G'_1;F)$. On munit $G'_1(F)$ et $\mathcal{Z}'_1$ de mesures de Haar $dg'_1$ et $dz'_1$. On forme le produit fibré
$$
\mathfrak{Z}_{\mathcal{Z}}= \mathcal{Z}'_1\times_{\mathcal{Z}'}\mathcal{Z}\subset \mathfrak{Z}(F).
$$
Continuons avec le caractère unitaire $\mu$ de $\mathcal{Z}$ fixé plus haut. L'application
$$
(z'_1,z)\mapsto \lambda_{\mathfrak{Z}}(z'_1,z)\mu(z)
$$
est un caractère unitaire de $\mathfrak{Z}_{\mathcal{Z}}$. S'il ne se factorise pas par la projection $(z'_1,z)\mapsto z'_1$, on dit que le transfert à $\wt{G}'_1(F)$ de tout élément de $C^\infty_{\mathcal{Z},\mu}(\wt{G}(F))$ est nul. Supposons qu'il se factorise en un caractère $\mu'_1$ de $\mathcal{Z}'_1$. Avec une définition naturelle des espaces et $C^\infty_{\mathcal{Z}'_1,\mu'_1}(\wt{G}'_1(F))$ et $\bs{SI}_{\smash{\mathcal{Z}'_1,\mu'_1}}(\wt{G}'_1(F))$ --- cf. \ref{décomposition endoscopique} ---, 
on définit l'homomorphisme de transfert géométrique
$$
\bs{I}_{\mathcal{Z},\mu}(\wt{G}(F),\omega)\mapsto \bs{SI}_{\smash{\mathcal{Z}'_1,\mu'_1}}(\wt{G}'_1(F)),\, 
\bs{f}\mapsto \bs{f}^{\wt{G}'_1},\leqno{(1)}
$$
par la formule habituelle: pour $f\in C^\infty_{\mathcal{Z},\mu}(\wt{G}(F))$ et $f'_1\in C^\infty_{\mathcal{Z}'_1,\mu'_1}(\wt{G}'_1(F))$, on dit que $f'_1$ est un transfert de $f$ si pour tout 
$\delta_1\in \wt{G}'_1(F)$ dont l'image dans $\wt{G}'(F)$ est un élément fortement $\wt{G}$--régulier, on a l'égalité
$$
S^{\wt{G}'_1}(\delta_1,f'_1)= d_\theta^{1/2}\sum_\gamma d_\gamma^{-1}\Delta_1(\delta_1,\gamma)I^{\wt{G}}(\gamma,\omega,f)\leqno{(2)}
$$
où $\gamma$ parcourt les éléments de $\wt{G}(F)$ tels que $(\delta_1,\gamma)\in \ES{D}_1$, modulo conjugaison par $G(F)$. Pour que la formule (2) ait un sens, on a besoin de mesures. 
Pour $\gamma\in \wt{G}(F)$ tel que $(\delta_1,\gamma)\in \ES{D}_1$, on munit l'espace quotient 
$\mathcal{Z}\backslash G_\gamma(F)$ d'une mesure $dh$. On a une application
$$
\mathcal{Z}\backslash G_\gamma(F)\rightarrow \mathcal{Z}'_1\backslash G'_{1,\delta_1}(F)
$$
qui est un isomorphisme local, et on munit l'espace d'arrivée de la mesure $dh'_1$ déduite de $dh$. Alors
$$G_\gamma(F)\backslash G(F)\simeq (\mathcal{Z}\backslash G_\gamma(F))\backslash (\mathcal{Z}\backslash G(F))
$$
est muni de la mesure $dh \backslash (dz\backslash dg)$, et 
$$
G'_{1,\delta_1}(F)\backslash G'_1(F) \simeq (\mathcal{Z}'_1\backslash G'_{1,\delta_1}(F))\backslash (\mathcal{Z}'_1\backslash G'_1(F))
$$ 
est muni de la mesure $dh'_1\backslash (dz'_1\backslash dg'_1)$. Une autre définition équivalente est la suivante. Pour $\bs{f}\in \bs{I}_{\mathcal{Z},\mu}(\wt{G}(F),\omega)$, on choisit un élément $\bs{h} \in \bs{I}(\wt{G}(F),\omega)$ tel que $\bs{p}_{\mathcal{Z},\mu}(\bs{h})=\bs{f}$, et l'on note 
$\bs{h}^{\wt{G}'_1}\in \bs{SI}_{\lambda_1}(\wt{G}'_1(F))$ son transfert usuel, défini à l'aide de la mesure de Haar $dg'$ sur $G'(F)$. Rappelons que $\lambda_1$ est la restriction de $\lambda_{\mathfrak{Z}}$ à $C_1(F)$, identifié à un sous--groupe de $\mathfrak{Z}(F)=
Z(\wt{G}'_1;F)\times_{Z(G';F)}Z(G;F)$ par l'application 
$c_1\mapsto (c_1,1)$. Cet élément $\bs{h}^{\wt{G}'_1}$ dépend de $dg$ et $dg'$, et précisément varie comme $dg (dg')^{-1}$. De $dg'$ et $dg'_1$ se déduit une mesure de Haar $dc_1$ sur $C_1(F)$, laquelle définit un homomorphisme surjectif
$$
\bs{p}_{C_1,\lambda_1}: \bs{I}(\wt{G}'_1(F))\rightarrow \bs{I}_{\lambda_1}(\wt{G}'_1(F))
$$
qui envoie $\bs{SI}(\wt{G}'_1(F))$ sur $\bs{SI}_{\lambda_1}(\wt{G}'_1(F))$. On choisit un élément $\bs{h}'_1\in \bs{SI}(\wt{G}'_1(F))$ tel que $\bs{p}_{C_1,\lambda_1}(\bs{h}'_1)= \bs{h}^{\wt{G}'_1}$, et l'on pose $\bs{f}^{\wt{G}'_1}= \bs{p}_{\mathcal{Z}'_1,\mu'_1}(\bs{h}'_1)$. 
Notons que l'élément $\bs{f}^{\wt{G}'_1}$ dépend de $dz\backslash dg$ et $dz'_1\backslash dg'_1$, et varie comme $(dz\backslash dg)(dz'_1\backslash dg'_1)^{-1}$. L'espace $\bs{SI}_{\mathcal{Z}'_1,\mu'_1}(\wt{G}'_1(F))$ est naturellement muni d'une action du groupe ${\rm Aut}(\bs{G}')$, et l'image du transfert (1) est contenu dans le sous--espace $\bs{SI}_{\mathcal{Z}'_1,\mu'_1}(\wt{G}'_1(F))^{{\rm Aut}(\bs{G}')}\subset 
\bs{SI}_{\mathcal{Z}'_1,\mu'_1}(\wt{G}'_1(F))$ 
des invariants par cette action. On définit de fa\c{c}on naturelle le sous--espace 
$\bs{SI}_{\mathcal{Z}'_1,\mu'_1,{\rm cusp}}(\wt{G}'_1(F))$ de 
$\bs{SI}_{\mathcal{Z}'_1,\mu'_1}(\wt{G}'_1(F))$. Il est stable par ${\rm Aut}(\bs{G}')$, et pour $\bs{f}\in \bs{I}_{\mathcal{Z},\mu,{\rm cusp}}(\wt{G}(F),\omega)$, le transfert $\bs{f}^{\wt{G}'_1}$ appartient à l'espace $\bs{SI}_{\mathcal{Z}'_1,\mu'_1,{\rm cusp}}(\wt{G}'_1(F))^{{\rm Aut}(\bs{G}')}$. 

On note $\mathfrak{E}_{\mathcal{Z},\mu}$ le sous--ensemble de $\mathfrak{E}$ formé des données $\bs{G}'$ pour lesquelles le caractère $\mu'_1$ est défini, et pour $\bs{G}'\in \mathfrak{E}_{\mathcal{Z},\mu}$, on pose
$$
\bs{SI}_{\mathcal{Z},\mu, {\rm cusp}}(\bs{G}')= \bs{SI}_{\mathcal{Z}'_1,\mu'_1,{\rm cusp}}(\wt{G}'_1(F)).
$$
Alors on a la variante à caractère central suivante de l'isomorphisme (3) de \ref{triplets elliptiques essentiels}.

\begin{monlem1}
Soit $\mathcal{Z}$ un sous--groupe fermé de $Z(G;F)^\theta$, et soit $\mu$ un caractère unitaire de $\mathcal{Z}$. L'application
$$
\bs{I}_{\mathcal{Z},\mu,{\rm cusp}}(\wt{G}(F),\omega)\rightarrow \bigoplus_{\bs{G}'\in \mathfrak{E}_{\mathcal{Z},\mu}}\bs{SI}_{\mathcal{Z},\mu,{\rm cusp}}(\bs{G}')^{{\rm Aut}(\bs{G}')},\, 
\bs{f}\mapsto \oplus_{\bs{G}'}\bs{f}^{\bs{G}'}
$$
est un isomorphisme; où l'on a posé $\bs{f}^{\bs{G}'}= \bs{f}^{\wt{G}'_1}$.
\end{monlem1}

Pour $\bs{G}'\in \mathfrak{E}_{\mathcal{Z},\mu}$, 
le transfert géométrique (1) définit dualement un homomorphisme de transfert spectral
$$
\bs{\rm T}_{\bs{G}'\!,\mathcal{Z},\mu}: \bs{SD}_{\mathcal{Z}'_1,\mu'_1}(\wt{G}'_1(F))
\rightarrow \bs{D}_{\mathcal{Z},\mu}(\wt{G}(F),\omega)\leqno{(3)}
$$
qui se factorise à travers la projection naturelle
$$\bs{SD}_{\mathcal{Z}'_1,\mu'_1}(\wt{G}'_1(F))
\rightarrow \bs{SD}_{\mathcal{Z}'_1,\mu'_1}(\wt{G}'_1(F))^{{\rm Aut}(\bs{G}')}.
$$
Pour $\Theta'\in \bs{SD}_{\mathcal{Z}'_1,\mu'_1}(\wt{G}'_1(F))$, on a
$$
\langle \bs{\rm T}_{{\bs G}'\!,\mathcal{Z},\mu}(\Theta'),\bs{f}\rangle_{\mathcal{Z},\mu}=
\langle \Theta', \bs{f}^{\wt{G}'_1}\rangle_{\mathcal{Z}'_1,\mu'_1},\quad \bs{f}\in \bs{I}_{\mathcal{Z},\mu}(\wt{G}(F),\omega).\leqno{(4)}
$$
Les définitions impliquent que $\bs{\rm T}_{\bs{G}'\!,\mathcal{Z},\mu}$ n'est autre que la restriction de 
l'homomorphisme de transfert spectral usuel $\bs{\rm T}_{\bs{G}'}: \bs{SD}_{\lambda_1}(\wt{G}'_1(F))\rightarrow \bs{D}(\wt{G}(F),\omega)$ au sous--espace
$$
\bs{SD}_{\mathcal{Z}'_1,\mu'_1}(\wt{G}'_1(F))
\subset \bs{SD}_{\lambda_1}(\wt{G}'_1(F)).
$$
Il envoie $\bs{SD}_{\mathcal{Z}'_1,\mu'_1,{\rm ell}}(\wt{G}'_1(F))$ dans $\bs{D}_{\mathcal{Z},\mu,{\rm ell}}(\wt{G}(F),\omega)$, et l'on note $\bs{\rm T}_{\bs{G}'\!,\mathcal{Z},\mu,{\rm ell}}$ sa restriction à 
$\bs{SD}_{\mathcal{Z}'_1,\mu'_1,{\rm ell}}(\wt{G}'_1(F))$. Tout comme $\bs{\rm T}_{\bs{G}'\!,\mathcal{Z},\mu}$, l'homomorphisme $\bs{\rm T}_{\bs{G}'\!,\mathcal{Z},\mu,{\rm ell}}$ se factorise à travers la projection naturelle
$$\bs{SD}_{\mathcal{Z}'_1,\mu'_1,{\rm ell}}(\wt{G}'_1(F))
\rightarrow \bs{SD}_{\mathcal{Z}'_1,\mu'_1,{\rm ell}}(\wt{G}'_1(F))^{{\rm Aut}(\bs{G}')}.
$$

Pour $\bs{G}'\in \mathfrak{E}_{\mathcal{Z},\mu}$, on pose
$$
\bs{SD}_{\mathcal{Z},\mu,{\rm ell}}(\bs{G}')= \bs{SD}_{\mathcal{Z}'_1,\mu'_1,{\rm ell}}(\wt{G}'_1(F)).
$$
Alors on a la variante à caractère central suivante de l'isomorphisme (4) de \ref{triplets elliptiques essentiels}.

\begin{monlem2}
Soit $\mathcal{Z}$ un sous--groupe fermé de $Z(G;F)^\theta$, et soit $\mu$ un caractère unitaire de $\mathcal{Z}$. L'application
$$
\oplus_{\bs{G}'\in \mathfrak{E}_{\mathcal{Z},\mu}} \bs{\rm T}_{\bs{G}'\!,\mathcal{Z},\mu,{\rm ell}} :\bigoplus_{\bs{G}'\in \mathfrak{E}_{\mathcal{Z},\mu}}\bs{SD}_{\mathcal{Z},\mu,{\rm ell}}(\bs{G}')^{{\rm Aut}(\bs{G}')}\rightarrow \bs{D}_{\mathcal{Z},\mu,{\rm ell}}(\wt{G}(F),\omega)
$$
est un isomorphisme.
\end{monlem2}

\subsection{Produits scalaires elliptiques}\label{produits scalaires elliptiques}On munit $A_{\smash{\wt{G}}}(F)$ d'une mesure de Haar $da$. Pour $f_1,\,f_2\in C^\infty_{c,{\rm cusp}}(\wt{G}(F))$, on a défini dans la remarque 1 de \ref{triplets elliptiques essentiels} une expression $J^{\wt{G}}(\omega,f_1,f_2)$ (formule (7)), qui dépend de $dg$ et $da$ et varie comme $dg^2da^{-1}$. Soit $\mathcal{Z}$ un sous--groupe fermé de $Z(G;F)^\theta$. On suppose que $\mathcal{Z}\cap A_{\smash{\wt{G}}}(F)$ est d'indice fini dans $A_{\smash{\wt{G}}}(F)$, et l'on munit $\mathcal{Z}$ d'une mesure de Haar $dz$. Pour $f_1,\,f_2\in C^\infty_{c,{\rm cusp}}(\wt{G}(F))$, on définit l'expression $J^{\wt{G}}_{\mathcal{Z}}(\omega,f_1,f_2)$ en rempla\c{c}ant ${\rm vol}(A_{\smash{\wt{G}}}(F)\backslash G_\gamma(F))$ par ${\rm vol}(\mathcal{Z}\backslash G_\gamma(F))$ dans la formule pour $J^{\wt{G}}(\omega,f_1,f_2)$. Une définition équivalente consiste à fixer une mesure de Haar sur $\mathcal{Z}\cap A_{\smash{\wt{G}}}(F)$ et à poser
$$
J^{\wt{G}}_{\mathcal{Z}}(\omega,f_1,f_2)= {\rm vol}((\mathcal{Z}\cap A_{\smash{\wt{G}}}(F))\backslash A_{\smash{\wt{G}}}(F)){\rm vol}((\mathcal{Z}\cap A_{\smash{\wt{G}}}(F))\backslash \mathcal{Z})^{-1}J^{\wt{G}}(\omega,f_1,f_2). \leqno{(1)}
$$ 
L'expression (1) dépend de $dg$ et $dz$ (elle ne dépend plus de $da$), et varie comme $dg^2dz^{-1}$. Bien sûr on peut remplacer les fonctions $f_1$ et $f_2$ par leurs images dans $\bs{I}_{\rm cusp}(\wt{G}(F),\omega)$. 

Soit $\mu$ un caractère unitaire de $\mathcal{Z}$. Comme dans la remarque 1 de \ref{triplets elliptiques essentiels}, on munit l'espace $\bs{I}_{\mu,{\rm cusp}}(\wt{G}(F),\omega)$ d'un produit scalaire hermitien défini positif $(\cdot ,\cdot)_{\mathcal{Z},\mu,{\rm ell}}$, défini comme suit. Pour $\bs{f}_{\!1},\,\bs{f}_{\!2}\in \bs{I}_{\mathcal{Z},\mu,{\rm cusp}}(\wt{G}(F),\omega)$, on choisit $\bs{h}_1,\,\bs{h}_2\in \bs{I}_{\rm cusp}(\wt{G}(F),\omega)$ tels que $\bs{p}_{\mathcal{Z},\mu}(\bs{h}_i)=\bs{f}_{\!i}$ ($i=1,\, 2$), et l'on pose
$$
(\bs{f}_{\!1},\bs{f}_{\!2})_{\mathcal{Z},\mu,{\rm ell}}= \int_{\mathcal{Z}}J^{\wt{G}}_{\mathcal{Z}}(\omega,\bs{h}_1,\bs{h}_2^{[z]})\mu(z)dz. \leqno{(2)}
$$
L'expression (2) dépend de $dg$ et $dz$, et varie comme $dg^2dz^{-2}$. Pour $\Theta\in \bs{D}_{\mathcal{Z},\mu,{\rm ell}}(\wt{G}(F),\omega)$, il existe un unique $\iota_{\mathcal{Z},\mu}(\Theta)\in \bs{I}_{\mathcal{Z},\mu,{\rm cusp}}(\wt{G}(F),\omega)$ tel que
$$
\langle \Theta, \bs{f}\rangle_{\mathcal{Z},\mu}= (\iota_{\mathcal{Z},\mu}(\Theta),\bs{f})_{\mathcal{Z},\mu,{\rm ell}},\quad \bs{f}\in \bs{I}_{\mathcal{Z},\mu,{\rm cusp}}(\wt{G}(F),\omega).
$$
L'application
$$
\iota_{\mathcal{Z},\mu}: \bs{D}_{\mathcal{Z},\mu,{\rm ell}}(\wt{G}(F),\omega)\rightarrow \bs{I}_{\mathcal{Z},\mu,{\rm cusp}}(\wt{G}(F),\omega)
$$
ainsi définie est un isomorphisme antilinéaire. Il dépend de $dg$ et $dz$, et varie comme $dzdg^{-1}$. 
Pour $\Theta_1,\,\Theta_2\in \bs{D}_{\mathcal{Z},\mu,{\rm ell}}(\wt{G}(F),\omega)$, on pose
$$
(\Theta_1,\Theta_2)_{\mathcal{Z},\mu,{\rm ell}}= (\iota_{\mathcal{Z},\mu}(\Theta_2), \iota_{\mathcal{Z},\mu}(\Theta_1))_{\mathcal{Z},\mu,{\rm ell}}.\leqno{(3)}
$$
Cela définit un produit scalaire hermitien défini positif sur l'espace $\bs{D}_{\mathcal{Z},\mu,{\rm ell}}(\wt{G}(F),\omega)$. Comme
$$
\bs{D}_{\rm ell}(\wt{G}(F),\omega)= \bigoplus_{\mu'}\bs{D}_{\mathcal{Z},\mu'\!,{\rm ell}}(\wt{G}(F),\omega)
$$
où $\mu'$ parcourt tous les caractères unitaires de $\mathcal{Z}$, on peut munir l'espace l'espace $\bs{D}_{\rm ell}(\wt{G}(F),\omega)$ du produit scalaire hermitien défini positif qui est la somme directe des $(\cdot, \cdot)_{\mathcal{Z},\mu'\!,{\rm ell}}$. Notons $(\cdot,\cdot)_{\mathcal{Z},{\rm ell}}$ ce produit. Le lemme suivant résulte de la définition de ce produit. 

\begin{monlem1}
Le produit $(\cdot ,\cdot)_{\mathcal{Z},{\rm ell}}$ sur $\bs{D}_{\rm ell}(\wt{G}(F),\omega)$ ne dépend ni de $dg$, ni de $dz$, ni de $\mathcal{Z}$.
\end{monlem1}

En appliquant la construction ci--dessus à $\mathcal{Z}=A_{\smash{\wt{G}}}(F)$, on obtient que 
$(\cdot ,\cdot)_{\mathcal{Z},{\rm ell}}=(\cdot ,\cdot)_{\rm ell}$, où $(\cdot ,\cdot)_{\rm ell}$ est le produit de \cite[7.3]{W} introduit en \ref{triplets elliptiques essentiels}, qui a été calculé explicitement (formule (6) de \ref{triplets elliptiques essentiels}). 

\vskip1mm
Soit $\bs{G}'\in \mathfrak{E}_{\mathcal{Z},\mu}$. On définit $\mathcal{Z}'_1$ et le caractère unitaire $\mu'_1$ de $\mathcal{Z}'_1$ comme en \ref{variante à caractère central}. Notons que $\mathcal{Z}'_1$ est un sous--groupe fermé de $Z(G'_1;F)$ tel que $\mathcal {Z}'_1\cap A_{\smash{G'_1}}(F)$ est d'indice fini dans $A_{\smash{G'_1}}(F)$. On munit $G'_1(F)$ et $\mathcal{Z}'_1$ de mesures de Haar $dg'_1$ et $dz'_1$. On a un produit scalaire hermitien défini positif $(\cdot ,\cdot)_{\mathcal{Z}'_1,\mu'_1,{\rm ell}}$ sur $\bs{I}_{\mathcal{Z}'_1,\mu'_1,{\rm cusp}}(\wt{G}'_1(F))$, donné par la formule (2). Grâce au lemme 1 de \ref{variante à caractère central} appliqué à $\wt{G}=\wt{G}'_1$ et $\mathcal{Z}= \mathcal{Z}'_1$, on peut identifier $\bs{SI}_{\mathcal{Z}'_1,\mu'_1,{\rm cusp}}(\wt{G}'_1(F))$ à un sous--espace de $\bs{I}_{\mathcal{Z}'_1,\mu'_1,{\rm cusp}}(\wt{G}'_1(F))$ --- c'est le sous--espace formé des images des fonctions $f'_1\in C^\infty_{\mathcal{Z}'_1,\mu'_1,{\rm cusp}}(\wt{G}'_1(F))$ dont les intégrales orbitales sont constantes sur toute classe de conjugaison stable fortement régulière ---, ce qui munit $\bs{SI}_{\mathcal{Z}'_1,\mu'_1,{\rm cusp}}(\wt{G}'_1(F))$ de la restriction du produit scalaire hermitien sur $\bs{I}_{\mathcal{Z}'_1,\mu'_1,{\rm cusp}}(\wt{G}'_1(F))$. 

\vskip1mm
Pour $\bs{G}'\in \mathfrak{G}$, soit $c(\wt{G},\bs{G})$ la constante donnée par (cf. \cite[4.17]{Stab I} ou \cite[3.1.1]{Stab X})
\begin{eqnarray*}
c(\wt{G},\bs{G}')&=& \vert \det(1-\theta;\ES{A}_G/\ES{A}_{\smash{\wt{G}}})\vert^{-1}\vert \pi_0(Z(\hat{G})^{\Gamma_F})\vert \vert \pi_0(Z(\hat{G}')^{\Gamma_F})\vert^{-1} 
\times \cdots \\
&& \cdots \times 
\vert {\rm Out}(\bs{G}')\vert^{-1} \vert \pi_0(Z(\hat{G})^{\Gamma_F,\circ}\cap \hat{G}')\vert 
\vert \pi_0([Z(\hat{G})/(Z(\hat{G})\cap \hat{G}')]^{\Gamma_F})\vert^{-1}.
\end{eqnarray*}
\begin{monlem2}
Pour $\bs{f}_{\!1},\, \bs{f}_{\!2}\in \bs{I}_{\mathcal{Z},\mu,{\rm cusp}}(\wt{G}(F),\omega)$, 
on a l'égalité (conformément à la décomposition du lemme 1 de \ref{variante à caractère central})
$$
(\bs{f}_{\!1},\bs{f}_{\!2})_{\mathcal{Z}, \mu,{\rm ell}}= \sum_{\bs{G}'\in \mathfrak{E}_{\mathcal{Z},\mu}}
c(\wt{G},\bs{G}')(\bs{f}_{\!1}^{\bs{G}'}\!,\bs{f}_{\!2}^{\bs{G}'})_{\mathcal{Z}'_1,\mu'_1,{\rm ell}}.
$$
\end{monlem2} 

\begin{proof}Notons tout d'abord que l'égalité du lemme ne dépend pas des choix des mesures: le produit $(\cdot ,\cdot)_{\mathcal{Z},\mu,{\rm ell}}$ dépend de $dg$ et $dz$ et varie comme $dg^2 dz^{-2}$; pour $\bs{G}'\in \mathfrak{E}_{\mathcal{Z},\mu}$, le produit $(\cdot , \cdot)_{\mathcal{Z}'_1,\mu'_1,{\rm ell}}$ dépend de $dg'_1$ et $dz'_1$ et varie comme $(dg'_1)^2(dz'_1)^{-2}$, et l'homomorphisme de transfert $\bs{f}\mapsto \bs{f}^{\bs{G}'}= \bs{f}^{\wt{G}'_1}$ 
dépend de $dz\backslash dg$ et $dz'_1\backslash dg'_1$ et varie comme $(dz\backslash dg)(dz'_1\backslash dg'_1)^{-1}$. D'après la proposition de \cite[4.17]{Stab I}, pour $\bs{h}_1,\, \bs{h}_{2}\in 
\bs{I}_{\rm cusp}(\wt{G}(F), \omega)$, on a l'égalité
$$
J^{\wt{G}}(\omega, \bs{h}_{1},\bs{h}_{2})= \sum_{\bs{G}'\in \mathfrak{E}}c(\wt{G},\bs{G}')
J^{\wt{G}'_1}(\bs{h}_{1}^{\bs{G}'}\!,\bs{h}_{2}^{\bs{G}'}),\leqno{(4)}
$$
où l'expresssion $J^{\wt{G}'_1}(\bs{h}_{1}^{\bs{G}'}\!,\bs{h}_{2}^{\bs{G}'})$ est donnée par la formule (7) de \ref{triplets elliptiques essentiels} (remarque 1) pour $\wt{G}= \wt{G}'_1$ et $\omega=1$, ou, ce qui revient au même, par la formule (3) de \cite[4.17]{Stab I} modulo les choix de mesures de Haar $dg'$ sur $\wt{G}'(F)$ et $da'$ sur $A_{G'}(F)$. La mesure $dg'$ est celle définissant l'homomorphisme de transfert $\bs{h}\mapsto \bs{h}^{\bs{G}'}= \bs{h}^{\wt{G}'_1}$, et les mesures $da$ et $da'$ sont supposées se correspondre par l'isomorphisme naturel $\ES{A}_{\smash{\wt{G}}}\rightarrow \ES{A}_{G'}$ --- cf. \cite[4.17]{Stab I}. Pour $\bs{f}_{\!1},\,\bs{f}_{\!2}\in 
\bs{I}_{\mathcal{Z},\mu,{\rm cusp}}(\wt{G}(F),\omega)$, on commence par choisir des éléments $\bs{h}_1,\,\bs{h}_2\in \bs{I}_{\rm cusp}(\wt{G}(F),\omega)$ tels que $\bs{p}_{\mathcal{Z},\mu}(\bs{h}_i)=\bs{f}_{\!i}$ ($i=1,\,2$). Ensuite on remplace $J^{\wt{G}}(\omega,\bs{h}_1,\bs{h}_2)$ par $J^{\wt{G}}_{\mathcal{Z}}(\omega,\bs{h}_1,\bs{h}_2)$ grâce à l'égalité (1), puis on intégre sur $\mathcal{Z}$ comme dans l'égalité (2), ce qui nous donne $(\bs{f}_{\!1},\bs{f}_{\!2})_{\mathcal{Z}, \mu,{\rm ell}}$. Compte--tenu de la définition des transferts $\bs{f}_{\!i}\mapsto \bs{f}_{\!i}^{\bs{G}'}$ ($i=1,\,2$) --- cf. \ref{variante à caractère central} ---, en appliquant la même construction à chacun des termes de la somme dans l'égalité (4), on obtient l'égalité du lemme. 
\end{proof}

Passons au résultat dual qui nous intéresse. Pour $\bs{G}'\in \mathfrak{E}_{\mathcal{Z},\mu}$ et $\Theta'\in \bs{D}_{\mathcal{Z},\mu,{\rm ell}}(\bs{G}')$, on note $\iota_{\mathcal{Z},\mu}^{\bs{G}'}(\Theta')$ l'unique élément de $\bs{I}_{\mathcal{Z},\mu,{\rm cusp}}(\bs{G}')$ tel que
$$
\langle \Theta', \bs{f}'\rangle_{\mathcal{Z}'_1,\mu'_1} = (\iota_{\mathcal{Z},\mu}^{\bs{G}'}(\Theta'), \bs{f}')_{\mathcal{Z}'_1,\mu'_1,{\rm ell}},\quad \bs{f}'\in \bs{I}_{\mathcal{Z},\mu,{\rm cusp}}(\bs{G}'). 
$$
L'application
$$
\iota_{\mathcal{Z},\mu}^{\bs{G}'}: \bs{D}_{\mathcal{Z},\mu,{\rm ell}}(\bs{G}')\rightarrow \bs{I}_{\mathcal{Z},\mu,{\rm cusp}}(\bs{G}')
$$
ainsi définie est un isomorphisme antilinéaire. Il induit par restriction un isomorphisme anti\-linéaire
$$
\jmath_{\mathcal{Z},\mu}^{\bs{G}'}: \bs{SD}_{\mathcal{Z},\mu,{\rm ell}}(\bs{G}')\rightarrow \bs{SI}_{\mathcal{Z},\mu,{\rm cusp}}(\bs{G}')
$$
qui envoie $\bs{SD}_{\mathcal{Z},\mu,{\rm ell}}(\bs{G}')^{{\rm Aut}(\bs{G}')}$ sur 
$\bs{SI}_{\mathcal{Z},\mu,{\rm cusp}}(\bs{G}')^{{\rm Aut}(\bs{G}')}$. D'après le lemme 2 ci--dessus, 
lorsqu'on passe de la décomposition du lemme 1 de \ref{variante à caractère central} à la décomposition du lemme 2 de 
\ref{variante à caractère central} via les isomorphismes $\iota_{\mathcal{Z},\mu}$ et 
$\jmath_{\mathcal{Z},\mu}^{\bs{G}'}$, la constante $c(\wt{G},\bs{G}')$ appara\^{\i}t: pour 
$\bs{G}'\in \mathfrak{E}_{\mathcal{Z},\mu}$, on a
$$
\jmath_{\mathcal{Z},\mu}^{\bs{G}'}(\Theta^{\bs{G}'})=
c(\wt{G},\bs{G}')\iota_{\mathcal{Z},\mu}(\Theta)^{\bs{G}'},\quad 
\Theta\in \bs{D}_{\mathcal{Z},\mu,{\rm ell}}(\wt{G}(F),\omega).
$$
On en déduit que pour $\Theta_1,\,\Theta_2\in \bs{D}_{\mathcal{Z},\mu,{\rm ell}}(\wt{G}(F),\omega)$, 
on a l'égalité (conformément à la décom\-position du lemme 2 de \ref{variante à caractère central})
$$
(\Theta_1,\Theta_2)_{\mathcal{Z},\mu,{\rm ell}}= \sum_{\bs{G}'\in \mathfrak{E}_{\mathcal{Z},\mu}}
c(\wt{G},\bs{G}')^{-1} (\Theta_1^{\bs{G}'}\!,\Theta_2^{\bs{G}'})_{\mathcal{Z}'_1,\mu'_1,{\rm ell}}.
\leqno{(5)}
$$
En vertu du lemme 1 ci--dessus, on peut s'affranchir de $\mathcal{Z}$ et $\mu$. On obtient le résultat suivant.

\begin{monlem3}Pour $\Theta_1,\,\Theta_2\in \bs{D}_{\rm ell}(\wt{G}(F),\omega)$, on a l'égalité (conformément à la décomposition (5) de \ref{décomposition endoscopique})
$$
(\Theta_1,\Theta_2)_{\rm ell}= \sum_{\bs{G}'\in \mathfrak{E}}c(\wt{G},\bs{G}')^{-1}(\Theta_1^{\bs{G}'}\!,\Theta_2^{\bs{G}'})_{\rm ell}.
$$
\end{monlem3}

\subsection{Séries principales non ramifiées}\label{séries principales nr}
Posons $\hat{Z}=Z(\hat{G})$ et $\hat{Z}_{\rm sc}= Z(\hat{G}_{\rm SC})$, où $\hat{G}_{\rm SC}$ est le revêtement simplement connexe du groupe dérivé de $\hat{G}$. Rappelons que Langlands a construit des isomorphismes canoniques ${\rm H}^1(W_F,\hat{Z})\rightarrow G(F)^D$ et ${\rm H}^1(W_F,\hat{Z}_{\rm sc})\rightarrow G_{\rm AD}(F)^D$, où $X^D$ désigne le groupe des caractères (homomorphismes continus dans ${\Bbb C}^\times$) de $X$. On en déduit un isomorphisme canonique
$$
G_{\rm AD}(F)/q(G(F))\rightarrow \ker ({\rm H}^1(W_F,\hat{Z}_{\rm sc})\rightarrow {\rm H}^1(W_F,\hat{Z}))^D,\leqno{(1)}
$$
où $q:G\rightarrow G_{\rm AD}$ est 
l'homomorphisme naturel. 

Soit $\varphi:W_F\rightarrow {^LG}$ un paramètre de Langlands tempéré. 
Posons
$$
S_\varphi= \{g\in \hat{G}: {\rm Int}_g\circ \varphi =\varphi\},\quad 
S_{\varphi,{\rm ad}}= \{g\in \hat{G}_{\rm AD}: {\rm Int}_g\circ \varphi =\varphi\}.
$$
La suite exacte courte $1\rightarrow \hat{Z}_{\rm sc}\rightarrow \hat{G}_{\rm SC}\rightarrow \hat{G}_{\rm AD}\rightarrow 1$ fournit une suite exacte de cohomologie
$$
{\rm H}^0(W_F,\varphi, \hat{G}_{\rm SC})\rightarrow {\rm H}^0(W_F,\varphi,\hat{G}_{\rm AD})\rightarrow {\rm H}^1(W_F,\hat{Z}_{\rm sc}),
$$
où ${\rm H}^0(W_F,\varphi,\cdot)$ désigne l'ensemble des invariants sous $W_F$ pour l'action donnée par $\varphi$. En composant la flèche de droite avec l'application naturelle ${\rm H}^0(W_F,\varphi,\hat{G})\rightarrow {\rm H}^0(W_F,\varphi, \hat{G}_{\rm AD})$, on obtient une application 
$$
S_\varphi ={\rm H}^0(W_F,\varphi,\hat{G})\rightarrow \ker({\rm H}^1(W_F,\hat{Z}_{\rm sc})\rightarrow {\rm H}^1(W_F,\hat{Z}))
$$
qui est un homomorphisme continu. 
Son noyau contient donc la composante neutre $S_\varphi^\circ$ du groupe algébrique affine complexe $S_\varphi$. Comme il contient aussi $Z(\hat{G})^{\Gamma_F}$, cet homomorphisme se factorise en un homomorphisme
$$
\bs{S}(\varphi)=S_\varphi/S_\varphi^\circ Z(\hat{G})^{\Gamma_F}\rightarrow 
\ker ({\rm H}^1(W_F,\hat{Z}_{\rm sc})\rightarrow {\rm H}^1(W_F,\hat{Z})).\leqno{(2)}
$$
Par dualité de Pontryagin, on en déduit un homomorphisme
$$
G_{\rm AD}(F)/q(G(F))\rightarrow \bs{S}(\varphi)^D.\leqno{(3)}
$$

Supposons que le paramètre $\varphi$ est de la forme $\varphi = ({^LT}\hookrightarrow {^LG})\circ \varphi_\lambda^T$ pour un caractère unitaire $\lambda$ de $T(F)$ de paramètre $\varphi_\lambda^T:W_F\rightarrow {^LT}$ (cf. \ref{paramètres}). D'après Keys \cite{K}, on sait que le $R$--groupe $R^G(\lambda)$ est naturellement isomorphe au $S$-groupe $\bs{S}(\varphi)$. Posons $\hat{U}= 
\hat{T}^{\Gamma_F,\circ}$ et notons $\hat{N}$ le normalisateur de $\hat{T}$ dans $\hat{G}$. D'après \cite[2.5]{K}, $\hat{T}$ est le centralisateur de $\hat{U}$ dans $\hat{G}$, $\hat{U}$ est un tore maximal de $S_\varphi^\circ$, $\hat{T}\cap S_\varphi= \hat{U}\hat{Z}^{\Gamma_F}$, $\hat{T}\cap S_\varphi^\circ = \hat{U}$, et $\hat{N}\cap S_\varphi^\circ$ est le normalisateur de $\hat{U}$ dans $S_\varphi^\circ$. On en déduit une suite exacte courte \cite[2.6]{K}
$$
1\rightarrow (\hat{N}\cap S_\varphi^\circ)/\hat{U}\rightarrow (\hat{N}\cap S_\varphi)/\hat{U}\hat{Z}^{\Gamma_F}\rightarrow \bs{S}(\varphi)\rightarrow 1.\leqno{(4)}
$$
De plus (loc.~cit.), on a un isomorphisme naturel
$$
W^G(\lambda)\simeq (\hat{N}\cap S_\varphi)/\hat{U}\hat{Z}^{\Gamma_F},
$$
qui se restreint en un isomorphisme
$$
W^G_0(\lambda)\simeq (\hat{N}\cap S_\varphi^\circ )/\hat{U}.
$$
D'où l'isomorphisme $ R^G(\lambda)\simeq \bs{S}(\varphi)$. Notons aussi que
$$
W^G_0(\lambda)=\{1\} \Leftrightarrow  S_\varphi^\circ = \hat{T}^{\Gamma_F,\circ}.\leqno{(5)}
$$

\begin{marema1}
{\rm Tout élément $w\in W^{\Gamma_F}=N_{G(F)}(T)/T(F)$ définit un $L$--isomorphisme ${^Lw}: {^LT}\rightarrow {^LT}$, donné par ${^Lw}(t\rtimes \sigma)= \hat{w}(t)\rtimes \sigma$, et 
d'après la correspondance de Langlands pour les tores, $W^G(\lambda)$ est le groupe des éléments $w\in W^{\Gamma_F}$ tels que ${^Lw}\circ \varphi_\lambda^T$ est $\hat{T}$--conjugué à $\varphi_\lambda^T$. On sait que tout élément $\Gamma_F$--invariant de $W^{\hat{G}}(\hat{T})=\hat{N}/\hat{T}$ se relève en un élément $\Gamma_F$--invariant de $\hat{N}$. Pour $w\in W^G(\lambda)$, choisissons un relèvement 
$n\in \hat{N}^{\Gamma_F}$ de $\hat{w}$. Alors ${\rm Int}_n \circ \varphi_\lambda^T$ est $\hat{T}$--conjugué à $\varphi_\lambda^T$. En d'autres termes, il existe un $t\in \hat{T}$ tel que $tn\in S_\varphi$. L'application $w\mapsto tn$ donne une bijection
$$
\iota_\lambda: W^G(\lambda)\buildrel \simeq\over{\longrightarrow} (\hat{N}\cap S_\varphi)\hat{T}/\hat{T}= (\hat{N}\cap S_\varphi)/ \hat{U}\hat{Z}
$$
qui est un isomorphisme sur l'opposé du groupe d'arrivée (par construction, $w^{-1}\;(= \hat{w})$ et ${\rm Int}_n$ ont la même action sur $\hat{T}$). Par passage aux quotients, 
$\iota_\lambda$ induit une bijection
$$
\bs{\iota}_\lambda: R^G(\lambda)\rightarrow (\hat{N}\cap S_\varphi)\hat{T}/(\hat{N}\cap S_\varphi^\circ)\hat{T}= (\hat{N}\cap S_\varphi)/ (\hat{N}\cap S_\varphi^\circ) \hat{U} \hat{Z}^{\Gamma_F}= \bs{S}(\varphi)
$$
qui est un isomorphisme sur le groupe opposé de $\bs{S}(\varphi)$. L'isomorphisme $R^G(\lambda)\simeq \bs{S}(\varphi)$ s'obtient en composant $\bs{\iota}_\lambda$ avec le passage à l'inverse.

}\end{marema1}

\begin{marema2}
{\rm On a introduit en \ref{paramètres} un ensemble $\wt{S}_{\varphi,a}=\{\tilde{g}\in \hat{G}\hat{\theta}: {\rm Int}_{\tilde{g}}\circ \varphi = a\cdot \varphi\}$. S'il n'est pas vide, ce que l'on suppose, alors c'est un $S_\varphi$--espace tordu, et l'on peut définir le ``$S$--groupe tordu''
$$
\wt{\bs{S}}(\varphi,\bs{a})= \wt{S}_{\varphi,a}/S_\varphi^0 Z(\hat{G})^{\Gamma_F}.
$$
C'est un $\bs{S}(\varphi)$--espace tordu. 
Notons $\theta$ l'automorphisme de $T$ associé à $(B,T)$ (\cad la restriction $\theta_{\ES{E}}\vert_T$). Alors, tout comme dans le cas non tordu, $W^{\wt{G},\omega}(\lambda)$ est l'ensemble des éléments $w\theta\in W^{\Gamma_F}\theta$ tels que 
${^L\theta}\circ {^Lw}\circ \varphi_\lambda^T$ est $\hat{T}$--conjugué à $a\cdot \varphi_\lambda^T$. 
Pour $w\theta\in W^{\wt{G},\omega}(\lambda)$, l'élément $\hat{\theta} \hat{w}= \hat{\theta}(\hat{w})\hat{\theta}$ appartient à $W^{\hat{G}}(\hat{T})^{\Gamma_F}$, et l'on peut choisir un relèvement $n$ de 
$\hat{\theta}(\hat{w})$ dans $\hat{N}^{\Gamma_F}$. Alors ${\rm Int}_n\circ {^L\theta}\circ \varphi_\lambda^T$ est $\hat{T}$--conjugué à $a\cdot \varphi_\lambda^T$. En d'autres termes, il existe un $t\in \hat{T}$ tel que $tn \hat{\theta}\in \wt{S}_{\varphi,a}$. L'application $w\theta\mapsto tn\hat{\theta}$ donne une application
$$
\tilde{\iota}_\lambda :W^{\wt{G},\omega}(\lambda)\rightarrow (\hat{N}\hat{\theta}\cap \wt{S}_{\varphi,a})\hat{T}/\hat{T}
= (\hat{N}\hat{\theta}\cap \wt{S}_{\varphi,a})/\hat{U}\hat{Z}^{\Gamma_F}.
$$
Par construction, $\hat{\theta}\circ w^{-1}$ et ${\rm Int}_n\circ \hat{\theta}$ ont la même action sur $\hat{T}$. Puisque l'application $\iota_\lambda$ vérifie
$$
\tilde{\iota}_\lambda (\tilde{w} w') = \iota_\lambda(w')\tilde{\iota}_\lambda(\tilde{w}),\quad \tilde{w} \in W^{\wt{G},\omega}(\lambda),\, w'\in W^G(\lambda),
$$
elle est bijective. De plus, elle induit par passage aux quotients une application bijective
$$
\tilde{\bs{\iota}}_\lambda: R^{\wt{G},\omega}(\lambda)\rightarrow \wt{\bs{S}}(\varphi,\bs{a})
$$
qui vérifie
$$
\tilde{\bs{\iota}}_\lambda(\tilde{r}r')= \bs{\iota}_\lambda(r')\tilde{\bs{\iota}}_\lambda(\tilde{r}),\quad \tilde{r}\in R^{\wt{G},\omega}(\lambda), \, 
r'\in R^G(\lambda).
$$
}
\end{marema2}

Supposons de plus que le caractère $\lambda$ est non ramifié. 
Posons $T_{\rm ad}=q(T)$, et notons $\hat{T}_{\rm sc}$ la pré--image de $\hat{T}$ dans $\hat{G}_{\rm SC}$ (par l'homomorphisme naturel $\hat{q}:\hat{G}_{\rm SC}\rightarrow \hat{G}$, dual de $q$). On utilise maintenant le fait que $G$ est non ramifié. Soit
\begin{eqnarray*}
\Omega &=& {\rm coker}(\check{X}(T)^{\Gamma_F} \rightarrow \check{X}(T_{\rm ad})^{\Gamma_F})\\
&\simeq & \ker(\hat{T}_{\rm sc}/(1-\phi)(\hat{T}_{\rm sc})\rightarrow \hat{T}/(1-\phi)(\hat{T}))^D.
\end{eqnarray*}
C'est un groupe abélien fini, qui opère transitivement et librement sur les classes de $G(F)$--conjugaison de sous--groupes hyperspéciaux de $G(F)$. Cette action provient de celle du groupe $G_{\rm AD}(F)/q(G(F))= T_{\rm ad}(F)/q(T(F))$ par conjugaison, via la projection naturelle
$$
\ker({\rm H}^1(W_F,\hat{Z}_{\rm sc})\rightarrow {\rm H}^1(W_F,\hat{Z}))^D
\simeq \ker({\rm H}^1(W_F,\hat{T}_{\rm sc})\rightarrow {\rm H}^1(W_F,\hat{T}))^D \rightarrow 
\Omega.
$$
D'après \cite[prop. 9]{Mi}, le $R$--groupe $R^G(\lambda)$ est naturellement isomorphe à un sous--groupe de $\Omega^D$. En particulier il est abélien, et si $G=G_{\rm AD}$, alors $R^G(\lambda)=\{1\}$ et l'induite $\pi=i_B^G(\lambda)$ est irréductible --- ce que l'on savait déjà grâce à des travaux antérieurs de Keys. On note $\zeta=\zeta_\lambda: \Omega \rightarrow R^G(\lambda)^D$ l'homomorphisme dual. En le composant avec la projection naturelle $G_{\rm AD}(F)\rightarrow \Omega$, on obtient un homomorphisme surjectif $G_{\rm AD}(F)\rightarrow R^G(\lambda)^D$, que l'on note encore $\zeta$. Un caractère $\rho$ de $R^G(\lambda)$ s'identifie à un élément de ${\rm Irr}(\ES{R}^G(\lambda))$ par la formule
$$
\rho(z,r)= z\rho(n),\quad (z,r)\in \ES{N}^G(\lambda)={\Bbb U}\times R^G(\lambda).
$$
D'après \cite[theorem 1]{Mi}, on a 
$$
\pi_\rho \circ {\rm Int}_{x-1}\simeq \pi_{\rho + \zeta(x)}, \quad \rho\in R^G(\lambda)^D, \, 
x\in G_{\rm AD}(F),\leqno{(6)}
$$
où $\rho+\zeta(x)$ désigne le caractère $r\mapsto \rho(r)\zeta(x)(r)$ de $R^G(\lambda)$. 
Ainsi, les sous--représentations irréductibles de $\pi$ sont permutées transitivement par $G_{\rm AD}(F)$, et pour $\rho\in R^G(\lambda)^D$, $K_1$ un sous--groupe hyperspécial de $G(F)$ et 
$x\in G_{\rm AD}(F)$, $\pi_{\rho}$ est l'élément $K_1$--sphérique de $\Pi_\lambda$ si et seulement si 
$\pi_{\rho + \zeta(x)}$ est l'élément ${\rm Int}_g(K_1)$--sphérique de $\Pi_\lambda$.

\begin{marema3}
{\rm La bijection $R^G(\lambda)^D\rightarrow \Pi_\lambda,\, \rho\mapsto \pi_\rho$ dépend du choix d'une normalisation des opérateurs d'entrelacement. Deux telles normalisations différent par un caractère de $R^G(\lambda)$, \cad que si $\bs{r}\mapsto r'_B(\bs{r})$ est la ${\Bbb U}$--représentation de $\ES{R}^G(\lambda)$ associée à une autre normalisation, il existe un caractère 
$\rho'$ de $R^G(\lambda)$ tel que $r'_B(\bs{r})= \rho'(r)r_B(\bs{r})$ pour $\bs{r}\in \ES{R}^G(\lambda)$, où $r$ est l'image de $\bs{r}$ dans $R^G(\lambda)$. La 
représentation $r'_P\otimes \pi$ de $\ES{R}^G(\lambda)\times G(F)$ se décompose alors en
$$
r'_P\otimes \pi = \sum_{\rho\in R^G(\lambda)^D} \rho\otimes \pi_{\rho-\rho'}.
$$
Quitte à changer de normalisation des opérateurs d'entracement, on peut donc supposer que la correspondance $\rho\mapsto \pi_\rho$ est telle que l'élément $\pi_0\in \Pi_\lambda$ associée au caractère trivial de $R^G(\lambda)$ est la représentation $K$--sphérique.}
\end{marema3}

\subsection{Le cas où $(\wt{G},\bs{a})$ est à torsion intérieure}\label{le cas à torsion intérieure}
On suppose dans ce numéro que l'espace tordu $(G,\wt{G})$ est à torsion intérieure, et que $\omega=1$ (on suppose toujours que le groupe $G$ est quasi--déployé sur $F$ et déployé sur $F^{\rm nr}$). Rappelons que $\wt{K}=K\delta_0=\delta_0K$ pour un $\delta_0\in \wt{T}(F)$, et que le $F$--automorphisme $\theta_0= {\rm Int}_{\delta_0}$ de $G$ est trivial sur $T(F)$. Soit $\lambda$ un caractère unitaire de $T(F)$. 
L'application $\ES{N}^{G}(\lambda)\rightarrow \ES{N}^{\wt{G}}(\lambda),\, (z,n)\mapsto (z,n\delta_0)$ est bijective, et elle induit une application bijective $\ES{W}^G(\lambda)\rightarrow \ES{W}^{\wt{G}}(\lambda)$. 
En quotientant par les orbites sous $W^G_0(\lambda)$, on obtient une application bijective 
$\ES{R}^G(\lambda)\rightarrow \ES{R}^{\wt{G}}(\lambda)$. L'automorphisme $\theta_{\ES{R}}$ de 
$\ES{R}^G(\lambda)$ --- qui n'est défini que modulo les automorphismes intérieurs de $\ES{R}^G(\lambda)$ --- opère trivialement sur ${\rm Irr}(\ES{R}^G(\lambda))$, et l'application
$${\rm Irr}(\ES{R}^{\wt{G}}(\lambda))\rightarrow {\rm Irr}(\ES{R}^G(\lambda)),\, (\rho, \tilde{\rho})\mapsto \rho$$
est surjective. En d'autres termes, toute représentation irréductible  
$\rho$ de $R^G(\lambda)$ se prolonge en une ${\Bbb U}$--repré\-sentation $\tilde{\rho}$ de $\ES{R}^{\wt{G}}(\lambda)$, laquelle définit un prolongement $\tilde{\pi}_{\tilde{\rho}}$ de $\pi_\rho$ à $\wt{G}(F)$. On peut normaliser ces prolongements $\tilde{\rho}$ en imposant que
$$
\tilde{\rho}(z,n\delta_0)=\rho(z,n)=z\rho(n),\quad (z,n)\in \ES{N}^G(\lambda).
$$
Pour $\rho\in {\rm Irr}(\ES{R}^G(\lambda))$, notons $\tilde{\rho}_0\in {\rm Irr}(\ES{R}^{\wt{G}}(\lambda))$ la classe d'isomorphisme du prolongement de $\rho$ normalisé de cette manière, et posons $\tilde{\pi}_\rho=\tilde{\pi}_{\tilde{\rho}_0}$. Cela revient à imposer que la restriction de $\widetilde{\nabla}_B(1,\delta_0)$ à l'espace de $\rho\otimes \pi_\rho$ soit égale à ${\rm id}\otimes \tilde{\pi}_\rho(\delta_0)$. Le $F$--automorphisme $\theta_0$ de $G$ induit un $F$--automorphisme $\bar{\theta}_0$ de $G_{\rm AD}$, qui est trivial sur $T_{\rm ad}$. Puisque l'inclusion $T_{\rm ad}\subset G_{\rm AD}$ induit un isomorphisme $T_{\rm ad}(F)/q(T(F))\rightarrow G_{\rm AD}(F)/q(G(F))$, on a
$$
x^{-1}\bar{\theta}_0(x)\in q(G(F)),\quad x\in G_{\rm AD}(F).
$$
 
Si de plus le caractère $\lambda$ est non ramifié, alors on a
$$
\tilde{\pi}_\rho \circ {\rm Int}_{x-1}\simeq \tilde{\pi}_{\rho +\zeta_\lambda(x)}, 
\quad \rho\in R^G(\lambda)^D, \, 
x\in G_{\rm AD}(F).\leqno{(1)}
$$
De plus (toujours si $\lambda$ est non ramifié), la représentation $\tilde{\pi}=\tilde{\pi}_{(1,\delta_0)}$ de $\wt{G}(F)$ donnée par
$$
\tilde{\pi}(g\delta_0)=\pi(g)\wt{\nabla}_B(1,\delta_0),\quad 
g\in G(F),$$
est isomorphe à une somme directe $\oplus_x\tilde{\pi}_{\rho_0}\circ {\rm Int}_{x^{-1}}$ où $x$ parcourt un ensemble de représentants de $G_{\rm AD}(F)/\ker(\zeta_\lambda)$. 

\begin{monlem}
On suppose que l'espace tordu $(G,\wt{G})$ est à torsion intérieure, et que $\omega=1$. Alors on a la décomposition
$$
\bs{SD}_{\rm ell}= \bs{SD}_{\rm ell}^{\rm nr}(\wt{G}(F))\oplus \bs{SD}_{\rm ell}^{\rm ram}(\wt{G}(F)).
$$
De plus, si $G$ n'est pas un tore, alors on a $\bs{SD}_{\rm ell}^{\rm nr}(\wt{G}(F))=\{0\}$.
\end{monlem}

\begin{proof}Soit $\Theta\in \bs{SD}_{\rm ell}(\wt{G}(F))$. On écrit $\Theta= \Theta^{\rm nr}+ \Theta^{\rm ram}$ conformément à la décomposition (1) de \ref{caractères tempérés}. Cette décomposition (1) de \ref{caractères tempérés} est compatible à l'action de $G_{\rm AD}(F)$ par conjugaison. Puisque la distribution $\Theta$ est stable, elle est invariante par cette action, et sa décomposition en $\Theta =\Theta^{\rm nr}+\Theta^{\rm ram}$ l'est aussi. Pour un caractère  
unitaire non ramifié $\lambda$ de $T(F)$, on peut choisir un prolongement unitaire $\tilde{\lambda}$ de $\lambda$ à $\wt{T}(F)$, et d'après ce qui précède, 
les sous--représentations $G(F)$--irréductibles de l'induite $i_{\wt{B}}^{\wt{G}}(\tilde{\lambda})$ sont permutées transitivement par l'action de $G_{\rm AD}(F)$ par conjugaison. On en déduit que $\Theta^{\rm nr}$ est une combinaison linéaire finie, à coefficients complexes, de caractères de telles induites. Une telle distribution ne peut être elliptique que si $G=T$. Donc $\bs{SD}_{\rm ell}^{\rm nr}(\wt{G}(F))=\{0\}$ si $G\neq T$. Cela prouve la décomposition du lemme dans le cas où $G\neq T$. Si $G=T$, alors tout est stable, et la décomposition du lemme équivaut à la décomposition (1) de \ref{caractères tempérés}.
\end{proof}

\subsection{Action de $\ES{C}$ sur les caractères de support non ramifié}
\label{action du groupe C}
Soit $C=C(G,\wt{G})$ le sous--groupe de $Z(G)\times G$ formé des paires $(z,g)$ telles que pour tout $\sigma\in \Gamma_F$, l'élément $g \sigma(g)^{-1}$ appartient à $Z(G)$ et $(1-\theta)(g\sigma(g)^{-1})= z\sigma(z)^{-1}$. Il contient le sous--groupe
$$
Z(G)^*=\{(1-\theta)(z),z): z\in Z(G)\}\simeq Z(G).
$$
On pose
$$
\ES{C}=C/Z(G)^*,
$$
et l'on note $\bs{q}:C\rightarrow \ES{C}$ la projection naturelle. 
Notons que pour $(z,g)\in C$, l'image $g_{\rm ad}$ de $g$ dans $G_{\rm AD}$ appartient à $G_{\rm AD}(F)$. 
Un élément $(z,g)\in C$ définit un $F$--automorphisme $g'\mapsto {\rm Int}_{\smash{g^{-1}}}(g')$ de 
$G$ et un $F$--automorphisme $\gamma \mapsto z {\rm Int}_{\smash{g^{-1}}}(\gamma)$ de $\wt{G}$. On obtient 
ainsi une application de $C$ dans le groupe ${\rm Aut}_F(G,\wt{G})$ des automorphismes de $(G,\wt{G})$ qui sont définis sur $F$. Cette application se quotiente en un homomorphisme injectif de $\ES{C}$ dans le groupe opposé ${\rm Aut}_F^{\rm op}(G,\wt{G})$. On a un plongement de $Z(G;F)$ dans $\ES{C}$ donné par $z \mapsto \bs{q}(z,1)$. On a aussi un plongement de $G_\sharp(F)$ dans $\ES{C}$ défini comme suit. Pour $g_\sharp\in G_\sharp(F)$, on choisit un relèvement $g$ de $g_\sharp$ dans $G$, et l'on envoie $g_\sharp$ sur $\bs{q}(1,g)\in \ES{C}$.

\begin{marema1}
{\rm  Le groupe $\ES{C}$ est en général plus gros que le sous--groupe de ${\rm Aut}_F^{\rm op}(G,\wt{G})$ engendré par $Z(G;F)$ et $G_\sharp(F)$, comme le montre l'exemple suivant. Soit $F'/F$ une extension cyclique de degré $4$ (par exemple la sous--extension de degré $4$ de $F^{\rm nr}/F$), et soit $\tau$ un générateur de ${\rm Gal}(F'/F)$. Soit $E=F'^{\langle \tau^2 \rangle}$ l'extension 
quadratique de $F$ intermédiaire. Soit $G$ le groupe ${\rm Res}_{E/F}({\rm SL}(2)_E)$, \cad la restriction des scalaires de $E$ à $F$ du groupe $SL(2)$ vu comme groupe défini et déployé sur $E$. 
On a donc
$$
G(\overline{F})= SL(2,\overline{F})\times SL(2,\overline{F})
$$
avec action de $\sigma \in \Gamma_F$ donnée par:
\begin{itemize}
\item $\sigma(g_1,g_2)= (\sigma(g_1),\sigma(g_2))$ si $\sigma \in \Gamma_E$;
\item $\sigma(g_1,g_2)= (\sigma(g_2),\sigma(g_1))$ si $\sigma \notin \Gamma_E$.
\end{itemize}
Le groupe $G$ est défini et quasi--déployé sur $F$, et déployé sur $E$, et l'on a
$$G(F)= \{(g,g): g\in SL(2,E)\}.
$$
Soit $\theta$ le $F$--automorphisme $(g_1,g_2)\mapsto (g_2,g_1)$ de $G$, et soit $\wt{G}= G\theta$ (si l'extension $E/F$ est non ramifiée, alors l'espace tordu $(G,\wt{G})$ vérifie toutes les hypothèses de \ref{objets}). Fixons un élément $\xi\in E^\times$ tel que $F'= E[\sqrt{\xi}]$. Considérons la matrice $h= {\rm diag}(\sqrt{\xi},\sqrt{\xi}^{-1})\in SL(2,F')$, l'élément $g=(\tau(h),h)\in G(F')$, et l'élément $z=(1,-1)\in Z(G;E)$. On a $\tau^2(h)=-h$, d'où $\tau(g)= zg $ et $\tau^2(g)= (-1,-1)g$. On en déduit que l'élément $(z,g)$ appartient à $C$. Son image $\bs{c}=\bs{q}(z,g)\in \ES{C}$ n'est pas dans le produit des images de $Z(G;F)$ et de $G_\sharp(F)$. En effet si elle l'était, alors $z$ appartiendrait au produit de $Z(G;F)$ et de $(1-\theta)(Z(G;\overline{F}))$. Or ces deux groupes sont égaux, et ont pour éléments $(1,1)$ et $(-1,-1)$.
}
\end{marema1}

\begin{monlem1}
Le groupe $\ES{C}$ opère transitivement sur les sous--espaces hyperspéciaux de $\wt{G}(F)$.
\end{monlem1}

\begin{proof}Rappelons qu'on a choisi le sous--espace hyperspécial $(K,\wt{K})$ de $\wt{G}(F)$ tel que $K=K_\ES{E}$ est le sous--groupe hyperspécial de $G(F)$ associé à la paire de Borel épinglée $\ES{E}= (B,T, \{E_\alpha\}_{\alpha\in \Delta})$ de $G$ définie sur $F$, et que l'espace $\wt{K}$ s'écrit $\wt{K}=K\delta_0=\delta_0K$ pour un élément $\delta_0\in \wt{T}(F)\cap N_{\smash{\wt{G}(F)}}(K)$ de la forme $\delta_0= t_0\epsilon_0$ avec $t_0\in T(F^{\rm nr})_1$ et $\epsilon_0\in Z(\wt{G}, \ES{E}; F^{\rm nr})$. 
Soit $(K_1,\wt{K}_1)$ un autre sous--espace hyperspécial de $\wt{G}(F)$. On peut aussi choisir une paire de Borel épinglée $\ES{E}_1=(B_1,T_1,\{E_{\alpha_1}\}_{\alpha_1\in \Delta_1})$ de $G$ définie sur $F$ telle que $K_1=K_{\ES{E}_1}$, et un élément $\delta_1\in \wt{K}_1$ de la forme $
\delta_1= t_1\epsilon_1$ avec $t_1\in T_1(F^{\rm nr})_1$ et $\epsilon_1\in Z(\wt{G},\ES{E}_1;F^{\rm nr})$. On se ramène tout de suite au cas où $(B_1,T_1)=(B,T)$: il suffit pour cela de conjuguer $(K_1,\wt{K}_1)$ par un élément de $G(F)$, puisque deux paires de Borel de $G$ définies sur $F$ sont toujours conjuguées par un tel élément. On peut alors fixer un élément $x\in T(\overline{F})$ tel que $x_{\rm ad}\in T_{\rm ad}(F)$ et ${\rm Int}_x(\ES{E})=\ES{E}_1$; où $x_{\rm ad}$ désigne la projection de $x$ sur $T_{\rm ad}=T/Z(G)$. Alors ${\rm Int}_x(\epsilon_0)$ conserve $\ES{E}_1$, donc on peut écrire ${\rm Int}_x(\epsilon_0)= z\epsilon_1$ avec $z\in Z(G;\overline{F})$. Notons $y$ l'élément de $G(F)$ tel que $\delta_1=y\delta_0$. En notant $\theta = {\rm Int}_{\epsilon_0}$, on obtient l'égalité $t_1 x \theta(x)^{-1}= zy t_0$. Elle entra\^{\i}ne que $y\in T(F)$. On projette l'égalité dans $T_{\rm ad}(\overline{F})$: elle donne $t_{1,{\rm ad}} x_{\rm ad}\theta(x_{\rm ad})^{-1}= y_{\rm ad}t_{0,{\rm ad}}$. Remarquons que pour $i=0,\,1$, ${\rm Int}_{\delta_i}$ est défini sur $F$ et ${\rm Int}_{\epsilon_i}$ aussi (puisque ${\rm Int}_{\epsilon_0}$ conserve $\ES{E}$ et ${\rm Int}_{\epsilon_i}$ conserve $\ES{E}_1$); donc ${\rm Int}_{t_i}$ est aussi défini sur $F$, \cad que $t_{i,{\rm ad}}\in T_{\rm ad}(F)_1$. Puisque $T$ et $T_{\rm ad}$ sont des tores non ramifiés, le choix d'une uniformisante $\varpi$ de $F$ permet d'identifier $T(F)$ à $T(F)_1\times \check{X}(T)^{\phi}$ et $T_{\rm ad}(F)$ à 
$T_{\rm ad}(F)_1\times \check{X}(T_{\rm ad})^\phi$. Conformément à ces deux décompositions, on écrit $y=y'y''$ et $x_{\rm ad}= x'_{\rm ad} x''_{\rm ad}$. L'égalité $t_{1,{\rm ad}} x_{\rm ad}\theta(x_{\rm ad})^{-1}= y_{\rm ad}t_{0,{\rm ad}}$ se décompose alors en deux égalité:
$$
x''_{\rm ad}\theta(x''_{\rm ad})^{-1}= y''_{\rm ad},\quad t_{1,{\rm ad}}x'_{\rm ad}\theta(x'_{\rm ad})^{-1}= y'_{\rm ad}t_{0,{\rm ad}}.
$$
On relève $x''_{\rm ad}$ en un élément $x''\in T(\overline{F})$. Il existe alors un élément $z''\in Z(G;\overline{F})$ tel que $x''\theta(x'')^{-1}=z''y''$. Parce que $x''_{\rm ad}$ et $y''$ sont $F$--rationnels, cela implique que le couple $(z''\!,x'')$ appartient à $C$. Notons $\bs{c}$ l'image dans $\ES{C}$ de l'inverse de ce couple. L'action de $\bs{c}$ envoie $K$ sur ${\rm Int}_{x''}(K)$. Puisque l'élément $x''_{\rm ad}$ appartient au sous--groupe compact maximal $T_{\rm ad}(F)_1$ de $T_{\rm ad}(F)$, la conjugaison par $xx''^{-1}$ conserve $K$. Le groupe ${\rm Int}_{x''}(K)$ est donc égal à ${\rm Int}_x(K)$, qui est égal à $K_1$ par définition de $x$. L'action de $\bs{c}$ envoie $\delta_0$ sur
$$
z''^{-1}{\rm Int}_{x''}(\delta_0)= z''^{-1}x''\theta(x'')^{-1}\delta_0= y'' \delta_0= y'^{-1}\delta_1.
$$
Mais $y'$ appartient au sous--groupe compact maximal $T(F)_1$ de $T(F)$, donc est contenu dans $K_1$. Donc $\bs{c}$ envoie $\delta_0$ sur un élément de $\wt{K}_1$, et il envoie $\wt{K}= K\delta_0$ sur $\wt{K}_1= K_1\delta_1$.
\end{proof}

\begin{marema2}
{\rm La preuve du lemme 2 est tirée de celle du lemme 1.9.(iv) de l'article:

-- \textsc{Waldspurger J.--L.}, \textit{À propos du lemme fondamental pondéré tordu}, Math. Ann. \textbf{343} (2009), 
103--174.

On aurait tout aussi bien pu s'y référer. Mais comme la propriété à prouver est énoncée dans cet article d'une fa\c{c}on assez différente, il aurait alors fallu expliquer que les deux énoncés sont équivalents. On a préféré la redémontrer ici.
}
\end{marema2}

\begin{monlem2}
Pour $x\in G_{\rm AD}(F)$, on a $\omega \circ {\rm Int}_x =\omega$.
\end{monlem2} 

\begin{proof}Soit $\pi:G_{\rm SC}\rightarrow G$ l'homomorphisme naturel. Tout caractère de $G(F)$ se factorise à travers $G(F)/\pi(G_{\rm SC}(F))$. Il suffit donc de prouver que $G_{\rm AD}(F)$ agit trivialement (par conjugaison) sur ce quotient, ou encore que, pour 
$g\in G(F)$ et $x\in G_{\rm AD}(F)$, on a $g{\rm Int}_x(g^{-1})\in \pi(G_{\rm SC}(F))$. Relevons $x$ en $x'\in G$, et écrivons $g= z\pi(g_{\rm sc})$ et $x'=z'\pi(x'_{\rm sc})$ avec $z,\,z'\in Z(G)$ et $g_{\rm sc},\, x'_{\rm sc}\in G_{\rm SC}$. Alors $g{\rm Int}_x(g)= \pi(g_{\rm sc}x'_{\rm sc}g_{\rm sc}^{-1}x'^{-1}_{\rm sc})$ et il suffit de prouver que l'élément $g_{\rm sc}x'_{\rm sc}g_{\rm sc}^{-1}x'^{-1}_{\rm sc}$ de $G_{\rm SC}$ est $F$--rationnel. Soit $\sigma\in \Gamma_F$. Puisque $g\in G(F)$ et $x\in G_{\rm AD}(F)$, il existe deux éléments $z_{\rm sc}(\sigma),\, z'_{\rm sc}(\sigma)\in Z(G_{\rm sc})$ tels que
$$
\sigma(g_{\rm sc})g_{\rm sc}^{-1}= z_{\rm sc}(\sigma),\quad \sigma(x'_{\rm sc})x'^{-1}_{\rm sc}= z'_{\rm sc}(\sigma).
$$
Alors on a
$$
\sigma(g_{\rm sc}x'_{\rm sc}g_{\rm sc}^{-1}x'^{-1}_{\rm sc})= z_{\rm sc}(\sigma)g_{\rm sc}z'_{\rm sc}(\sigma)x'_{\rm sc}z_{\rm sc}(\sigma)^{-1}g_{\rm sc} x'^{-1}_{\rm sc}z'_{\rm sc}(\sigma)^{-1}
= g_{\rm sc}x'_{\rm sc}g_{\rm sc}^{-1}x'^{-1}_{\rm sc}.
$$
Donc $g_{\rm sc}x'_{\rm sc}g_{\rm sc}^{-1}x'^{-1}_{\rm sc}$ est fixe par $\Gamma_F$, d'où l'assertion.
\end{proof}

D'après le lemme 2, l'action de $\ES{C}$ sur $(G,\wt{G})$ induit une action sur l'espace $\bs{I}(\wt{G}(F),\omega)$, et aussi une action sur l'espace $\bs{D}(\wt{G}(F),\omega)$. Cette action stabilise $\bs{D}_{\rm ell}(\wt{G}(F),\omega)$ et préserve la décomposition (1) de \ref{le principe}. 
Précisément, pour $(z,g)\in C$ d'image $\bs{c}=\bs{q}(z,g)\in \ES{C}$, on pose
$$
{^{\bs{c}\!}f}(\gamma)= f(zg^{-1}\gamma g),\quad f\in C^\infty_{\rm c}(\wt{G}(F)),\, \gamma\in \wt{G}(F),
$$
et
$$
{^{\bs{c}}\Theta}(f)=\Theta({^{\bs{c}^{-1}\!\!}f}),\quad \Theta\in \bs{D}(\wt{G}(F),\omega),\, 
f\in C^\infty_{\rm c}(\wt{G}(F)).
$$
En d'autres termes, si $(\pi,\tilde{\pi})$ est une représentation de $(\wt{G}(F),\omega)$ telle que la représentation $\pi$ de $G(F)$ soit tempérée et de longueur finie, alors l'application $\gamma \mapsto {^{\bs c}\tilde{\pi}}(\gamma)= \tilde{\pi}(zg^{-1}\gamma g)$ définit une autre représentation de $(\wt{G}(F),\omega)$, dont le caractère $\Theta_{^{\bs{c}}\tilde{\pi}}$ est donné par $\Theta_{^{\bs{c}}\tilde{\pi}}= {^{\bs{c}}(\Theta_{\tilde{\pi}})}$. La fonction localement constante 
$\theta_{^{\bs{c}}\tilde{\pi}}$ sur $\wt{G}_{\rm reg}(F)$ est donnée par 
$\theta_{^{\bs{c}}\tilde{\pi}}(\gamma)= \Theta_{\tilde{\pi}}(zg^{-1}\gamma g)$.

\vskip1mm
Soit $\lambda$ un caractère unitaire et non ramifié de $T(F)$. Posons $\pi= i_B^G(\lambda)$. On a vu (\ref{séries principales nr}) qu'il existe un homomorphisme surjectif $\zeta_\lambda:G_{\rm AD}(F)\rightarrow R^G(\lambda)^D$ tel que pour $\rho\in {\rm Irr}(\ES{R}^G(\lambda))$, identifié à l'élément $r\mapsto \rho(1,r)$ de $R^G(\lambda)^D$, on a
$$
\pi_{\rho}\circ {\rm Int}_{x^{-1}}\simeq \pi_{\rho+\zeta_\lambda(x)},\quad x\in G_{\rm AD}(F).
$$ On voudrait étendre cette construction au cadre tordu qui nous intéresse ici. On abandonne pour cela l'identification ${\rm Irr}(\ES{R}^G(\lambda))=R^G(\lambda)^D$, qui s'adapte très mal au cadre tordu.

Supposons de plus que l'ensemble $\ES{N}^{\wt{G},\omega}(\lambda)$ n'est pas vide (ou, ce qui revient au même, que l'ensemble $N_{\smash{\wt{G}(F),\omega}}(\lambda)$ n'est pas vide). Alors l'ensemble $\ES{R}^{\wt{G},\omega}(\lambda)$ est une extension (scindée) de $R^{\wt{G},\omega}(\lambda)$ par ${\Bbb U}$, et un espace tordu sous $\ES{R}^G(\lambda)$. Pour $\tilde{\bs{r}}\in \ES{R}^{\wt{G},\omega}(G)$, l'automorphisme $\theta_{\tilde{\bs{r}}}$ de $\ES{R}^G(\lambda)$ défini par $\tilde{\bs{r}}\bs{r}=\theta_{\tilde{\bs{r}}}(\bs{r}) \tilde{\bs{r}}$ ne dépend pas de $\bs{\tilde{r}}$, et est noté $\theta_{\ES{R}}$. Un élément de ${\rm Irr}(\ES{R}^{\wt{G},\omega}(\lambda))$ est par définition la donnée d'un couple $(\rho,\tilde{\rho})$ où $\rho$ est un élément de ${\rm Irr}(\ES{R}^G(\lambda),\theta_{\ES{R}})$, \cad un élément de ${\rm Irr}(\ES{R}^G(\lambda))$ tel que $\rho\circ \theta_{\ES{R}}=\rho$, et $\tilde{\rho}: \ES{R}^{\wt{G},\omega}(\lambda) \rightarrow {\Bbb U}$ est une application telle que
$$
\tilde{\rho}(\bs{r}\tilde{\bs{r}}\bs{r}')= \rho(\bs{r})\tilde{\rho}(\tilde{\bs{r}})\rho(\bs{r}'),\quad \tilde{\bs{r}}\in 
\ES{R}^{\wt{G},\omega}(\lambda),\, \bs{r},\,\bs{r}'\in \ES{R}^G(\lambda).
$$
L'application $\tilde{\rho}$ est une ${\Bbb U}$--représentation de $\ES{R}^{\wt{G},\omega}(\lambda)$, \cad qu'elle vérifie
$$
\tilde{\rho}(z,\tilde{r})= z\tilde{\rho}(1,\tilde{r}),\quad (z,\tilde{r})\in \ES{R}^{\wt{G},\omega}(\lambda)={\Bbb U}\times R^{\wt{G},\omega}(\lambda).
$$
Pour $\bs{r}\in \ES{R}^G(\lambda)$ et $\rho\in {\rm Irr}(\ES{R}^G(\lambda))$, en posant $\bs{r}^{-1}\theta_{\ES{R}}(\bs{r})= (z,r')$ avec $z\in {\Bbb U}$ et $r'\in R^G(\lambda)$, on a $\rho(\bs{r}^{-1}\theta_{\ES{R}}(\bs{r}))=z\rho(1,r')$. Par conséquent, si pour un (\cad pour tout) $\tilde{\bs{r}}\in \ES{R}^{\wt{G},\omega}(\lambda)$, le triplet $\bs{\tau}=(T,\lambda, \tilde{\bs{r}})$ est inessentiel, alors l'ensemble ${\rm Irr}(\ES{R}^G(\lambda),\theta_{\ES{R}})$ est vide (et, ce qui revient au même,  l'ensemble ${\rm Irr}(\ES{R}^{\wt{G},\omega}(\lambda))$ est vide). Réciproquement, si pour $\tilde{\bs{r}}\in \ES{R}^{\wt{G},\omega}(\lambda)$, le triplet $\bs{\tau}=(T,\lambda, \tilde{\bs{r}})$ est essentiel, alors puisque la représentation $\tilde{\pi}_{\bs{\tau}}$ de $(\wt{G}(F),\omega)$ a un caractère non nul \cite[2.9]{W}, l'ensemble ${\rm Irr}(\ES{R}^G(\lambda),\theta_{\ES{R}})$ n'est pas vide. D'autre part, pour $\rho\in {\rm Irr}(\ES{R}^G(\lambda))$, si la représentation $\pi_\rho$ de $G(F)$ est $K_1$--sphérique pour un sous--groupe hyperspécial $K_1$ de $G(F)$ tel que l'ensemble $N_{\smash{\wt{G}(F)}}(K_1)$ ne soit pas vide, alors on a $\rho\circ \theta_{\ES{R}}= \rho$. En effet,  pour $\delta_1\in N_{\smash{\wt{G}(F)}}(K_1)$, la représentation $\omega^{-1}(\pi_{\rho})^{\delta_1}$ de $G(F)$ est encore $K_1$--sphérique, donc isomorphe à $\pi_\rho$, ce qui signifie que $\pi_\rho$ se prolonge à $(\wt{G}(F),\omega)$, et donc que $\rho\in {\rm Irr}(\ES{R}^G(\lambda),\theta_{\ES{R}})$. L'existence du sous--espace hyperspécial $(K,\wt{K})$ de $\wt{G}(F)$ assure donc que
\begin{enumerate}
\item[(1)]pour $\tilde{\bs{r}}\in \ES{R}^{\wt{G},\omega}(\lambda)$, le triplet $(T,\lambda,\tilde{\bs{r}})$ est essentiel.
\end{enumerate}

Tout élément $\tilde{\bs{r}}\in \ES{R}^{\wt{G},\omega}(\lambda)$ définit un triplet (essentiel) $\bs{\tau}=(T,\lambda,\tilde{\bs{r}})$ et une représentation $\tilde{\pi}_{\bs{\tau}}$ de $(\wt{G}(F),\omega)$ qui prolonge $\pi$ --- cf. \ref{triplets elliptiques essentiels}. Le caractère $\Theta_{\bs{\tau}}$ de $\tilde{\pi}_{\bs{\tau}}$ est non nul, et l'on veut décrire la distribution ${^{\bs{c}}(\Theta_{\bs{\tau}})}$ pour tout  élément $\bs{c}\in \ES{C}$. Puisque $G_{\rm AD}(F)$ est le produit de $T_{\rm ad}(F)$ et de l'image naturelle de $G_{\rm SC}(F)$ dans $G_{\rm AD}(F)$ --- cela résulte de la décomposition de Bruhat--Tits ---, il suffit de le faire pour les éléments $\bs{c}$ de la forme $\bs{c}=\bs{q}(z,g)$ avec $g_{\rm ad}\in T_{\rm ad}(F)$.  

\begin{monlem3}
(On rappelle que $\lambda$ est un caractère unitaire et non ramifié de $T(F)$ tel que l'ensemble $\ES{N}^{\wt{G},\omega}(\lambda)$ n'est pas vide.) Soient $\tilde{\bs{r}}\in \ES{R}^{\wt{G},\omega}(\lambda)$, et $\bs{c}\in \ES{C}$ de la forme $\bs{c}=\bs{q}(z,g)$ avec $g_{\rm ad}\in T_{\rm ad}(F)$. Choisissons un relèvement $\tilde{w}\in W^{\wt{G},\omega}(\lambda)$ de la projection de $\tilde{\bs{r}}$ sur l'ensemble $R^{\wt{G},\omega}(\lambda)= W^{\wt{G},\omega}(\lambda)/W^G_0(\lambda)$. Posons $\bs{\tau}=(T,\lambda, \tilde{\bs{r}})$ et $t= z g^{-1}{\rm Int}_{\tilde{w}}(g)$. Alors $t$ appartient à $T(F)$, et on a l'égalité
$$
{^{\bs{c}}(\Theta_{\bs{\tau}})}= \lambda(t)\Theta_{\bs{\tau}}.
$$
\end{monlem3}

\begin{proof}Puisque $g_{\rm ad}\in T_{\rm ad}(F)$, les éléments $g$ et $t$ appartiennent à $T=T(\overline{F})$, et un calcul simple montre que $t$ est $F$--rationnel. Rappelons que pour $z'\in {\Bbb U}$, on a $\Theta_{z'\bs{\tau}}= z'\Theta_{\bs{\tau}}$. Par conséquent, quitte à remplacer $\tilde{\bs{r}}$ par $z'\tilde{\bs{r}}$ pour un $z'\in {\Bbb U}$, on peut supposer que $\tilde{\bs{r}}$ est de la forme $\tilde{\bs{r}}=(1,\tilde{r})$ pour un $\tilde{r}\in R^{\wt{G},\omega}(\lambda)$. 
Relevons $\tilde{w}$ en un élément $\gamma_{\tilde{w}}\in N_{\smash{\wt{G}(F),\omega}}(\lambda)$. L'automorphisme ${\rm Int}_{\gamma_{\tilde{w}}}$ de $G$ conserve $T$, et opère sur $T$ comme ${\rm Int}_{\tilde{w}} = {\rm Int}_w \circ \theta$ pour un élément $w\in W^{\Gamma_F}$, où $\theta$ est le $F$--automorphisme de $T$ défini par $\wt{T}$. 
Considérons la série principale $\pi=i_B^G(\lambda)$ et la représentation $\tilde{\pi}_{\bs{\tau}}$ de $(\wt{G}(F),\omega)$ agissant sur l'espace $V_\pi$ de $\pi$. Rappelons que cette dernière est définie par
$$
\tilde{\pi}_{\bs{\tau}}(g\gamma_{\tilde{w}})= \pi(g) \circ \wt{\nabla}(1,\gamma_{\tilde{w}}),\quad g\in G(F).
$$
Soit $\Psi$ l'automorphisme de $V_\pi$ défini par
$$
\Psi(\phi)(x)=\phi(g^{-1}xg),\quad \phi \in V_\pi,\, x\in G(F).
$$
C'est un automorphisme ${\Bbb C}$--linéaire, et pas un automorphisme de $G(F)$--module. 
L'égalité de traces de l'énoncé résulte en fait de l'égalité plus précise suivante
$$
\tilde{\pi}_{\bs{\tau}}(zg^{-1}\gamma g) = \lambda(t)\Psi^{-1}\circ \tilde{\pi}_{\bs{\tau}}(\gamma)\circ \Psi,\quad \gamma\in \wt{G}(F). \leqno{(2)}
$$
Prouvons (2). Il est clair que $\pi(g^{-1}xg)= \Psi^{-1}\circ \pi(x) \circ \Psi$ pour tout $x\in G(F)$. Il suffit donc de prouver que $\tilde{\pi}_{\bs{\tau}}(zg^{-1}\gamma_{\tilde{w}} g) = \lambda(t)\Psi^{-1}\circ \tilde{\pi}_{\bs{\tau}}(\gamma_{\tilde{w}})\circ \Psi$, ce qui équivaut à
$$
\tilde{\pi}_{\bs{\tau}}(t\gamma_{\tilde{w}})= \lambda(t)\Psi^{-1}\circ \tilde{\pi}_{\bs{\tau}}(\gamma_{\tilde{w}})\circ \Psi.
$$
Posons ${^wB}={\rm Int}_w(B)$. Par définition \cite[2.8]{W}, on a $\tilde{\pi}_{\bs{\tau}}(\gamma_{\tilde{w}})=I_1\circ I_2\circ I_3$ où:
\begin{itemize}
\item $I_1\in {\rm Isom}_{G(F)}(i_B^G(\lambda),i_B^G(\omega\lambda))$ est défini par $I_1(\phi)(x)= \omega(x)\phi(x)$;
\item $I_2\in  {\rm Isom}_{G(F)}(i_B^G(\omega\lambda), i_{^w\!B}^G(\lambda))$ est défini par $I_2(\phi)(x)= \partial_B(\gamma_{\tilde{w}})^{1/2}\phi({\rm Int}_{\gamma_{\tilde{w}}}^{-1}(x))$;
\item $I_3= R_{B\vert {^w\!B}}(\lambda)\in  {\rm Isom}_{G(F)}(i_{^w\!B}^G(\lambda), i_{B}^G(\lambda))$; 
\end{itemize} 
où $\phi\in V_\pi$ et $x\in G(F)$, $\partial_B(\gamma_{\tilde{w}})$ est le Jacobien de l'application 
${\rm Int}_{\gamma_{\tilde{w}}}:U_B(F)\rightarrow U_{^w\!B}(F)$, et $R_{B,{^w\!B}}(\lambda)$ est l'opérateur d'entrelacement normalisé (cf. \cite[1.10]{W}). On note encore $\Psi$ l'automorphisme défini de la même manière sur les espaces des induites $i_B^G(\omega\lambda)$, 
$i_{^w\!B}^G(\lambda)$, et l'on note $\Psi'$ l'automorphisme de ces différents espaces obtenu en rempla\c{c}ant $g$ par ${\rm Int}_{\tilde{w}}(g)$ dans la définition de $\Psi$. Alors on a (calcul simple)
$$
I_1\circ \Psi =\Psi \circ I_1,\quad I_2\circ \Psi = \Psi' \circ I_2,\quad I_3\circ \Psi' = d \Psi' \circ I_3,
$$
où $d$ est un certain module. La relation cherchée équivaut donc à
$$\Psi \circ \pi(t)= d\lambda(t) \Psi'.$$
Deux autres calculs simples donnent $\Psi \circ \pi(t)= \delta_B(t)^{1/2}\lambda(t)\Psi'$ et $d= \delta_B(t)^{1/2}$. Cela prouve (2), et donc aussi le lemme.

\end{proof}

Soit $\omega_{\ES{R}}: \ES{R}^G(\lambda)\rightarrow {\Bbb U}$ l'application définie par
$$
\omega_{\ES{R}}= p_{\Bbb U}\circ (\theta_{\ES{R}}-1)
$$
où $p_{\Bbb U}: \ES{R}^G(\lambda)\rightarrow {\Bbb U}$ désigne la projection naturelle, et 
$(\theta_{\ES{R}}-1)(\bs{r})= \theta_{\ES{R}}(\bs{r})\bs{r}^{-1}$, $\bs{r}\in \ES{R}^G(\lambda)$. On a donc
$$
\omega_{\ES{R}}(z,r) = p_{\Bbb U}\circ\theta_{\ES{R}}(1,r), \quad (z,r)\in \ES{R}^G(\lambda)={\Bbb U}\times R^G(\lambda).
$$ On note ${\rm Irr}(\ES{R}^G(\lambda),\theta_{\ES{R}},\omega_{\ES{R}})$ le sous--ensemble de ${\rm Irr}(\ES{R}^G(\lambda))$ formé des $\rho$ tels que $\rho \circ \theta_{\ES{R}}= \omega_{\ES{R}}\rho$. 
Ce sous--ensemble n'est pas vide, il contient l'application $p_{\Bbb U}$. Il est muni d'une structure de groupe commutatif, déduite par restriction de celle sur ${\rm Irr}(\ES{R}^G(\lambda))$: pour $\rho,\, \rho'\in {\rm Irr}(\ES{R}^G(\lambda))$, l'élément $\rho+ \rho'$ de ${\rm Irr}(\ES{R}^G(\lambda))$ est donné par
$$
(\rho + \rho')(z,r)= z^{-1}\rho(z,r)\rho'(z,r);
$$
$p_{\Bbb U}$ est l'élément neutre, et l'inverse $(-\rho)$ de $\rho$ est donné par $(-\rho)(z,r)= z\rho(1,r)^{-1}$. Pour $\rho\in {\rm Irr}(\ES{R}^G(\lambda),\theta_{\ES{R}})$ et $\rho'\in 
{\rm Irr}(\ES{R}^G(\lambda),\theta_{\ES{R}},\omega_{\ES{R}})$, l'élément $\rho + \rho'$ est dans 
${\rm Irr}(\ES{R}^G(\lambda),\theta_{\ES{R}})$. Cela munit ${\rm Irr}(\ES{R}^G(\lambda),\theta_{\ES{R}})$ d'une structure de ${\rm Irr}(\ES{R}^G(\lambda),\theta_{\ES{R}},\omega_{\ES{R}})$--ensemble, et fait de ${\rm Irr}(\ES{R}^G(\lambda),\theta_{\ES{R}})$ un espace principal homogène sous ${\rm Irr}(\ES{R}^G(\lambda),\theta_{\ES{R}},\omega_{\ES{R}})$. 

On note ${\rm Irr}(\ES{R}^{\wt{G},\omega}(\lambda),\omega_{\ES{R}})$ l'ensemble des couples $(\rho,\tilde{\rho})$ où $\rho$ est un élément de l'ensemble ${\rm Irr}(\ES{R}^G(\lambda),\theta_{\ES{R}},\omega_{\ES{R}})$, et $\tilde{\rho}: \ES{R}^{\wt{G},\omega}(\lambda)\rightarrow {\Bbb U}$ est une application telle que
$$
\tilde{\rho}(\bs{r}\tilde{\bs{r}}\bs{r}')= \rho(\bs{r})\tilde{\rho}(\tilde{\bs{r}})\omega_{\ES{R}}(\bs{r}')\rho(\bs{r}'),\quad \tilde{\bs{r}}\in 
\ES{R}^{\wt{G},\omega}(\lambda),\, \bs{r},\,\bs{r}'\in \ES{R}^G(\lambda).
$$
L'application $\tilde{\rho}$ est encore une ${\Bbb U}$--représentation de $\ES{R}^{\wt{G},\omega}(\lambda)$. Cet ensemble ${\rm Irr}(\ES{R}^{\wt{G},\omega}(\lambda),\omega_{\ES{R}})$ n'est pas vide, il contient la projection naturelle $\tilde{p}_{\Bbb U}: \ES{R}^{\wt{G},\omega}(\lambda)\rightarrow {\Bbb U}$. On a une action de ${\Bbb U}$ sur ${\rm Irr}(\ES{R}^{\wt{G},\omega}(\lambda),\omega_{\ES{R}})$, donnée par $z(\rho,\tilde{\rho})= (\rho, z\tilde{\rho})$, $z\in {\Bbb U}$. Tout élément $\rho\in {\rm Irr}(\ES{R}^G(\lambda),\theta_{\ES{R}},\omega_{\ES{R}})$ se prolonge en un élément $(\rho,\tilde{\rho})$ de ${\rm Irr}(\ES{R}^{\wt{G},\omega}(\lambda),\omega_{\ES{R}})$, et les fibres de l'application $(\rho,\tilde{\rho})\mapsto \rho$ sont les orbites de ${\Bbb U}$ dans ${\rm Irr}(\ES{R}^{\wt{G},\omega}(\lambda),\omega_{\ES{R}})$. L'ensemble 
${\rm Irr}(\ES{R}^{\wt{G},\omega}(\lambda), \omega_{\ES{R}})$ est muni d'une structure de groupe commutatif, d'élément neutre $\tilde{p}_{\Bbb U}$, définie comme celle sur ${\rm Irr}(\ES{R}^G(\lambda),\theta_{\ES{R}},\omega_{\ES{R}})$: pour $\tilde{\rho},\, \tilde{\rho}'\in {\rm Irr}(\ES{R}^{\wt{G},\omega}(\lambda),\omega_{\ES{R}})$, on note $\tilde{\rho}+ \tilde{\rho}'$ l'élément de 
${\rm Irr}(\ES{R}^{\wt{G},\omega}(\lambda),\omega_{\ES{R}})$ défini par
$$(\tilde{\rho}+ \tilde{\rho}')(z,\tilde{r})= z^{-1}\tilde{\rho}(z,\tilde{r})\tilde{\rho}'(z,\tilde{r}),\quad 
(z,\tilde{r})\in \ES{R}^{\wt{G},\omega}(\lambda).
$$
Cette structure prolonge celle sur ${\rm Irr}(\ES{R}^G(\lambda),\theta_{\ES{R}},\omega_{\ES{R}})$, au sens où si $\tilde{\rho}$ et $\tilde{\rho}'$ prolongent les éléments $\rho$ et $\rho'$ de ${\rm Irr}(\ES{R}^G(\lambda),\theta_{\ES{R}},\omega_{\ES{R}})$, alors $\tilde{\rho}+ \tilde{\rho}'$ prolonge $\rho + \rho'$. Pour $\tilde{\rho}\in {\rm Irr}(\ES{R}^{\wt{G},\omega}(\lambda), \omega_{\ES{R}})$ et 
$\tilde{\rho}'\in  {\rm Irr}(\ES{R}^{\wt{G},\omega}(\lambda))$, on définit comme ci--dessus 
un élément $\tilde{\rho} + \tilde{\rho}' = \tilde{\rho}' + \tilde{\rho}$ de ${\rm Irr}(\ES{R}^{\wt{G},\omega}(\lambda))$. Cela munit ${\rm Irr}(\ES{R}^{\wt{G},\omega}(\lambda))$ d'une structure de 
${\rm Irr}(\ES{R}^{\wt{G},\omega}(\lambda),\omega_{\ES{R}})$--ensemble, qui prolonge la structure 
de ${\rm Irr}(\ES{R}^G(\lambda),\theta_{\ES{R}},\omega_{\ES{R}})$--ensemble sur 
${\rm Irr}(\ES{R}^G(\lambda),\theta_{\ES{R}})$, et fait de ${\rm Irr}(\ES{R}^{\wt{G},\omega}(\lambda))$ un espace principal homogène sous ${\rm Irr}(\ES{R}^{\wt{G},\omega}(\lambda),\omega_{\ES{R}})$.

\vskip1mm 
Rappelons que pour $\rho\in {\rm Irr}(\ES{R}^G(\lambda),\theta_{\ES{R}})$, le choix d'un prolongement $\tilde{\rho}$ de $\rho$ à $\ES{R}^{\wt{G},\omega}(\lambda)$ définit un prolongement $\tilde{\pi}_{\tilde{\rho}}$ de $\pi_\rho$ à $(\wt{G}(F),\omega)$, déterminé par le fait que pour $\gamma\in N_{\smash{\wt{G}(F),\omega}}(\lambda)$, la restriction de $\wt{\nabla}_B(1,\gamma)$ à l'espace de $\pi_\rho$ est donnée par $\tilde{\rho}(\tilde{r})\tilde{\pi}_{\tilde{\rho}}(\gamma)$, où $\tilde{r}$ est l'image de $\gamma$ dans $R^{\wt{G},\omega}(\lambda)$. Pour $\bs{c}\in \ES{C}$ et $(\rho,\tilde{\rho})\in {\rm Irr}(\ES{R}^{\wt{G},\omega}(\lambda))$, la représentation ${^{\bs{c}}(\tilde{\pi}_{\tilde{\rho}}})$ de $(\wt{G}(F),\omega)$ prolonge la représentation ${^x(\pi_\rho)}= \pi_\rho \circ {\rm Int}_{x^{-1}}$ de $G(F)$, où l'on a posé $x= g_{\rm ad}\in G_{\rm AD}(F)$. Notons $\bs{c}\cdot \tilde{\rho}$ l'élément de ${\rm Irr}(\ES{R}^{\wt{G},\omega}(\lambda))$ prolongeant $x\cdot \rho =\rho + \zeta_\lambda(x)$ défini par 
$$
{^{\bs{c}}(\Theta_{\tilde{\pi}_{\tilde{\rho}}})}= \Theta_{\tilde{\pi}_{\bs{c}\cdot \tilde{\rho}}}.\leqno{(3)}
$$
Cela définit une action de $\ES{C}$ sur ${\rm Irr}(\ES{R}^{\wt{G},\omega}(\lambda))$. 
Comme $\rho$ et $x\cdot \rho$ appartiennent à 
${\rm Irr}(\ES{R}^G(\lambda),\theta_{\ES{R}})$, l'élément $\zeta_\lambda(x)$ appartient à ${\rm Irr}(\ES{R}^G(\lambda),\theta_{\ES{R}}, \omega_{\ES{R}})$. En composant l'homomorphisme $\zeta_\lambda$ avec la projection 
$\ES{C}\rightarrow G_{\rm AD}(F),\, (z,g)\mapsto g_{\rm ad}$, on obtient donc un homomorphisme
$$
\zeta_{\lambda,\ES{C}}: \ES{C}\rightarrow {\rm Irr}(\ES{R}^G(\lambda),\theta_{\ES{R}},\omega_{\ES{R}}).
$$
Le lemme suivant est une simple conséquence du lemme 3.

\begin{monlem4}
(On rappelle que $\lambda$ est un caractère unitaire et non ramifié de $T(F)$ tel que l'ensemble $\ES{N}^{\wt{G},\omega}(\lambda)$ n'est pas vide.) Il existe un unique homomorphisme
$$
\tilde{\zeta}_\lambda: \ES{C} \rightarrow {\rm Irr}(\ES{R}^{\wt{G},\omega}(\lambda),\omega_{\ES{R}})
$$
qui vérifie l'égalité
$$
\bs{c}\cdot \tilde{\rho}= \tilde{\rho}+ \tilde{\zeta}_\lambda(\bs{c}),\quad \tilde{\rho}\in {\rm Irr}(\ES{R}^{\wt{G},\omega}(\lambda)),\, \bs{c}\in \ES{C}.
$$
Cet homomorphisme relève $\zeta_{\lambda,\ES{C}}$ au sens où, composé avec la projection naturelle
$${\rm Irr}(\ES{R}^{\wt{G},\omega}(\lambda),\omega_{\ES{R}})\rightarrow {\rm Irr}(\ES{R}^G(\lambda),\theta_{\ES{R}},\omega_{\ES{R}}),
$$
il redonne $\zeta_{\lambda,\ES{C}}$. 
Pour $\tilde{r}\in R^{\wt{G},\omega}(\lambda)$ et $\tau=(T,\lambda,\tilde{r})$, l'application
$\bs{c}\mapsto \tilde{\zeta}_\lambda(\bs{c})(1,\tilde{r})^{-1}$ est un caractère unitaire du groupe $\ES{C}$, que l'on note $\omega_\tau$, et pour $\bs{\tau}=(T,\lambda, \tilde{\bs{r}})$ se projetant sur $\tau$, la distri\-bution $\Theta_{\bs{\tau}}$ est un vecteur propre pour l'action de $\ES{C}$ relativement au caractère $\omega_\tau$, au sens où l'on a
$$
{^{\bs{c}}(\Theta_{\bs{\tau}})}= \omega_\tau(\bs{c})\Theta_{\bs{\tau}},\quad \bs{c}\in \ES{C}.
$$
Enfin, le caractère $\omega_\tau$ ne dépend que de la classe de conjugaison du triplet $\tau$. 
\end{monlem4}

\begin{proof}Notons qu'il existe au plus une application $\tilde{\zeta}: \ES{C}\rightarrow 
{\rm Irr}(\ES{R}^{\wt{G},\omega}(\lambda),\omega_{\ES{R}})$ telle que $\bs{c}\cdot \tilde{\rho} = 
\tilde{\rho} + \tilde{\zeta}(\bs{c})$ pour tout $\tilde{\rho}\in {\rm Irr}(\ES{R}^{\wt{G},\omega}(\lambda))$. Donc si une telle application existe, elle est unique. Soit $\tilde{\bs{r}}\in \ES{R}^{\wt{G},\omega}(\lambda)$, d'image $\tilde{r}\in R^{\wt{G},\omega}(\lambda)$. Posons $\bs{\tau}=(T,\lambda,\tilde{\bs{r}})$ et $\tau=(T,\lambda,\tilde{r})$. D'après le lemme 3, pour $\bs{c}\in \ES{C}$, il existe une constante $\omega_\tau(\bs{c})\in {\Bbb U}$ telle que
$$
{^{\bs c}(\Theta_{\bs{\tau}})}= \omega_\tau(\bs{c}) \Theta_{\bs{\tau}}.\leqno{(4)}
$$
Concrètement, on choisit un couple $(z,g)\in C$ tel que $\bs{c}=\bs{q}(z,g)$, un élément $y\in G_{\rm SC}(F)$ tel $y_{\rm ad}g_{\rm ad}\in T_{\rm ad}(F)$, et l'on pose $g'=\pi(y)g\in T$, où $\pi: G_{\rm SC}\rightarrow G$ est l'homomorphisme naturel. On choisit aussi un relèvement $\tilde{w}\in W^{\wt{G},\omega}(\lambda)$ de $\tilde{r}$, et l'on pose $t= z g'^{-1}{\rm Int}_{\tilde{w}}(g')$. Alors $t\in T(F)$, et l'on a $\omega_\tau(\bs{c})= \lambda(t)$. D'après (4), l'application $\bs{c}\mapsto \omega_\tau(\bs{c})$ est un caractère unitaire du groupe $\ES{C}$, et la distribution  $\Theta_{\bs{\tau}}$ est un vecteur propre pour l'action de $\ES{C}$ relativement à ce caractère $\omega_\tau$. De plus, tout comme la distribution $\Theta_{\tau}$, le caractère $\omega_\tau$ ne dépend que de la classe de conjugaison du triplet $\tau$. Rappelons que la distribution $\Theta_{\bs{\tau}}$ est donnée par (\ref{triplets elliptiques essentiels}.(3))
$$
\Theta_{\bs{\tau}}= \sum_{\rho \in {\rm Irr}(\ES{R}^G(\lambda),\theta_{\ES{R}})}
\tilde{\rho}(\tilde{\bs{r}})\Theta_{\tilde{\pi}_{\tilde{\rho}}}.
$$
Pour $\bs{c}\in \ES{C}$, on a donc
$$
{^{\bs{c}}(\Theta_{\bs{\tau}})}= \sum_{\rho \in {\rm Irr}(\ES{R}^G(\lambda),\theta_{\ES{R}})}
(\bs{c}^{-1}\cdot \tilde{\rho})(\tilde{\bs{r}})\Theta_{\tilde{\pi}_{\tilde{\rho}}},
$$
et d'après (4), on a
$$
(\bs{c}^{-1}\cdot \tilde{\rho})(\tilde{\bs{r}})= \omega_{\tau}(\bs{c}) \tilde{\rho}(\tilde{\bs{r}}),\quad \rho \in {\rm Irr}(\ES{R}^G(\lambda), \theta_{\ES{R}}).\leqno{(5)}
$$
Pour $\bs{c}\in \ES{C}$, posons
$$
\tilde{\zeta}_\lambda(\bs{c})(\tilde{\bs{r}}) = p_{\Bbb U}(\tilde{\bs{r}})\omega_{\tau}(\bs{c})^{-1}.
$$
D'après (5), pour $(z,\tilde{r})\in \ES{R}^{\wt{G},\omega}(\lambda)$, on a
$$
(\bs{c}\cdot \tilde{\rho})(z,\tilde{r})= z^{-1}\tilde{\rho}(z,\tilde{r})\tilde{\zeta}_\lambda(\bs{c})(z,\tilde{r}),\quad 
\rho\in {\rm Irr}(\ES{R}^{\wt{G},\omega}(\lambda),\theta_{\ES{R}}).
$$
On en déduit que:
\begin{itemize}
\item pour $\bs{c}\in \ES{C}$, l'application $\tilde{\bs{r}}\mapsto \tilde{\zeta}_{\lambda}(\bs{c})(\tilde{\bs{r}})$ est un élément de ${\rm Irr}(\ES{R}^{\wt{G},\omega}(\lambda),\omega_{\ES{R}})$; 
\item pour $\bs{c}\in \ES{C}$ et $\tilde{\rho}\in {\rm Irr}(\ES{R}^{\wt{G},\omega}(\lambda))$, on a 
$\bs{c}\cdot \tilde{\rho} = \tilde{\rho} + \tilde{\zeta}_{\lambda}(\bs{c})$;
\item l'application $\ES{C}\rightarrow {\rm Irr}(\ES{R}^{\wt{G},\omega}(\lambda),\omega_{\ES{R}}),\, 
\bs{c} \mapsto \tilde{\zeta}_\lambda(\bs{c})$ est un homomorphisme;
\item pour $\bs{c}=\bs{q}(z,g)\in \ES{C}$, $\tilde{\zeta}_\lambda(\bs{c})$ est un prolongement de $\zeta_{\lambda,\ES{C}}(\bs{c})= \zeta_\lambda(g_{\rm ad})$.
\end{itemize}
Cela démontre le lemme.
\end{proof}

Pour $\bs{c}\in \ES{C}$ et $\tilde{\rho}\in {\rm Irr}(\ES{R}^{\wt{G},\omega}(\lambda))$, on a donc
$$
{^{\bs c}(\Theta_{\tilde{\pi}_{\tilde{\rho}}})}= \Theta_{\tilde{\pi}_{\tilde{\rho} + \tilde{\zeta}_{\lambda}(\bs{c})}}. \leqno{(6)}
$$

\vskip1mm
Pour $\rho\in {\rm Irr}(\ES{R}^G(\lambda))$, on a vu plus haut que si la représentation $\pi_\rho$ de $G(F)$ est $K_1$--sphérique pour un sous--groupe hyperspécial $K_1$ de $G(F)$ tel que l'ensemble $N_{\smash{\wt{G}(F)}}(K_1)$ ne soit pas vide, alors elle se prolonge à $(\wt{G}(F),\omega)$. Pour définir un tel prolongement $\tilde{\pi}_{\tilde{\rho}}$, il suffit de choisir un élément $\delta_1\in N_{\smash{\wt{G}(F)}}(K_1)$, et d'imposer la condition
$$
\Theta_{\tilde{\pi}_{\tilde{\rho}}}(\bs{1}_{K_1\delta_1})= 1.
$$
Cela détermine $\tilde{\pi}_{\tilde{\rho}}$ et, ce qui revient au même, le relèvement $\tilde{\rho}$ de $\rho$ à $\ES{R}^{\wt{G},\omega}(\lambda)$. 

Soit $\rho_0\in {\rm Irr}(\ES{R}^G(\lambda))$ l'élément tel que la représentation $\pi_{\rho_0}$ est $K$--sphérique. La représentation $\pi_{\rho_0}$ de $G(F)$ se prolonge à $(\wt{G}(F),\omega)$, et l'on note $\tilde{\rho}_0$ le prolongement de $\rho_0$ à $\ES{R}^{\wt{G},\omega}(\lambda)$ normalisé par $\wt{K}$, \cad tel que
$$
\Theta_{\tilde{\pi}_{\tilde{\rho}_0}}(\bs{1}_{\wt{K}})= 1.
$$
Pour $\bs{c}=\bs{q}(z,g)\in \ES{C}$, notons $({^{\bs{c}}K},{^{\bs{c}}\wt{K}})$ le sous--espace hyperspécial de $\wt{G}(F)$ défini par
$$
{^{\bs{c}}K}= {\rm Int}_g(K),\quad {^{\bs{c}}\wt{K}}= 
z^{-1}{\rm Int}_g(\wt{K}).
$$
La représentation ${^{\bs{c}}(\tilde{\pi}_{\tilde{\rho}_0})}\simeq \tilde{\pi}_{\tilde{\rho}_0+ \tilde{\zeta}_\lambda(\bs{c})}$ de $(\wt{G}(F),\omega)$ 
est un prolongement de la représentation $\pi_{\rho_0}\circ {\rm Int}_{\smash{g_{\rm ad}^{-1}}}\simeq \pi_{\rho_0+ \zeta(g_{\rm ad})}$ de $G(F)$. Son caractère 
$\Theta_{\tilde{\pi}_{\tilde{\rho}_0 + \tilde{\zeta}_\lambda(\bs{c})}}={^{\bs{c}}(\Theta_{\tilde{\pi}_{\tilde{\rho}_0}})}$ vérifie
$$
\Theta_{\tilde{\pi}_{\tilde{\rho}_0 + \tilde{\zeta}_\lambda(\bs{c})}}(\bs{1}_{{^{\bs{c}}\wt{K}}})= {^{\bs{c}}(\Theta_{\tilde{\pi}_{\tilde{\rho}_0}})}({^{\bs{c}}(\bs{1}_{\wt{K}})})= \Theta_{\tilde{\pi}_{\tilde{\rho}_0}}(\bs{1}_{\wt{K}})=1.
$$
Plus généralement, pour $\tilde{\rho}\in {\rm Irr}(\ES{R}^{\wt{G},\omega}(\lambda))$, $\bs{c}=\bs{q}(z,g)\in \ES{C}$, et $\wt{K}_1$ un sous--espace hyperspécial de $\wt{G}(F)$, on a
$$
\Theta_{\tilde{\pi}_{\tilde{\rho}+\tilde{\zeta}_\lambda(\bs{c})}}(\bs{1}_{z^{-1}{\rm Int}_g(\wt{K}_1)})
= \Theta_{\tilde{\pi}_{\tilde{\rho}}}(\bs{1}_{\wt{K}_1}).\leqno{(7)}
$$

\begin{marema3}
{\rm 
D'après le lemme 1, les sous--espaces hyperspéciaux de $\wt{G}(F)$ forment une seule orbite sous l'action de $\ES{C}$ par conjugaison. Par suite les éléments $\rho_0 + \zeta_\lambda(g_{\rm ad})$,  pour $(z,g)\in C$, décrivent les éléments de ${\rm Irr}(\ES{R}^G(\lambda),\theta_{\ES{R}})$ tels que la représentation $\pi_\rho$ de $G(F)$ associée à $\rho$ est $K_1$--sphérique pour un sous--espace hyperspécial $(K_1,\wt{K}_1)$ de $\wt{G}(F)$. On peut se demander si ces éléments décrivent tout l'ensemble ${\rm Irr}(\ES{R}^G(\lambda),\theta_{\ES{R}})$. Il semble que ce soit le cas si l'ensemble $E_{\rm ell}^{\rm nr}$ n'est pas vide, mais en général nous ne 
savons pas répondre à cette question.
}
\end{marema3}

\begin{marema4}
{\rm 
Soit $\lambda$ un caractère unitaire et non ramifié de $T(F)$, et soit $\tilde{r}\in R^{\wt{G},\omega}(\lambda)$. Posons $\tau = (T,\lambda, \tilde{r})$. Alors la restriction de $\omega_{\tau}$ au sous--groupe $G_\sharp(F)$ de $\ES{C}$ est un caractère non ramifié de $G_\sharp(F)$, \cad qu'il correspond à une classe de cohomologie, disons 
$\bs{a}_{\tau}\in {\rm H}^1(W_F,Z(\hat{G}_\sharp))$, qui est non ramifiée. En effet, pour $\bs{c}=\bs{q}(z,g)\in \ES{C}$, on a
$$\omega_\tau(\bs{c})= \tilde{\zeta}_\lambda(\bs{c})(1,\tilde{r})^{-1}.
$$
Or $\tilde{\zeta}_\lambda(\bs{c})\in {\rm Irr}(\ES{R}^{\wt{G},\omega}(\lambda),\omega_{\ES{R}})$ est un prolongement de
$$\zeta_\lambda(g_{\rm ad})\in {\rm Irr}(\ES{R}^G(\lambda),\theta_{\ES{R}},\omega_{\ES{R}}) \subset 
{\rm Irr}(\ES{R}^G(\lambda))= R^G(\lambda)^D,
$$ et l'application $\zeta_\lambda: G_{\rm AD}(F)\rightarrow R^G(\lambda)^D$ se factorise à travers le groupe abélien fini
$$
\ker(\hat{Z}_{\rm sc}/(1-\phi)(\hat{Z}_{\rm sc})\rightarrow 
\hat{Z}/(1-\phi)(\hat{Z}))^D,
$$
qui paramétrise les caractères non ramifiés de $G_{\rm AD}(F)$ qui sont triviaux sur $q(G(F))$ --- cf. \ref{séries principales nr}. D'où le résultat, puisqu'un caractère non ramifié de $G_{\rm AD}(F)$ composé avec l'homomorphisme naturel $G_\sharp(F)\rightarrow G_{\rm AD}(F)$, est un caractère non ramifié de $G_\sharp(F)$.
}
\end{marema4}

\subsection{L'ensemble $\mathfrak{E}_{\rm t-nr}$}\label{l'ensemble t-nr}On a fixé en \ref{décomposition endoscopique} un ensemble $\mathfrak{E}_{\rm nr}$ de représentants des classes d'isomorphisme de données endoscopiques elliptiques et non ramifiées pour $(\wt{G},\bs{a})$. 
Soit $\mathfrak{E}_{\rm t-nr}$ le sous--ensemble de $\mathfrak{E}_{\rm nr}$ formé des données $\bs{G}'=(G',\ES{G}',\tilde{s})$ telles que le groupe sous--jacent $G'$ est un tore. 

Soit $\bs{T}'=(T',\ES{T}',\tilde{s})\in \mathfrak{E}_{\rm t-nr}$. Un {\it caractère affine} de $\wt{T}'(F)$ est une représentation 
$(\chi',\wt{\chi}')$ de $\wt{T}'(F)$ telle que $\chi'$ est un caractère de $T'(F)$. Un caractère affine $\wt{\chi}'$ de $T'(F)$ est unitaire s'il prend ses valeurs dans ${\Bbb U}$, et on dit qu'il est non ramifié si le caractère $\chi'$ de $T'(F)$ est non ramifié (\cad trivial sur le sous--groupe compact maximal $T'(F)_1$ de $T'(F)$). Au sous--espace hyperspécial $(K,\wt{K}')$ de $\wt{G}(F)$ est associé un sous--espace hyperspécial $(K',\wt{K}')$ de $T'(F)$. On a forcément $K'=T'(F)_1$ et $N_{\smash{\wt{T}'\!(F)}}(K')= \wt{T}'(F)$. 
\'Ecrivons $\wt{K}'= K'\delta'_0$ pour un élément $\delta'_0\in \wt{T}'(F)$. Puisque $\wt{T}'$ est à 
torsion intérieure, tout caractère de $T'(F)$ se prolonge en un caractère affine de $\wt{T}'(F)$, et un tel prolongement est déterminé par sa valeur en $\delta'_0$. Un caractère affine unitaire et non ramifié $\tilde{\chi}'$ de $\wt{T}'(F)$ est dit {\it normalisé par $\wt{K}$} si $\chi'(\delta'_0)=1$, \cad si la restriction de $\tilde{\chi}'$ à $\wt{K}'$ vaut $1$. Rappelons que pour transférer un caractère affine de $\wt{T}'(F)$ en une représen\-tation de $(\wt{G}(F),\omega)$, il faut fixer un isomorphisme 
${^LT'}\buildrel \simeq\over{\longrightarrow} \ES{T}'$. Pour cela on choisit un élément $(h,\phi)\in \ES{T}'$, et l'on prend l'isomorphisme défini par
$$
(t',w\phi^k)\mapsto (t',w)(h,\phi)^k,\quad t'\in \hat{T}',\, w\in I_F,\,k\in {\Bbb Z}.
$$
Si le lemme fondamental pour les unités des algèbres de Hecke sphériques est vrai pour la donnée $\bs{T}'$ (par exemple si la caractéristique résiduelle de $F$ est assez grande), 
un caractère affine unitaire et non ramifié $\tilde{\chi}'$ de $\wt{T}'(F)$ est normalisé par 
$\wt{K}$ si et seulement si
$$
\bs{\rm T}_{\bs{T}'}(\wt{\chi}')(\bs{1}_{\wt{K}})=1.
$$

\'Ecrivons $\tilde{s}=s\hat{\theta}$ et posons $\bs{h}= h\phi$. En considéront $\tilde{s}$ et $\bs{h}$ comme des automorphismes de $\hat{G}$, on a l'égalité $\tilde{s}\bs{h}= a(\phi)\bs{h}\tilde{s}$, où $a:W_F\rightarrow Z(\hat{G})$ est un cocycle dans la classe $\bs{a}$. 

{\bf Jusqu'à la fin de ce numéro, on suppose $\hat{G}= \hat{G}_{\rm AD}$.} Alors les automorphismes $\tilde{s}$ et $\bs{h}$ de $\hat{G}$ vérifient les propriétés suivantes:
\begin{enumerate}
\item[(1)]$\tilde{s}$ est semisimple et la composante neutre $\hat{S}= \hat{G}_{\tilde{s}}$ du centralisateur de $\tilde{s}$ dans $\hat{G}$ est un tore;
\item[(2)]$\tilde{s}\bs{h}=\bs{h}\tilde{s}$ (c'est l'égalité (1) de \ref{données endoscopiques nr});
\item[(3)]le sous--groupe des points fixes de $\hat{S}^{\bs{h}}$ est fini (cela traduit l'ellipticité de $\bs{T}'$);
\item[(4)]le commutant commun $Z_{\hat{G}}(\tilde{s},\bs{h})$ de $\tilde{s}$ et $\bs{h}$ dans $\hat{G}$ est fini;
\item[(5)]pour $n,\, m\in {\Bbb Z}$, l'élément $\tilde{s}^n\bs{h}^m$ est semisimple.
\end{enumerate}
En effet, (1), (2) et (3) résultent des définitions. Puisque la composante neutre $\hat{S}^{\bs{h},\circ}$ de $\hat{S}^{\bs{h}}$ est aussi la composante neutre de $Z_{\hat{G}}(\tilde{s},\bs{h})$, (3) est équivalent à (4). On sait qu'à conjugaison près, on peut supposer que $s$ appartient à $\hat{T}$. Alors $\hat{S}=\hat{T}^{\hat{\theta},\circ}$, et comme $\bs{h}$ normalise ce tore, il normalise aussi son commutant $\hat{T}$. Pour $n,\, m\in {\Bbb Z}$, on a alors $\tilde{s}^n\bs{h}^m= g\hat{\theta}^n\phi^m$ pour un élément $g\in N_{\hat{G}}(\hat{T})$. Une puissance d'un tel élément appartient au tore $\hat{T}$, d'où (5). 

Inversement, on a:
\begin{enumerate}
\item[(6)] si les automorphismes $\tilde{s}$ et $\bs{h}$ vérifient (2), (4) et (5), alors il vérifient (1).
\end{enumerate}
En effet, si $\tilde{s}$ et $\bs{h}$ vérifient (2), (4) et (5), alors comme $\bs{h}$ commute à $\tilde{s}$, il stabilise $\hat{G}_{\tilde{s}}$. Puisque $\bs{h}$ est semisimple, sa restriction à $\hat{G}_{\tilde{s}}$  l'est aussi, et $\bs{h}$ stabilise une paire de Borel $(\hat{B}',\hat{T}')$ de $\hat{G}_{\tilde{s}}$. Si $\hat{G}_{\tilde{s}}$ n'est pas réduit à $\hat{T}'$, alors $\bs{h}$ conserve le cocaractère de $\hat{T}'$ qui est la somme des coracines positives relativement à $\hat{B}'$. L'image de ce cocaractère est donc contenue dans le groupe $Z_{\hat{G}}(\tilde{s},\bs{h})$, et ce dernier n'est pas fini, ce qui  contredit (4). 

Notons aussi que puisque les rôles des automorphismes $\tilde{s}$ et $\bs{h}$ sont symétriques, s'ils vérifient (2), (4) et (5), alors on a aussi:
\begin{enumerate}
\item[$(1)'$]$\bs{h}$ est semisimple et la composante neutre $\hat{G}_{\bs{h}}$ du centralisateur de $\bs{h}$ dans $\hat{G}$ est un tore.
\end{enumerate}

\subsection{L'application $(\bs{T}',\tilde{\chi}')\mapsto \bs{\tau}_{\tilde{\chi}'}$}\label{l'application tau}
(On ne suppose plus que le groupe $\hat{G}$ est adjoint.) Soit $\bs{T}'=(T',\ES{T}',\tilde{s})\in \mathfrak{E}_{\rm t-nr}$. On suppose que $\tilde{s}$ appartient à $\hat{T}\hat{\theta}$. Choisissons un élément $(h,\phi)\in \ES{T}'$. Il définit comme en \ref{l'ensemble t-nr} un isomorphisme ${^LT'}\buildrel\simeq\over{\longrightarrow} \ES{T}'$. 
Posons $\tilde{s}=s\hat{\theta}$ et $\bs{h}= h\phi$. On a donc $\tilde{s}\bs{h}= a(\phi)\bs{h}\tilde{s}$, où $a:W_F\rightarrow Z(\hat{G})$ est un cocycle dans la classe $\bs{a}$. 

Soit $\chi'$ un caractère unitaire et non ramifié de $T'(F)$. \`A $\chi'$ est associé un paramètre non ramifié $\varphi^{T'}: W_F\rightarrow {^LT'}$, et un paramètre non ramifié
$$
\varphi': W_F\xrightarrow{\varphi^{T'}} {^LT}' \simeq
\ES{T}'\hookrightarrow {^LG}.
$$
\`A ce paramètre (tempéré et non ramifié) $\varphi'$ sont associés 
un $S$--groupe
$$
\bs{S}(\varphi')=S_{\varphi'}/S_{\varphi'}^\circ Z(\hat{G})^{\Gamma_F},\quad S_{\varphi'}=\{g\in \hat{G}: 
{\rm Int}_g(\bs{h})=\bs{h}\},
$$
et un ``$S$--groupe tordu''
$$
\wt{\bs{S}}(\varphi'\!,\bs{a})= \wt{S}_{\varphi'\!,a}/S_{\varphi'}^\circ Z(\hat{G})^{\Gamma_F},\quad 
\wt{S}_{\varphi'\!,a}= \{\tilde{g}\in \hat{G}\hat{\theta}: {\rm Int}_{\tilde{g}}(\bs{h})= a(\phi)\bs{h}\}.
$$
Notons que $\wt{S}_{\varphi'\!,a}$ est un $S_{\varphi'}$--espace tordu, et que $\wt{\bs{S}}(\varphi'\!,\bs{a})$ est un $\bs{S}({\varphi'})$--espace tordu.
Par définition, l'élément $\tilde{s}$ appartient à $\wt{S}_{\varphi'\!,a}$.  
\`A $\varphi'$ est aussi associé un caractère unitaire et non ramifié $\lambda_{\varphi'}$ de $T(F)$, bien défini à conjugaison près par $N_{G(F)}(T)$. Soit $b'\in \hat{T}'$ l'élément défini par 
$\varphi^{T'}(\phi)= b'\rtimes \phi $. Rappelons que (par ellipticité) on a $\hat{T}'^{\Gamma_F}\subset 
Z(\hat{G})$, ce qui entra\^{\i}ne l'égalité $\hat{T}'=(Z(\hat{G})\cap \hat{T}')(1-\phi_{T'})(\hat{T}')$. On peut donc supposer que $b'$ appartient à $Z(\hat{G})\cap \hat{T}'$. D'autre part, choisissons un élément $y\in \hat{G}_{\rm SC}$ tel que ${\rm Int}_{y^{-1}}(\bs{h})= \underline{h}\phi$ pour un élément $\underline{h}\in \hat{T}$, et notons $\psi^T: W_F\rightarrow {^LT}$ le paramètre tempéré et non ramifié défini par
$$
\psi^T(\phi)= b'\underline{h}\rtimes \phi.
$$
\`A $\psi^T$ sont associés un caractère unitaire et non ramifié $\lambda=\lambda_\psi$ de $T(F)$, et un paramètre (tempéré et non ramifié)
$$
\psi: W_F \xrightarrow{\psi^T} {^LT}\hookrightarrow {^LG}.
$$
On a donc un $R$--groupe $R^G(\lambda)$, et un $R$--groupe tordu $R^{\wt{G},\omega}(\lambda)$ --- cf. \ref{R-groupes tordus}. On a aussi un $S$--groupe
$\bs{S}(\psi)= S_\psi/ S_\psi^\circ Z(\hat{G})^{\Gamma_F}$ et un $S$--groupe tordu 
$\wt{\bs{S}}(\psi,\bs{a})= \wt{S}_{\psi,a}/S_\psi^\circ Z(\hat{G})^{\Gamma_F}$, où $\wt{S}_{\psi,a}$ est défini en \ref{paramètres} (cf. aussi la remarque 2 de \ref{séries principales nr}). 
Ces ensembles sont ceux obtenus en rempla\c{c}ant $\bs{h}$ par $b'\underline{h}\phi$ dans les définitions de $\bs{S}(\varphi')$ et $\wt{\bs{S}}(\varphi'\!,\bs{a})$. Puisque $b'\in Z(\hat{G})$, l'automorphisme ${\rm Int}_{y}$ de $\hat{G}$ induit un isomorphisme de $S_\psi$ sur $S_{\varphi'}$, et donc un isomorphisme de $S_\psi^\circ$ sur $S_{\varphi'}^\circ$. D'autre part, puisque $b'\in Z(\hat{G})$ et $\hat{\theta}(b')=b'$, l'automorphisme ${\rm Int}_{y}$ de $\hat{G}\hat{\theta}$ induit aussi une bijection de $\wt{S}_{\psi,a}$ sur $\wt{S}_{\varphi'\!,a}$. D'où, par passage aux quotients, une application bijective
$$
{\rm Int}_{y}: \wt{\bs{S}}(\psi,\bs{a})\rightarrow \wt{\bs{S}}(\varphi'\!,\bs{a}),\leqno{(1)}
$$
qui prolonge l'isomorphisme ${\rm Int}_{y}:{\bs{S}}(\psi)\rightarrow {\bs{S}}(\varphi')$.  

Posons $\hat{U}= \hat{T}^{\Gamma_F,\circ}$, et notons $\hat{N}$ le normalisateur de $\hat{T}$ dans $\hat{G}$. Puisque $\hat{U}$ est un tore maximal de $S_\psi^\circ$ et que ${\rm Int}_{y}(S_{\psi,{\rm ad}}^\circ)= S_{\varphi'\!,{\rm ad}}^\circ$ est un tore (\ref{l'ensemble t-nr}, relation $(1)'$), on a $S_\psi^\circ = \hat{U}$. Par suite $W^G_0(\lambda)=\{1\}$, et l'on a un isomorphisme naturel (cf. la remarque 1 de \ref{séries principales nr})
$$
R^G(\lambda)=W^G(\lambda)\buildrel\simeq\over{\longrightarrow} (\hat{N}\cap S_{\psi})/ (\hat{T}\cap S_\psi)= \bs{S}(\psi),\, r\mapsto\iota_\lambda(r)^{-1}
$$
avec
$$
\hat{T}\cap S_\psi = \hat{U}Z(\hat{G})^{\Gamma_F},\quad \hat{T}\cap S_\psi^\circ =\hat{U}.
$$
D'après la remarque 2 de \ref{séries principales nr}, on a aussi une application bijective
$$
\tilde{\iota}_\lambda:R^{\wt{G},\omega}(\lambda)=W^{\wt{G},\omega}(\lambda)\rightarrow (\hat{N}\hat{\theta}\cap \wt{S}_{\psi,a})/ (\hat{T}\cap S_\psi)= \wt{\bs{S}}(\psi,\bs{a})\leqno{(2)}
$$
qui vérifie
$$
\tilde{\iota}_\lambda(\tilde{r} r')= \iota_\lambda(r)\tilde{\iota}_\lambda(\tilde{r}),\quad 
\tilde{r}\in R^{\wt{G},\omega}(\lambda),\, r\in R^G(\lambda).
$$
En composant (1) et (2), on obtient une application bijective
$$
\tilde{\jmath}_\lambda :R^{\wt{G},\omega}(\lambda) \rightarrow \wt{\bs{S}}(\varphi'\!,\bs{a}),\leqno{(3)}
$$
qui vérifie
$$
\tilde{\jmath}_\lambda(\tilde{r} r) = {\rm Int}_y(\iota_\lambda(r))\tilde{\jmath}_\lambda(\tilde{r}),\quad 
\tilde{r}\in R^{\wt{G},\omega}(\lambda),\, r\in R^G(\lambda).
$$

\begin{marema1}
{\rm 
La bijection (3) dépend bien sûr du choix de $y$ (d'ailleurs le caractère $\lambda$ aussi en dépend). Soit 
$y_1\in \hat{G}_{\rm SC}$ un autre élément tel que ${\rm Int}_{\smash{y_1^{-1}}}(\bs{h})= t_1\phi$ pour un $t_1\in \hat{T}$. Cet élément définit comme ci--dessus un paramètre $\psi^T_1: W_F \rightarrow {^LT}$, auquel est associé un caractère unitaire et non ramifié $\lambda_1$ de $T(F)$, et un paramètre (tempéré et non ramifié) $\psi_1: W_F \rightarrow {^LG}$. Posons $x= y_1^{-1}y$. On a donc 
${\rm Int}_x(S_\psi)= S_{\psi_1}$. Puisque $S_{\psi_1}^\circ = \hat{U} = S_\psi^\circ$ et $Z_{\hat{G}}(\hat{U})= \hat{T}$, l'élément $x$ appartient à $\hat{N}$, et sa projection sur $W=\hat{N}/\hat{T}$ appartient à $W^{\Gamma_F}$. On en déduit que la bijection
$$\tilde{\jmath}_{\lambda_1}:R^{\wt{G},\omega}(\lambda_1)\rightarrow \wt{\bs{S}}(\varphi'\!,\bs{a})$$
se déduit de (3) par transport de structure grâce à l'automorphisme ${\rm Int}_x$ de $\hat{T}$.
}
\end{marema1}

Soit $\tilde{r}$ l'élément de $R^{\wt{G},\omega}(\lambda)$ correspondant à la projection de $\tilde{s}\in \wt{S}_{\varphi'\!,a}$ sur $\wt{\bs{S}}(\varphi'\!,\bs{a})$ par la bijection (3). La condition d'ellipticité sur $\tilde{s}$ assure que $\tilde{r}$ appartient à $R^{\wt{G},\omega}_{\rm reg}(\lambda)$. En effet, posons $\hat{U}'= S_{\varphi'}^\circ\;(= {\rm Int}_y(S_\psi^\circ))$. C'est un tore de $\hat{G}$, qui co\"{\i}ncide avec le centralisateur connexe $\hat{G}_{\bs{h}}= Z_{\hat{G}}(\bs{h})^\circ$. L'application naturelle $\hat{U}\rightarrow \hat{T}/(1-\phi_T)(\hat{T})$ est surjective, de noyau fini. Ainsi l'isomorphisme ${\rm Int}_y$  identifie $X(A_T)\otimes_{\Bbb Z} {\Bbb R}$ à $\check{X}(\hat{U}')\otimes_{\Bbb Z}{\Bbb R}$, et l'action de $\tilde{r}$ à celle de $\tilde{s}=s\hat{\theta}$. D'autre part, on a l'égalité
$$
\det(1-\tilde{r}; \ES{A}_T/\ES{A}_{\wt{G}})= \det (1-\theta; \ES{A}_G/\ES{A}_{\wt{G}})\det (1-\tilde{r}; \ES{A}_T/\ES{A}_G).\leqno{(4)}
$$
Soit $\hat{U}'_{\rm ad}= S_{\varphi'\!,{\rm ad}}^\circ$ l'image de $\hat{U}'$ dans $\hat{G}_{\rm AD}$ par la projection naturelle 
$q:\hat{G}\rightarrow \hat{G}_{\rm AD}$. D'après la condition d'ellipticité, le commutant commun de $\tilde{s}$ et $\bs{h}$ dans $\hat{G}_{\rm AD}$ est fini (d'après \ref{l'ensemble t-nr}, relation (4)). Par suite le sous--groupe $(\hat{U}'_{\rm ad})^{\tilde{s}}\subset \hat{U}'_{\rm ad}$ formé des points fixes sous $\tilde{s}$ est fini, $1-\tilde{r}$ est un automorphisme de $\ES{A}_T/\ES{A}_G$, et $\ES{A}_T^{\tilde{r}}= \ES{A}_G^\theta = \ES{A}_{\wt{G}}$.  
On obtient donc un triplet elliptique essentiel 
(non ramifié) $\tau_{\chi'}=(T,\lambda,\tilde{r})\in E_{\rm ell}^{\rm nr}$, bien défini à conjugaison près. 

Soit $\tilde{\chi}'$ un caractère affine unitaire de $\wt{T}'(F)$ prolongeant $\chi'$. 
Alors on peut relever $\tau_{\chi'}$ en un triplet $\bs{\tau}_{\tilde{\chi}'}= (T,\lambda, \bs{\tilde{r}})\in \ES{E}_{\rm ell}^{\rm nr}$ --- lui aussi bien défini à conjugaison près --- en imposant la condition
$$
\Theta_{\bs{\tau}_{\tilde{\chi}'}}({\bs 1}_{\wt{K}})= \tilde{\chi}'(\bs{1}_{\wt{K}'}).\leqno{(5)}
$$
Si le lemme fondamental pour les unités des algèbres de Hecke sphériques est vrai pour la donnée 
$\bs{T}'$, cela revient à imposer que
$$
\Theta_{\bs{\tau}_{\tilde{\chi}'}}({\bs 1}_{\wt{K}}) = \bs{\rm T}_{\bs{T}'}(\tilde{\chi}')(\bs{1}_{\wt{K}}).
$$

\begin{marema2}
{\rm Tout élément $x\in {\rm Aut}(\bs{T}')$ définit un automorphisme $\alpha_x$ de $T'$ et un automorphisme $\tilde{\alpha}_x$ de $\wt{T}'=T'\times_{\ES{Z}(G)}\ES{Z}(\wt{G},\ES{E})$. Soient $(\chi'_1,\tilde{\chi}'_1)$ et $(\chi'_2,\tilde{\chi}'_2)$ deux caractères affines unitaires et non ramifiés de $\wt{T}'(F)$. D'après la construction, si $\chi'_1$ et $\chi'_2$ se déduisent l'un de l'autre par un élément de ${\rm Aut}(\bs{T}')$, alors $\tau_{\chi'_1}$ et $\tau_{\chi'_2}$ sont dans la même classe de conjugaison de $E_{\rm ell}^{\rm nr}$ . De même, si $\tilde{\chi}'_1$ et $\tilde{\chi}'_2$ se déduisent l'un de l'autre par un élément de ${\rm Aut}(\bs{T}')$, alors $\bs{\tau}_{\tilde{\chi}'_1}$ et $\bs{\tau}_{\tilde{\chi}'_2}$ sont dans la même classe de conjugaison de $\ES{E}_{\rm ell}^{\rm nr}$ . 

}
\end{marema2}

\begin{monlem1}
Soit $\bs{T}'= (T',\ES{T}',\tilde{s})\in \mathfrak{E}_{\rm t-nr}$. Soit $(\chi',\tilde{\chi}')$ un caractère affine unitaire et non ramifié de $\wt{T}'(F)$, et soit $\bs{\tau}'= \bs{\tau}_{\tilde{\chi}'}\;(\in \ES{E}_{\rm ell}^{\rm nr}/{\rm conj.})$.  
On a l'égalité
$$
(\Theta_{\bs{\tau}'},\Theta_{\bs{\tau}'})_{\rm ell} = c(\wt{G},\bs{G}')^{-1}\vert {\rm Aut}(\bs{T}')/B \vert^{-1},
$$
où $B$ est le stabilisateur de $\tilde{\chi}'$ dans ${\rm Aut}(\bs{T}')$.
\end{monlem1}

\begin{proof}Reprenons les constructions (et aussi les notations) précédentes. D'après la relation (6) de \ref{triplets elliptiques essentiels}, on a
$$
(\Theta_{\bs{\tau}'},\Theta_{\bs{\tau}'})_{\rm ell}= \vert {\rm Stab}(R^G(\sigma),\tilde{r})\vert 
\vert \det(1-\tilde{r}; \ES{A}_T/\ES{A}_{\wt{G}})\vert,
$$
et d'après la relation (4) ci--dessus, on a
$$
\vert \det(1-\tilde{r}; \ES{A}_T/\ES{A}_{\wt{G}})\vert=
\vert \det (1-\theta; \ES{A}_G/\ES{A}_{\wt{G}})\vert \vert \det (1-\tilde{r}; \ES{A}_T/\ES{A}_G)\vert.
$$
Le terme $\vert \det (1-\tilde{r}; \ES{A}_T/\ES{A}_G)\vert$ est égal à 
$\vert \det (1-\tilde{s}; \check{X}(\hat{U}'_{\rm ad})\otimes_{\Bbb Z}{\Bbb R})\vert$. Par un résultat général concernant les automorphismes d'un tore complexe, ce terme est aussi égal à  
$\vert (\hat{U}'_{\rm ad})^{\tilde{s}}\vert$.

On a un isomorphisme
$$
{\rm Stab}(R^G(\lambda), \tilde{r})\simeq {\rm Stab}(\bs{S}(\varphi'), \tilde{s}), 
$$
où ${\rm Stab}(\bs{S}(\varphi'),\tilde{s})$ désigne le commutant de $\tilde{s}$ dans $\bs{S}(\varphi')$. 
Notons $\mathfrak{X}$ le groupe des $x\in \hat{G}$ tels que ${\rm Int}_{x^{-1}}(\bs{h})= \bs{h}$ et 
${\rm Int}_{x^{-1}}(\tilde{s})\in Z(\hat{G})\tilde{s}$. Par définition, $\mathfrak{X}$ est un sous--groupe de $S_{\varphi'}$. 
En se rappelant que $\tilde{s}\bs{h}\in Z(\hat{G})\bs{h}\tilde{s}$, on voit que si $x\in \mathfrak{X}$, alors ${\rm Int}_{x^{-1}}(\tilde{s})\in Z(\hat{G})^{\Gamma_F}\tilde{s}$. Cela implique que l'image de $\mathfrak{X}$ dans $\bs{S}(\varphi')=S_{\varphi'}/\hat{U}'Z(\hat{G})^{\Gamma_F}$ est contenue dans ${\rm Stab}(\bs{S}(\varphi'),\tilde{s})$. On en déduit une suite
$$
1\rightarrow (\mathfrak{X}\cap \hat{U}')Z(\hat{G})^{\Gamma_F}
\rightarrow \mathfrak{X} \rightarrow {\rm Stab}(\bs{S}(\varphi'),\tilde{s})\rightarrow 1.
\leqno{(6)}
$$
Montrons que cette suite est exacte. L'injectivité à gauche et l'exactitude au centre sont faciles, puisque $\mathfrak{X}\cap (\hat{U}'Z(\hat{G})^{\Gamma_F})=
(\mathfrak{X}\cap \hat{U}')Z(\hat{G})^{\Gamma_F}$. Il s'agit de prouver la surjectivité à droite. Soit $x\in S_{\varphi'}$ dont l'image dans $\bs{S}(\varphi')$ est fixée par $\tilde{s}$. On a donc 
${\rm Int}_{\tilde{s}}(x)\in \hat{U}'Z(\hat{G})^{\Gamma_F}x$. Comme l'automorphisme $\tilde{s}$ de $\hat{U}'_{\rm ad}$ n'a qu'un nombre fini de points fixes, on a
$$
\hat{U}'= (Z(\hat{G})\cap \hat{U}')(1-\tilde{s})(\hat{U}').
$$
Par suite, quitte à multiplier $x$ par un élément convenable de $\hat{U}'$, on peut supposer que 
${\rm Int}_{\tilde{s}}(x)\in Z(\hat{G})$. Mais alors $x$ appartient à $\mathfrak{X}$, d'où la surjectivité cherchée. 

Remarquons que le groupe $\mathfrak{X}$ est d'image finie dans $\hat{G}_{\rm AD}$ (toujours parce que le commutant commun de $\tilde{s}$ et $\bs{h}$ dans $\hat{G}_{\rm AD}$ est fini). Il en résulte que la composante neutre de $\mathfrak{X}$ est $Z(\hat{G})^{\Gamma_F,\circ}$. Dans la suite (6), on peut quotienter les deux premiers termes par $Z(\hat{G})^{\Gamma_F,\circ}$. On obtient ainsi des groupes finis, d'où l'égalité
$$
\vert {\rm Stab}(\bs{S}(\varphi'),\tilde{s})\vert = \vert \mathfrak{X}/ Z(\hat{G})^{\Gamma_F,\circ}\vert 
\vert (\mathfrak{X}\cap \hat{U}')Z(\hat{G})^{\Gamma_F}/Z(\hat{G})^{\Gamma_F,\circ}\vert^{-1}.
$$
Par définition de $\mathfrak{X}$, l'ensemble $\mathfrak{X}\cap \hat{U}'$ s'envoie surjectivement sur $(\hat{U}'_{\rm ad})^{\tilde{s}}$. On en déduit une suite exacte courte
$$
1\rightarrow Z(\hat{G})^{\Gamma_F}/Z(\hat{G})^{\Gamma_F,\circ}\rightarrow 
(\mathfrak{X}\cap \hat{U}')Z(\hat{G})^{\Gamma_F}/Z(\hat{G})^{\Gamma_F,\circ}
\rightarrow (\hat{U}'_{\rm ad})^{\tilde{s}}\rightarrow 1,
$$
puis l'égalité
$$
 \vert (\hat{U}'_{\rm ad})^{\tilde{s}}\vert=  
\vert (\mathfrak{X}\cap \hat{U}')Z(\hat{G})^{\Gamma_F}/Z(\hat{G})^{\Gamma_F,\circ}\vert 
\vert \pi_0(Z(\hat{G})^{\Gamma_F})\vert^{-1}.
$$
On obtient
$$
\vert {\rm Stab}(\bs{S}(\varphi'),\tilde{s})\vert \vert (\hat{U}'_{\rm ad})^{\tilde{s}}\vert 
= \vert \mathfrak{X}/ Z(\hat{G})^{\Gamma_F,\circ}\vert \vert \pi_0(Z(\hat{G})^{\Gamma_F})\vert^{-1}.
$$
En rassemblant les calculs ci--dessus, on obtient l'égalité
$$
(\Theta_{\bs{\tau}'},\Theta_{\bs{\tau}'})_{\rm ell}=\vert \det (1-\theta; \ES{A}_G/\ES{A}_{\wt{G}})\vert 
\vert \mathfrak{X}/ Z(\hat{G})^{\Gamma_F,\circ}\vert \vert \pi_0(Z(\hat{G})^{\Gamma_F})\vert^{-1}.
\leqno{(7)}
$$

Le stabilisateur $B$ de $\tilde{\chi}'$ dans ${\rm Aut}(\bs{T}')$ est aussi celui de $\chi'$, autrement dit du paramètre $\varphi'$ modulo action de $\hat{T}'$. Rappelons que $\varphi'(\phi)= b'(h,\phi)$, avec $b'\in Z(\hat{G})\cap \hat{T}'$. C'est--à--dire que $B$ est le groupe des $x\in \hat{G}$ tels que ${\rm Int}_{x^{-1}}(b'\bs{h})\in (1-\bs{h})(\hat{T}')b'\bs{h}$ et ${\rm Int}_{x^{-1}}(\tilde{s})\in Z(\hat{G})\tilde{s}$. On voit que $B= \hat{T}'\mathfrak{X}$. On obtient une suite exacte
$$
1\rightarrow (\mathfrak{X}\cap \hat{T}')Z(\hat{G})^{\Gamma_F,\circ}/Z(\hat{G})^{\Gamma_F,\circ}
\rightarrow \mathfrak{X}/Z(\hat{G})^{\Gamma_F,\circ}
\rightarrow B/ \hat{T}'Z(\hat{G})^{\Gamma_F,\circ}
\rightarrow 1,
$$
d'où (puisque tous ces groupes sont finis)
$$
\vert \mathfrak{X}/Z(\hat{G})^{\Gamma_F,\circ}\vert =
\vert (\mathfrak{X}\cap \hat{T}')Z(\hat{G})^{\Gamma_F,\circ}/Z(\hat{G})^{\Gamma_F,\circ}\vert 
\vert B/ \hat{T}'Z(\hat{G})^{\Gamma_F,\circ}\vert.
$$
Le groupe $\hat{T}'Z(\hat{G})^{\Gamma_F,\circ}$ est la composante neutre de $B$, comme de ${\rm Aut}(\bs{T}')$; d'où
$$
\vert B/ \hat{T}'Z(\hat{G})^{\Gamma_F,\circ}\vert= \vert {\rm Aut}(\bs{T}')/B\vert^{-1} 
\vert \pi_0({\rm Aut}(\bs{T}'))\vert.
$$
En utilisant \cite[2.1, page 19]{KS1}, on obtient
$$
\vert \pi_0({\rm Aut}(\bs{T}'))\vert = \vert {\rm Out}(\bs{T}')\vert \vert \pi_0([Z(\hat{G})/(Z(\hat{G})\cap \hat{T}')]^{\Gamma_F}).
$$
D'autre part, on a $\mathfrak{X}\cap \hat{T}'= \hat{T}'^{\Gamma_F}$, d'où une suite exacte
$$
1\rightarrow (\hat{T}'\cap Z(\hat{G})^{\Gamma_F,\circ})/Z(\hat{G})^{\Gamma_F,\hat{\theta},\circ}
\rightarrow \hat{T}'^{\Gamma_F}/ Z(\hat{G})^{\Gamma_F,\hat{\theta},\circ}
\rightarrow (\mathfrak{X}\cap \hat{T}')Z(\hat{G})^{\Gamma_F,\circ}/Z(\hat{G})^{\Gamma_F,\circ}
\rightarrow 1.
$$
On en déduit (à nouveau puisque ces groupes sont finis)
$$
\vert (\mathfrak{X}\cap \hat{T}')Z(\hat{G})^{\Gamma_F,\circ}/Z(\hat{G})^{\Gamma_F,\circ}\vert =
\vert \pi_0(\hat{T}'^{\Gamma_F})\vert \vert \pi_0(\hat{T}'\cap Z(\hat{G})^{\Gamma_F,\circ})\vert^{-1}
$$
L'égalité (7) devient
\begin{eqnarray*}
(\Theta_{\bs{\tau}'},\Theta_{\bs{\tau}'})_{\rm ell} &=&\vert \det (1-\theta; \ES{A}_G/\ES{A}_{\wt{G}})\vert  \vert \pi_0(Z(\hat{G})^{\Gamma_F})\vert^{-1}\times \cdots \\
&& \cdots \times \vert {\rm Out}(\bs{T}')\vert \vert 
\pi_0([Z(\hat{G})/(Z(\hat{G})\cap \hat{T}')]^{\Gamma_F})
\vert  \times \cdots \\
&& \cdots \times \vert  \pi_0(\hat{T}'^{\Gamma_F})\vert \vert \pi_0(\hat{T}'\cap Z(\hat{G})^{\Gamma_F,\circ})\vert^{-1}
\vert {\rm Aut}(\bs{T}')/B\vert^{-1} 
\end{eqnarray*}
On reconnaît le produit des premiers termes (tous sauf le dernier): c'est $c(\wt{G},\bs{G})^{-1}$. D'où le lemme. 
\end{proof}

Par linéarité, l'application $(\bs{T}',\tilde{\chi}')\mapsto \Theta_{\bs{\tau}_{\tilde{\chi}'}}$ définit un 
homomorphisme (normalisé grâce au choix de l'espace hyperspécial $\wt{K}$, \cad par la condition (5))
$$
\bigoplus_{\bs{T}'\in \mathfrak{E}_{\rm t-nr}}\bs{D}^{\rm nr}(\wt{T}'(F))\rightarrow \bs{D}_{\rm ell}^{\rm nr}(\wt{G}(F),\omega).\leqno{(8)}
$$

\begin{monlem2}
L'homomorphisme (8) est surjectif.
\end{monlem2}

\begin{proof}Il s'agit d'inverser la construction de l'application $(\bs{T}',\tilde{\chi}')\mapsto \bs{\tau}_{\tilde{\chi}'}$. On part d'un triplet $\tau=(T,\lambda,\tilde{r})\in E_{\rm ell}^{\rm nr}$. Soit $\psi^T: W_F\rightarrow {^LT}$ un paramètre non ramifié associé à $\lambda$, et soit $\psi:W_F \rightarrow {^LG}$ le paramètre non ramifié obtenu en composant $\psi^T$ avec le plongement naturel ${^LT}\hookrightarrow {^LG}$. Soit $\tilde{s}=s\hat{\theta}$ un élément de $\hat{N}\hat{\theta}\cap\wt{S}_{\psi, a}$ se projetant sur l'image de $\tilde{r}$ dans $\wt{\bs{S}}(\psi,\bs{a})$ par la bijection (2). Soit $h\in \hat{T}$ l'élément défini par $\psi^T(\phi)= h\rtimes \phi$. Posons $\bs{h}= h\phi$. Alors on a $\tilde{s}\bs{h}=a(\phi)\bs{h}\tilde{s}$. Puisque $W_0^G(\lambda)=\{1\}$, le centralisateur connexe $\hat{G}_{\bs{h}} = S_{\psi}^\circ$ co\"{\i}ncide avec le tore $\hat{U}= \hat{T}^{\Gamma_F,\circ}$ (\ref{séries principales nr}.(5)). L'application naturelle $\hat{U}\rightarrow \hat{T}/ (1-\phi_T)(\hat{T})$ est surjective, de noyau fini. On identifie $X(A_T)\otimes_{\Bbb Z}{\Bbb R}$ à $\check{X}(\hat{U})\otimes_{\Bbb Z}{\Bbb R}$, et l'action de $\tilde{r}$ à celle de $\tilde{s} = s \hat{\theta}$. Soit $\hat{U}_{\rm ad}= S_{\psi,{\rm ad}}^\circ$ l'image de $\hat{U}$ dans $\hat{G}_{\rm AD}$ par la projection naturelle $\hat{G}\rightarrow \hat{G}_{\rm AD}$. La condition d'ellipticité sur $\tilde{r}$ assure (d'après (4)) que l'automorphisme $\tilde{s}$ de $\hat{U}_{\rm ad}$ n'a qu'un nombre fini de points fixes, par conséquent le commutant commun de $\tilde{s}$ et $\bs{h}$ dans $\hat{G}_{\rm AD}$ est fini, et les automorphismes $\tilde{s}$ et $\bs{h}$ de $\hat{G}_{\rm AD}$ vérifient les propriétés (1) à (6) de \ref{l'ensemble t-nr} (rappelons que dans ces propriétés, les rôles de $\tilde{s}$ et $\bs{h}$ sont symétriques). En particulier, $\tilde{s}$ est semisimple --- comme automorphisme de $\hat{G}_{\rm AD}$, et donc aussi comme élément de $\hat{G}$ --- et le centralisateur connexe $\hat{G}_{\tilde{s}}$ est un tore, disons $\hat{T}'$. Soit 
$\ES{T}'$ le sous--groupe de ${^LG}$ engendré par $\hat{T}'$ et $\psi(W_F)$. C'est un sous--groupe fermé de ${^LG}$, et une extension scindée de $W_F$ par $\hat{T}'$. On munit $\hat{T}'$ de l'action galoisienne $\sigma \mapsto \sigma_{\ES{T}'}$ définie comme suit: pour $\sigma = w\phi^k$ avec $w\in I_F$ et $k\in {\Bbb Z}$, on pose $\sigma_{\ES{T}'}=\phi_{\ES{T}'}^k$ avec 
$$
\phi_{\ES{T}'}(x)= {\rm Int}_{\bs{h}}(x)= h\phi_G(x)h^{-1},\quad x\in \hat{T}'.
$$
Soit $T'$ un tore défini sur $F$ tel que ${^LT}= \hat{T}'\rtimes W_F$, \cad tel que l'action galoisienne $\sigma \mapsto \sigma_{T'}$ sur $\hat{T}'$ donnée par $T'$ co\"{\i}ncide avec $\sigma \mapsto \sigma_{\ES{T}'}$. Par construction, le triplet $(T',\ES{T}',\tilde{s})$ est une donnée endoscopique non ramifiée pour $(\wt{G},\bs{a})$, et puisque le commutant commun de $\bs{h}$ et $\tilde{s}$ dans $\hat{G}_{\rm AD}$ est fini, cette donnée est elliptique: on a $\hat{T}'^{\Gamma_F,\circ}= [Z(\hat{G})^{\hat{\theta}}]^{\Gamma_F,\circ}$. Elle est donc isomorphe à un unique élément $\bs{T}'_1=(T'_1,\ES{T}'_1,\tilde{s}_1)$ de $\mathfrak{E}_{\rm t-nr}$. Soit $x\in \hat{G}$ tel que
$$
x\ES{T}'x^{-1} = \ES{T}'_1,\quad x\tilde{s}x^{-1}\in Z(\hat{G})\tilde{s}_1.
$$
Le choix d'un élément dans $\ES{T}'_1$ définit un isomorphisme $\ES{T}'_1\buildrel\simeq\over{\longrightarrow}{^LT'_1}$ qui, composé avec l'homomorphisme $W_F \buildrel \varphi\over{\longrightarrow}\ES{T}'\xrightarrow{{\rm Int}_x} \ES{T}'_1$, donne un paramètre $\varphi^{T'_1}:W_F\rightarrow {^L{T'_1}}$. \`A ce paramètre correspond un caractère unitaire et non ramifié $\chi'_1$ de $T'_1(F)$, que l'on prolonge en un caractère affine unitaire $\tilde{\chi}'_1$ de $\wt{T}'_1(F)$. Par construction, les éléments $\tau$ et $\tau_{\chi'_1}$ de $E_{\rm ell}^{\rm nr}$ sont dans la même classe de conjugaison par $G(F)$. Si de plus on choisit un relèvement $\bs{\tau}\in \ES{E}_{\rm ell}^{\rm nr}$ de $\tau$, alors on peut normaliser le prolongement $\tilde{\chi}'_1$ en imposant la condition  
$\tilde{\chi}'_1(\bs{1}_{\wt{K}'_1})= \Theta_{\bs{\tau}}(\bs{1}_{\wt{K}})$, où $(K'_1,\wt{K}'_1)$ est le sous--espace hyperspécial de $\wt{T}'_1(F)$ associé à $(K,\wt{K})$. Alors les éléments $\bs{\tau}$ et $\bs{\tau}_{\tilde{\chi}'_1}$ de $\ES{E}_{\rm ell}^{\rm nr}$ sont dans la même classe de conjugaison par $G(F)$. Cela démontre le lemme.
\end{proof}

\begin{marema3}
{\rm 
On verra plus loin (\ref{preuve}) que l'homomorphisme (8) se factorise en un isomorphisme
$$
\bigoplus_{\bs{T}'\in \mathfrak{E}_{\rm t-nr}}\bs{D}^{\rm nr}(\wt{T}'(F))^{{\rm Aut}(\bs{T}')}\rightarrow \bs{D}_{\rm ell}^{\rm nr}(\wt{G}(F),\omega).
$$ 
}
\end{marema3}

\subsection{Preuve de la commutativité du diagramme (5) de \ref{transfert spectral elliptique}}\label{preuve}
On rappelle qu'à une donnée $\bs{G}'\in \mathfrak{E}$ est associé un caractère $\omega_{\bs{G}'}$ de $G_\sharp(F)$. Si l'ensemble $\mathfrak{E}_{\rm t-nr}$ n'est pas vide, alors les propriétés suivantes sont vérifiées:
\begin{enumerate}
\item[(1)] Soient $\bs{T}'_1,\, \bs{T}'_2\in \mathfrak{E}_{\rm t-nr}$. Si $\omega_{\bs{T}'_1}= \omega_{\bs{T}'_2}$, alors $\bs{T}'_1=\bs{T}'_2$. 
\item[(2)]Soit $\bs{T}'=(T',\ES{T}',\tilde{s})\in \mathfrak{E}_{\rm t-nr}$, et soit $(\chi',\tilde{\chi}')$ un caractère affine unitaire et non ramifié de $\wt{T}'(F)$. 
La distribution $\bs{\rm T}_{\bs{T}'}(\tilde{\chi}')$ est un vecteur propre pour l'action du groupe $\ES{C}$ relativement à un caractère $\omega_{\chi'}$ de ce groupe, \cad qu'on a
$$
{^{\bs{c}}(\bs{\rm T}_{\bs{T}'}(\tilde{\chi}'))}= \omega_{\chi'}(\bs{c})\bs{\rm T}_{\bs{T}'}(\tilde{\chi}'),\quad \bs{c}\in \ES{C}.
$$
\item[(3)]Soit $\bs{T}=(T',\ES{T}',\tilde{s})\in \mathfrak{E}_{\rm t-nr}$, et soient $(\chi'_1,\tilde{\chi}'_1)$ et $(\chi'_2,\tilde{\chi}'_2)$ deux caractères affines unitaires et non ramifiés de $\wt{T}'(F)$. Si $\omega_{\chi'_1}=\omega_{\chi'_2}$, alors à homothétie près, $\tilde{\chi}'_1$ et $\tilde{\chi}'_2$ se déduisent l'un de l'autre par l'action d'un élément de ${\rm Aut}(\bs{T}')$.
\item[(4)]Soit $\bs{T}=(T',\ES{T}',\tilde{s})\in \mathfrak{E}_{\rm t-nr}$, et soit $\chi'$ un caractère unitaire et non ramifié de $T'(F)$. On a $\omega_{\chi'}= \omega_{\tau_{\chi'}}$. 
\end{enumerate}
Ces quatre propriétés seront démontrées (sous l'hypothèse $\mathfrak{E}_{\rm t-nr}\neq \emptyset$) dans la section 5. Admet\-tons ce résultat, ainsi que le lemme fondamental pour les unités des algèbres de Hecke sphériques pour toutes les données $\bs{T}'\in \mathfrak{E}_{\rm t-nr}$, et déduisons--en que le diagramme (5) de \ref{transfert spectral elliptique} est commutatif.

Supposons pour commencer que l'ensemble $E_{\rm ell}^{\rm nr}$ est vide. Alors d'après \ref{triplets elliptiques essentiels}, on a
$$\bs{D}_{\rm ell}(\wt{G}(F),\omega)= \bs{D}_{\rm ell}^{\rm ram}(\wt{G}(F),\omega),$$
et dans ce cas, le diagramme (5) de \ref{transfert spectral elliptique} est trivialement commutatif.

Supposons maintenant que l'ensemble $E_{\rm ell}^{\rm nr}$ n'est pas vide. Grâce au lemme 2 de \ref{l'application tau}, cela équivaut à 
ce que l'ensemble $\mathfrak{E}_{\rm t-nr}$ ne soit pas vide. Soit un couple $(\bs{T}',\tilde{\chi}')$ formé d'un élément $\bs{T}'\in \mathfrak{E}_{\rm t-nr}$ et d'un caractère affine unitaire et non ramifié $(\chi',\tilde{\chi}')$ de $\wt{T}'(F)$. On veut calculer le transfert $\bs{\rm T}_{\bs{T}'}(\tilde{\chi}')$. Soit $\bs{\tau}'= \bs{\tau}_{\tilde{\chi}'}$ un élément de $\ES{E}_{\rm ell}^{\rm nr}$ associé à $(\bs{T}',\tilde{\chi}')$ comme en \ref{l'application tau} --- il est bien défini à conjugaison près --- et soit $\tau'=\tau_{\chi'}$ son image dans $E_{\rm ell}^{\rm nr}$. Le caractère $\omega_{\tau'}$ de $\ES{C}$ est bien défini, et d'après la propriété (4), on a
$$
\omega_{\tau'}= \omega_{\chi'}.
$$
\'Ecrivons
$$
\bs{\rm T}_{\bs{T}'}(\tilde{\chi}')= \sum_{\tau \in E_{\rm ell}/{\rm conj.}}\bs{\rm T}_{\bs{T}'}(\tilde{\chi}')_\tau, \quad 
\bs{\rm T}_{\bs{T}'}(\tilde{\chi}')_\tau\in D_\tau,\leqno{(5)}
$$
conformément à la décomposition (2) de \ref{triplets elliptiques essentiels}. D'après le lemme 4 et la remarque 4 de \ref{action du groupe C}, pour $\tau\in E_{\rm ell}^{\rm nr}/{\rm conj.}$, tout élément non nul de l'espace $D_\tau$ est un vecteur propre pour l'action du groupe $\ES{C}$ relativement à un caractère $\omega_{\tau}$ de ce groupe, et la restriction de $\omega_{\tau}$ au sous--groupe $G_\sharp(F)\subset \ES{C}$ est un caractère non ramifié de $G_\sharp(F)$. D'autre part, puisque l'homomorphisme (8) de 
\ref{l'application tau} est surjectif (\ref{l'application tau}, lemme 2), d'après les propriétés (1), (2), (3), (4) et la remarque 2 de \ref{l'application tau}, si deux triplets $\tau_1,\, \tau_2\in E_{\rm ell}^{\rm nr}$ vérifient $\omega_{\tau_1}=\omega_{\tau_2}$, alors il sont conjugués par un élément de $G(F)$. On en déduit que
$$
\bs{\rm T}_{\bs{T}'}(\tilde{\chi}')= c\, \Theta_{\bs{\tau}'}+\sum_{\tau\in E_{\rm ell}^{\rm ram}/{\rm conj.}} \bs{\rm T}_{\bs{T}'}(\tilde{\chi}')_{\tau}\leqno{(6)}
$$
pour une constante $c\in {\Bbb C}$, où (rappel) $E_{\rm ell}^{\rm ram}=E_{\rm ell}\smallsetminus E_{\rm ell}^{\rm nr}$. Pour 
$\tau\in E_{\rm ell}^{\rm ram}/{\rm conj.}$, la distribution $\bs{\rm T}_{\bs{T}'}(\tilde{\chi}')_\tau$ est dans $D_\tau
\subset \bs{D}_{\rm ell}^{\rm ram}(\wt{G}(F),\omega)$. En appliquant le lemme fondamental pour les unités des algèbres de Hecke sphériques à la donnée $\bs{T}'$, on obtient que
$$
0\neq \tilde{\chi}'(\bs{1}_{\wt{K}'})= \bs{\rm T}_{\bs{T}'}(\tilde{\chi}')(\bs{1}_{\wt{K}})= c\,\Theta_{\bs{\tau}'}(\bs{1}_{\wt{K}}).
$$
Puisque $\bs{\tau}'=\bs{\tau}_{\tilde{\chi}'}$, on a $\bs{\rm T}_{\bs{T}'}(\tilde{\chi}')(\bs{1}_{\wt{K}})= \Theta_{\bs{\tau}'}(\bs{1}_{\wt{K}})$, d'où $c=1$. 

Pour calculer le produit scalaire elliptique de la distribution $\bs{\rm T}_{\bs{T}'}(\tilde{\chi}')$, il faut commencer par rendre le caractère $\tilde{\chi}'$ invariant par le groupe des automorphismes de $\bs{T}'$. Notons $B$ le stabilisateur de $\tilde{\chi}'$ dans ${\rm Aut}(\bs{T}')$, et posons
$$
\tilde{\xi}'= \vert{\rm Aut}(\bs{T}')/B\vert^{-1} \sum_{x\in {\rm Aut}(\bs{T}')/B}{^x(\tilde{\chi}')}.
$$
La distribution $\tilde{\xi}'$ sur $\wt{T}'(F)$ appartient à l'espace $\bs{D}^{\rm nr}(\wt{T}'(F))^{{\rm Aut}(\bs{T}')}= 
\bs{SD}_{\rm ell}^{\rm nr}(\wt{T}'(F))^{{\rm Aut}(\bs{T}')}$, et comme 
$\bs{\rm T}_{\bs{T}'}(\tilde{\chi}')= \bs{\rm T}_{\bs{T}'}(\tilde{\xi}')$, on a
$$
\bs{\rm T}_{\bs{T}'}(\tilde{\xi}')= \Theta_{\bs{\tau}'} + \bs{\rm T}_{\bs{T}'}(\tilde{\xi}')^{\rm ram},\quad 
\bs{\rm T}_{\bs{T}'}(\tilde{\xi}')^{\rm ram}= \sum_{\tau \in E_{\rm ell}^{\rm ram}/{\rm conj.}} \bs{\rm T}_{\bs{T}'}(\tilde{\xi}')_\tau.\leqno{(7)}
$$
Puisque 
$$
(\tilde{\xi}',\tilde{\xi}')_{\rm ell}^{\bs{T}'}= \vert {\rm Aut}(\bs{T}')/B\vert^{-1}
(\tilde{\chi}',\tilde{\chi}')_{\rm ell}^{\bs{T}'}= \vert {\rm Aut}(\bs{T}')/B\vert^{-1},
$$
d'après le lemme 1 de \ref{l'application tau}, on a
$$
(\Theta_{\bs{\tau}'},\Theta_{\bs{\tau}'})_{\rm ell}= c(\wt{G},\bs{T}')^{-1}(\tilde{\xi}',\tilde{\xi}')_{\rm ell}^{\bs{T}'}.
$$
D'autre part, d'après le lemme 3 de \ref{produits scalaires elliptiques} (formule des produits scalaires elliptiques), on a
$$
(\bs{\rm T}_{\bs{T}'}(\tilde{\xi}'), \bs{\rm T}_{\bs{T}'}(\tilde{\xi}'))_{\rm ell}= c(\wt{G},\bs{T}')^{-1}(\tilde{\xi}',\tilde{\xi}')_{\rm ell}^{\bs{T}'}=
(\Theta_{\bs{\tau}'},\Theta_{\bs{\tau}'})_{\rm ell}.
$$
D'après (7), on a donc $\bs{\rm T}_{\bs{T}'}(\tilde{\xi}')^{\rm ram}=0$ et
$$
\bs{\rm T}_{\bs{T}'}(\tilde{\chi}')= \Theta_{\bs{\tau}'}.\leqno{(8)}
$$

\vskip1mm
En définitive, on a prouvé que l'application $(\bs{T}',\tilde{\chi}')\mapsto \bs{\tau}_{\tilde{\chi}'}$ est une bijection entre:
\begin{itemize}
\item l'ensemble des paires $(\bs{T}',\tilde{\chi}')$ où $\bs{T}'\in \mathfrak{E}_{\rm t-nr}$ et $\tilde{\chi}'$ est un caractère affine unitaire et non ramifié de $\wt{T}'(F)$, où $\tilde{\chi}'$ est pris modulo l'action de ${\rm Aut}(\bs{T}')$;
\item l'ensemble $\ES{E}_{\rm ell}^{\rm nr}/{\rm conj.}$ des triplets $\bs{\tau}'\in \ES{E}_{\rm ell}^{\rm nr}$, pris à conjugaison près dans $G(F)$.
\end{itemize}
Cette bijection est complètement déterminée par l'égalité (8), ou, ce qui revient au même, par les deux conditions
$$
\omega_{\tau'} = \omega_{\chi'},\quad \Theta_{\bs{\tau}'}(\bs{1}_{\wt{K}})= \tilde{\chi}'(\bs{1}_{\wt{K}'}),\leqno{(9)}
$$
où $\tau'$ est la projection de $\bs{\tau}'$ sur $E_{\rm ell}^{\rm nr}$ et $\chi'$ est le caractère de $T'(F)$ sous--jacent à $\tilde{\chi}'$. 

Revenons à notre propos, la démonstration de la commutativité du diagramme (5) de \ref{transfert spectral elliptique}. Remarquons tout d'abord que la bijection donnée par (8) implique que l'homomorphisme (8) de \ref{l'application tau} se factorise en un isomorphisme
$$
\bigoplus_{\bs{T}'\in \mathfrak{E}_{\rm t-nr}}\bs{D}^{\rm nr}(\wt{T}'(F))^{{\rm Aut}(\bs{T}')}\rightarrow \bs{D}_{\rm ell}^{\rm nr}(\wt{G}(F),\omega)\leqno{(10)}
$$
qui n'est autre que celui déduit par restriction de l'isomorphisme (4) de \ref{décomposition endoscopique}. On en déduit l'inclusion
$$
\bs{T}_{\bs{G}'}(\bs{SD}_{\rm ell}^{\rm ram}(\wt{G}'(F))\subset \bs{D}_{\rm ell}^{\rm ram}(\wt{G},\omega),\quad \bs{G}'\in \mathfrak{E}_{\rm nr}.\leqno{(11)}
$$
Par conséquent pour $\bs{G}'\in \mathfrak{E}_{\rm nr}\smallsetminus \mathfrak{E}_{\rm t-nr}$, le diagramme (5) de \ref{transfert spectral elliptique} est trivialement commutatif. Soit donc 
$\bs{T}'\in \mathfrak{E}_{\rm t-nr}$, et soit $\tilde{\chi}'$ un caractère affine unitaire et non ramifié de $\wt{T}'(F)$, que l'on peut supposer normalisé par $\wt{K}$ (cf. \ref{l'ensemble t-nr}). Soit $\bs{\tau} = \bs{\tau}_{\tilde{\chi}'}\in \ES{E}_{\rm ell}^{\rm nr}$ un triplet associé à $\tilde{\chi}'$ comme en \ref{l'application tau}. \'Ecrivons $\bs{\tau}=(T,\lambda,\tilde{\bs{r}})$. On a vu (8) que $\Theta_{\bs{\tau}}= \bs{\rm T}_{\bs{T}}(\tilde{\chi}')$. On a
$$
p^K(\Theta_{\bs{\tau}})= \tilde{\rho}_0(\tilde{\bs{r}})\Theta_{\tilde{\pi}_{\tilde{\rho}_0}},
$$
où $\rho_0$ est l'élément de ${\rm Irr}(\ES{R}^G(\lambda),\theta_{\ES{R}})$ tel que $\pi_{\rho_0}$ est l'élément $K$--sphérique de $\Pi_\lambda$, et $\tilde{\rho}_0$ est le prolongement de $\rho_0$ à $\ES{R}^{\wt{G},\omega}(\lambda)$ normalisé par $\Theta_{\tilde{\pi}_{\tilde{\rho}_0}}(\bs{1}_{\wt{K}})=1$. Puisque la distribution $\Theta_{\bs{\tau}}$ elle--même a été normalisée de sorte que $\Theta_{\bs{\tau}}(\bs{1}_{\wt{K}})=1$, cela entra\^{\i}ne $\tilde{\rho}_0(\tilde{\bs{r}})=1$. 
D'autre part, on a $p^{K'}(\tilde{\chi}')=\tilde{\chi}'$ et $\bs{\rm t}_{\bs{T}'}(\tilde{\chi}')= c_0\Theta_{\tilde{\pi}_{\tilde{\rho}_0}}$ pour une constante $c_0\in {\Bbb U}$. 
Comme
$$
\bs{\rm t}_{\bs{T}'}(\tilde{\chi}')(\bs{1}_{\wt{K}})= \tilde{\chi}'(\bs{1}_{\wt{K}'})= 1,
$$
on obtient $c_0=1$, puis
$$
p^K(\Theta_{\bs{\tau}})= \bs{\rm t}_{\bs{T}'}(\tilde{\chi}').
$$
Cela achève la démonstration de la commutativité du diagramme (5) de \ref{transfert spectral elliptique}.

\begin{marema}
{\rm 
Pour établir la commutativité du diagramme (5) de \ref{transfert spectral elliptique}, on a été amené à prouver la paramétrisation endoscopique des représentations tempérées elliptiques non ramifiées de $(\wt{G}(F),\omega)$, \cad l'égalité (8). On peut bien sûr, grâce au dictionnaire de \ref{l'application tau}, écrire cette égalité en termes duaux, \cad remplacer le $R$--groupe tordu $R^{\wt{G},\omega}(\lambda)$ par le $S$--groupe tordu $\wt{\bs{S}}(\varphi'\!,\bs{a})$ --- où $\varphi'$ est le paramètre $W_F \xrightarrow{\varphi^{T'}} {^LT'}\simeq \ES{T}'\hookrightarrow {^LG}$ --- dans la décomposition du caractère elliptique $\Theta_{\bs{\tau}_{\tilde{\chi}'}}$ associé à $(\bs{T}',\tilde{\chi}')$. 
}
\end{marema}

\subsection{le cas du groupe $GL(n)$ tordu}\label{le cas de GL(n) tordu}Dans ce numéro, on considère le cas du groupe $GL(n,F)$ tordu, \cad du groupe linéaire déployé $G=GL(n)$ pour un entier $n\geq 1$, avec comme automorphisme extérieur $g\mapsto {^{\rm t}g^{-1}}$. 

\begin{mapropo} Le lemme fondamental tordu pour tous les éléments des algèbres de Hecke sphériques est vrai sans restriction sur la caract\'eristique r\'esiduelle dans le cas du groupe $GL(n,F)$ tordu.
\end{mapropo}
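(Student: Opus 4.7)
The strategy is to combine the main theorem of this paper (Theorem 1 $\Rightarrow$ Theorem 2) with a direct verification of Theorem 1 restricted to toric endoscopic data in the $GL(n)$ twisted setting. More precisely, section 4 (especially \ref{preuve}) shows that in order to prove Theorem 2, one only needs Theorem 1 (the fundamental lemma for the units) to hold for the data $\bs{T}' \in \mathfrak{E}_{\rm t-nr}$, i.e.\ the elliptic unramified endoscopic data whose underlying group is a torus. So the problem reduces to establishing the fundamental lemma for the units, \emph{for torus endoscopic data only}, in the case $\wt{G} = GL(n)\theta$ with $\theta:g\mapsto {}^{\rm t}g^{-1}$, and without any restriction on the residual characteristic of $F$.

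The first step is to classify $\mathfrak{E}_{\rm t-nr}$ explicitly for $G=GL(n)$ twisted. Here $\hat{G}=GL(n,{\Bbb C})$, $\hat\theta$ is (conjugate to) the transpose-inverse automorphism, and the condition that $\hat{G}_{\tilde s}$ be a torus forces $\tilde s = s\hat\theta$ to have the property that every eigenvalue-pair contributes only a one-dimensional invariant line. In concrete terms, writing $n=2m$ or $n=2m+1$, one checks that the torus endoscopic data correspond to a very restricted list (essentially an $SO(2)$ or trivial classical piece), so that $\bs{T}'$ is either a split or an unramified quadratic (or trivial) torus, and $\hat{T}'\subset \hat{T}$ is completely described by the fixed embedding.

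The second step is to compute, for each such $\bs{T}'$, both sides of the asserted identity $\bs{1}_{\wt{K}'} = (\bs{1}_{\wt{K}})^{\bs{T}'}$ directly. For a torus endoscopic datum, the stable orbital integral $S^{\wt{G}'}(\delta, \bs{1}_{\wt{K}'})$ is simply the characteristic function of the intersection of the relevant conjugacy class with $\wt{K}'$, while the endoscopic $\omega$-orbital integral $I^{\wt{G},\omega}(\delta, \bs{1}_{\wt{K}})$ can be evaluated explicitly. Because we are transferring to a torus, the geometric side reduces to a Weyl integration identity over $\wt{T}(F)\cap \wt{K}$ coupled with an explicit evaluation of $\Delta(\delta,\gamma)$, using that the facteur de transfert is normalized by $(K,\wt{K})$. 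All ingredients (transfer factors, hyperspecial subspaces, Weyl discriminants) are built from generic data of $F$, $F^{\rm nr}$ and their Galois action; nothing in the computation invokes Ng\^o's global geometric result, hence no restriction on the residual characteristic is needed.

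The main obstacle will be the explicit verification of the transfer factor identity in the few non-split toric cases, where the cocycle $\alpha_{G'}(\sigma)$ of \ref{donn�es endoscopiques} and the discrepancy $\xi: T \to T'$ become nontrivial. One must check, case by case, that the twisted character sum coming from $I^{\wt{G},\omega}(\delta,\bs{1}_{\wt{K}})$ matches the product of $\Delta(\delta,\gamma)$ with the characteristic function of $\wt{K}'$. This is a finite explicit calculation, tractable because of the very restricted classification of torus endoscopic data for twisted $GL(n)$; once it is done, the main theorem of the present paper delivers Theorem 2 in full generality, with no condition on the residual characteristic of $F$.
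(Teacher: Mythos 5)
Your high-level plan matches the paper's: invoke the reduction of \ref{preuve}, which shows that Theorem~2 follows from Theorem~1 \emph{for toric data only}, and then try to settle Theorem~1 in that toric case directly, without restriction on residual characteristic. But after that point the proposal diverges from what the paper actually does, and the divergence is where the real work lives.

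First, you never pin down what the toric data actually are. After the descent-to-Levi step (from \ref{r�duction aux donn�es elliptiques}), a Levi of twisted $GL(n)$ is a product of at most one smaller twisted $GL(n')$ and pairs $GL(m)\times GL(m)$ with the factor-swapping automorphism. For the pairs the statement is immediate, so everything reduces to: for which $n'$ does a twisted $GL(n')$ admit an elliptic unramified datum whose group is a torus? The answer is only $n'=1$ or $n'=2$. Your description \emph{``writing $n=2m$ or $n=2m+1$, one checks that the torus endoscopic data correspond to a very restricted list (essentially an $SO(2)$ or trivial classical piece)''} gestures in this direction but does not say it, and a reader cannot tell from your text that the set $\mathfrak{E}_{\rm t-nr}$ is empty for $n' \ge 3$. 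This is the observation that makes the whole thing finite.

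Second, and more seriously: for the $n=2$ case your plan is to evaluate both sides of the transfer identity by a bare-hands ``Weyl integration identity over $\wt{T}(F)\cap \wt{K}$'', and you characterize this as a tractable finite computation. You do not perform it, and the paper does not do it that way either. What the paper does instead is identify the twisted $\omega$-orbital integral for $GL(2,F)\theta$, at the identity of the spherical Hecke algebra, with the ordinary $(PGL(2),\omega)$-orbital integral, via a change of variables on $g\in PGL(2,F)$ and a sign-matching on transfer factors, and then cites Hales \cite{H} for the resulting \emph{untwisted} fundamental lemma for $(PGL(2),\omega)$ with no hypothesis on the residual characteristic. That reduction, and the invocation of Hales, is the content of the $n=2$ case; simply asserting that the computation is ``tractable'' leaves the heart of the proposition unproved. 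If you insist on a direct computation rather than quoting \cite{H}, you would essentially be reproving Hales for $PGL(2)$, which is doable but is not what your sketch supplies.

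In short: correct skeleton, but missing the two ingredients that actually carry the proof — the observation that only $n'\le 2$ occurs, and the reduction of the $n'=2$ twisted lemma to the untwisted $(PGL(2),\omega)$ lemma together with the citation of Hales.
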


\begin{proof}Les r\'eductions faites permettent de ne consid\'erer que le cas d'un espace de Levi qui admet une donn\'ee endoscopique elliptique dont le groupe sous--jacent est un tore. Et en plus, seul le lemme fondamental pour les unit\'es des alg\`ebres de Hecke sph\'eriques doit \^etre prouv\'e sans restriction sur la caract\'eristique r\'esiduelle. Un espace de Levi est  produit d'au plus un groupe $GL(n',F)$ tordu (avec $n'\leq n$) et d'un certain nombre fini de paires $GL(m,F)\times GL(m,F)$ sur lesquelles l'automorphisme agit par permutation. Pour ces paires, le lemme \`a prouver est imm\'ediat. Cela nous ram\`ene au cas d'un groupe $GL(n,F)$ tordu qui admet  une donn\'ee endoscopique elliptique dont le groupe sous--jacent est un tore. Cela ne peut se produire que si $n=1$ ou $n=2$. Au lieu de prouver le lemme fondamental pour les unit\'es des algèbres de Hecke sphériques, 
on peut aussi, ce que l'on va faire dans le cas $n=1$, simplement prouver le transfert spectral cherch\'e (cf. ci--dessous). 

Pour cela, il faut d'abord v\'erifier que le transfert spectral se traduit ais\'ement en termes des fonctions--caract\`eres des repr\'esentations. En toute g\'en\'eralit\'e, donc en particulier pour un $G$--espace tordu $\wt{G}$ vérifiant les hypothèses de \ref{objets}, pour une donnée endoscopique elliptique $\bs{G}'=(G',\ES{G}',\tilde{s})$ pour $\wt{G}$, et pour des distributions $\Theta\in \bs{D}(\wt{G}(F))$ et $\Theta'\in \bs{SD}(\wt{G}'(F))$, on a le transfert spectral
$$
\Theta(f) = \Theta'(f^{\wt{G}'}), \quad f\in C^\infty_{\rm c}(\wt{G}(F)),
$$
si et seulement si pour pour tout \'el\'ement $\gamma\in \tilde{G}(F)$ fortement régulier, on a l'égalité
$$
d_\theta^{-1/2}D^{\tilde{G}}(\gamma)^{1/2}\Theta(\gamma)= \sum_{\delta} \Delta(\delta,\gamma)\, D^{\wt{G}'}(\delta)^{1/2} \Theta'(\delta),\leqno{(1)}
$$
où $\delta$ parcourt les éléments fortement $\wt{G}$--réguliers de $\wt{G}'(F)$ qui correspondent à $\gamma$, pris à conjugaison stable près.

\begin{marema}
{\rm 
On a \'enonc\'e (1) dans le cas simple o\`u il n'est pas n\'ecessaire d'introduire de donn\'ees auxiliaires, ce qui est ici le cas. 
}
\end{marema}

Consid\'erons donc le cas o\`u $n=1$: les donn\'ees endoscopiques elliptiques sont form\'ees uniquement d'un caract\`ere quadratique du groupe de Galois de $F$. Fixons une telle donn\'ee, \cad un caract\`ere quadratique $\eta_{0}$ de $\Gamma_F$. On suppose cette donn\'ee non ramifi\'ee, ce qui ici revient \`a dire que le caract\`ere $\eta_0$ est non ramifi\'e.  

Le caract\`ere $\eta_{0}$, vu comme un caractère de $G(F)=F^\times$, s'\'etend trivialement en un carac\-tère affine $\tilde{\pi}$ de $\wt{G}(F)= F^\times \theta$: pour $x\in F^\times$, on pose $\tilde{\pi}(x\theta)= \eta_0(x)$. C'est, à homothétie près, l'unique représentation $G(F)$--irréductible de $\wt{G}(F)$ de caractère central $\eta_0$. Soit $\gamma=x\theta$ un élément de $\wt{G}(F)$. Le caract\`ere $\Theta_{\tilde{\pi}}$ en ce point $\gamma$ a pour valeur $\eta_{0}(x)$, et le facteur de transfert 
$\Delta(\delta,\gamma)$ est lui aussi égal à $\eta_{0}(x)$. En effet, le facteur de transfert est normalis\'e de sorte qu'il vaille 1 pour $\theta$ et, en notant $x'$ une racine carr\'ee de $x$, vue comme un \'el\'ement de $G_\sharp(F)= (GL(1)/{\pm 1})(F)$, on a $\gamma= {\rm Int}_{x'}(\theta)$. Le facteur de transfert se transforme sous cette action par $\eta_{0}(x'^2)=\eta_{0}(x)$. D'o\`u trivialement un transfert de traces de repr\'esentations. C'est bien le transfert donné par (1) car $d_\theta^{-1/2}D^{\tilde{G}}(\gamma)^{1/2}=1$ et $D^{\wt{G}'}(\delta)=1$.

\vskip1mm
On consid\`ere maintenant le cas o\`u $n=2$. Dans ce cas on montre que le lemme fondamental tordu que l'on cherche à démontrer est \'equivalent au lemme fondamental (non tordu) pour $(PGL(2),\omega)$, où $\omega$ est l'unique caract\`ere non ramifi\'e d'ordre 2 de $PGL(2,F)$, que l'on identifie à un caractère de $F^\times$. Ce dernier lemme fondamental a \'et\'e prouv\'e par Hales \cite{H}. Le cas que nous devons consid\'erer est celui o\`u la donn\'ee endoscopique a pour groupe sous-jacent $SO(2)$ avec \'evidemment le caract\`ere quadratique non ramifi\'e de $\Gamma_F$, que l'on identifie au caract\`ere $\omega$ de $F^\times$. C'est lui qui d\'etermine la forme du groupe endoscopique $SO(2)$. On la note $G'=T'$. On a donc $G'(F)=E^{\times,1}$, où $E^{\times,1}$ est le groupe des \'el\'ements de norme $1$ de l'extension quadratique non ramifi\'ee $E$ de $F$.

On note $\theta$ l'\'el\'ement de $\wt{G}(F)$ qui agit sur $G(F)$ par $g\mapsto \det (g)^{-1}g$. 
Soit $f\in C^{\infty}_{c}(\wt{G}(F))$ et soit $\gamma=x\theta$ un élément fortement régulier de $\wt{G}(F)$. L'int\'egrale orbitale de $f$ en le point $\gamma$ est le produit de $D^{\wt{G}}(\gamma)^{1/2}$ avec l'int\'egrale (pour des mesures dont on parlera ci-dessous)
$$
\int_{G(F)/G_{\gamma}(F)}f(g\gamma g^{-1})d\bar{g}_\gamma
=\int_{G(F)/G_{\gamma}(F)} f( \det(g)gx  g^{-1}\theta )d\bar{g}_\gamma.\leqno{(2)}
$$
Posons $K=GL(2,\mathfrak{o})$, où (rappel) $\mathfrak{o}$ est l'anneau des entiers de $F$, et supposons que $f=f_{0}$ est la fonction caract\'eristique du sous--espace hyperspécial $\wt{K}= K\theta$ de $\wt{G}(F)$. L'int\'egrale ci--dessus ne porte que sur les \'el\'ements $g$ tel que $\det (g)^2\det (x)^{-1}$ est une unit\'e de $F^\times$. Cette int\'egrale est donc nulle sauf si la valuation (normalisée) de $\det(x)$ est paire. Dans ce cas, en posant $Z=Z(G)$, l'int\'egrale vaut
$$
{\rm vol}(G_{\gamma}(F))^{-1}\int_{
g\in GL(2,F),\, v_F(\det (g))=-{1\over 2} v_F(\det(x))}
{\mathbf 1}_{Z(F)K}(g x g^{-1})dg, 
$$
ou encore
$$
{\rm vol}(G_{\gamma}(F))^{-1}\int_{g\in PGL(2,F),\, v_F(\det(g))\equiv -{1\over 2} v_F(\det(x))\;[2]}{\mathbf 1}_{Z(F)K}(g x g^{-1})dg.\leqno{(3)}
$$

D'autre part, un \'el\'ement fortement régulier $\gamma=x\theta$ de $\wt{G}(F)$ a sa classe de conjugaison stable qui  correspond \`a un \'el\'ement $\delta$ de $E^{\times,1}$ si $x^2/\det(x)$ est conjugu\'e d'un \'el\'ement  de $E^{\times,1}$, vu comme sous--groupe de $E^\times =T_0(F)\subset GL(2,F)$, et si $\delta$ appartient \`a la classe de conjugaison stable de cet \'el\'ement de $E^{\times,1}$; cette classe de conjugaison stable est en fait r\'eduite \`a un point par commutativit\'e. 

Quitte \`a conjuguer $\gamma$,  on peut supposer $x^2/\det(x)=\delta$. Alors on a $x^2 \in E^\times$. Mais $\gamma$ est un \'el\'ement fortement r\'egulier elliptique de $\wt{G}(F)$, et $x$ est un \'el\'ement r\'egulier d'un tore de $GL(2)$. Comme $\delta$ est un \'el\'ement lui aussi r\'egulier, $x^2$ est un \'el\'ement r\'egulier du m\^eme tore que celui auquel appartient $x$, \cad $T_0$, et donc $x$ est (comme $x^2$) dans $E^\times$. On remarque  que la valuation de $\det(x)$ est alors n\'ecessairement paire: en effet $\det(x)= x\sigma(x)$ o\`u $\sigma$ est 
l'élément non trivial de ${\rm Gal}(E/F)$, mais comme l'extension $E/F$ est non ramifi\'ee, on a l'assertion.

Avant de continuer le calcul de l'int\'egrale, montrons qu'il existe exactement deux classes de conjugaison dans la classe de conjugaison stable de $\gamma$ comme ci--dessus. On a $\det(x)= x\sigma(x)$ et $x^2/\det(x)=x/\sigma(x)$. On \'ecrit $\delta=x/\sigma(x)$ avec $x\in \mathfrak{o}_E^\times$, où $\mathfrak{o}_E^\times$ est le groupe des unités de $E^\times$, en utilisant le fait que 
$E^\times=\varpi^{\mathbb Z}\mathfrak{o}_E^\times$ pour une uniformisante $\varpi$ de $F$. Pour un autre élément $\gamma'=x'\theta$ vérifiant nos conditions, en posant $z=x' x^{-1}$, on obtient que $z=\sigma(z)$, \cad que $z\in F^\times$. Les \'el\'ements de $N_{E/F}(E^\times)\gamma$ sont les conjugués de $\gamma$ par un élément de $E^\times$, et les éléments de la forme $z\gamma$ avec $z\in F^\times \smallsetminus N_{E/F}(E^\times)$ sont tous conjugu\'es mais ne sont pas des conjugu\'es de $\gamma$. Cela donne les deux classes de $G(F)$--conjugaison dans $\wt{G}(F)$ correspondant \`a la classe de conjugaison stable de $\delta$. On a fix\'e $\gamma$, et l'on note $\gamma'=x'\theta$ un \'el\'ement de l'autre classe de conjugaison correspondant à $\delta$. Alors on a la relation entre les facteurs de transfert:
$$
\Delta(\delta,\gamma)=-\Delta(\delta,\gamma').
$$

Revenons aux int\'egrales, et calculons le c\^ot\'e tordu du lemme fondamental, en fixant $\delta$ et $\gamma$ comme ci--dessus. 
On a l'égalité:
$$
D^{\wt{G}}(\gamma)^{1/2}\Delta(\delta,\gamma) {\rm vol}(G_{\gamma}(F))^{-1}=-
D^{\wt{G}}(\gamma')^{1/2}\Delta(\delta,\gamma') {\rm vol}(G_{\gamma'}(F))^{-1}
$$
De plus en rempla\c{c}ant $\gamma'=x'\theta$ par $z\gamma= zx\theta$ avec $z\in F^\times$ de valuation $1$, on voit que 
l'intégrale
$$
\int_{g\in PGL(2,F),\, v_F(\det(g))\equiv-{1\over 2} v_F(\det(x'))\;[2]}{\mathbf 1}_{Z(F)K}(g x' g^{-1})dg
$$
est égale à
$$
\int_{g\in PGL(2,F),\, v_F(\det(g))\equiv-{1\over 2}v_F(\det(x))+1\;[2]}{\mathbf 1}_{Z(F)K}(g x g^{-1})dg.
$$
D'o\`u le c\^ot\'e tordu du lemme fondamental:
$$
d_\theta^{1/2}D^{\wt{G}}(\gamma)^{1/2}\Delta(\delta,\gamma) {\rm vol}(G_{\gamma}(F))^{-1}\int_{g\in PGL(2,F)}\omega(\det(g)){\mathbf 1}_{Z(F)K}(g \gamma g^{-1})dg.\leqno{(4)}
$$
On sait d'autre part (d'après \cite{H}) que l'expression
$$
D^{PGL(2)}(x)^{1/2}\Delta^{PGL(2)}(\delta,x)\int_{g\in PGL(2,F)}\omega(\det(g)){\mathbf 1}_{Z(F)K}(g x g^{-1}) dg\leqno{(5)}
$$
vaut l'int\'egrale orbitale stable de $\delta$ pour la fonction caract\'eristique du compact $E^{\times,1}$; \cad $1$. Il reste donc \`a prouver que les expressions (4) et (5) sont les mêmes, \cad à vérifier:
\begin{itemize}

\item l'\'egalit\'e des facteurs de transfert pour $(PGL(2),\omega)$ et $\wt{G}= GL(2)\theta$;
 
\item l'\'egalit\'e $d_\theta^{-1/2}D^{\wt{G}}(x\theta)=D^{PGL(2)}(x)$; 
 
\item l'\'egalit\'e $d_\theta={\rm vol}(G_{\gamma}(F))/{\rm vol} (E^{\times,1})$ dans la normalisation pour le transfert tordu; pour le transfert non tordu de $PGL(2)$, le transfert des mesures vaut bien 1. 
 
 \end{itemize}
 
\noindent L'\'egalit\'e des facteurs de transfert r\'esulte imm\'ediatement de leur d\'efinition, cf. \cite[6.3]{Stab I}. La deuxième égalité est claire. On d\'etaille la derni\`ere \'egalit\'e: dans l'identification entre les stabilisateurs, dans le cas tordu, on a 
$G_{\gamma}(F)=E^{\times ,1}$ et $ E^{\times, 1}=G'_{\delta}(F)$, mais l'identification n'est pas l'identit\'e, c'est l'homomorphisme naturel $E^{\times,1}\rightarrow (T_0/(1-\theta)(T_0))(F)=E^\times /F^\times$, \cad l'application naturelle de $E^{\times,1}$ dans $E^\times/F^\times$. L'identification  de $E^\times/F^\times$ avec $E^{\times,1}=G'_\delta(F)$ est l'application $x\mapsto x/\sigma(x)$. Quand on part de $x\in E^{\times,1}$, c'est l'application $x\mapsto x/\sigma(x)=x^2$. D'o\`u le Jacobien $d_\theta$, cf. \cite{Stab I} 2.4.
\end{proof}

\section{Le cas o la donnée endoscopique est un tore}

\subsection{La proposition--clé dans le cas où $\hat{G}_{\rm AD}$ est simple}\label{le cas simple}Dans cette section 5, on s'intéresse exclusivement aux données endoscopiques elliptiques pour $(\wt{G},\bs{a})$ telles que le groupe sous--jacent est un tore, \cad à l'ensemble $\mathfrak{E}_{\rm t-nr}$. 

Soit $\bs{T}=(T',\ES{T}',\tilde{s})\in \mathfrak{E}_{\rm t-nr}$. On reprend les notations de \ref{données endoscopiques nr}. En particulier, l'action du groupe de Weyl $W_F$ sur $\hat{G}$ est donnée par celle de l'élément de Frobenius $\phi$ qui stabilise une paire de Borel épinglée $\hat{\ES{E}}=(\hat{B},\hat{T},\{\hat{E}_\alpha\}_{\alpha\in \hat{\Delta}})$ de $\hat{G}$. Cette paire $\hat{\ES{E}}$ est stabilisée par ${\rm Int}_{\tilde{s}}$, et l'on note $\hat{\theta}$ l'automorphisme de $\hat{G}$ qui stabilise $\hat{\ES{E}}$ et commute à l'action galoisienne $\sigma\mapsto \sigma_{G}$. On a donc $\tilde{s}=s\hat{\theta}$ pour un $s\in \hat{T}$. Soit un élément $(h,\phi)\in \ES{T}'$. Il définit (comme en \ref{l'ensemble t-nr}) un isomorphisme $\ES{T}'\simeq {^LT'}$. Du plongement $\hat{T}'\hookrightarrow \hat{T}$ se déduit par dualité un homomorphisme $\xi: T\rightarrow T/(1-\theta_{\ES{E}})(T)\simeq T'$. Cet homomorphisme n'est pas $\Gamma_F$--équivariant, mais sa restriction à $Z(G)$, notée $\xi_Z: Z(G)\rightarrow Z(T')=T'$, l'est. Pour un caractère affine $\tilde{\chi}'=(\chi',\tilde{\chi}')$ de $\wt{T}'(F)$ tel que le caractère $\chi'$ de $T'(F)$ soit unitaire, le transfert $\bs{\rm T}_{\bs{T}'}(\wt{\chi}')$ est une combinaison linéaire de caractères $\Theta_{\tilde{\pi}}$ pour des représentations $G(F)$--irréductibles tempérées $\tilde{\pi}$ de $(\wt{G}(F),\omega)$ qui ont même caractère central $Z(G;F)\rightarrow {\Bbb C}^\times$ (pour l'action de $Z(G;F)$ à gauche sur $\wt{G}(F)$). Ce caractère est le produit de $\chi'\circ \xi_{Z}$ et d'un caractère indépendant de $\tilde{\chi}'$ (dépendant du facteur de transfert) --- cf. \ref{décomposition endoscopique}. Comme on peut toujours multiplier $\tilde{\chi}'$ par un nombre complexe non nul pour le rendre unitaire, on peut se limiter à ne considérer que des caractères affines unitaires de $\wt{T}(F)$. 

La donnée $\bs{T}'$ a un groupe d'automorphismes, que l'on a noté ${\rm Aut}(\bs{T}')$ --- 
cf. \ref{données endoscopiques}. Deux caractères $\tilde{\chi}'_1$ et $\tilde{\chi}'_2$ de $\wt{T}'(F)$ qui se déduisent l'un de l'autre par un automorphisme de $\bs{T}'$ ont même transfert. Dans le cas tordu, certains éléments de ${\rm Aut}(\bs{T}')$ agissent subtilement par multiplication par des caractères affines (voir la preuve du point (iii) de la proposition ci--dessous). 

{\bf Jusqu'à la fin de ce numéro on suppose que $\hat{G}_{\rm AD}$ est simple}.

\begin{marema1}
{\rm 
L'hypothèse que $\hat{G}_{\rm AD}$ est simple est vérifiée dans le cas de $GL(n)$ tordu par son automorphisme extérieur, et pour les groupes classiques non tordus à l'exception de $SO(4)$.}
\end{marema1}

\begin{mapropo}
On suppose que $\hat{G}_{\rm AD}$ est simple. On suppose aussi que l'ensemble $\mathfrak{E}_{\rm t-nr}$ n'est pas vide. Alors on a:
\begin{enumerate}
\item[(i)]Les actions de $\phi$ et $\hat{\theta}$ sur $\hat{G}_{\rm AD}$ sont triviales.
\item[(ii)]Soient $\bs{T}'_1,\,\bs{T}'_2\in \mathfrak{E}_{\rm t-nr}$. Si $\omega_{\bs{T}'_1}=\omega_{\bs{T}'_2}$, alors $\bs{T}'_1=\bs{T}'_2$.
\item[(iii)]Soit $\bs{T}'\in \mathfrak{E}_{\rm nr}$, et soient $(\chi'_1,\tilde{\chi}'_1)$ et $(\chi'_2,\tilde{\chi}'_2)$ deux caractères affines unitaires et non ramifiés de $\wt{T}'(F)$. Si $\chi'_1\circ \xi_Z = \chi'_2\circ \xi_Z$, où $\xi_Z: Z(G)\rightarrow T'$ est l'homomorphisme naturel, alors à homothétie près, $\tilde{\chi}_1$ et $\tilde{\chi}_2$ se déduisent l'un de l'autre par l'action d'un élément du groupe d'automorphismes de $\bs{T}'$.
\end{enumerate}
\end{mapropo}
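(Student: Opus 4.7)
Le plan s'appuie sur les propri�t�s structurelles (1)--(6) de \ref{l'ensemble t-nr}: l'hypoth�se $\mathfrak{E}_{\rm t-nr}\neq\emptyset$ fournit des �l�ments semisimples $\tilde s=s\hat\theta$ et $\bs h=h\phi$ de $\hat G\hat\theta$ et $\hat G\phi$ respectivement, commutant modulo $Z(\hat G)$, dont les centralisateurs connexes dans $\hat G$ sont des tores, et dont le centralisateur commun dans $\hat G_{\rm AD}$ est fini. Pour (i), on se ram�ne � $\hat G_{\rm AD}$ et l'on proc�de par �limination dans la classification de Dynkin. La condition que $\tilde s$ et $\bs h$ soient r�guliers semisimples au sens tordu ad�quat, combin�e � la finitude du centralisateur commun, exclut tous les types simples sauf $A_{n-1}$: dans chaque autre type (y compris les cas � torsion ext�rieure en types $D_n$ et $E_6$), on exhibe un sous--tore non trivial de $\hat G_{\rm AD}$ stable sous $\tilde s$ et $\bs h$, contredisant la finitude du centralisateur commun. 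En type $A_{n-1}$, l'involution ext�rieure non triviale de $PGL(n,{\Bbb C})$ a pour sous--groupe connexe des points fixes $PSO(n)$ ou $PSp(n)$, qui ne peut �tre un tore d�s que $n\geq 2$; cela force les actions de $\phi$ et $\hat\theta$ sur $\hat G_{\rm AD}$ � �tre triviales, d'o� (i).

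Une fois (i) acquis, on a $\hat G_{\rm AD}=PGL(n,{\Bbb C})$ avec actions triviales de $\phi$ et $\hat\theta$, et la donn�e $\bs T'=(T',\ES{T}',\tilde s)\in\mathfrak{E}_{\rm t-nr}$ est enti�rement d�termin�e par la classe de $\hat G$--conjugaison d'un �l�ment semisimple $\tilde s\in\hat G\hat\theta$ de centralisateur connexe torique. Pour (ii), le caract�re $\omega_{\bs T'}$ de $G_\sharp(F)$ est d�termin� via la relation de covariance (3) de \ref{donn�es endoscopiques nr} par l'image de $\tilde s$ dans un groupe de cohomologie explicite; la dualit� de Langlands pour le groupe ab�lien $G_\sharp$ traduit alors l'�galit� $\omega_{\bs T'_1}=\omega_{\bs T'_2}$ en identit� de ces classes de cohomologie, d'o� $\bs T'_1=\bs T'_2$ par l'injectivit� annonc�e.

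Pour (iii), l'hypoth�se $\chi'_1\circ\xi_Z=\chi'_2\circ\xi_Z$ signifie que le caract�re $\eta=\chi'_1(\chi'_2)^{-1}$ de $T'(F)$ est trivial sur l'image de $\xi_Z\colon Z(G;F)\rightarrow T'(F)$. La non--ramification et l'ellipticit� $(\hat T')^{\Gamma_F,\circ}=[Z(\hat G)^{\hat\theta}]^{\Gamma_F,\circ}$ entra\^{\i}nent que l'ensemble des caract�res unitaires et non ramifi�s de $T'(F)$ triviaux sur $\xi_Z(Z(G;F))$ est fini. Utilisant la structure explicite de type $A$ fournie par (i) et la suite exacte courte de \cite[2.1]{KS1} pour ${\rm Aut}(\bs T')$, on identifie ce groupe de caract�res � celui des caract�res induits par l'action de ${\rm Aut}(\bs T')$ sur $\chi'_2$. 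Une fois $\chi'_1$ et $\chi'_2$ conjugu�s par un �l�ment de ${\rm Aut}(\bs T')$, les deux prolongements affines unitaires $\tilde\chi'_1,\tilde\chi'_2$ d'un m�me caract�re de $T'(F)$ diff�rent d'un scalaire dans ${\Bbb U}$ (�tant un torseur sous cette action scalaire), d'o� l'homoth�tie requise.

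L'obstacle principal est (i): l'analyse cas par cas dans la classification de Dynkin, bien que dict�e par la contrainte tr�s forte de l'existence simultan�e de deux �l�ments r�guliers semisimples commutants ayant un centralisateur commun fini, exige une �num�ration attentive des types exceptionnels et des formes ext�rieurement tordues en types $D$ et $E_6$, et c'est ce calcul structurel qui conditionne ensuite la simplicit� des �nonc�s (ii) et (iii).
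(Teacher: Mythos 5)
Le point faible fatal est (i), o\`u votre strat\'egie repose sur une confusion entre le groupe des points fixes de $\hat\theta$ (disons $\hat G_{\rm AD}^{\hat\theta,\circ}$, qui est bien $PSO(n)$ ou $PSp(n)$ en type $A_{n-1}$) et le centralisateur connexe $Z_{\hat G_{\rm AD}}(\tilde s)^\circ$ de $\tilde s=s\hat\theta$. La condition \guillemotleft~$\hat G_{\tilde s}$ est un tore~\guillemotright\ est une condition de r\'egularit\'e sur $\tilde s$, qui peut \^etre satisfaite pour un $s$ convenable d\`es que $\hat\theta$ est quasi--semisimple, m\^eme si $\hat G^{\hat\theta,\circ}$ n'est pas un tore: g\'en\'eriquement dans la composante $\hat G\hat\theta$, le centralisateur connexe est un tore de dimension \'egale au rang de $\hat G^{\hat\theta,\circ}$. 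En particulier l'existence d'un $\tilde s$ \`a centralisateur torique n'exclut donc aucun type. La contrainte vient exclusivement de la coexistence de $\tilde s$ et de $\bs h$ avec $Z_{\hat G_{\rm AD}}(\tilde s,\bs h)$ fini, ce qui est pr\'ecis\'ement le contenu du lemme combinatoire de Langlands (\cite{La}, p.~705 et 709) utilis\'e dans la preuve du papier: les ensembles $\mathfrak{X}_k$ d\'etermin\'es par $s$, leur union $\mathfrak{D}$ qui est soit une base soit une base compl\'et\'ee par $-\alpha_0$, et le fait que $\bs h$ les permute en une seule orbite --- seule situation possible en type $A_{n-1}$. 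Votre proposition ne traite pas le cas $\hat\theta=1$ sur $\hat G_{\rm AD}$ (o\`u il faut montrer $\phi=1$ et type $A_{n-1}$ \`a partir de $s$ et $\bs h$ seuls), et ne fournit pas la r\'eduction des cas $\hat\theta\neq1$, $\phi\neq1$ au cas trait\'e.

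Pour (ii), dire que \guillemotleft~la dualit\'e de Langlands traduit l'\'egalit\'e des caract\`eres en identit\'e des classes de cohomologie, d'o\`u $\bs T'_1=\bs T'_2$ par l'injectivit\'e annonc\'ee~\guillemotright\ est une p\'etition de principe: c'est exactement l'injectivit\'e qu'il faut \'etablir. La preuve du papier le fait via le calcul explicite de $a'_{\rm sc}={\rm diag}(\zeta^{-1},\ldots,\zeta^{-1})$ dans $\hat Z_{\rm sc}$ (remarque~4), combin\'e \`a la suite exacte d\'ecrivant $\hat Z_\sharp$, pour d\'eduire d'abord $\zeta_1=\zeta_2$ puis que $h_1$ et $h_2$ diff\`erent par un \'el\'ement de $\hat Z\cap\hat T^{\hat\theta,\circ}$. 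Aucun \'equivalent n'appara\^it dans votre texte. De m\^eme pour (iii), l'\'etape cl\'e de la preuve du papier est l'assertion~(6): $\chi'_1$ et $\chi'_2$ non ramifi\'es et co\"incidant sur $p(Z(G;F))$ co\"incident sur $p(T_0(F))$, ce qui repose sur le fait --- cons\'equence de (i) --- que $\check X(T_{0,{\rm ad}})^{\Gamma_F}=\{0\}$ et donc $T_0(F)=T_0(F)_1Z(G;F)$. Votre phrase sur la finitude des caract\`eres triviaux sur $\xi_Z(Z(G;F))$ ne mentionne ni l'identification de $T'(F)^G$ avec l'image de $T_0(F)$, ni la r\'eduction au sous--groupe compact maximal. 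En somme, votre plan esquisse les conclusions interm\'ediaires attendues mais chacune des trois parties omet l'argument central: la combinatoire de Langlands pour~(i), le calcul de $a'_{\rm sc}$ pour~(ii), et l'identification de $T'(F)^G$ pour~(iii).
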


\begin{marema2}
{\rm Le fait que $\hat{\theta}$ et $\phi$ soient triviaux sur $\hat{G}_{\rm AD}$ n'implique évidemment pas que $\hat{\theta}$ et $\phi$ soient triviaux sur $\hat{G}$. En particulier, $\mathfrak{E}_{\rm t-nr}$ n'est pas vide dans le cas de $GL(2)$ tordu et aussi dans le cas de $U(2)$ non tordu. }
\end{marema2}

\begin{marema3}
{\rm 
La proposition a comme conséquence immédiate que les transferts de caractères affines unitaires non ramifiés provenant de données dans $\mathfrak{E}_{\rm t-nr}$ se séparent à l'aide de l'action du groupe $G_\sharp(F)$ et du caractère central.
}
\end{marema3}

\begin{proof}Prouvons (i). Dans ce point (i) du lemme, tout se passe dans le groupe $\hat{G}_{\rm AD}$. Pour simplifier les notations, on suppose que $\hat{G}=\hat{G}_{\rm AD}$. Soit $\bs{T}'=(T',\ES{T}',\tilde{s})\in \mathfrak{E}_{\rm t-nr}$. Choisissons un élément $(h,\phi)\in \ES{T}'$. \'Ecrivons $\tilde{s}=s\hat{\theta}$ et $\bs{h}=h\phi$.

{\it Supposons tout d'abord que $\hat{\theta}=1$.} On va utiliser les résultats de Langlands \cite{La}. 
On suppose que $s\in \hat{T}$ (c'est toujours possible, quitte à conjuguer $\tilde{s}=s$). On a alors $G_{s}= \hat{T}$. On note $\hat{\Sigma}\;(= \check{\Sigma})$ l'ensemble des racines de $\hat{T}$ dans $\hat{G}$. Langlands montre tout d'abord (\cite{La}, page 705) que $s$ est d'ordre fini (sinon $\bs{T}'$ ne pourrait pas être elliptique). Soit $m\geq 1$ l'ordre de $s$. On pose $\zeta= e^{2\pi i/m}\in {\Bbb U}$. Pour $k=0,\ldots ,m-1$, on note $\mathfrak{N}_k$ l'ensemble des $\alpha\in \hat{\Sigma}$ tels que $\alpha(s)=\zeta^k$. La propriété (1) signifie que $\mathfrak{N}_0$ est vide. On note $\mathfrak{X}_k$ l'ensemble des $\alpha\in \mathfrak{N}_k$ qui ne sont pas combinaisons linéaires à coefficients entiers d'éléments de $\bigcup_{j=0}^{k-1}\mathfrak{N}_j$ (ces ensembles sont notés $\mathfrak{Z}_k$ dans \cite{La}, mais puisque $\mathfrak{N}_0$ est vide, ils co\"{\i}ncident avec ceux notés $\mathfrak{X}_k$). Tous ces ensembles sont invariants par $\bs{h}$. Langlands montre (\cite{La}, page 709) que l'ensemble $\mathfrak{D}=\bigcup_{k=0}^{m-1}\mathfrak{X}_k$ est justiciable du lemme 2 de \cite{La}, page 705. Il existe donc une base $\hat{\Delta}'$ de $\hat{\Sigma}$ telle que l'une des deux situations suivantes est vérifiée:
\begin{itemize}
\item $\mathfrak{D}= \hat{\Delta}'$;
\item $\mathfrak{D}= \hat{\Delta}'\cup \{-\alpha'_0\}$ où $\alpha'_0$ est la plus grande racine pour cette base $\hat{\Delta}'$.
\end{itemize}
Soit $\hat{B}'$ le sous--groupe de Borel de $\hat{G}$ contenant $\hat{T}$ associé à cette base $\hat{\Delta}'$. Quitte à conjuguer toute la situation par un élément de $N_{\hat{G}}(\hat{T})$ envoyant $\hat{B}$ sur $\hat{B}'$, on peut supposer que $\hat{\Delta}'= \hat{\Delta}$. Alors on a:
\begin{enumerate}
\item[(1)]$\mathfrak{D}\neq \hat{\Delta}$ et les éléments de $\mathfrak{D}$ forment une seule orbite sous l'action de $\bs{h}$.
\end{enumerate}
En effet, si (1) n'est pas vrai, alors une orbite de $\mathfrak{D}$ sous l'action de $\bs{h}$ est contenue dans $\hat{\Delta}$. La somme des éléments de cette orbite est non nulle et invariante par $\bs{h}$, par conséquent $X(\hat{T})^{\bs{h}}\neq \{0\}$ et donc aussi 
$\check{X}(\hat{T})^{\bs{h}}\neq \{0\}$. L'image d'un cocaractère de $\hat{T}$ appartenant à cet ensemble $\check{X}(\hat{T})^{\bs{h}}$ est contenue dans $Z_{\hat{G}}(s,\bs{h})^\circ$, ce qui contredit la propriété de finitude (4) de \ref{l'ensemble t-nr}. La propriété (1) est donc vérifiée. A fortiori $\mathfrak{D}$ est réduit à un seul $\mathfrak{X}_k$. L'indice $k$ en question est forcément le plus petit entier $k\geq 1$ tel que $\mathfrak{N}_k\neq \emptyset$. Puisque ce $\mathfrak{X}_k$ contient $\hat{\Delta}$, toutes les valeurs $\alpha(s)$ ($\alpha\in \check{\Sigma}$) sont des puissances de $\zeta^k$. Quitte à remplacer $\zeta$ par $\zeta^k$, on ne perd rien à supposer que $\mathfrak{D}=\mathfrak{X}_1$. Puisque $\mathfrak{X}_1= \hat{\Delta} \cup \{-\alpha_0\}$, on voit que $\bs{h}$ définit un automorphisme du diagramme de Dynkin complété de $\hat{G}$, et les éléments de ce diagramme forment une seule orbite pour cet automorphisme. En inspectant tous les diagrammes possibles, on voit que seuls ceux de type $A_{n-1}$ possèdent de tels automorphismes. L'élément de Frobenius $\phi$, vu comme un automorphisme de $\hat{G}$, agit sur $\hat{\Delta}$, soit par l'identité, soit par l'automorphisme $\alpha_i\mapsto \alpha_{n-i}$. Ce dernier cas n'est possible que si $n\geq 3$ (rappelons que $\hat{G}$ est adjoint; si $n=2$, l'automorphisme $\phi$ de $\hat{G}$ est forcément l'identité). L'élément $h$ normalise le tore $\hat{T}$. Si $\phi$ n'est pas l'identité, comme $h\phi(h)$ agit trivialement sur $\hat{\Delta}\cup \{-\alpha_0\}$, les orbites de $\bs{h}$ dans $\hat{\Delta}\cup\{-\alpha_0\}$ sont au plus d'ordre $2$. Dans ce cas, il en résulte que $\Delta\cup\{-\alpha_0\}$ a au plus deux éléments. Mais alors $n\leq 2$, contradiction. Donc $\phi$ agit trivialement, et (sous l'hypothèse $\hat{\theta}=1$) le point (i) est démontré.  

{\it Supposons maintenant que $\phi=1$.} Il suffit d'échanger les rôles de $\tilde{s}$ et $\bs{h}$ dans le raisonnement précédent. Ce raisonnement prouvait que $\phi=1$ (sous l'hypothèse $\hat{\theta}=1$), il prouve maintenant que $\hat{\theta}=1$ (sous l'hypothèse $\phi=1$). 

{\it Enfin supposons que $\hat{\theta}\neq 1$ et $\phi\neq 1$.}
On rappelle que $\hat{\theta}$ et $\phi$ sont deux automorphismes de $\hat{G}$ qui stabilisent $\hat{\ES{E}}$ et commutent entre eux. Notons $\mathfrak{A}$ le groupe des automorphismes de $\hat{G}$ stabilisant $\hat{\ES{E}}$. Les seuls types pour lesquels $\mathfrak{A}\neq \{1\}$ sont $A_{n-1}$ ($n\geq 3$), $D_n$ ($n\geq 3$), et $E_6$. Pour tous ces types, à l'exception de $D_4$, ce groupe est isomorphe à ${\Bbb Z}/2{\Bbb Z}$, et donc forcément $\hat{\theta}=\phi$. En type $D_4$, le groupe $\mathfrak{A}$ est isomorphe au groupe symétrique $\mathfrak{S}_3$, et parce que $\hat{\theta}$ et $\phi$ commutent, on a forcément $\phi= \hat{\theta}^k$ avec $k\in \{1,2\}$. Dans tous les cas, on a $\phi= \hat{\theta}^k$ pour un entier $k\geq 1$. On modifie les actions $\hat{\theta}'=\hat{\theta}$ et $\phi'=1$. Posons $\tilde{s}'=\tilde{s}$ et $\bs{h}'= \tilde{s}^{-k}\bs{h}$. Ces éléments vérifient encore les propriétés (1), (2), (4) et (5) de \ref{l'ensemble t-nr}. En particulier (pour (4)), le commutant commun $Z_{\hat{G}}(\tilde{s}',\bs{h}')$ co\"{\i}ncide avec $Z_{\hat{G}}(\tilde{s},\bs{h})$. On applique à ces nouvelles données $(\tilde{s}',\bs{h}')$ ce que l'on a déjà prouvé pour $(\tilde{s},\bs{h})$. On obtient que $\hat{\theta}'=1$; contradiction. 

Cela achève la preuve de (i).

\begin{marema4}
{\rm 
Dans le cas où $\hat{G}=\hat{G}_{\rm AD}$ est simple et de type $A_{n-1}$, avec actions triviales de $\hat{\theta}$ et $\phi$, on retrouve la description familière. L'élément $\zeta\in {\Bbb U}$ est une racine primitive $n$--ième de $1$ et, avec les notations habituelles, les éléments $s$ et $h$ sont, à conjugaison près, les images $u_{\rm ad}(\zeta)$ et $v_{\rm ad}$ dans $PGL(n,{\Bbb C})$ des éléments suivants de $GL(n,{\Bbb C})$:
$$
u(\zeta)= {\rm diag}(\zeta^{n-1},\zeta^{n-2},\ldots ,\zeta, 1),
$$
$$
v=
\begin{pmatrix}0&\cdots &\cdots & 0& (-1)^{n-1}\\
1& 0 & \cdots &\cdots& 0  \\
0& 1 & \ddots &&\vdots\\
\vdots&\ddots&\ddots&\ddots&\vdots\\
0&\cdots & 0& 1& 0
\end{pmatrix}.
$$
Notons que $v\in SL(n,{\Bbb C})$. 
}
\end{marema4}

\vskip1mm 
Prouvons (ii). On ne suppose plus que $\hat{G}=\hat{G}_{\rm AD}$. En revanche (d'après (i)), on peut supposer que $\hat{G}_{\rm AD}= PGL(n,{\Bbb C})$ avec actions triviales de $\hat{\theta}$ et $\phi$ (mais comme on l'a déjà dit, ces actions ne sont pas forcément triviales sur $Z(\hat{G})$). On considère deux éléments $\bs{T}'_1,\, \bs{T}'_2\in \mathfrak{E}_{\rm t-nr}$. Pour $i=1,\,2$, on a des éléments $\tilde{s}_i= s_i\hat{\theta}_i$ et $\bs{h}_i= h_i\phi$. On peut supposer que $s_i\in \hat{T}$ et que les images de ces éléments dans $PGL(n,{\Bbb C})$ sont de la forme de ceux de la remarque 4, avec des racines $\zeta_i$ ($i=1,\,2$). Posons $\hat{Z}=Z(\hat{G})$, 
$\hat{Z}_{\rm sc}=Z(\hat{G}_{\rm SC})$ et $\hat{Z}_\sharp = Z(\hat{G}_\sharp)$. 
Puisque $\hat{\theta}$ est trivial sur $\hat{G}_{\rm AD}$, son relèvement à $\hat{G}_{\rm SC}$ l'est aussi, et donc il est a fortiori trivial sur $\hat{Z}_{\rm sc}$. La description de $\hat{Z}_\sharp $ donnée dans \cite[2.7]{Stab I} se simplifie: on a une identification
$$
\hat{Z}_\sharp= \hat{Z}/(\hat{Z}\cap \hat{T}^{\hat{\theta},\circ}) \times \hat{Z}_{\rm sc}.
$$
Pour $i=1,\,2$, le caractère $\omega_{\bs{T}'_i}$ correspond à un cocycle $a'_i$ de $W_F$ dans $\hat{Z}_\sharp$ qui est non ramifié, donc déterminé par l'image $a'_i(\phi)$. Cette image est un élément de $\hat{Z}_\sharp$ dont seule compte la projection sur $\hat{Z}_\sharp/(1-\phi)(\hat{Z}_\sharp)$. D'après \cite[2.7]{Stab I}, cet élément se calcule comme suit. On choisit un relèvement $s_{i,{\rm sc}}$ de $s_{i,{\rm ad}}$ (la projection de $s_i$ sur $\hat{G}_{\rm AD}$) dans $\hat{G}_{\rm SC}$, et on écrit $h_i = z_i\pi(h_{i,{\rm sc}})$ avec $z_i\in \hat{Z}$ et $h_{i,{\rm sc}}\in \hat{G}_{\rm SC}$; où $\pi:\hat{G}_{\rm SC}\rightarrow \hat{G}$ est l'homomorphisme naturel. On note $a'_{i,{\rm sc}}$ l'élément de $\hat{Z}_{\rm sc}$ défini par
$$
a'_{i,{\rm sc}}= s_{i,{\rm sc}}\hat{\theta}(h_{i,{\rm sc}})\phi(s_{i,{\rm sc}})^{-1}h_{i,{\rm sc}}^{-1}.
$$
Alors $a'_i(\phi)$ est l'image de $(z_i,a'_{i,{\rm sc}})$ dans $\hat{Z}_\sharp$. Remarquons que puisque $\hat{\theta}$ et $\phi$ sont triviaux sur $\hat{G}_{\rm SC}$, la définition de $a'_{i,{\rm sc}}$ se simplifie en
$$
a'_{i,{\rm sc}}= s_{i,{\rm sc}}h_{i,{\rm sc}}s_{i,{\rm sc}}^{-1}h_{i,{\rm sc}}^{-1}.
$$
Un calcul matriciel immédiat montre que
$$
a'_{i,{\rm sc}}= {\rm diag}(\zeta_i^{-1},\ldots ,\zeta_i^{-1})\in SL(n,{\Bbb C}).
$$
Supposons que $\omega_{\bs{T}'_1}=\omega_{\bs{T}'_2}$. Alors $a'_1(\phi)=a'_2(\phi)(1-\phi)(z_\sharp)$ pour un élément $z_\sharp\in \hat{Z}_\sharp$. Puisque $\phi$ est trivial sur $\hat{G}_{\rm SC}$, cela se simplifie: on a $\zeta_1=\zeta_2$ et il existe un élément $z\in \hat{Z}$ tel que
$$
z_1\equiv z_2(1-\phi)(z) \quad ({\rm mod}\; \hat{Z}\cap \hat{T}^{\hat{\theta},\circ}).
$$
En conjuguant la donnée $\bs{T}'_1$ par $z$, on se ramène au cas où $z_1z_2^{-1}\in \hat{Z}\cap \hat{T}^{\hat{\theta},\circ}$. On a supposé que $h_1$ et $h_2$ ont même image dans $PGL(n,{\Bbb C})$, à savoir l'élément $v_{\rm ad}$ de la remarque 4. Par suite $h_{1,{\rm sc}}$ et $h_{2,{\rm sc}}$ diffèrent par un élément de $\hat{Z}_{\rm sc}$, qui est inclus dans $\hat{T}_{\rm sc}=\hat{T}_{\rm sc}^{\hat{\theta},\circ}$. Donc $\pi(h_{1,{\rm sc}})$ et $\pi(h_{2,{\rm sc}})$ diffèrent par un élément de $\hat{Z}\cap \hat{T}^{\hat{\theta},\circ}$. On obtient finalement que $h_1$ et $h_2$ diffèrent par un élément de $\hat{Z}\cap \hat{T}^{\hat{\theta},\circ}$. Puisque $\ES{T}'_i$ est engendré par $(h_i,\phi)$, $\hat{T}^{\hat{\theta},\circ}$ et les élément $(1,w)$ pour $w\in I_F$, on obtient que $\ES{T}'_1=\ES{T}'_2$. Enfin puisque $\zeta_1=\zeta_2$, les éléments $s_1$ et $s_2$ ont même image dans $PGL(n,{\Bbb C})$, par suite ils diffèrent par un élément de $\hat{Z}$. Les données $\bs{T}'_1$ et $\bs{T}'_2$ sont donc isomorphes, et puisqu'on les a prises dans $\mathfrak{E}_{\rm t-nr}$, elles sont égales. 

\vskip1mm
Preuve de (iii). On conserve les hypothèses sur $\hat{G}_{\rm AD}$, et on fixe un élément $\bs{T}'\in \mathfrak{E}_{\rm t-nr}$. Notons $\wt{T}'(F)^{\wt{G}}$ l'ensemble des éléments de $\wt{T}'(F)$ qui correspondent à une classe de conjugaison stable semisimple dans $\wt{G}(F)$. On sait qu'il n'est pas vide et qu'il existe un sous--groupe $T'(G)^G$ de $T'(F)$, ouvert et d'indice fini, de sorte que $\wt{T}'(F)^{\wt{G}'}$ soit une seule classe modulo multiplication par ce sous--groupe. Soit $\mu$ un caractère de $T'(F)/T'(F)^G$, et soit $\tilde{\mu}$ le caractère affine de $\wt{T}'(F)$ qui vaut $1$ sur $\wt{T}'(F)^{\wt{G}}$. D'après \cite[2.6]{Stab I}, on sait qu'il existe un automorphisme de $\bs{T}'$ dont l'action associée sur les fonctions sur $\wt{T}'(F)$ est la multiplication par $\tilde{\mu}$. On en déduit:
\begin{enumerate}
\item[(2)]Soient $(\chi'_1,\tilde{\chi}'_1)$ et $(\chi'_2,\tilde{\chi}'_2)$ deux caractères affines de $\wt{T}'(F)$ tels que $\chi_1$ et $\chi_2$ co\"{\i}ncident sur $T'(F)^G$. Alors, à homothétie près, $\tilde{\chi}'_1$ et $\tilde{\chi}'_2$ se déduisent l'un de l'autre par un automorphisme de 
$\bs{T}'$.
\end{enumerate}

On va identifier le groupe $T'(F)^G$. Pour cela choisissons une paire de Borel $({\Bbb B},{\Bbb T})$ de $G$ définie sur $F$ --- on peut bien sûr prendre $({\Bbb B},{\Bbb T})=(B,T)$ ---, et notons $\theta$ le $F$--automorphisme de ${\Bbb T}$ associé à cette paire.   On note $\sigma\mapsto \sigma_G$ l'action galoisienne naturelle sur ${\Bbb T}$. On peut identifier $T'$ à ${\Bbb T}/(1-\theta)({\Bbb T})$ muni d'une action galoisienne $\sigma \mapsto \sigma'_G= w(\sigma)\sigma_G$, où $w\mapsto w(\sigma)$ est un cocycle non ramifié de $W_F$ à valeurs dans $W^\theta$, où $W=W^G({\Bbb T})$ est le groupe de Weyl (ici on a $W^\theta=W$ puisque $\hat{\theta}$ est trivial sur $\hat{G}_{\rm AD}$). On note $T_0$ le tore ${\Bbb T}$ muni de l'action galoisienne $\sigma\mapsto \sigma'_G$. On a un homomorphisme naturel $T_0(F)\rightarrow T'(F)$. Montrons que:
\begin{enumerate}
\item[(3)] $T'(F)^G$ est l'image de cet homomorphisme naturel.
\end{enumerate}
On a une suite d'homomorphismes
$$
(\hat{Z}/(\hat{Z}\cap \hat{T}^{\hat{\theta},\circ})^{\Gamma_F}\rightarrow {\rm H}^1(\Gamma_F,\hat{Z}\cap \hat{T}^{\hat{\theta},\circ})={\rm H}^1(\Gamma_F,\hat{Z}\cap \hat{T}')\rightarrow {\rm H}^1(W_F,\hat{T}').\leqno{(4)}
$$
On a noté $\hat{T}'$ le tore $\hat{T}^{\hat{\theta},\circ}$ muni de l'action galoisienne provenant de $T'$. Les deux actions galoisiennes (celle sur $\hat{T}'$ et celle sur $\hat{T}^{\hat{\theta},\circ}$) co\"{\i}ncident sur $\hat{Z}$, d'où l'égalité centrale. On a d'autre part une dualité entre ${\rm H}^1(W_F,\hat{T}')$ et $T'(F)$. D'après \cite[1.13]{Stab I}, $T'(F)^G$ est l'annulateur dans $T'(F)$ de l'image de la suite (4). Il suffit de montrer que cette image est le noyau de l'homomorphisme ${\rm H}^1(W_F,\hat{T}')\rightarrow {\rm H}^1(W_F,\hat{T}_0)$ dual de l'homomorphisme naturel $T_0(F)\rightarrow T'(F)$; évidemment $\hat{T}_0$ s'identifie à $\hat{T}$ muni de l'action galoisienne convenable. L'homomorphisme $p:T_0\rightarrow T'$ se complète en la suite exacte
$$
1\rightarrow (1-\theta)(T_0)\rightarrow T_0\rightarrow T'\rightarrow 1.
$$
Dualement, on a la suite exacte
$$
1\rightarrow \hat{T}'= \hat{T}_0^{\hat{\theta},\circ}\rightarrow \hat{T}_0\rightarrow \hat{T}_0/ \hat{T}_0^{\hat{\theta},\circ}\rightarrow 1,
$$
laquelle fournit une suite exacte de cohomologie
$$
{\rm H}^0(W_F,\hat{T}_\circ / \hat{T}_0^{\hat{\theta},\circ})\rightarrow {\rm H}^1(W_F, \hat{T}')\rightarrow {\rm H}^1(W_F,\hat{T}_\circ).\leqno{(5)}
$$
Comme $\hat{\theta}$ est trivial sur $\hat{G}_{\rm AD}$, on a $\hat{T}_{0,{\rm ad}}= \hat{T}_{0,{\rm ad}}^{\hat{\theta},\circ}$. On en déduit que $\hat{T}_0/\hat{T}_0^{\hat{\theta},\circ}$ n'est autre que $\hat{Z}/ (\hat{Z}\cap \hat{T}_0^{\hat{\theta},\circ})$, ou encore que $\hat{Z}/(\hat{Z}\cap \hat{T}^{\hat{\theta},\circ})$. Il est clair qu'alors le premier homomorphisme de la suite (5) co\"{\i}ncide avec l'homomorphisme composé de la suite (4). Son image est donc bien le noyau de ${\rm H}^1(W_F,\hat{T}')\rightarrow {\rm H}^1(W_F,\hat{T}_0)$, ce qui prouve (3). 

L'homomorphisme $\xi_Z:Z(G)\rightarrow T'$ est le composé du plongement naturel $Z(G)\rightarrow T_0$ et de la projection $p:T_0\rightarrow T'$. En vertu de (2) et (3), il reste à prouver l'assertion suivante:
\begin{enumerate}
\item[(6)] Supposons que $\chi'_1$ et $\chi'_2$ sont non ramifiés et qu'ils co\"{\i}ncident sur $p(Z(G;F))$. Alors ils co\"{\i}ncident sur $p(T_0(F))$. 
\end{enumerate} 
Pour $i=1,\, 2$, le caractère $\chi_i\circ p$ de $T_0(F)$ est non ramifié, \cad qu'il est trivial sur le sous--groupe compact maximal $T_0(F)_1$ de $T_0(F)$. Fixons une uniformisante $\varpi_F$ de $F$ et identifions $\check{X}(T_0)$ à un sous--groupe de $T_0$ via l'application $\check{x}\mapsto \check{x}(\varpi_F)$. On sait que $T_0(F)$ est le produit direct de $T_0(F)_1$ et de $\check{X}(T_0)^{\Gamma_F}$. Par ellipticité, on sait que $\check{X}(T_{0,{\rm ad}})^{\Gamma_F,\theta}=\{0\}$. Mais $\hat{\theta}$ est trivial sur $\hat{G}_{\rm AD}$, par conséquent $\check{X}(T_{0,{\rm ad}} )^{\Gamma_F}=\{0\}$. Il en résulte que $\check{X}(T_0)^{\Gamma_F}$ est contenu dans $\check{X}(Z(G)^\circ)^{\Gamma_F}$. Donc $T_0(F)=T_0(F)_1Z(G;F)$. L'assertion (6) en résulte. Cela achève la démonstration du point (iii) et de la proposition.   
\end{proof}

\subsection{Réduction au cas où $\hat{G}_{\rm AD}$ est simple}\label{réduction au cas simple}
On souhaite supprimer l'hypothèse que $\hat{G}_{\rm AD}$ est simple dans la proposition de \ref{le cas simple}. 
Notons $\Omega$ le groupe d'automorphismes de $\hat{G}$ engendré par $\phi$ et $\hat{\theta}$. Il opère sur l'ensemble des composantes connexes du diagramme de Dynkin $\bs{\Delta}$ de $\hat{G}_{\rm AD}$. Soient $\EuScript{O}_1,\ldots ,\EuScript{O}_d$ les orbites sous $\Omega$ dans cet ensemble, et pour $i=1,\ldots ,d$, soit $\hat{G}_{{\rm AD},i}$ le sous--groupe de $\hat{G}_{\rm AD}$ correspondant à $\EuScript{O}_i$. On a la décomposition
$$
\hat{G}_{\rm AD}= \hat{G}_{{\rm AD},1}\times \cdots \times \hat{G}_{{\rm AD},d}.\leqno{(1)}
$$
Pour $i=1,\ldots ,d$, les automorphismes $\phi$ et $\hat{\theta}$ de $\hat{G}$ induisent des automorphismes de $\hat{G}_{{\rm AD},i}$, que l'on note encore $\phi$ et $\hat{\theta}$. 

Du point (i) de la proposition de \ref{le cas simple}, on déduit le résultat suivant.

\begin{monlem}(On ne suppose pas que $\hat{G}_{\rm AD}$ est simple.) On suppose que l'ensemble 
$\mathfrak{E}_{\rm t-nr}$ n'est pas vide. Alors 
pour $i=1,\ldots ,d$, le groupe $\hat{G}_{{\rm AD},i}$ est isomorphe à un produit de copies d'un groupe adjoint simple $\hat{H}$ de type $A_{n-1}$ (l'entier $n$, ainsi que le nombre de copies de $\hat{H}$, dépendent de $i$), et les automorphismes $\phi$ et $\hat{\theta}$ de $\hat{G}_{{\rm AD},i}$ sont de la forme suivante: il existe des entiers $m, \, r,\, q \geq 1$ tels que $\hat{G}_{{\rm AD},i}\simeq 
((\hat{H}^{\times m})^{\times r})^{\times q}$, et notant $\alpha$ l'automorphisme de $\hat{H}^{\times m}$ 
donné par 
$$
\alpha(x_1,\ldots ,x_m)= (x_2,\ldots ,x_m,x_1)
$$
et $\beta$ l'automorphisme de $\hat{H}^*=(\hat{H}^{\times m})^{\times r}$ donné par
$$
\beta(y_1,\ldots ,y_r)=(y_2,\ldots ,y_r,\alpha(y_1)),
$$
les automorphismes $\hat{\theta}$ et $\phi$ de 
$\hat{G}_{{\rm AD},i}\simeq (\hat{H}^*)^{\times q}$ sont donnés par
$$
\hat{\theta}(g_1,\ldots, g_q)= (\beta(g_1),\ldots ,\ldots \beta(g_q))
$$
et
$$
\phi(g_1,\ldots ,g_q) =(g_2,\ldots ,g_{q-1}, \beta^{er}(g_1))
$$
pour un entier $e\in \{1,\ldots ,m-1\}$ premier à $m$ (si $m=1$, on prend $e=0$).
\end{monlem}

\begin{proof}Comme dans la preuve du point (i) de la proposition de \ref{le cas simple}, on peut supposer que $\hat{G}=\hat{G}_{\rm AD}$. On a donc
$$
\hat{G}= \hat{G}_1\times \cdots \times \hat{G}_{d},\quad \hat{\theta}=\hat{\theta}^{\otimes d},
$$
où l'on a posé $\hat{G}_i=\hat{G}_{{\rm AD},i}$. On note
$$
G=G_1\times \cdots \times G_d,\quad \widetilde{G}= \widetilde{G}_{1}\times \cdots \times \widetilde{G}_{d}
$$
la décomposition duale: $G_i$ est un groupe semisimple simplement connexe, défini et quasi--déployé sur $F$, et déployé sur une extension non ramifiée de $F$; $\widetilde{G}_{i}$ est un espace tordu sous $G_{i}$, défini sur $F$ et tel que $\widetilde{G}_{i}(F)$ n'est pas vide. Remarquons que les espaces $\wt{G}_i$ peuvent être définis canoniquement par la formule
$\wt{G}_i= \wt{G}/ (\prod_{j\neq i}G_j)$.
On note 
$\EuScript{E}_{{\rm t-nr},i}$ l'ensemble des données endoscopiques $\boldsymbol{T}'_i= (T'_i,\EuScript{T}'_i, \tilde{s}_i)$ pour $(G_{i}, \widetilde{G}_{i})$ qui sont elliptiques, non ramifiées, et telles que $T'_i$ est un tore.

Alors on a la décomposition
$$
\EuScript{E}_{\rm t-nr}= \EuScript{E}_{{\rm t-nr},1}\times \cdots \times \EuScript{E}_{{\rm t-nr},d}.
$$
On peut donc supposer que $d=1$, \cad que le groupe $\Omega$ opère transitivement sur les composantes connexes du diagramme de Dynkin $\bs{\Delta}$ de $\hat{G}$. 

Soit $\boldsymbol{T}'=(T',\EuScript{T}',\tilde{s})\in \EuScript{E}_{\rm t-nr}$ et $(h,\phi)\in \EuScript{T}'$. On écrit $\tilde{s}=s\hat{\theta}$ et $\boldsymbol{h}=h\phi$. On procède par étapes, en allant du cas particulier vers le cas général.

\vskip1mm
{\it \'Etape 1}. On suppose que chacun des deux automorphismes $\hat{\theta}$ et $\phi$ de $\hat{G}$ opère transitivement sur l'ensemble des composantes connexes de $\bs{\Delta}$. On note 
$\bs{\Delta}_1,\ldots ,\bs{\Delta}_r$ ces composantes connexes, ordonnées de telle manière que 
$\phi(\bs{\Delta}_{i+1})=\bs{\Delta}_i$, $i=1,\ldots r-1$. Pour $i=1,\ldots ,r$, on note $\hat{G}_i$ la composante simple de $\hat{G}$ correspondant à $\bs{\Delta}_i$, et pour $i=1,\ldots ,r-1$, on identifie $\hat{G}_{i+1}$ à $\hat{G}_1$ via $\phi^i$. Avec ces identifications, l'automorphisme $\phi$ de $\hat{G}=\hat{G}_1\times \cdots \times \hat{G}_1$ est donné par
$$
\phi(x_1,\ldots ,x_r)= \phi(x_2,\ldots ,x_r,\phi_1(x_1)),
$$
où $\phi_1$ est l'automorphisme de $\hat{G}_1$ induit par $\phi^r$. 
Puisque $\hat{\theta}$ opère transitivement sur les composantes connexes de $\Delta$, il existe un entier $e\in \{1,\ldots ,r-1\}$ premier à $r$ (si $r=1$, on prend $e=0$), et des automorphismes $\hat{\theta}_1,\ldots ,\hat{\theta}_r$ de $\hat{G}_1$, tels que
$$
\phi^{-e}\hat{\theta}(x_1,\ldots ,x_r)= (\hat{\theta}_1(x_1),\ldots ,\hat{\theta}_r(x_r)).    
$$
Comme 
$\hat{\theta}\phi=\phi \hat{\theta}$, on a $\hat{\theta}_r=\hat{\theta}_{r-1} =\cdots = \hat{\theta}_1$ et 
$\hat{\theta}_1\phi_1 = \phi_1\hat{\theta}_1$. Notons $\alpha$ l'automorphisme d'ordre $r$ de $\hat{G}_1$ donné par
$$
\alpha(x_1,\ldots ,x_r)= (x_2,\ldots, x_r,x_1).
$$
On verra plus loin que l'on peut se ramener aux deux cas particuliers suivants:
\begin{itemize}
\item {\bf Cas 1:} $e=1$, $\phi_1=1$, i.e. $\phi =\alpha$ et $\hat{\theta} = \hat{\theta}_1^{\otimes r}\alpha$;
\item {\bf Cas 2:} $e=1$, $\hat{\theta}_1=1$, i.e. $\hat{\theta}=\phi$.
\end{itemize}
Montrons que dans les deux cas, les actions $\phi_1$ et $\hat{\theta}_1$ sur $\hat{G}_1$ sont triviales, et $\hat{G}_1$ est de type $A_{n-1}$.

\vskip1mm
Commen\c{c}ons par le {\it cas 1}. Posons $\mu =\hat{\theta}_1$. Rappelons que l'on a posé $\tilde{s}=s\hat{\theta}$. On écrit $s=(s_1,\ldots ,s_r)$. Pour $g=(g_1,\ldots ,g_r)\in \hat{G}$, on a
$$
g^{-1}\tilde{s}g= (g_1^{-1}s_1\mu(g_2),\ldots ,g_{r-1}^{-1}s_{r-1}\mu(s_r), g_r^{-1}s_r \mu(g_1)).
$$
En prenant $g=(1,s_2\mu(s_3)\cdots \mu^{r-2}(s_r),\ldots ,s_{r-1}\mu(s_r) ,s_r)$, on obtient
$$
g^{-1}\tilde{s}g= (s_1\mu(s_2)\cdots \mu^{r-1}(s_r),1,\ldots ,1).
$$
Quitte à remplacer $\tilde{s}$ et $\boldsymbol{h}$ par $g^{-1}\tilde{s}g$ et $g^{-1}\boldsymbol{h}g$ pour un $g\in \hat{G}$, on peut donc supposer que $s=(s_1,1,\ldots ,1)$. 
L'équation $h^{-1}s\hat{\theta}(h)=\phi(s)$ entra\^{\i}ne que
$$
h=(h_1, \mu^{r-2}(\mu(h_1)s_1^{-1}),\ldots,  \mu(\mu(h_1)s_1^{-1}),\mu(h_1)s_1^{-1})
$$
pour un $h_1\in \hat{G}_1$ vérifiant $h_1^{-1}s_1\mu^{r-1}(\mu(h_1)s_1^{-1})=1$. Posons $\tilde{s}_1= s_1\mu^{r}$ et $\boldsymbol{h}_1= h_1 \mu^{r-1}$. On a
$$
\tilde{s}_1\boldsymbol{h}_1=\boldsymbol{h}_1\tilde{s}_1. 
$$
L'application
$$
g_1\mapsto (g_1,\mu^{r-1}(g_1),\ldots ,\mu(g_1))
$$
induit un isomorphisme de $Z_{\hat{G}_1}(\tilde{s}_1)$ sur $Z_{\hat{G}}(\tilde{s})$ (donc aussi de 
$Z_{\hat{G}_1}(\tilde{s}_1)^\circ$ sur $Z_{\hat{G}}(\tilde{s})^\circ$), et induit aussi un isomorphisme de $Z_{\hat{G}_1}(\tilde{s}_1,\boldsymbol{h}_1)$ sur $Z_{\hat{G}}(\tilde{s},\boldsymbol{h})$. On en déduit que les automorphismes $\tilde{s}_1$ et $\boldsymbol{h}_1$ de $\hat{G}_1$ vérifient les propriétés (1), (2) et (4) de \ref{le cas simple}. Puisque le groupe $\hat{G}_1$ est adjoint et simple, on peut appliquer la proposition de \ref{le cas simple} (point (i)): les actions $\mu^r$ et $\mu^{r-1}$ sur $\hat{G}_1$ sont triviales, et $\hat{G}_1$ est de type $A_{n-1}$. On a aussi $\hat{\theta}_1= \mu^r(\mu^{r-1})^{-1}=1$. 
 
 \vskip1mm
Traitons maintenant le {\it cas 2}. Cette fois--ci posons $\mu= \phi_1$. On peut comme dans le cas 1 supposer que $s=(s_1,1,\ldots ,1)$. L'équation $h^{-1}s\hat{\theta}(h)= \phi(s)$ entra\^{\i}ne que
$$
h=(h_1,\mu(h_1s_1^{-1}),\ldots ,\mu(h_1s_1^{-1}))
$$
pour un élément $h_1\in \hat{G}_1$ vérifiant $h_1^{-1}s_1\mu(h_1s_1^{-1})=1$. Ainsi, posant $\tilde{s}_1=s_1\mu$ et $\boldsymbol{h}_1= h_1\mu$, on a
$$
\tilde{s}_1\boldsymbol{h}_1= \boldsymbol{h}_1\tilde{s}_1.
$$
L'application
$$
g_1\mapsto (g_1,\mu(g_1),\ldots ,\mu(g_1))
$$
induit un isomorphisme de $Z_{\hat{G}_1}(\tilde{s}_1)$ sur $Z_{\hat{G}}(\tilde{s})$, et de $Z_{\hat{G}}(\tilde{s}_1,\boldsymbol{h}_1)$ sur $Z_{\hat{G}}(\tilde{s},\boldsymbol{h})$. On en déduit que les automorphismes $\tilde{s}_1$ et $\boldsymbol{h}_1$ de $\hat{G}_1$ vérifient les propriétés (1), (2) et (4) de \ref{le cas simple}. On peut donc, comme dans le cas 1, appliquer 
la proposition de \ref{le cas simple} (point (i)). On obtient que $\mu=1$ et que $\hat{G}_1$ est de type $A_{n-1}$. 

\vskip1mm
Revenons au cas général (toujours dans l'étape 1). Supposons $\phi_1\neq 1$. 
Alors $\hat{\theta}_1$ est une puissance $\phi_1^k$ de 
$\phi_1$ pour un entier $k\geq 0$, d'où $\hat{\theta}= \phi^{e+kr}$. On modifie les actions de $\phi$ et $\hat{\theta}$ en posant $\phi'=\phi$ et $\hat{\theta}'= \phi$. On pose $\boldsymbol{h}'= \boldsymbol{h}$ et $\tilde{s}'= \boldsymbol{h}^{1-e -kr} \tilde{s}$. Les automorphismes $\boldsymbol{h}'$ et $\tilde{s}'$ de $\hat{G}$ vérifient encore les conditions (2), (4) et (5) de \ref{le cas simple}. En particulier puisque leur commutant commun $Z_{\hat{G}}(\tilde{s}',\boldsymbol{h}')$ co\"{\i}ncide avec $Z_{\hat{G}}(\tilde{s},\boldsymbol{h})$, il est fini. On est dans le cas 2 pour les actions $\phi'$ et $\hat{\theta}'$. 
On a donc $\phi_1=1$, contradiction. Supposons maintenant 
$\phi_1=1$. On modifie les actions de $\phi$ et $\hat{\theta}$ en posant $\phi'=\phi\;(=\alpha)$ et $\hat{\theta}'= \phi_1^{\otimes r}\alpha$. On pose $\boldsymbol{h}'= \boldsymbol{h}$ et $\tilde{s}'= 
\boldsymbol{h}^{1-e}\tilde{s}$. Les automorphismes $\boldsymbol{h}'$ et $\tilde{s}'$ de $\hat{G}$ vérifient les conditions (2), (4) et (5) de \ref{le cas simple}, et l'on est dans le cas 2 pour les actions $\phi'$ et $\hat{\theta}'$. On a donc $\hat{\theta}_1=1$, d'où $\phi= \alpha$ et $\hat{\theta}= \alpha^e$, et $\hat{G}_1$ est de type $A_{n-1}$. 

Récapitulons: le groupe $\hat{G}$ est isomorphe à un produit direct de $r$ copies 
d'un groupe adjoint simple de type $A_{n-1}$, et $\phi$ et $\hat{\theta}$ sont deux $r$--cycles d'ordre $r$.

\vskip1mm
{\it \'Etape 2.} On suppose que $\hat{\theta}$ opère transitivement sur l'ensemble des composantes connexes de $\bs{\Delta}$. On note $\bs{\Delta}_1,\ldots ,\bs{\Delta}_r$ les $\phi$--orbites dans cet ensemble. Comme $\phi$ et $\hat{\theta}$ commutent, ces orbites sont $\hat{\theta}$--stables, et l'on peut supposer que $\hat{\theta}(\bs{\Delta}_{i+1})= \bs{\Delta}_i$, $i=1,\ldots ,r-1$. Pour $i=1,\ldots ,r$, on note $\hat{G}_i$ le sous--groupe de $\hat{G}$ correspondant à $\bs{\Delta}_i$, et pour $i=1,\ldots ,r-1$, on identifie $\hat{G}_{i+1}$ à $\hat{G}_1$ via $\theta^i$. Avec ces identifications, l'automorphisme $\hat{\theta}$ de $\hat{G}=\hat{G}_1\times \cdots \times \hat{G}_1$ est donné par
$$
\hat{\theta}(x_1,\ldots ,x_r)= (x_2,\ldots ,x_r, \hat{\theta}_1(x_1)),
$$
où $\hat{\theta}_1$ est la restriction de $\hat{\theta}^r$ à $\hat{G}_1$. Quant à l'automorphisme $\phi$, puisqu'il commute à $\hat{\theta}$, il est donné par $\phi = \phi_1^{\otimes r}$ pour un automorphisme $\phi_1$ de $\hat{G}_1$ qui commute à $\hat{\theta}_1$. Par construction, 
les automorphismes $\hat{\theta}_1$ et $\phi_1$ de $\hat{G}_1$ opèrent chacun transitivement sur l'ensemble des composantes connexes de $\bs{\Delta}_1$. Comme dans le cas 2 de l'étape 1, on peut supposer que $s=(s_1,1,\ldots ,1)$. L'équation $h^{-1}s\hat{\theta}(h)=\phi(s)$ entra\^{\i}ne que
$$
h=(h_1, \hat{\theta}_1(h_1),\ldots ,\hat{\theta}_1(h_1))
$$
pour un élément $h_1\in \hat{G}_1$ vérifiant $h_1^{-1}s_1\hat{\theta}_1(h_1)= \phi_1(s_1)$. Posons 
$\tilde{s}_1=s_1\hat{\theta}_1$ et $\boldsymbol{h}_1= h_1\phi_1$. On a donc
$$
\tilde{s}_1\boldsymbol{h}_1=\boldsymbol{h}_1\tilde{s}_1.
$$
L'application
$$
g_1\mapsto (g_1,\hat{\theta}_1(g_1),\ldots ,\hat{\theta}_1(g_1))
$$
induit un isomorphisme de $Z_{\hat{G}}(\tilde{s}_1)$ sur $Z_{\hat{G}}(\tilde{s})$, et de $Z_{\hat{G}}(\tilde{s}_1,\boldsymbol{h}_1)$ sur $Z_{\hat{G}}(\tilde{s},\boldsymbol{h})$. On en déduit que les automorphismes $\tilde{s}_1$ et $\boldsymbol{h}_1$ de $\hat{G}_1$ vérifient les propriétés (1), (2) et (4) de \ref{l'ensemble t-nr}. On peut donc leur appliquer le résultat de l'étape 1: le groupe $\hat{G}_1$ est isomorphe à un produit direct de $m$ copies d'un groupe adjoint simple de type $A_{n-1}$ pour un entier $m\geq 1$, et $\phi_1$ et $\hat{\theta}_1$ sont deux $m$--cycles d'ordre $m$. En particulier $\phi_1 = (\hat{\theta}_1)^e$ pour un entier $e\in \{1,\ldots ,m-1\}$ premier à $m$ (si $m=1$, on prend $e=0$).

\vskip1mm
{\it \'Etape 3.} On peut maintenant traiter le cas général (en supposant toujours que $\Omega$ opère transitivement sur les composantes connexes de $\bs{\Delta}$). On note $\bs{\Delta}_1,\ldots ,\bs{\Delta}_q$ les $\hat{\theta}$--orbites dans l'ensemble des composantes connexes de $\bs{\Delta}$. Puisque $\phi$ et $\hat{\theta}$ commutent, ces ensembles $\bs{\Delta}_i$ sont permutés (transitivement) par $\phi$, et l'on 
peut supposer que $\phi(\bs{\Delta}_{i+1})=\bs{\Delta}_i$, $i=1,\ldots ,q-1$. Pour $i=1,\ldots ,q$, on note $\hat{G}_i$ le sous--groupe de $\hat{G}$ correspondant à $\bs{\Delta}_i$, et pour $i=1,\ldots ,q-1$, on identifie $\hat{G}_{i+1}$ à $\hat{G}_1$ via $\phi^i$. Avec ces identifications, l'automorphisme $\phi$ de $\hat{G}=\hat{G}_1\times \cdots \times \hat{G}_1$ est donné par
$$
\phi(x_1,\ldots ,x_q)=(x_2,\ldots ,x_{q-1},\phi_1(x_1)),
$$
où $\phi_1$ est la restriction de $\phi^q$ à $\hat{G}_1$. Quant à l'automorphisme $\hat{\theta}$, puisqu'il commute à $\phi$, il est donné par $\hat{\theta}= \hat{\theta}_1^{\otimes q}$ pour un automorphisme $\hat{\theta}_1$ de $\hat{G}_1$ qui commute à $\hat{G}_1$. Comme dans le cas 2 de l'étape 1 (en rempla\c{c}ant $\hat{\theta}$ par $\phi$), on peut supposer que $h=(h_1,1,\ldots ,1)$. Comme dans l'étape 2 (en échangeant les rôles de $\hat{\theta}$ et de $\phi$), on obtient que
$$
s=(s_1, \phi_1(s_1),\ldots ,\phi_1(s_1))
$$
pour un élément $s_1$ de $\hat{G}_1$ vérifiant $s_1^{-1}h_1\phi_1(s_1)=\hat{\theta}_1(h_1)$. 
Posons $\tilde{s}_1=s_1\hat{\theta}_1$ et $\boldsymbol{h}_1= h_1\phi_1$. On a encore
$$
\tilde{s}_1\boldsymbol{h}_1=\boldsymbol{h}_1\tilde{s}_1.
$$
Toujours comme dans l'étape 2 (en échangeant les rôles de $\tilde{s}$ et de $\boldsymbol{h}$), on en déduit que les automorphismes $\tilde{s}'_1= \boldsymbol{h}_1$ et $\boldsymbol{h}'_1=\tilde{s}_1$ vérifient les conditions (1), (2) et (4) de \ref{l'ensemble t-nr}. On peut donc leur appliquer le résultat de l'étape 2. 
Cela achève la preuve du lemme.
\end{proof}

On suppose que l'ensemble $\mathfrak{E}_{\rm t-nr}$ n'est pas vide. D'après le lemme, on peut supposer que
$$
\hat{G}_{\rm AD}= (\hat{H}_1^*)^{\times q_1}\times \cdots \times (\hat{H}_d^*)^{\times q_d},\quad 
\hat{H}_i^* = (\hat{H}_i^{\times m_i})^{\times r_i},\quad \hat{H}_i= PGL(n_i,{\Bbb C}),\leqno{(2)}
$$
et que pour $i=1,\ldots ,d$, notant $\alpha_i$ l'automorphisme
$$
(x_1,\ldots, x_{m_i})\mapsto 
(x_2,\ldots ,x_{m_i},x_1)
$$
de $(\hat{H}_i)^{\times m_i}$, et $\beta_i$ l'automorphisme 
$$
(y_1,\ldots ,y_{r_i})\mapsto (y_2,\ldots ,y_{r_i},\alpha_i(y_1))
$$
de $(\hat{H}_i^{\times m_i})^{\times r_i}$, les automorphismes $\hat{\theta}$ et $\phi$ de 
$(\hat{H}_i^*)^{\times q_i}$ sont donnés par
$$
\hat{\theta}(g_1,\ldots ,g_{q_i})= (\beta_i(g_1),\ldots ,\beta_i(g_{q_i})), \leqno{(3)}
$$
$$
\phi(g_1,\ldots ,g_{q_i})=(g_2,\ldots ,g_{q_i},\beta_i^{e_i r_i}(g_1))\leqno{(4)}
$$
pour un entier $e_i\in \{1,\ldots ,m_i-1\}$ premier à $m_i$ (si $m_i=1$, on prend $e_i=0$). Notons que
$$\beta_i^{e_i r_i}= (\alpha_i^{e_i})^{\otimes r_i}.
$$ 

Soit $\boldsymbol{T}'=(T',\EuScript{T}',\tilde{s})$ un élément de $\mathfrak{E}_{\rm t-nr}$ et soit $(h,\phi)\in \EuScript{H}'$. D'après la preuve du lemme et la remarque 4 de \ref{le cas simple}, on peut supposer que les éléments $\tilde{s}=s\hat{\theta}$ et $\boldsymbol{h}=h\phi$ sont de la {\it forme standard} suivante. Pour $i=1,\ldots ,d$, on a une racine primitive $n_i$--ième de l'unité $\zeta_i$. On note $\bar{u}_i=u_{{\rm ad},i}(\zeta_i)$ et $\bar{v}_i=v_{{\rm ad},i}$ les images dans $PGL(n_i,{\Bbb C})$ des éléments suivants de $GL(n_i,{\Bbb C})$:
$$
u_i(\zeta_i)={\rm diag} (\zeta_i^{n_i-1},\zeta_i^{n_i-2},\ldots, \zeta, 1),
$$
$$
v_i=
\begin{pmatrix}0&\cdots &\cdots & 0& (-1)^{n_i-1}\\
1& 0 & \cdots &\cdots& 0  \\
0& 1 & \ddots &&\vdots\\
\vdots&\ddots&\ddots&\ddots&\vdots\\
0&\cdots & 0& 1& 0
\end{pmatrix}.
$$
Notons $\bar{s}=(\bar{s}_1,\ldots ,\bar{s}_d)$ et $\bar{h}=(\bar{h}_1,\ldots ,\bar{h}_d)$ 
les images de $s$ et $h$ dans $\hat{G}_{\rm AD}$. Pour $i=1,\ldots ,d$, on a
$$
\bar{s}_i=(\bar{s}_{i,1},\beta_i^{e_ir_i}(\bar{s}_{i,1}),\ldots , \beta_i^{e_ir_i}(\bar{s}_{i,1}))
\in (\hat{H}_i^*)^{q_i},
$$
$$ 
\bar{s}_{i,1}= (\bar{a}_i,1,\ldots ,1)\in \hat{H}^*_i= (\hat{H}_i^{\times m_i})^{\times r_i},
$$
$$
\bar{a}_i=(\bar{u}_i,1,\ldots ,1)\in \hat{H}_i^{\times m_i}
$$
et
$$
\bar{h}_i=(\bar{h}_{i,1},1,\ldots ,1) \in (\hat{H}_i^*)^{q_i};
$$
$$
\bar{h}_{i,1}= (\bar{b}_i, \alpha_i(\bar{b}_i),\ldots ,\alpha_i(\bar{b}_i))\in \hat{H}^*_i.
$$
$$
\bar{b}_i= (\bar{v}_i,\bar{v}_i\bar{u}_i^{-1},\ldots ,\bar{v}_i\bar{u}_i^{-1},\bar{v}_i,\ldots,\bar{v}_i)\in \hat{H}_i^{\times m_i}, 
$$
où le nombre de facteurs $\bar{v}_i\bar{u}_i^{-1}$ dans $\bar{b}_i$ est égal à $m_i-e_i$ si $m_i>1$, et à $0$ sinon. 
Puisque $\beta_i^{e_ir_i}= (\alpha_i^{e_i})^{\otimes r_i}$, on a
$$\beta_i^{e_ir_i}(\bar{s}_{i,1})= (\alpha_i^{e_i}(\bar{a}_i),1,\ldots ,1).
$$
On a donc (par construction)
$$
\bar{v}_i^{-1}\bar{u}_i\bar{v}_i=\bar{u}_i,
$$
$$
\bar{b}_i^{-1}\bar{a}_i\alpha_i(\bar{b}_i)= \alpha_i^{e_i}(\bar{a}_i),
$$
$$
\bar{h}_{i,1}^{-1}\bar{s}_{i,1}\beta_i(\bar{h}_{i,1})=  \beta_i^{e_ir_i}(\bar{s}_{i,1}),
$$
$$
\bar{h}_i\bar{s}_i\hat{\theta}(\bar{h}_i)= \phi(\bar{s}_i).
$$
La paire $(\bar{s},\bar{h})$ étant uniquement déterminée par le $d$--uplet $\zeta= (\zeta_1,\ldots ,\zeta_d)$, on la note aussi $(s_{\rm ad}(\zeta),h_{\rm ad}(\zeta))$.

Pour $i=1,\ldots ,d$, soit
$$
\pi_i: \hat{H}_{{\rm SC},i}= SL(n_i,{\Bbb C})\rightarrow PGL(n_i,{\Bbb C})=\hat{H}_i
$$
le revêtement simplement connexe de $\hat{H}_i$. Posant 
$\hat{G}_{{\rm SC},i}= ((\hat{H}_i^{\times m_i})^{\times r_i})^{\times q_i}$, l'application
$$
\pi= \otimes_{i=1}^d((\pi_i^{\otimes m_i})^{\otimes r_i})^{\otimes q_i}: 
\hat{G}_{\rm SC}=\hat{G}_{{\rm SC},1}\times\cdots \times \hat{G}_{{\rm SC},d}\rightarrow \hat{G}_{\rm AD} 
$$
est le revêtement simplement connexe de $\hat{G}_{\rm AD}$.
Les automorphismes $\phi$ et $\hat{\theta}$ de $\hat{G}_{\rm AD}$ se relèvent de manière unique en des automorphismes de $\hat{G}_{\rm SC}$, et la description des actions de $\phi$ et $\hat{\theta}$ sur $\hat{G}_{\rm SC}$ est identique à celle  des actions sur $\hat{G}_{\rm AD}$. Pour $i=1,\ldots ,d$, choisissons un relèvement $u_{{\rm sc},i}$ de $\bar{u}_i$ dans $\hat{H}_{{\rm SC},i}$, et notons 
$s_{{\rm sc},i}$ l'élément de $\hat{G}_{{\rm SC},i}$ obtenu en rempla\c{c}ant $\bar{u}_i$ par $u_{{\rm sc},i}$ dans la construction de $\bar{s}_i$. Posons
$$
s_{\rm sc}=(s_{{\rm sc},1},\ldots ,s_{{\rm sc},d})\in \hat{G}_{\rm SC}.
$$
C'est un relèvement de $\bar{s}=s_{\rm ad}(\zeta)$ dans $\hat{G}_{\rm SC}$. De même, pour $i=1,\ldots ,d$, choisissons un relèvement $v_{{\rm sc},i}$ de $\bar{v}_i$ dans $\hat{H}_{{\rm SC},i}$ --- on peut bien sûr prendre $v_{{\rm sc},i}=v_i\in SL(n_i,{\Bbb C})$ ---, et notons $h_{{\rm sc},i}$ l'élément de $\hat{G}_{{\rm SC},i}$ obtenu en rempla\c{c}ant $\bar{v}_i$ par $v_{{\rm sc},i}$ dans la construction de $h_i$. Posons
$$
h_{\rm sc}=(h_{{\rm sc},1},\ldots ,h_{{\rm sc},d})\in \hat{G}_{\rm SC}.
$$
C'est un relèvement de $\bar{h}=h_{\rm ad}(\zeta)$ dans $\hat{G}_{\rm SC}$. De tels relèvements $s_{\rm sc}$ de 
$s_{\rm ad}(\zeta)$ et $h_{\rm sc}$ de $h_{\rm ad}(\zeta)$ sont appelés des {\it bons relèvements}.

\begin{marema1}{\rm Supposons $q=1$ et supprimons l'indice $i$ dans les notations précédentes. On a $\hat{H}= PGL(n,{\Bbb C})$, $\hat{H}_{\rm SC}=SL(n,{\Bbb C})$, $\hat{G}_{\rm AD}= ((\hat{H}^{\times m})^{\times r})^{\times q}$ et $\hat{G}_{\rm SC}= ((\hat{H}_{\rm SC}^{\times m})^{\times r})^{\times q}$. On a aussi un entier $e\in \{0,\ldots ,m-1\}$ qui est $>0$ et premier à $m$ si $m>1$, et vaut $0$ sinon. On note $(g_{i,j,k})_{1\leq i\leq m, 1\leq j \leq r, 1\leq k\leq q}$ un élément de $\hat{G}_{\rm AD}$ ou $\hat{G}_{\rm SC}$. Les indices $i,\,j,\,k$ doivent plutôt être considérés comme des entiers modulo $m,\, r,\, q$. Ainsi on a:
\begin{itemize}
\item $\hat{\theta}(g)_{i,j,k}= g_{i,j+1,k}$ si $j\neq r$;
\item $\hat{\theta}(g)_{i,r,k}= g_{i+1,1,k}$;
\item $\phi(g)_{i,j,k}= g_{i,j,k+1}$ si $k\neq q$;
\item $\phi(g)_{i,j,q}= g_{i+e, j,1}$.
\end{itemize}
Pour une racine primitive $n$--ième de l'unité $\zeta$, choisissons des relèvements $u_{\rm sc}$ et 
$v_{\rm sc}$ de $u_{\rm ad}(\zeta)$ et $v_{\rm ad}$ dans $SL(n,{\Bbb C})$. Alors les bons relèvements $s_{\rm sc}=(s_{{\rm sc};i,j,k})$ et $h_{\rm sc}=(h_{{\rm sc};i,j,k})$ de $s_{\rm ad}(\zeta)$ et $h_{\rm ad}(\zeta)$ dans $\hat{G}_{\rm SC}$ associés à ces choix sont donnés par:
\begin{itemize}
\item $s_{{\rm sc};1,1,1}= u_{\rm sc}$;
\item $s_{{\rm sc};e,1,k}= u_{\rm sc}$ si $k\neq 1$;
\item $s_{{\rm sc};i,j,k}=1$ dans tous les autres cas;
\item $h_{{\rm sc};i,j,1}=v_{\rm sc}$ si $j=1$ et $i\neq 2, \ldots , m-e+1$, ou si $j\neq 1$ et $i\neq 1, \ldots m-e$;
\item $h_{{\rm sc};i,j,1}= v_{\rm sc}u_{\rm sc}^{-1}$ si $j=1$ et $i=2,\ldots , m-e+1$, ou si $j \neq 1$ et $i=1,\ldots ,m-e$; 
\item $h_{{\rm sc};i,j,k}=1$ si $k\neq 1$.
\end{itemize}
}
\end{marema1}

\begin{marema2}
{\rm On a envie de définir l'espace tordu $(G_{\rm SC},\wt{G}_{\rm SC})$, mais on se garde bien de le faire! En effet, pour $\gamma\in \wt{G}(F)$, le $F$--automorphisme $\theta_\gamma={\rm Int}_\gamma$ de $G$ se relève en un unique $F$--automorphisme de $\wt{G}_{\rm SC}$, encore noté $\theta_\gamma$. On peut alors définir $\wt{G}_{\rm SC}$ comme étant $G_{\rm SC}\theta_\gamma$. La structure galoisienne est telle que $\theta_\gamma$ est fixe par $\Gamma_F$, et cet espace s'envoie naturellement dans $\wt{G}$ par l'application $\wt{G}_{\rm SC}\rightarrow g_{\rm sc}\theta_\gamma \mapsto \rho(g_{\rm sc})\gamma$, où $\rho:G_{\rm SC}\rightarrow G$ est l'homomorphisme naturel. Le problème est que l'espace $\wt{G}_{\rm SC}$ ainsi défini dépend de $\gamma$. De plus, il se peut qu'il ne soit pas ``non ramifié'', \cad que $\wt{G}_{\rm SC}(F)$ ne possède aucun sous--espace hyperspécial. Par exemple si $\wt{G}=G= SL(n)$, l'espace $\wt{G}_{\rm SC}= G_{\rm SC}\theta_\gamma$ est isomorphe à l'ensemble des $g\in GL(n)$ tels que $\det(g) = \det(\gamma)$. Il dépend clairement (pour sa structure galoisienne) de $\det(\gamma)$. L'espace $\wt{G}_{\rm SC}(F)$ possède un sous--espace hyperspécial si et seulement s'il existe un élément $\gamma'_{\rm sc}\in \wt{G}_{\rm SC}(F)$ tel que ${\rm Int}_{\gamma'_{\rm sc}}$ stabilise un sous--groupe hyperspécial de $SL(n,F)$, ce qui n'est possible que si la valuation normalisée de $\det(\gamma)$ appartient à $n{\Bbb Z}$.}
\end{marema2} 

\subsection{Séparation des données dans $\mathfrak{E}_{\rm t-nr}$}\label{séparation des données}La proposition suivante est une généralisation du point (ii) de la proposition de \ref{le cas simple}.

\begin{mapropo}
(On ne suppose pas que $\hat{G}_{\rm AD}$ est simple.) Supposons que l'ensemble $\mathfrak{E}_{\rm t-nr}$ n'est pas vide. Soient $\bs{T}'$ et $\bs{T}''$ deux éléments de $\mathfrak{E}_{\rm t-nr}$. Si $\omega_{\bs{T}'}=\omega_{\bs{T}''}$, alors $\bs{T}'=\bs{T}''$. 
\end{mapropo}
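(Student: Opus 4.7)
The plan is to reduce the general case to the simple case already handled in \ref{le cas simple}(ii), using the structural description of $\hat{G}_{\rm AD}$ provided by the lemma of \ref{r�duction au cas simple}. Concretely, we dispose of the decomposition
$$
\hat{G}_{\rm AD}=\prod_{i=1}^d(\hat{H}_i^*)^{\times q_i},\qquad \hat{H}_i^*=(\hat{H}_i^{\times m_i})^{\times r_i},\quad \hat{H}_i=PGL(n_i,{\Bbb C}),
$$
into $\Omega$-stable blocks, with explicit actions of $\hat{\theta}$ and $\phi$ on each factor. This decomposition is inherited by $\hat{G}_{\rm SC}$ and, modulo suitable identifications, by $\hat{Z}_\sharp=\hat{Z}/(\hat{Z}\cap\hat{T}^{\hat{\theta},\circ})\times\hat{Z}_{\rm sc}$. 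Since the $\phi$-action preserves each block, so does the cohomology class $\bs{a}'\in {\rm H}^1(W_F,\hat{Z}_\sharp)$ that corresponds to $\omega_{\bs{T}'}$, and therefore the equality $\omega_{\bs{T}'}=\omega_{\bs{T}''}$ holds if and only if the analogous equality holds in each block. This reduces us to the case $d=1$.

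Assume then $d=1$, i.e.\ $\hat{G}_{\rm AD}=((\hat{H}^{\times m})^{\times r})^{\times q}$ with the actions of $\phi$ and $\hat{\theta}$ described in remark~1 of \ref{r�duction au cas simple}. For $\bs{T}'\in\mathfrak{E}_{\rm t-nr}$ we choose a standard form $(\tilde{s},\bs{h})=(s_{\rm ad}(\zeta)\hat{\theta},h_{\rm ad}(\zeta)\phi)$ parametrised by a primitive $n$-th root of unity $\zeta$, together with good lifts $s_{\rm sc},h_{\rm sc}$ to $\hat{G}_{\rm SC}$, and we record the central element $z\in\hat{Z}$ determined by $h=z\pi(h_{\rm sc})$. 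As in the proof of \ref{le cas simple}(ii), the class $\bs{a}'(\phi)$ is the image in $\hat{Z}_\sharp/(1-\phi)(\hat{Z}_\sharp)$ of the pair $(z,a'_{\rm sc})$, where
$$
a'_{\rm sc}=s_{\rm sc}\,\hat{\theta}(h_{\rm sc})\,\phi(s_{\rm sc})^{-1}h_{\rm sc}^{-1}\in\hat{Z}_{\rm sc}.
$$
Using the explicit description of $s_{\rm sc}$ and $h_{\rm sc}$, a direct block-by-block computation shows that $a'_{\rm sc}$ lies in the diagonal copy of $Z(\hat{H}_{\rm SC})=\mu_n$ inside $\hat{Z}_{\rm sc}\simeq(\mu_n^{\times m})^{\times r})^{\times q}$ and reduces, modulo $(1-\phi)(\hat{Z}_{\rm sc})$, to the image of $\zeta^{-1}$ in $\mu_n$. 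This is the same telescoping phenomenon that makes the simple case work: the permutation $\phi$ collapses all copies into a single one, and the combinatorics of the standard form is arranged precisely so that only the product along one cycle, namely $\zeta^{-1}$, survives.

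From the equality $\omega_{\bs{T}'}=\omega_{\bs{T}''}$ we then deduce first that $\zeta'=\zeta''$, since the image in $\mu_n$ is well defined, and secondly that the classes of $z'$ and $z''$ in $\hat{Z}/((\hat{Z}\cap\hat{T}^{\hat{\theta},\circ})(1-\phi)(\hat{Z}))$ coincide. Conjugating $\bs{T}'$ by a suitable element of $\hat{Z}$ we may assume $z'^{-1}z''\in\hat{Z}\cap\hat{T}^{\hat{\theta},\circ}$. With both $\zeta$ and the ambiguity in $z$ killed, the argument at the end of the proof of \ref{le cas simple}(ii) applies verbatim: the images of $h',h''$ in $\hat{G}_{\rm AD}$ coincide, hence $h'$ and $h''$ differ by an element of $\hat{Z}\cap\hat{T}^{\hat{\theta},\circ}$, so $\ES{T}'=\ES{T}''$; and the equality $\zeta'=\zeta''$ yields $s'\equiv s''\pmod{\hat{Z}}$, whence $\bs{T}'$ and $\bs{T}''$ are isomorphic, and therefore equal because they both belong to the chosen system of representatives $\mathfrak{E}_{\rm t-nr}$.

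The main obstacle is the explicit evaluation of $a'_{\rm sc}$ in step two: the permutation actions of $\hat{\theta}$ (cyclic on the $r$ copies inside each $\hat{H}_i^*$, composed with $\alpha_i$ on the $m_i$ innermost copies) and of $\phi$ (cyclic on the $q_i$ copies, twisted by $\beta_i^{e_ir_i}$) interact in a combinatorially delicate way with the support patterns of the good lifts $s_{\rm sc}$ and $h_{\rm sc}$ prescribed in remark~1 of \ref{r�duction au cas simple}. Carrying out the telescoping that shows all contributions collapse onto the single factor $\zeta^{-1}\in\mu_n$, and in particular that nothing non-trivial survives in the off-diagonal $\hat{Z}_{\rm sc}$-components once one takes the quotient by $(1-\phi)$, is the technical heart of the proof.
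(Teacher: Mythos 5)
Your proposal correctly identifies the basic strategy — standard form for $(\tilde s,\bs h)$, good lifts, computation of the $\hat{Z}_{\rm sc}$-component $a'_{\rm sc}$, and comparison of the two data modulo the ambiguities — but it has two genuine gaps, and one of them is exactly where the difficulty of the general case lies.

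The main gap is that you only work modulo $(1-\phi)(\hat{Z}_{\rm sc})$, whereas the relevant ambiguity is larger. The class $a'(\phi)$ is the image of a pair $(z',z_{\rm sc}(\zeta'))$ under the quotient map $\hat{Z}/(\hat{Z}\cap\hat{T}^{\hat{\theta},\circ})\times\hat{Z}_{\rm sc}\to\hat{Z}_\sharp$, and the kernel of that map is the image of $(\pi,1-\hat{\theta})$. So when you lift the equality $a''(\phi)=a'(\phi)(1-\phi)(z_\sharp)$, the $\hat{Z}_{\rm sc}$-component relation that comes out is not ``$z_{\rm sc}(\zeta')\equiv z_{\rm sc}(\zeta'')\pmod{(1-\phi)(\hat{Z}_{\rm sc})}$'' but rather
$z_{\rm sc}(\zeta'')= z_{\rm sc}(\zeta')\,(1-\phi)(y_{\rm sc})\,(1-\hat{\theta})(z_{\rm sc}^0)$
for some $y_{\rm sc},z^0_{\rm sc}\in\hat{Z}_{\rm sc}$. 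The crux of the proof in the paper is showing, from the explicit concentration of $z_{\rm sc}(\zeta)$ on a single coordinate (it is not diagonal, contrary to what you write), that the term $(1-\phi)(y_{\rm sc})(1-\hat{\theta})(z^0_{\rm sc})$ is forced to equal $1$: this is a telescoping identity using that $\phi$ moves everything to the first factor and that the residual cycle $\beta$ has order $mr$ on each block, so a one-coordinate-supported element in the image of $(1-\beta)$ must already be trivial. Deducing $\zeta'=\zeta''$ without controlling the $(1-\hat{\theta})$-term is not valid, and your appeal to a well-defined ``image in $\mu_n$'' modulo $(1-\phi)$ alone does not establish it.

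The second gap is in the last step. Once $\zeta'=\zeta''$ you claim the tail of the proof of \ref{le cas simple}(ii) applies verbatim. It does not: there the key simplification is that $\phi$ and $\hat{\theta}$ are trivial on $\hat{G}_{\rm SC}$, which makes the $\pi(z^0_{\rm sc})$-term in the $\hat{Z}$-component relation (3) innocuous. In the general case this term survives, and the paper needs the Lemma on $\mathfrak{Z}(\tilde s_{\rm sc},\bs h_{\rm sc})$ to rewrite $z^0_{\rm sc}$ as $\eta(x,\bs h_{\rm sc})(1-\phi)(\cdot)$ with $x$ in that centralizer, and only then can one build the conjugating element $g=y'\pi(x)$ that exhibits an isomorphism $\bs T'\simeq\bs T''$. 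Your plan to reduce globally to $d=1$ via a block decomposition of $\hat{Z}_\sharp$ is also suspect: the $\hat{Z}_{\rm sc}$-factor decomposes block by block, but the $\hat{Z}/(\hat{Z}\cap\hat{T}^{\hat{\theta},\circ})$-factor does not, since $\hat{G}$ itself need not be a product of its blocks. The safe reduction, used in the paper, is to go to $d=1$ only for the $\hat{Z}_{\rm sc}$-analysis.
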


\begin{proof}Soit $\bs{T}'=(T',\ES{T}',\tilde{s})$ un élément de $\mathfrak{E}_{\rm t-nr}$ et soit $(h,\phi)\in \ES{T}'$. On suppose que les automorphismes $\hat{\theta}$ et $\phi$ de $\hat{G}_{\rm AD}$ sont donnés par les formules (2), (3), (4) de \ref{réduction au cas simple}. On suppose aussi que les éléments $\tilde{s}=s\hat{\theta}$ et $\bs{h}= h\phi$ sont de la forme standard  décrite en \ref{réduction au cas simple}, \cad que les projections $\bar{s}$ et $\bar{h}$ sur $\hat{G}_{\rm AD}$ des éléments $s$ et $h$ de $\hat{G}$ sont de la forme $\bar{s}=s_{\rm ad}(\zeta)$ et $\bar{h}= h_{\rm ad}(\zeta)$ pour un $d$--uplet $\zeta=(\zeta_i,\ldots ,\zeta_d)$, où $d$ est le nombre de composantes connexes de $\hat{G}_{\rm AD}$ (formule (1) de \ref{réduction au cas simple}) et $\zeta_i$ est une racine primitive $n_i$--ième de l'unité. Fixons des bons relèvement $s_{\rm sc}$ et $h_{\rm sc}$ de $\bar{s}$ et $\bar{h}$ dans $\hat{G}_{\rm SC}$. Ces bons relèvements sont construits en choisissant des relèvements $u_{{\rm sc},i}$ et $v_{{\rm sc},i}$ de $\bar{u}_i$ et $\bar{v}_i$ dans $\hat{H}_{{\rm SC},i}$ ($i=1,\ldots ,d$).
 
Rappelons que l'on a noté $\hat{Z}$, $\hat{Z}_{\rm sc}$ et $\hat{Z}_\sharp$ les centres de $\hat{G}$, $\hat{G}_{\rm SC}$ et $\hat{G}_\sharp$. D'après \cite[2.7]{Stab I}, on a une suite exacte courte
$$
1 \rightarrow \hat{Z}_{\rm sc}/\hat{Z}_{\rm sc}^{\hat{\theta}}
\xrightarrow{(\pi,1-\hat{\theta})} \hat{Z}/(\hat{Z}\cap \hat{T}^{\hat{\theta},\circ})
\times \hat{Z}_{\rm sc}\rightarrow \hat{Z}_\sharp\rightarrow 1;\leqno{(1)}
$$
où $\pi:\hat{G}_{\rm SC}\rightarrow \hat{G}$ est l'homomorphisme naturel.
Le caractère $\omega_{\boldsymbol{T}'}$ de $G_\sharp(F)$ correspond à un cocycle $a'$ de $W_F$ dans $\hat{Z}_\sharp$. Ce cocycle est non ramifié, donc déterminé par l'élément $a'(\phi)$ de $\hat{Z}_\sharp$, et sa classe est donnée par l'image de 
$a'(\phi)$ dans $\hat{Z}_\sharp/(1-\phi)(\hat{Z}_\sharp)$. D'après loc.~cit., 
l'élément $a'(\phi)$ se calcule comme suit. On écrit
$$
h= z\pi(h_{\rm sc})
$$
où $\pi:\hat{G}_{\rm SC}\rightarrow \hat{G}$ est l'homorphisme naturel et $z\in \hat{Z}$. Puisque
$$
\bar{h}^{-1}\bar{s}\hat{\theta}(\bar{h})= \phi(\bar{s}),
$$
l'élément $h_{\rm sc}^{-1}s_{\rm sc}\hat{\theta}(h_{\rm sc})\phi(s_{\rm sc})^{-1}$ 
appartient à $\hat{Z}_{\rm sc}$. On le note $a'_{\rm sc}$. Alors $a'(\phi)$ est 
l'image de $(z,a'_{\rm sc})$ dans $Z(\hat{G}_\sharp)$. Le centre $\hat{Z}_{\rm sc}$ de $\hat{G}_{\rm SC}$ se décompose en
$$
\hat{Z}_{\rm sc}= \hat{Z}_{{\rm sc},1}\times \cdots \times \hat{Z}_{{\rm sc},d},\quad 
\hat{Z}_{{\rm sc},i}=
((Z(\hat{H}_{{\rm SC},i})^{m_i})^{r_i})^{q_i}.
$$
On écrit $a'_{\rm sc}=(a'_{{\rm sc},1},\ldots ,a'_{{\rm sc},d})$, où $a'_{{\rm sc},i}\in \hat{Z}_{{\rm sc},i}$. On a par construction
$$
a'_{{\rm sc},i}= (z_{{\rm sc},i},1,\ldots ,1),\quad 
z_{{\rm sc},i}=v_{{\rm sc},i}^{-1}u_{{\rm sc},i}v_{{\rm sc},i}u_{{\rm sc},i}^{-1},
$$
et $z_{{\rm sc},i}$ est l'élément 
${\rm diag}(\zeta_i^{-1},\ldots ,\zeta_i^{-1})$ de $Z(\hat{H}_{{\rm SC},i})$. 
L'élément $a'_{\rm sc}$ ne dépend donc que du $d$--uplet $\zeta=(\zeta_1,\ldots ,\zeta_d)$. On le note 
aussi $z_{\rm sc}(\zeta)$. 

\begin{marema}
{\rm Pour calculer l'élément $a'_{\rm sc}=h_{\rm sc}^{-1}s_{\rm sc}\hat{\theta}(h_{\rm sc})\phi(s_{\rm sc})^{-1}$, on a choisi des bons relèvements $s_{\rm sc}$ et $h_{\rm sc}$ de $\bar{s}$ et $\bar{h}$ dans $\hat{G}_{\rm SC}$. Comme on vient de le voir, $a'_{\rm sc}$ ne dépend pas du choix de ces bons relèvements, mais si l'on choisit des relèvements quelconques, ce n'est en général plus vrai (sauf si $\phi$ et $\hat{\theta}$ sont triviaux sur $\hat{Z}_{\rm sc}$). Si l'on remplace $h_{\rm sc}$ et $s_{\rm sc}$ par $h'_{\rm sc}=y^{-1}h_{\rm sc}\phi(y)$ et $s'_{\rm sc}=y^{-1}s_{\rm sc}\hat{\theta}(y)$ pour un $y\in \hat{G}_{\rm SC}$, alors on a l'égalité
$$h'^{-1}_{\rm sc}s'_{\rm sc}\hat{\theta}(h'_{\rm sc})\phi(s'_{\rm sc})^{-1}=\phi(y)^{-1}h_{\rm sc}^{-1}s_{\rm sc}\hat{\theta}(h_{\rm sc})\phi(s_{\rm sc})^{-1}\phi(y).
$$
En particulier pour $y\in \hat{Z}_{\rm sc}$, les éléments $h'_{\rm sc}$ et $s'_{\rm sc}$ sont encore des relèvements de $\bar{h}$ et $\bar{s}$ dans $\hat{G}_{\rm SC}$ (pas forcément bons), et on l'égalité
$$
h'^{-1}_{\rm sc}s'_{\rm sc}\hat{\theta}(h'_{\rm sc})\phi(s'_{\rm sc})^{-1}=h_{\rm sc}^{-1}s_{\rm sc}\hat{\theta}(h_{\rm sc})\phi(s_{\rm sc})^{-1}.
$$
} 
\end{marema}

\vskip2mm
Soient maintenant deux éléments $\boldsymbol{T}'=(T',\EuScript{T}',\tilde{s}')$ et 
$\boldsymbol{T}''=(T'',\EuScript{T}'',\tilde{s}'')$ de $\mathfrak{E}_{t-nr}$, et soient $(h',\phi)\in \EuScript{T}'$ et $(h'',\phi)\in \EuScript{T}''$. On reprend les constructions ci--dessus en affublant les objets d'exposants ``$\,'\,$'' et ``$\,''\,$'' pour les distinguer. On écrit $\tilde{s}'=s'\hat{\theta}$, $\boldsymbol{h}'=h'\phi$, $\tilde{s}''=s''\hat{\theta}$ et $\boldsymbol{h}''= h''\phi'$. On suppose que les projections des éléments $s'$, $h'$, $s''$, $h''$ de $\hat{G}$ sur $\hat{G}_{\rm AD}$ sont de la forme standard (\ref{réduction au cas simple}). En particulier, à ces projections sont associées deux $d$--uplets $\zeta'=(\zeta'_1,\ldots ,\zeta'_d)$ et $\zeta''=(\zeta''_1,\ldots ,\zeta''_d)$ où, pour $i=1,\ldots ,d$, les éléments $\zeta'_i$ et $\zeta''_i$ sont des racines 
primitives $n_i$--ièmes de l'unité. On fixe des bons relèvements $s'_{\rm sc}$, $h'_{\rm sc}$, $s''_{\rm sc}$, $h''_{\rm sc}$ dans  
$\hat{G}_{\rm SC}$ des projections de $s'$, $h'$, $s''$, $h''$ sur $\hat{G}_{\rm AD}$. On écrit $h'=z'\pi(h'_{\rm sc})$ et $h''=z''\pi(h''_{\rm sc})$ pour des éléments 
$z'$ et $z''$ de $\hat{Z}$. Les caractères $\omega_{\boldsymbol{T}'}$ et 
$\omega_{\boldsymbol{T}''}$ de $G_\sharp(F)$ correspondent à des cocycles $a'$ et 
$a''$ de $W_F$ dans $\hat{Z}_\sharp$, déterminés par les éléments $a'(\phi)$ et $a''(\phi)$ de $\hat{Z}_\sharp$, dont seules comptent 
les images dans $\hat{Z}_\sharp/ (1-\phi)(\hat{Z}_\sharp)$. On a vu que $a'(\phi)$ est l'image de $(z',z_{\rm sc}(\zeta'))$ dans $\hat{Z}_\sharp$, et que $a''(\phi)$ est l'image de $(z'',z_{\rm sc}(\zeta''))$ dans 
$\hat{Z}_\sharp$. 

Supposons que $\omega_{\boldsymbol{T}'}=\omega_{\boldsymbol{T}''}$. On a donc $a''(\phi)= a'(\phi)(1-\phi)(z_\sharp)$ pour un élément $z_\sharp\in \hat{Z}_\sharp$. Choisissons un relèvement $(y,y_{\rm sc})$ de $z_\sharp$ dans 
$\hat{Z}\times \hat{Z}_{\rm sc}$. D'après la suite exacte courte (1) décrivant $\hat{Z}_\sharp$, il existe un élément $z^0_{\rm sc}\in \hat{Z}_{\rm sc}$ tel que:
$$
z_{\rm sc}(\zeta'')= z_{\rm sc}(\zeta') (1-\phi)(y_{\rm sc})(1-\hat{\theta})(z_{\rm sc}^0),\leqno{(2)}
$$
$$
z'' \equiv z' (1-\phi)(y)\pi(z_{\rm sc}^0) \quad ({\rm mod}\; \hat{Z}\cap \hat{T}^{\hat{\theta},\circ}).
\leqno{(3)}
$$
Montrons que l'égalité (2) n'est possible que si
$$(1-\phi)(y_{\rm sc})(1-\hat{\theta})(z_{\rm sc}^0)=1.\leqno{(4)}
$$
Rappelons que pour $i=1,\ldots ,d$, on a posé $\hat{Z}_{{\rm sc},i}= ((Z(\hat{H}_{{\rm SC},i})^{\times m_i})^{\times r_i})^{\times q_i}$. 
Il s'agit de vérifier (pour $i=1,\ldots ,d$) que si $y$ et $z$ sont deux éléments de $\hat{Z}_{{\rm sc},i}$  tels que les coordonnées de $(1-\phi)(y)(1-\hat{\theta})(z)$ sur les $m_ir_iq_i$ facteurs $Z(\hat{H}_{{\rm SC},i})$ de $\hat{Z}_{{\rm sc},i}$ sont toutes égales à $1$, sauf peut--être sur le premier facteur, alors $(1-\phi)(y)(1-\hat{\theta})(z)=1$. Pour cela on peut supposer $d=1$ et supprimer l'indice $i$ dans les notations. Soient $y=(y_1,\ldots ,y_q)$ et $z=(z_1,\ldots ,z_q)$ deux éléments de $\hat{Z}_{\rm sc}$, où les coordonnées $y_k$ et $z_k$ sont dans $Z(\hat{H}^*)=(Z(\hat{H}_{\rm SC})^{\times m})^{\times r}$. Un calcul explicite donne
$$
(1-\phi)(y)(1-\hat{\theta})(z)= ((1-\beta^{er})(y_1)(1-\beta)(z_1\cdots z_{q}), 1,\ldots ,1)
$$
On peut donc supposer $q=1$. Alors on a
$$
(1-\phi)(y)(1-\hat{\theta})(z)= (1-\beta^{re})(y)(1-\beta)(z)
=(1-\beta)(x),
$$
où l'on a posé
$$
x=y\beta(y)\cdots \beta^{re-1}(y)z.
$$
Puisque $\beta$ est un $mr$--cycle d'ordre $mr$, si les $rm-1$ dernières coordonnées de $(1-\beta)(x)$ sont égales à $1$, alors la première l'est aussi, ce qui démontre (4). 

Revenons à $d$ quelconque. D'après (2) et (4), on a $z_{\rm sc}(\zeta'')=
z_{\rm sc}(\zeta')$, d'où $\zeta''=\zeta'$. On note simplement $\zeta$ ce $d$--uplet. Puisque les projections de $s'$ et $s''$ sur $\hat{G}_{\rm AD}$ sont de la forme standard, elles sont égales (à $s_{\rm ad}(\zeta))$. On peut donc supposer que $s'_{\rm sc}=s''_{\rm sc}$. De la même manière, on peut supposer que $h'_{\rm sc}=h''_{\rm sc}$. On note simplement $s_{\rm sc}$ et $h_{\rm sc}$ ces bons relèvements de $s_{\rm ad}(\zeta)$ et  $h_{\rm ad}(\zeta)$ dans $\hat{G}_{\rm SC}$. Posons $\tilde{s}_{\rm sc}= s_{\rm sc}\hat{\theta}$ et $\bs{h}_{\rm hs}= h_{\rm sc}\phi$. Pour $x\in \hat{G}_{\rm SC}$, notons $\eta(x,\tilde{s}_{\rm sc})$ et $\eta(x,\bs{h}_{\rm sc})$ les éléments de $\hat{G}_{\rm SC}$ définis par
$$
\eta(x,\tilde{s}_{\rm sc})= x s_{\rm sc}\hat{\theta}(x)^{-1}s_{\rm sc}^{-1},\quad \eta(x,\bs{h}_{\rm sc})= xh_{\rm sc}\phi(x)^{-1}x^{-1},
$$
\cad par
$$
x\tilde{s}_{\rm sc}x^{-1}= \eta(x,\tilde{s}_{\rm sc})\tilde{s}_{\rm sc},\quad 
x \bs{h}_{\rm sc}  x^{-1} = \eta(x,\bs{h}_{\rm sc})\bs{h}_{\rm sc}.
$$
Soit $\mathfrak{Z}(\tilde{s}_{\rm sc},\bs{h}_{\rm sc})$ l'ensemble des $x\in \hat{G}_{\rm SC}$ tels que $\eta(x,\tilde{s}_{\rm sc})$ et $\eta(x,\bs{h}_{\rm sc})$ appartiennent à $\hat{Z}_{\rm sc}$. Cet ensemble est un groupe, qui co\"{\i}ncide avec la pré--image dans $\hat{G}_{\rm SC}$ du commutant commun de $s_{\rm ad}(\zeta)\hat{\theta}$ et $h_{\rm ad}(\zeta)\phi$ dans $\hat{G}_{\rm AD}$. 

\begin{monlem}
Soit $z\in \hat{Z}_{\rm sc}$. Supposons que $(1-\hat{\theta})(z) \in (1-\phi)(\hat{Z}_{\rm sc})$. Il existe un $x\in \mathfrak{Z}(\tilde{s}_{\rm sc},\bs{h}_{\rm sc})$ tel que $z\in \eta(x,\bs{h}_{\rm sc})(1-\phi)(\hat{Z}_{\rm sc})$.
\end{monlem} 

\begin{proof}On peut supposer $d=1$. Reprenons les notations de la remarque 1 de \ref{réduction au cas simple}. Les éléments $u_{\rm ad}(\zeta)$ et $v_{\rm ad}$ de $\hat{H}= PGL(n,{\Bbb C})$ sont les projections des éléments $u(\zeta)$ et $v$ de $GL(n,{\Bbb C})$ définis dans la remarque 4 de \ref{le cas simple}. On a choisi un relèvement $u_{\rm sc}$ de $u_{\rm ad}(\zeta)$ dans $\hat{H}_{\rm SC}= SL(n,{\Bbb C})$. L'élément $v$ est déjà dans $\hat{H}_{\rm SC}$. On note $\mathfrak{Y}$ le sous--groupe de $\hat{H}_{\rm SC}$ engendré par $\hat{Z}_{\rm sc}$, $u_{\rm sc}$ et $v$. Soit $\mathfrak{X}$ l'image de $\mathfrak{Y}$ dans $\hat{G}_{\rm SC}$ par le plongement diagonal: un élément $x= (x_{i,j,k})$ de $\hat{G}_{\rm SC}$ appartient à $\mathfrak{X}$ si et seulement si $x_{i,j,k}$ est indépendant de $i,\,j,\,k$ et appartient à $\mathfrak{Y}$. On note $\overline{\mathfrak{Y}}$ et $\overline{\mathfrak{X}}$ les projections de $\mathfrak{Y}$ et $\mathfrak{X}$ sur $\hat{H}$ et $\hat{G}_{\rm AD}$. Le groupe $\overline{\mathfrak{Y}}$ est le commutant commun de $u_{\rm ad}(\zeta)$ et $v_{\rm ad}$ dans $\hat{H}$. Il en résulte qu'un élément de $\overline{\mathfrak{X}}$ commute à $s_{\rm ad}(\zeta)\hat{\theta}$ et à $h_{\rm ad}(\zeta)\phi$. En revanche $u_{\rm sc}$ et $v$ ne commutent pas dans $\hat{H}_{\rm SC}$. Les éléments de $\mathfrak{X}$ ne commutent pas à $\tilde{s}_{\rm sc}$ ni à $\bs{h}_{\rm sc}$, mais $\mathfrak{X}$ est contenu dans $\mathfrak{Z}(\tilde{s}_{\rm sc},\bs{h}_{\rm sc})$. On note $\hat{Z}_{\mathfrak{X}}$ le sous--groupe de $\hat{Z}_{\rm sc}$ engendré par les $\eta(x,\bs{h}_{\rm sc})$ quand $x$ décrit $\mathfrak{X}$. On peut vérifier qu'il ne dépend pas de $\zeta$, mais ce n'est pas utile ici. 

Pour $z=(z_{i,j,k})\in \hat{Z}_{\rm sc}$, posons $p(z)_j= \prod_{i,k}z_{i,j,k}$ ($j=1,\ldots ,r$). On vérifie que $(1-\phi)(\hat{Z}_{\rm sc})$ est le groupe formé des $z\in \hat{Z}_{\rm sc}$ tels que $p(z)_j=1$ pour tout $j\in \{1,\ldots ,r\}$. On vérifie aussi que $p((1-\hat{\theta})(z))_j= p(z)_jp(z)_{j+1}^{-1}$ (avec $j+1=1$ si $j=r$). L'hypothèse sur $z$ dans l'énoncé est donc que $p(z)_j$ est indépendant de $j$. Pour prouver le lemme, il suffit de prouver que pour $\xi\in Z(\hat{H}_{\rm SC})$, il existe un 
$z'\in \hat{Z}_{\mathfrak{X}}$ tel que $p(z')_j=\xi$ pour tout $j\in \{1,\ldots ,r\}$. Soient $a,\, b\in {\Bbb Z}$. Considérons l'élément $y= (u_{\rm sc})^a v^b$ de $\mathfrak{Y}$. Notons $x$ l'élément de $\mathfrak{X}$ tel que $x_{i,j,k}=y$, et posons $\eta= \eta(x,\bs{h}_{\rm sc})$. À l'aide de l'égalité
$$
v u_{\rm sc}v^{-1} = \bs{\zeta} u_{\rm sc},\quad \bs{\zeta}={\rm diag}(\zeta,\ldots , \zeta)\in Z(\hat{H}_{\rm SC}),
$$
on vérifie que $p(\eta)_j = \bs{\zeta}^{-ma -eb}$ pour tout $j\in \{1,\ldots ,r\}$. Puisque $m$ et $e$ sont premiers entre eux si $m>1$, et que $\bs{\zeta}$ engendre $Z(\hat{H}_{\rm SC})$, on peut choisir $a$ et $b$ de sorte que $p(\eta)_j=\xi$ pour tout $j\in \{1,\ldots ,r\}$. Cela démontre le lemme. 
\end{proof}

Reprenons la démonstration de la proposition. D'après l'égalité (4), l'élément $z_{\rm sc}^0\in \hat{Z}_{\rm sc}$ vérifie l'hypothèse du lemme. On peut donc écrire $z_{\rm sc}^0= \eta(x,\bs{h}_{\rm sc})(1-\phi)(y_{\rm sc})$ pour des éléments $x\in \hat{G}_{\rm SC}$ et $y_{\rm sc}\in \hat{Z}_{\rm sc}$ tels que $x$ vérifie les conditions du lemme. La relation (3) entra\^{\i}ne alors que
$$
z''\equiv z'(1-\phi)(y')\pi(\eta(x,\bs{h}_{\rm sc}))\quad ({\rm mod}\; \hat{Z}\cap \hat{T}^{\hat{\theta},\circ}).\leqno{(5)}
$$
pour un élément $y'\in \hat{Z}$. Posons $g=y'\pi(x)$. 
Puisque $\eta(x,\tilde{s}_{\rm sc})\in \hat{Z}_{\rm sc}$, on a 
$$
g\tilde{s}'g^{-1}=(1-\hat{\theta})(y') 
\pi(\eta(x,\tilde{s}_{\rm sc})) \tilde{s}'\in \hat{Z} \tilde{s}''.
$$
D'autre part, d'après (5), on a
$$
g\bs{h}'g^{-1}= z'(1-\phi)(y') \pi(\eta(x,\bs{h}_{\rm sc}) \pi(h_{\rm sc}) \phi \in \bs{h}''(\hat{Z}\cap \hat{T}^{\hat{\theta},\circ}).
$$
Comme dans la preuve du point (ii) de la proposition de \ref{le cas simple}, on en déduit que $\ES{T}'=\ES{T}''$. Par conséquent l'élément $g$ est un isomorphisme de $\bs{T}'$ sur $\bs{T}''$, et puisque ces données ont été prises dans $\mathfrak{E}_{\rm t-nr}$, elles sont égales.
\end{proof}

\subsection{Action de $\ES{C}$ sur les transferts usuels non ramifiés}\label{action de C sur les transferts usuels}Soit $\bs{T}'=(T',\ES{T}',\tilde{s})$ un élément de $\mathfrak{E}_{\rm t-nr}$. Comme en \ref{le cas simple}, on suppose que $\tilde{s}=s\hat{\theta}$ pour un $s\in \hat{T}$ (ainsi $\hat{T}'$ s'identifie à $\hat{T}^{\hat{\theta},\circ}$ muni de l'action galoisienne convenable). 
Pour transférer un caractère affine unitaire $\tilde{\chi}'$ de $\wt{T}'(F)$ à $(\wt{G}(F),\omega)$, on a besoin de fixer un isomorphisme ${^LT'}\buildrel \simeq\over{\longrightarrow} \ES{T}'$. 
On choisit un élément $(h,\phi)\in \ES{T}'$, et on prend l'isomorphisme ${^LT}\simeq \ES{T}'$ défini par cet élément comme en \ref{l'ensemble t-nr}. 
\'Ecrivons $\bs{h}=h\phi$. On suppose que les projections $\bar{s}$ et $\bar{h}$ de $s$ et $h$ sur $\hat{G}_{\rm AD}$ sont de la forme standard $\bar{s}=s_{\rm ad}(\zeta)$ et $\bar{h}=h_{\rm ad}(\zeta)$ pour un $d$--uplet $\zeta=(\zeta_1,\ldots ,\zeta_d)$, où $\zeta_i$ est une racine primitive $n_i$--ième de l'unité --- cf. \ref{réduction au cas simple}. On fixe aussi 
des bons relèvements $s_{\rm sc}$ et $h_{\rm sc}$ de $s_{\rm ad}(\zeta)$ et $h_{\rm ad}(\zeta)$ dans $\hat{G}_{\rm SC}$, et l'on pose $\tilde{s}_{\rm sc}=s_{\rm sc}\hat{\theta}$ et $\bs{h}_{\rm sc}= h_{\rm sc}\phi$. 

Le sous--espace hyperspécial $(K,\wt{K})$ de $\wt{G}(F)$ fixé en \ref{données endoscopiques nr}  détermine un sous--espace hyperspécial $(K',\wt{K}')$ de $\wt{T}'(F)$ --- on a forcément $K'=T'(F)_1 $ ---, et un facteur de transfert normalisé $\Delta: \ES{D}(\bs{T}')\rightarrow {\Bbb C}^\times$.

La proposition suivante est une généralisation du point (iii) de la proposition de \ref{le cas simple}. 

\begin{mapropo}(On ne suppose pas que $\hat{G}_{\rm AD}$ est simple.) Soit $\bs{T}'=(T',\ES{T}',\tilde{s})\in \mathfrak{E}_{\rm t-nr}$. 
\begin{enumerate}
\item[(i)]Soit $(\chi',\tilde{\chi}')$ un caractère affine unitaire et non ramifié de $\wt{T}'(F)$. La distribution $\Theta_{\tilde{\chi}'}=\bs{\rm T}_{\bs{T}'}(\tilde{\chi}')$ est un vecteur propre 
pour l'action du groupe $\ES{C}$ relativement à un caractère unitaire $\omega_{\chi'}$ de ce groupe, \cad qu'on a
$$
{^{\bs{c}}(\Theta_{\tilde{\chi}'})}= \omega_{\chi'}(\bs{c})\Theta_{\tilde{\chi}'},\quad \bs{c}\in \ES{C}.
$$
\item[(ii)]Soient $(\chi_1,\tilde{\chi}'_1)$ et $(\chi'_2,\tilde{\chi}'_2)$ deux caractères affines unitaires et non ramifiés de $\wt{T}'(F)$. Supposons que $\omega_{\chi_1}=\omega_{\chi_2}$. Alors, à homothétie près, $\tilde{\chi}'_1$ et $\tilde{\chi}'_2$ se déduisent l'un de l'autre par l'action d'un élément du groupe d'automorphisme de $\bs{T}'$.  
\end{enumerate}
\end{mapropo}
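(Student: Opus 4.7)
Voici l'esquisse de la d�monstration que je proposerais.

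\emph{Strat�gie g�n�rale pour (i).} On se ram�ne au calcul explicite de l'action du groupe $\ES{C}$ sur le facteur de transfert $\Delta: \ES{D}(\bs{T}')\rightarrow {\Bbb C}^\times$. Pour un couple $(z,g)\in C$ d'image $\bs{c}\in \ES{C}$, l'action de $\bs{c}$ sur $\wt{G}(F)$ est donn�e par $\gamma \mapsto z\,{\rm Int}_{\smash{g^{-1}}}(\gamma)$. Rappelons d�j� les �galit�s (2) et (3) de \ref{donn�es endoscopiques nr}: sous conjugaison par un �l�ment de $G(F)$, le facteur de transfert est multipli� par $\omega(g)$, et sous conjugaison par $g\in G_\sharp(F)$, il est multipli� par $\omega_{\bs{T}'}(g)$. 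Il s'agit donc de combiner cela avec le comportement du facteur de transfert sous la multiplication � gauche de $\gamma$ par un �l�ment $z\in Z(G;F)$, comportement qui est contr�l� par le caract�re central --- plus pr�cis�ment, $\Delta(\delta,z\gamma)=\chi'(\xi_Z(z))^{-1}\cdot c(\Delta)(z)\cdot \Delta(\delta,\gamma)$ pour un caract�re $c(\Delta)$ de $Z(G;F)$ ind�pendant de $\chi'$ provenant de la normalisation du facteur de transfert (cf.~\ref{d�composition endoscopique}).

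\emph{Construction du caract�re $\omega_{\chi'}$.} En combinant ces trois transformations, on obtient pour tout $(z,g)\in C$ une formule explicite du type
$$
\Delta(\delta, z\,{\rm Int}_{\smash{g^{-1}}}(\gamma))= \omega_{\chi'}(z,g)\,\Delta(\delta,\gamma),
$$
et il faut v�rifier que le facteur $\omega_{\chi'}(z,g)$ ne d�pend que de l'image $\bs{c}=\bs{q}(z,g)$ dans $\ES{C}$ (et non du rel�vement $(z,g)$), et d�finit bien un caract�re unitaire du groupe $\ES{C}$. Pour le premier point, on utilise que la translation par $(1-\theta)(z),z)\in Z(G)^*$ donne bien une contribution triviale gr�ce � la formule de transformation centrale. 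Pour le second, le caract�re d'homomorphisme d�coule de la multiplicativit� s�par�e de chacune des trois contributions. Une fois ceci �tabli, la d�finition du transfert spectral usuel $\bs{\rm T}_{\bs{T}'}$ via la dualit� (2) de \ref{transfert spectral} donne imm�diatement, pour $f\in C^\infty_{\rm c}(\wt{G}(F))$,
$$
{^{\bs{c}}(\Theta_{\tilde{\chi}'})}(f)= \Theta_{\tilde{\chi}'}(\,{^{\bs{c}^{-1}}\!\!\!f}\,)= \omega_{\chi'}(\bs{c})\,\Theta_{\tilde{\chi}'}(f),
$$
d'o� le point (i). La d�pendance de $\omega_{\chi'}$ en $\chi'$ se lit dans la restriction � $Z(G;F)\subset \ES{C}$: on a $\omega_{\chi'}\vert_{Z(G;F)}=\chi'\circ \xi_Z\cdot \omega_0$ pour un caract�re $\omega_0$ ind�pendant de $\chi'$. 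Sur $G_\sharp(F)\subset \ES{C}$, on retrouve $\omega_{\bs{T}'}$, lui aussi ind�pendant de $\chi'$.

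\emph{Strat�gie pour (ii).} Soient $(\chi'_1,\tilde{\chi}'_1)$ et $(\chi'_2,\tilde{\chi}'_2)$ tels que $\omega_{\chi'_1}=\omega_{\chi'_2}$. D'apr�s la d�pendance explicite d�crite ci-dessus, cette �galit� entra�ne $\chi'_1\circ \xi_Z = \chi'_2\circ \xi_Z$ sur $Z(G;F)$. On reprend alors l'argument du point (iii) de la proposition de \ref{le cas simple}: on identifie $T'(F)^G$ � l'image de l'homomorphisme naturel $T_0(F)\rightarrow T'(F)$, o� $T_0$ est le tore ${\Bbb T}$ muni de l'action galoisienne tordue. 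Gr�ce au fait, d�j� d�montr� en \ref{le cas simple}, qu'un caract�re de $T'(F)/T'(F)^G$ s'obtient par multiplication par un caract�re affine associ� � un automorphisme de $\bs{T}'$, il suffit d'�tablir que $\chi'_1$ et $\chi'_2$ co�ncident sur $T'(F)^G$. Pour cela, la nouveaut� par rapport � \ref{le cas simple} est qu'on doit utiliser non seulement l'�galit� sur $Z(G;F)$ (i.e.~sur la restriction � $Z(G;F)$ du caract�re $\omega_{\chi'}$) mais aussi les informations suppl�mentaires fournies par les �l�ments de $\ES{C}$ n'appartenant pas au sous-groupe engendr� par $Z(G;F)$ et $G_\sharp(F)$: ces �l�ments ``suppl�mentaires'' provenant du produit fibr� d�finissant $\ES{C}$ permettent de tester le caract�re $\chi'$ sur la partie de $T'(F)^G$ qui ne provient pas de $Z(G;F)$.

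\emph{Principal obstacle.} La difficult� centrale est de v�rifier concr�tement que la diff�rence $T'(F)^G/ \xi_Z(Z(G;F))$ est exactement test�e par l'action du suppl�ment $\ES{C}/(Z(G;F)\cdot G_\sharp(F))$. Ceci repose sur la description structurelle d�velopp�e en \ref{r�duction au cas simple} (d�composition de $\hat{G}_{\rm AD}$ en produit de groupes de type $A_{n-1}$ avec actions explicites de $\hat{\theta}$ et $\phi$), qui permet de ramener la v�rification � un calcul cocyclique dans les centres des facteurs $\hat{Z}_{{\rm sc},i}$, analogue � celui effectu� dans la preuve de la proposition de \ref{s�paration des donn�es}. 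Plus pr�cis�ment, il faudra montrer que les classes de cohomologie qui apparaissent dans la construction de $\ES{C}$ (via le produit fibr� $Z(G)\times_{G}\cdots$) param�trisent pr�cis�ment les caract�res non-ramifi�s de $T'(F)^G$ modulo l'image de $Z(G;F)$, ce qui n�cessite une analyse cas par cas conforme � la d�composition (2) de \ref{r�duction au cas simple} et � la structure des bons rel�vements $s_{\rm sc}$, $h_{\rm sc}$ introduits en \ref{r�duction au cas simple}.
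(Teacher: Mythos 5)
Your strategy for (i) has a gap at the very first step: you propose to obtain the formula
$\Delta(\delta, z\,{\rm Int}_{\smash{g^{-1}}}(\gamma))= \omega_{\chi'}(z,g)\,\Delta(\delta,\gamma)$
by combining the three known transformation laws of the transfer factor (under $G(F)$-conjugation, $G_\sharp(F)$-conjugation, and central translation). But $\ES{C}$ is \emph{strictly} larger than the subgroup generated by $Z(G;F)$ and $G_\sharp(F)$ --- this is exactly the content of remark 1 of \ref{action du groupe C}. For a general $(z,g)\in C$ the element $g$ lies only in $G(\overline F)$, and its image in $G_\sharp$ need not be $F$-rational; the three transformation laws simply do not apply to it, so there is nothing to ``combine.'' The paper's proof necessarily proceeds differently: it writes $g=z_g\pi(g_{\rm sc})$ with $g_{\rm sc}\in G_{\rm SC}$, associates to $(z,g)$ a class $b_{\bs c}\in{\rm H}^{1,0}(\Gamma_F; Z(G_{\rm SC})\xrightarrow{1-\theta}Z(G))$ --- and this map $\overline{\ES{C}}\to{\rm H}^{1,0}(\cdots)$ is shown to be a bijection --- and then directly computes the effect of $\gamma\mapsto\gamma^c$ on each piece of the normalized transfer factor of \cite[6.3]{Stab I} ($\Delta_{\rm II}$, $\tilde\lambda_{\underline z}(\gamma)$, the pairing $\langle(V_{T_0},\nu_{\rm ad}),(t_{T_0,{\rm sc}},s_{\rm ad})\rangle^{-1}$, together with $\tilde\chi'(\delta^c)$), through a chase over the maps $\iota_0,\iota_1,\iota_2,\iota_3$, landing on the closed formula $\omega_{\chi'}(\bs c)=\langle\iota_0(b_{\bs c}),\eta_{\chi'}\rangle$. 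This cohomological machinery is not an optional refinement: it is what makes the argument apply to the whole of $\ES{C}$.

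For (ii), you correctly flag that the extra elements of $\ES{C}$ must be used, but your proposed reduction to the equality $\chi'_1\circ\xi_Z=\chi'_2\circ\xi_Z$ on $Z(G;F)$ discards the very information that makes the non-simple case work, and the planned route through ``characters of $T'(F)/T'(F)^G$ classified by automorphisms of $\bs T'$'' is the argument from \ref{le cas simple}, which is exactly where the restriction to $\hat G_{\rm AD}$ simple was used. The paper instead keeps the full hypothesis on $\ES{C}$: via the bijection $\overline{\ES{C}}\simeq{\rm H}^{1,0}(\Gamma_F; Z(G_{\rm SC})\to Z(G))$ and the formula for $\omega_{\chi'}$, the equality $\omega_{\chi'_1}=\omega_{\chi'_2}$ becomes the statement that $\eta_{\chi'_1}\eta_{\chi'_2}^{-1}$ kills the image of $\iota_0$, hence by a precise exact sequence lifts to ${\rm H}^{1,0}(W_F;\hat T_{0,{\rm sc}}\to\hat T_{0,{\rm sc}})$. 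The lemma of \ref{s�paration des donn�es} is then applied not to parametrize $T'(F)^G/\xi_Z(Z(G;F))$, as you suggest, but to rectify the cocycle representative inside $\hat Z_{\rm sc}$ and produce an element $u_0\pi(x)$, with $x\in\mathfrak{Z}(\tilde s_{\rm sc},\bs h_{\rm sc})$, that realizes the required automorphism of $\bs T'$ sending $\chi'_1$ to $\chi'_2$.
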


\begin{proof}
Preuve de (i). Notons $\theta_{\tilde{\chi}'}$ la fonction localement constante sur $\wt{G}_{\rm reg}(F)$ associée à la distribution $\Theta_{\tilde{\chi}'}$ (cf. \ref{caractères tempérés}). Pour $\gamma\in \wt{G}(F)$ fortement régulier, on a une égalité
$$
\Theta_{\tilde{\chi}'}(\gamma)= [G^\gamma(F):G_\gamma(F)]^{-1}\sum_{\delta} \Delta(\delta,\gamma) \tilde{\chi}(\delta),\leqno{(1)}
$$
où $\delta$ parcourent les éléments fortement $\wt{G}$--réguliers de $\wt{T}'(F)$ qui correspondent à $\gamma$ --- en général, $\delta$ est pris à conjugaison stable près mais, puisque $T'$ est un tore, la conjugaison de $T'$ sur $\wt{T}'$ est triviale. Soit $c=(z,g)\in C$. Pour $\gamma\in \wt{G}$, posons $\gamma^{c}= zg^{-1}\gamma g$. Pour $\gamma\in \wt{G}(F)$ fortement régulier, on vérifie que, si $\delta\in \wt{T}'(F)$ correspond à $\gamma$, alors $\delta^c= \xi_Z(z)\delta$ correspond à $\gamma^c$, où $\xi_Z: Z(G)\rightarrow T'$ est l'homomorphisme naturel; remarquons que $\xi_Z(z)\in T'(F)$. Ainsi on a
$$
\Theta_{\tilde{\chi}'}(\gamma^c)= [G^{\gamma^c}(F):G_{\gamma^c}(F)]^{-1}\sum_{\delta} \Delta(\delta^c,\gamma^c) \tilde{\chi}(\delta^c),\leqno{(2)}
$$
où $\delta$ parcourt le même ensemble que dans (1). Il est clair que l'on a l'égalité
$$
[G^\gamma(F):G_\gamma(F)]= [G^{\gamma^c}(F):G_{\gamma^c}(F)].
$$
On est donc ramené à prouver l'existence d'un caractère $\omega_{\chi'}$ de $\ES{C}$ tel que, 
pour $(\delta,\gamma)\in \ES{D}(\bs{T}')$ et $c=(z,g)\in C$ d'image $\bs{c}=\bs{q}(z,g)\in 
\ES{C}$, on ait
$$
\Delta(\delta^c,\gamma^c)\tilde{\chi}'(\delta^c)=\omega_{\chi'}(\bs{c})\Delta(\delta,\gamma)\tilde{\chi}'(\delta).\leqno{(3)}
$$

Pour un élément $c=(z,g)\in C$, écrivons $g= z_g \pi(g_{\rm sc})$ avec $z_g\in Z(G)$ et $g_{\rm sc}\in G_{\rm SC}$, où $\pi:G_{\rm SC}\rightarrow G$ est l'homomorphisme naturel. On définit un cocycle galoisien $\sigma \mapsto \alpha(\sigma)$ à valeurs dans $Z(G_{\rm SC})$ par $\alpha(\sigma)= 
g_{\rm sc}\sigma(g_{\rm sc})^{-1}$. Le couple $(\alpha, z^{-1}(1-\theta)(z_g))$ définit un élément du groupe de cohomologie\footnote{Ce groupe est celui noté ${\rm H}^1$ par Kottwitz et Shelstad, et ${\rm H}^0$ par Labesse --- cf. la remarque de \cite[1.12]{Stab I}.} ${\rm H}^{1,0}(\Gamma_F; Z(G_{\rm SC}) \buildrel 1-\theta\over{\longrightarrow} Z(G))$, que l'on note $b_c$. Cet élément ne dépend pas du choix de la décomposition de $g$. L'application ainsi définie $c\mapsto b_c$ de $C$ dans 
${\rm H}^{1,0}(\Gamma_F; Z(G_{\rm SC}) \buildrel 1-\theta\over{\longrightarrow} Z(G))$ se factorise à travers $\ES{C}$. On a un homomorphisme naturel
$$
\pi_{\ES{C}}: G_{\rm SC}(F)\rightarrow G_\sharp(F)\rightarrow \ES{C}. 
$$
Notons $\overline{\ES{C}}= \ES{C}/\pi_{\ES{C}}(G_{\rm SC}(F))$ le quotient de $\ES{C}$ par le sous--groupe image de $G_{\rm SC}(F)$. On voit que l'homomorphisme $\bs{c}\mapsto b_{\bs{c}}$ se quotiente en un homomorphisme
$$
\overline{\ES{C}}\rightarrow {\rm H}^{1,0}(\Gamma_F; Z(G_{\rm SC}) \xrightarrow{1-\theta} Z(G)),\, \bar{\bs{c}}\mapsto b_{\bar{\bs{c}}}.\leqno{(4)}
$$
Une preuve standard montre que:
\begin{enumerate}
\item[(5)]l'homomorphisme (4) est bijectif.
\end{enumerate} 

Fixons un couple $(\delta,\gamma)\in \ES{D}(\bs{T}')$, et reprenons la construction du facteur de transfert non ramifié $\Delta(\delta,\gamma)$ donnée en \cite[6.3]{Stab I}. Ici, puisqu'on a identifié ${^LT}'$ à $\ES{T}'$, il n'y a pas de données auxiliaires. Notons $T_0$ le commutant de $G_\gamma$ dans $G$. C'est un tore maximal de $G$ défini sur $F$. Comme dans la démonstration du point (iii) de la proposition de \ref{le cas simple}, $T_0$ s'identifie au tore ``quasi--déployé'' maximal ${\Bbb T}$ (\cad d'une paire de Borel $({\Bbb B},{\Bbb T})$ de $G$ définie sur $F$) tordu par un cocycle galoisien $\sigma \mapsto w(\sigma)$ à valeur dans $W^\theta$, de sorte que l'application naturelle $T_0\rightarrow T_0/(1-\theta)(T_0)\simeq T'$ soit définie sur $F$. Ici $\theta$ est le $F$--automorphisme de ${\Bbb T}$ associé à la paire $({\Bbb B},{\Bbb T})$, et $W= W^G({\Bbb T})$ est le groupe de Weyl. Bien sûr, le tore dual $\hat{T}_0$ (resp. $\hat{T}_0^{\hat{\theta},\circ}$) s'identifie à $\hat{T}$ (resp. 
$\hat{T}^{\hat{\theta},\circ}$) muni de l'action galoisienne convenable. On a une formule \cite[6.3]{Stab I}
$$
\Delta(\delta,\gamma)= \Delta_{\rm II}(\delta,\gamma)\tilde{\lambda}_{\underline{\zeta}}(\delta)^{-1}
\tilde{\lambda}_{\underline{z}}(\gamma) \langle (V_{T_0},\nu_{\rm ad}),(t_{T_0,{\rm sc}},s_{\rm ad})\rangle^{-1},
$$
le produit étant donné par l'accouplement \cite[A.3]{KS1}
$$
\langle\cdot ,\cdot \rangle: 
{\rm H}^{1,0}(\Gamma_F, T_{0,{\rm sc}}\xrightarrow{1-\theta} T_{0,{\rm ad}})\times 
{\rm H}^{1,0}(W_F; \hat{T}_{0,{\rm sc}}\xrightarrow{1-\hat{\theta}}\hat{T}_{0,{\rm ad}})\rightarrow {\Bbb C}^\times.
$$
Dans cette formule, $\underline{z}$ est un cocycle non ramifié de $W_F$ dans $Z(\hat{G})$, et $\underline{\zeta}$ est un cocycle non ramifié de $W_F$ dans $\hat{T}'$. Ces cocycles déterminent des caractères $\lambda_{\underline{z}}$ de $G(F)$ et $\lambda_{\underline{\zeta}}$ de $T'(F)$, qui sont triviaux sur $K$ et sur $K'$. On les prolonge en des caractères affines $\tilde{\lambda}_{\underline{z}}$ de $\wt{G}(F)$ et $\tilde{\lambda}_{\underline{\zeta}}$ de $\wt{T}'(F)$, tels que $\tilde{\lambda}_{\underline{z}}$ soit trivial sur $\wt{K}$ et $\tilde{\lambda}_{\underline{\zeta}}$ soit trivial sur $\wt{K}'$. 
Soit $c=(z,g)\in C$, d'image $\bs{c}=\bs{q}(z,g)\in \ES{C}$. Posons $(\gamma',\delta')=(\gamma^c,\delta^c)$. \'Ecrivons $g= z_g \pi(g_{\rm sc})$ avec $z_g\in Z(G)$ et $g_{\rm sc}\in G_{\rm SC}$ comme ci--dessus. Quand on remplace $\gamma$ par $\gamma'$, le tore $T_0$ est changé en $T'_0={\rm Int}_{\smash{g_{\rm sc}^{-1}}}(T_0)$, mais on se ramène à $T_0$ par l'isomorphisme ${\rm Int}_{g_{\rm sc}}$ qui est défini sur $F$. Pour définir le facteur de transfert $\Delta(\delta,\gamma)$, on a fixé une paire de Borel épinglée $\ES{E}_0$ de $G$ de paire de Borel sous--jacente $(B_0,T_0)$ de sorte que $(\delta,T',T',B_0,T_0,\gamma)$ soit un diagramme au sens de \cite[1.10]{Stab I}, et on a décomposé $\gamma$ en $\gamma = \nu \epsilon$ avec $\epsilon \in Z(\wt{G},\ES{E}_0)$ et $\nu\in T_0$. Pour $\gamma'$, on peut prendre $\ES{E}'_0= {\rm Int}_{\smash{g_{\rm sc}^{-1}}}(\ES{E}_0)$, $\epsilon'= {\rm Int}_{\smash{g_{\rm sc}^{-1}}}(\epsilon)$ et $\nu'= z (\theta-1)(z_g){\rm Int}_{\smash{g_{\rm sc}^{-1}}}(\nu)$. Quand on se ramène à $T_0$ par ${\rm Int}_{g_{\rm sc}}$, on obtient $\nu'= z(\theta-1)(z_g)\nu$, donc $\nu'_{\rm ad}=\nu_{\rm ad}$. On fixe un élément $g_1\in G_{\rm SC}$ tel que ${\rm Int}_{g_1}(\ES{E}_0)$ soit une paire de Borel épinglée de $G$ {\it définie sur $F$}  (par exemple la paire $\ES{E}$ de \ref{L-groupe}). Le cocycle $V_{T_0}$ est de la forme
$$
V_{T_0}(\sigma)= r_{T_0}(\sigma)n_{\ES{E}_0}(\omega_{T_0}(\sigma))u_{\ES{E}_0}(\sigma),
$$
où $u_{\ES{E}_0}(\sigma)= g_1^{-1}\sigma(g_1)$, $\omega_{T_0}(\sigma)$ est l'élément de $W^\theta$ défini par $u_{\ES{E}_0}(\sigma)^{-1}$, $n_{\ES{E}_0}:W\rightarrow G_{\rm SC}$ est la section de Springer associée à $\ES{E}_0$, et $r_{T_0}: \Gamma_F\rightarrow T_{0,{\rm sc}}^\theta$ est une fonction associée à $(B_0,T_0)$ définie en \cite[2.2]{Stab I}. On peut prendre les mêmes $a$--data pour $\gamma$ et pour $\gamma'$, modulo l'isomorphisme ${\rm Int}_{g_{\rm sc}}$. On voit alors que, pour $\gamma'$, les deux premiers termes $r_{T_0}(\sigma)$ et $n_{\ES{E}_0}(\omega_{T_0}(\sigma))$ sont les mêmes que pour $\gamma$, modulo l'isomorphisme ${\rm Int}_{g_{\rm sc}}$. En effet pour $\gamma'$, on peut prendre $g'_1= g_1g_{\rm sc}$. Alors ${\rm Int}_{g'_1}(\ES{E}'_0)= {\rm Int}_{g_1}(\ES{E}_0)$, et posant $u_{\ES{E}'_0}= g'^{-1}_1\sigma(g'_1)$, on a
$${\rm Int}_{g_{\rm sc}}(u_{\ES{E}'_0}(\sigma))=u_{\ES{E}_0}(\sigma)\alpha(\sigma),\quad \alpha(\sigma)= g_{\rm sc}\sigma(g_{\rm sc})^{-1}.
$$
On a un diagramme naturel de complexes
$$
\xymatrix{
Z(G_{\rm SC}) \ar[r]^{1-\theta} \ar[d] & Z(G) \ar[d] \\
T_{0,{\rm sc}} \ar[d] \ar[r]^{1-\theta}&  T_0 \ar[d]\\
T_{0,{\rm sc}} \ar[r]^{1-\theta} & T_{0,{\rm ad}}
}
$$
qui donne naissance à deux homomorphismes
$$
\iota_0: {\rm H}^{1,0}(\Gamma_F; Z(G_{\rm SC})\xrightarrow{1-\theta} Z(G))
\rightarrow {\rm H}^{1,0}(\Gamma_F; T_{0,{\rm sc}}\xrightarrow{1-\theta} T_0),
$$
$$
\iota_1: {\rm H}^{1,0}(\Gamma_F; T_{0,{\rm sc}}\xrightarrow{1-\theta} T_0)
\rightarrow {\rm H}^{1,0}(\Gamma_F; T_{0,{\rm sc}}\xrightarrow{1-\theta} T_{0,{\rm ad}}).
$$
On obtient que, quand on remplace $\gamma$ par $\gamma'$, le terme $(V_{T_0},\nu_{\rm ad})$ est multiplié par $\iota_1\iota_0(b_{\bs{c}})^{-1}$. Côté dual, rien n'a changé (pourvu que l'on conserve les mêmes $\chi$--data). En définitive, quand on remplace $\gamma$ par $\gamma'$, le terme $\langle (V_{T_0},\nu_{\rm ad}),(t_{T_0,{\rm sc}},s_{\rm ad})\rangle^{-1}$ est multiplié par 
$\langle \iota_1\iota_0(b_{\bs{c}}), (t_{T_0,{\rm sc}},s_{\rm ad})\rangle$, ou encore par 
$\langle \iota_0(b_{\bs{c}}), \hat{\iota}_1(t_{T_0,{\rm sc}},s_{\rm ad})\rangle$, où $\hat{\iota}_1$ désigne l'application ``duale'' de $\iota_1$. 

\begin{marema1}
{\rm 
On entend par application ``duale'' l'homomorphisme naturel
$$
\hat{\iota}_1: {\rm H}^{1,0}(W_F; \hat{T}_{0,{\rm sc}}\xrightarrow{1-\hat{\theta}}\hat{T}_{0,{\rm ad}})\rightarrow {\rm H}^{1,0}(W_F; \hat{T}_0\xrightarrow{1-\hat{\theta}}\hat{T}_{0,{\rm ad}}).
$$
Ces groupes ne sont pas les duaux des précédents, ces duaux sont les quotients par les images naturelles de $\hat{T}_{\rm ad}^{\Gamma_F,\circ}$.
}
\end{marema1}

Le terme $\Delta_{\rm II}(\delta,\gamma)$ ne dépend que des $a$--data, des $\chi$--data, et de $\nu_{\rm ad}$. 

En reprenant la définition du caractère affine $\tilde{\lambda}_{\underline{\zeta}}:\wt{T}'(F)\rightarrow {\Bbb C}^\times$, on voit qu'il est trivial à cause de notre identification de ${^LT'}$ avec $\ES{T}'$.  

On a $\gamma'= z g^{-1}{\rm Int}_{\gamma}(g)\gamma$, donc
$$
\tilde{\lambda}_{\underline{z}}(\gamma')= \lambda_{\underline{z}} (g^{-1}{\rm Int}_{\gamma}(g))\tilde{\lambda}_{\underline{z}}(\gamma).
$$
\'Ecrivons $h=z_h \pi(h_{\rm sc})$ avec $z_h\in Z(\hat{G})$ et $h_{\rm sc}\in \hat{G}_{\rm SC}$; ici, $\pi$ est l'homomorphisme naturel $ \hat{G}_{\rm SC}\rightarrow \hat{G}$. Le cocycle non ramifié $\underline{z}:W_F\rightarrow Z(\hat{G})$ est déterminé par la valeur $\underline{z}(\phi)=z_h$. On le pousse en un élément $(\underline{z},1)$ de ${\rm H}^{1,0}(W_F; \hat{T}_0\rightarrow \hat{T}_{0,{\rm ad}})$. Calculons l'image de 
$z g^{-1}{\rm Int}_{\gamma}(g)$ dans $G_{\rm ab}(F)= {\rm H}^{1,0}(\Gamma_F;T_{0,{\rm sc}}\rightarrow T_0)$. On a
$$
z g^{-1}{\rm Int}_{\gamma}(g)= z(\theta-1)(z_g) \pi(g_{\rm sc}^{-1}{\rm Int}_\gamma(g_{\rm sc})).
$$
On définit le cocycle galoisien
$$\sigma \mapsto \beta(\sigma)= g_{\rm sc}^{-1}{\rm Int}_\gamma(g_{\rm sc})\sigma(g_{\rm sc}^{-1}{\rm Int}_\gamma(g_{\rm sc}))^{-1}.$$
L'image de $z g^{-1}{\rm Int}_{\gamma}(g)$ dans $G_{\rm ab}(F)$ est la classe du cocycle $(\beta,z(\theta-1)(z_g))$. Or puisque ${\rm Int}_\gamma$ est défini sur $F$, on a $\beta(\sigma)= (\theta-1)(\alpha(\sigma))$. On a un diagramme de complexes
$$
\xymatrix{
T_{0,{\rm sc}} \ar[r]^{1-\theta}\ar[d]_{1-\theta} & T_0 \ar[d]\\
T_{0,{\rm sc}} \ar[r] & T_0
}
$$
qui donne naissance à un homomorphisme
$$
\iota_2: {\rm H}^{1,0}(\Gamma_F; T_{0,{\rm sc}} \xrightarrow{1-\theta} T_0)
\rightarrow G_{\rm ab}(F).
$$
Alors l'image de $z g^{-1}{\rm Int}_{\gamma}(g)$ dans $G_{\rm ab}(F)$ est égale à $\iota_2 \iota_0( b_{\bs{c}})^{-1}$. On en déduit que, quand on remplace $\gamma$ par $\gamma'$, $\tilde{\lambda}_{\underline{z}}(\gamma)$ est remplacé par $\langle \iota_2\iota_0(b_{\bs{c}}), (\underline{z},1)\rangle^{-1}=\langle \iota_0(b_{\bs{c}}), \hat{\iota}_2(\underline{z},1)\rangle^{-1}$. 

Enfin on a $\tilde{\chi}'(\delta')= \chi'(\xi_Z(z))\tilde{\chi}'(\delta)$. Le caractère $\tilde{\chi}'$ de $T'(F)$ est repéré par un cocycle $\mu_{\chi'}: W_F\rightarrow \hat{T}'=\hat{T}_0^{\hat{\theta},\circ}$. Alors $\chi'(\xi_Z(z))$ est le produit $\langle \xi_Z(z),\mu_{\chi'}\rangle $, pour l'accouplement
$$
\langle\cdot ,\cdot \rangle: {\rm H}^0(\Gamma_F; T_0/(1-\theta)(T_0))\times {\rm H}^1(W_F;\hat{T}_0^{\hat{\theta},\circ})\rightarrow {\Bbb C}^\times.
$$
D'après Kottwitz--Shelstad \cite[relation (A.3.13), page 137]{KS1}, où l'inversion doit disparaître d'après une correction ultérieure \cite{KS2}, c'est aussi le produit $\langle (1,\xi_Z(z)), (\mu_{\chi'},1)\rangle$ 
pour l'accouplement
$$
\langle\cdot, \cdot \rangle: 
{\rm H}^{1,0}(\Gamma_F; 1\rightarrow T_0/(1-\theta)(T_0))\times  {\rm H}^{1,0}(W_F; \hat{T}_0^{\hat{\theta},\circ}\rightarrow 1)\rightarrow{\Bbb C}^\times.
$$
On a un diagramme de complexes
$$
\xymatrix{
T_{0,{\rm sc}}\ar[r]^{1-\theta} \ar[d]& T_0 \ar[d]\\
1 \ar[r] & T_0/(1-\theta)(T_0)
}
$$
qui donne naissance à un homomorphisme
$$
\iota_3:{\rm H}^{1,0}(\Gamma_F; T_{0,{\rm sc}}\xrightarrow{1-\theta}T_0)\rightarrow
{\rm H}^{1,0}(\Gamma_F; 1 \rightarrow T_0/(1-\theta)(T_0)).
$$
Le terme $(1,\xi_Z(z))$ est égal à $\iota_3\iota_0(b_{\bs{c}})^{-1}$. On en déduit que, quand on remplace $\gamma$ par $\gamma'$, $\tilde{\lambda}_z(\gamma)$ est multiplié par 
$\langle \iota_3\iota_0 (b_{\bs{c}}), (\mu_{\chi'},1)\rangle^{-1}=\langle \iota_0(b_{\bs{c}}), \hat{\iota}_3(\mu_{\chi'},1)\rangle^{-1}$.

Définissons l'élément $\eta_{\chi'}\in {\rm H}^{1,0}(W_F; \hat{T}_0 \xrightarrow{1-\hat{\theta}} \hat{T}_{0,{\rm ad}})$ par
$$
\eta_{\chi'}= \hat{\iota}_1(t_{T_0,{\rm sc}},s_{\rm ad})\hat{\iota}_2(\underline{z},1)^{-1}\hat{\iota}_3(\mu_{\chi'},1)^{-1}.
$$
En rassemblant les calculs ci--dessus, on obtient la relation (3), le caractère $\omega_{\chi'}$ de $\ES{C}$ étant défini par
$$
\omega_{\chi'}(\bs{c})= \langle \iota_0(b_{\bs{c}}), \eta_{\chi'}\rangle, \quad \bs{c}\in \ES{C}.
$$
Le point (i) de la proposition est démontré.

\begin{marema2}
{\rm En reprenant les définitions, on calcule
$$
\eta_{\chi'}= (\beta_{\chi'},s_{\rm ad}),
$$
où $\beta_{\chi'}$ est le cocycle non ramifié qui, en $\phi$, vaut $\pi(\hat{r}_{T_0}(\phi)\hat{n}(\omega_{T_0}(\phi)))h^{-1}\mu_{\chi'}(\phi)^{-1}$. Les deux premiers termes sont ceux de \cite[6.3]{Stab I}; on choisit bien sûr des $\chi$--data non ramifiés.}
\end{marema2}

\begin{marema3}
{\rm On peut donner une formule plus explicite. Le terme $\hat{r}_{T_0}(\phi)\hat{n}(\omega_{T_0}(\phi))$ est un élément du normalisateur de $\hat{T}_{0,{\rm sc}}$ dans $\hat{G}_{\rm SC}^{\hat{\theta}}$ qui agit sur $\hat{T}_{0,{\rm sc}}$ de la même fa\c{c}on que ${\rm Int}_{h_{\rm sc}}$. On peut construire un autre élément qui a la même propriété. Rappelons que $h_{\rm sc}$ est un bon relèvement dans $\hat{G}_{\rm SC}$ de l'élément $h_{\rm ad}(\zeta)$. Il est de la forme 
$h_{{\rm sc},i}= (h_{{\rm sc},1},\ldots ,h_{{\rm sc},d})$. Pour $i=1,\ldots ,d$, les composantes de $h_{{\rm sc},i}$ sont égales à $1$, $v_{{\rm sc},i}$, ou $v_{{\rm sc},i} u_{{\rm sc},i}^{-1}$; où $u_{{\rm sc},i}$ et $v_{{\rm sc},i}$ sont des relèvements de $u_{{\rm ad},i}(\zeta_i)$ et $v_{{\rm ad},i}$ dans $SL(n_i,{\Bbb C})$. Pour $v_{{\rm sc},i}$, on peut prendre l'élément $v_i\in SL(n_i,{\Bbb C})$ --- cf. \ref{réduction au cas simple}. 
On définit un élément $h_{\rm sc}^0\in \hat{G}_{\rm SC}$ en prenant $v_{{\rm sc},i}=v_i$ et en rempla\c{c}ant les coordonnées $v_iu_{{\rm sc},i}^{-1}$ par $v_i$ dans la définition de $h_{{\rm sc},i}$ 
(pour $i=1,\ldots ,d$). C'est l'élément en question. Alors $\hat{r}_{T_0}(\phi)\hat{n}(\omega_{T_0}(\phi))$ appartient à $\hat{T}_{0,{\rm sc}}^{\hat{\theta}}h_{\rm sc}^0$, et $\beta_{\chi'}(\phi)$ appartient à $\pi(\hat{T}_{0,{\rm sc}}^{\hat{\theta}})\pi(h_{\rm sc}^0)h^{-1}\mu_{\chi'}(\phi)^{-1}$. Mais, la donnée $\bs{T}'$ étant elliptique, le groupe $\hat{T}_{0,{\rm sc}}^{\hat{\theta},\Gamma_F,\circ}$ est réduit à $\{1\}$. Il en résulte que
$$\hat{T}_{0,{\rm sc}}^{\hat{\theta}}= 
(1-\phi_{T_0})(\hat{T}_{0,{\rm sc}}^{\hat{\theta}}).
$$
Ici, $\phi_{T_0}$ est l'action du Frobenius $\phi$ sur $\hat{T}_0$ ou $\hat{T}_{0,{\rm sc}}$, le tore $\hat{T}_0$ étant muni de l'action galoisienne qui en fait le tore dual de $T_0$. On voit alors que, sans changer la classe du cocycle 
$(\beta_{\chi'}, s_{\rm ad})$, on peut modifier la définition de $\beta_{\chi'}$ et supposer que
$$
\beta_{\chi'}(\phi)= \pi(h_{\rm sc}^0)h^{-1}\mu_{\chi'}(\phi)^{-1}.
$$}
\end{marema3}

\vskip1mm
Prouvons (ii). Considérons deux caractères affines unitaires et non ramifiés $(\chi'_1,\tilde{\chi}'_1)$ et 
$(\chi'_2,\tilde{\chi}'_2)$ de $\wt{T}'(F)$, et supposons que $\omega_{\chi'_1}=\omega_{\chi'_2}$. D'après les calculs ci--dessus et la propriété (5), le cocycle $\eta_{\chi'_1}\eta_{\chi'_2}^{-1}$ annule l'image de $\iota_0$. L'homomorphisme $\iota_0$ s'insère dans une suite exacte
$$
{\rm H}^{1,0}(\Gamma_F; Z(G_{\rm SC})\xrightarrow{1-\theta} Z(G))
\xrightarrow{\iota_0}{\rm H}^{1,0}(\Gamma_F; T_{0,{\rm sc}} \xrightarrow{1-\theta} T_0) \xrightarrow{\psi} 
{\rm H}^{1,0}(\Gamma_F; T_{0,{\rm ad}} \xrightarrow{1-\theta} T_{0,{\rm ad}}). 
$$
Dire que $\eta_{\chi'_1}\eta_{\chi'_2}^{-1}$ annule l'image de $\iota_0$ revient à dire que le caractère associé à $\eta_{\chi'_1}\eta_{\chi'_2}^{-1}$ du groupe central de la suite ci--dessus est le composé de $\psi$ et d'un caractère du groupe de droite. Autrement dit (cf. la remarque 1), cela revient à dire que $\eta_{\chi'_1}\eta_{\chi'_2}^{-1}$ appartient au sous--groupe de ${\rm H}^{1,0}(W_F; \hat{T}_0 
\xrightarrow{1-\hat{\theta}} \hat{T}_{0,{\rm ad}})$ engendré par les images naturelles de 
$\hat{T}_{0,{\rm ad}}^{\Gamma_F,\circ}$ et de l'homomorphisme
$$
\hat{\psi}: {\rm H}^{1,0}(W_F; \hat{T}_{0,{\rm sc}}\xrightarrow{1-\hat{\theta}} 
\hat{T}_{0,{\rm sc}})\rightarrow {\rm H}^{1,0}(W_F; \hat{T}_0
\xrightarrow{1-\hat{\theta}} \hat{T}_{0,{\rm ad}}).
$$
On peut se débarrasser du groupe $\hat{T}_{0,{\rm ad}}^{\Gamma_F,0}$: c'est l'image naturelle de $\hat{T}_{0,{\rm sc}}^{\Gamma_F,\circ}$, donc il est contenu dans l'image de $\hat{\psi}$. Soit un couple $(\beta,t)$ définissant un élément de ${\rm H}^{1,0}(W_F; \hat{T}_{0,{\rm sc}}\xrightarrow{1-\hat{\theta}} 
\hat{T}_{0,{\rm sc}})$ tel que $\hat{\psi}(\beta,t)= \eta_{\chi'_1}\eta_{\chi'_2}^{-1}$. D'après les calculs plus haut (cf. les remarques 2 et 3), on a
$$
\eta_{\chi'_1}\eta_{\chi'_2}^{-1}= (\mu_{\chi'_1}\mu_{\chi'_2}^{-1},1).
$$
Il existe donc un $u\in \hat{T}_0$ tel que
$$
\mu_{\chi'_2}(w)\mu_{\chi'_1}(w)^{-1}=w_{T_0}(u)u^{-1}\pi(\beta(w)),\quad w\in W_F,\leqno{(6)}
$$
$$
1= (1-\hat{\theta})(u_{\rm ad})t.\leqno{(7)}
$$
Comme dans la remarque 3, on note $w_{T_0}$ l'action de $w$ sur $\hat{T}_0$, vu comme tore dual de $T_0$. En particulier, on a $\phi_{T_0}= {\rm Int}_h \circ \phi$, où le dernier $\phi$ est l'action naturelle du Frobenius sur $\hat{G}$. 

On écrit $u= z_u\pi(u_{\rm sc})$ avec $z_u\in Z(\hat{G})$ et $u_{\rm sc}\in \hat{G}_{\rm SC}$. On peut remplacer $\beta$ par $\beta'$ défini par
$$
\beta'(w)= w_{T_0}(u_{\rm sc})u_{\rm sc}^{-1}\beta(w),\quad w\in W_F,
$$
et remplacer $t$ par
$$
t'= (1-\hat{\theta})(u_{\rm sc})t.
$$
Cela nous ramène au cas où $u=z_u$. En oubliant cet épisode, on suppose simplement que $u\in Z(\hat{G})$. La relation (7) équivaut alors à $t\in \hat{Z}_{\rm sc}=Z(\hat{G}_{\rm SC})$. Puisque $\mu_{\chi'_1}$ et $\mu_{\chi'_2}$ sont à valeurs dans $\hat{T}_0^{\hat{\theta},\circ}$, la relation (6) entra\^{\i}ne que $\beta$ prend ses valeurs dans $\hat{Z}_{\rm sc}\hat{T}^{\hat{\theta}}_{0,{\rm sc}}$. On a vu dans la remarque 3 que $\hat{T}_{0,{\rm sc}}^{\hat{\theta}}=(1-\phi_{T_0})(\hat{T}_{0,{\rm sc}}^{\hat{\theta}})$. On peut donc écrire $\beta(\phi)= y(1-\phi_{T_0})(u')$ avec $y\in \hat{Z}_{\rm sc}$ et $u'\in \hat{T}_{0,{\rm sc}}^{\hat{\theta}}$. La relation de cocycle dit que $(1-\hat{\theta})(y)=\phi_{T_0}(t)t^{-1}$. Alors $y$ vérifie l'hypothèse du lemme de \ref{séparation des données}. En conséquence, on peut écrire $y= \eta(x,\bs{h}_{\rm sc})(1-\phi_{T_0})(u'')$ avec $x \in \mathfrak{Z}(\tilde{s}_{\rm sc},\bs{h}_{\rm sc})$ et $u''\in \hat{Z}_{\rm sc}$. En posant $u_0= u^{-1}\pi(u'u'')$, on a $u_0\in Z(\hat{G})\hat{T}_0^{\hat{\theta},\circ}$ et la relation (6) devient
$$
\mu_{\chi'_2}(w)\mu_{\chi'_1}(w)^{-1}= (1-\phi_{T_0})(u_0)\pi(\eta(x,\bs{h}_{\rm sc})),\quad w\in W_F.
$$
On se rappelle que $\phi_{T_0}= {\rm Int}_h\circ \phi$. Mais alors l'élément $u_0\pi(x)$ réalise un automorphisme de la donnée $\bs{T}'$ qui envoie le caractère $\chi'_1$ sur $\chi'_2$. Cela prouve le point (ii) de la proposition.
\end{proof}

\begin{marema4}
{\rm \`A l'aide des mêmes calculs, on pourrait prouver simultanément cette proposition et celle de \ref{séparation des données}. Mais il nous a semblé plus clair de séparer les preuves. }
\end{marema4}

\subsection{Action de $\ES{C}$ sur les transferts sphériques.}Soit $\bs{T}'=(T',\ES{T}',\tilde{s})$ un élément de $\mathfrak{E}_{\rm t-nr}$. On a vu (\ref{l'application tau}) qu'à un caractère affine unitaire et non ramifié $(\chi',\tilde{\chi}')$ de $\wt{T}'(F)$ est associé un triplet elliptique essentiel non ramifié $\tau'= 
\tau_{\chi'} \in E_{\rm ell}^{\rm nr}$, bien défini à conjugaison près. Ce triplet $\tau'$ se relève en un triplet $\bs{\tau}'=\bs{\tau}_{\tilde{\chi}'}\in \ES{E}_{\rm ell}^{\rm nr}$, lui aussi bien défini à conjugaison près, en imposant la condition $\Theta_{\bs{\tau}'}(\bs{1}_{\wt{K}})= \tilde{\chi}'(\bs{1}_{\wt{K}'})$, où $(K'\!,\wt{K}')$ est le sous--espace hyperspécial de $\wt{T}'(F)$ associé à $(K,\wt{K})$. D'après le lemme 4 de \ref{action du groupe C} et la proposition de \ref{action de C sur les transferts usuels}, les distributions $\Theta_{\bs{\tau}'}$ et $\bs{\rm T}_{\bs{T}'}(\wt{\chi}')$ sont des vecteurs propres pour l'action (par conjugaison) de $\ES{C}$ relativement à des caractères $\omega_{\tau'}$ et $\omega_{\chi'}$ de ce groupe.

\begin{mapropo}
Soient $\bs{T}'=(T',\ES{T}',\tilde{s})\in \mathfrak{E}_{\rm t-nr}$ et $\chi'$ un caractère unitaire et non ramifié de $\wt{T}'(F)$. Posons
$\tau'= \tau_{\chi'}\;(\in E_{\rm ell}^{\rm nr}/{\rm conj.})$. On a l'égalité
$$
\omega_{\tau'}= \omega_{\chi'}.
$$
\end{mapropo}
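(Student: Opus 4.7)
L'approche consiste � comparer les formules explicites pour $\omega_{\chi'}$ et $\omega_{\tau'}$ obtenues dans les num�ros pr�c�dents, et � v�rifier leur �galit� par un calcul cohomologique. On se ram�ne d'abord au cas $\bs{c}=\bs{q}(z,g)\in \ES{C}$ avec $g_{\rm ad}\in T_{\rm ad}(F)$, \cad $g\in T$: par la d�composition de Bruhat--Tits $G_{\rm AD}(F)=T_{\rm ad}(F)\cdot q(G_{\rm SC}(F))$ et puisque l'image de $G_{\rm SC}(F)$ dans $\ES{C}$ est contenue dans celle de $G_\sharp(F)$, le groupe $\ES{C}$ est engendr� par $Z(G;F)$, $G_\sharp(F)$ et les �l�ments $\bs{q}(z,g)$ avec $g_{\rm ad}\in T_{\rm ad}(F)$. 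Sur le sous--groupe $G_\sharp(F)$, les deux caract�res co\"{\i}ncident d�j� avec $\omega_{\bs{T}'}$: pour $\omega_{\chi'}$ c'est la formule (3) de \ref{donn�es endoscopiques nr}, et pour $\omega_{\tau'}$ c'est la remarque 4 de \ref{action du groupe C}. De m�me, les deux caract�res se calculent imm�diatement sur $Z(G;F)\subset \ES{C}$ via le caract�re central, et y co\"{\i}ncident gr\^ace � la compatibilit� entre $\xi_Z$ et $\mu_{\chi'}$.

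Pour un $\bs{c}$ du type ci--dessus, le lemme 3 de \ref{action du groupe C} donne $\omega_{\tau'}(\bs{c})=\lambda(t)$ avec $t=zg^{-1}{\rm Int}_{\tilde{w}}(g)\in T(F)$ et $\tilde{w}\in W^{\wt{G},\omega}(\lambda)$ rel�vement de la projection de $\tilde{r}$. Notons que l'ellipticit� de $\bs{T}'$ (avec groupe sous--jacent un tore) donne $S_\psi^\circ = \hat{T}^{\Gamma_F,\circ}$ et donc $W^G_0(\lambda)=\{1\}$, de sorte que $R^{\wt{G},\omega}(\lambda)=W^{\wt{G},\omega}(\lambda)$. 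Via la correspondance de Langlands pour le tore $T$, le terme $\lambda(t)$ s'exprime comme un accouplement $\langle t,\psi^T\rangle$, o� le param�tre $\psi^T$ de $\lambda$ est donn� par $\psi^T(\phi)=b'\underline{h}\rtimes \phi$ avec $b'\in \hat{Z}\cap \hat{T}'$ et $\underline{h}\in \hat{T}$ d�fini par ${\rm Int}_{y^{-1}}(\bs{h})=\underline{h}\phi$ pour un $y\in \hat{G}_{\rm SC}$ convenable (cf. \ref{l'application tau}). L'�l�ment $y$ et la bijection $\tilde{\iota}_\lambda$ identifient la classe de $\tilde{w}$ dans $R^{\wt{G},\omega}(\lambda)$ � la projection de $\tilde{s}$ dans $\wt{\bs{S}}(\varphi'\!,\bs{a})$, ce qui traduit l'action de ${\rm Int}_{\tilde{w}}$ sur $T$ en celle de $\tilde{s}$ sur $\hat{T}$, modulo conjugaison par $y$.

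Par ailleurs, la preuve du point (i) de la proposition de \ref{action de C sur les transferts usuels} donne $\omega_{\chi'}(\bs{c})=\langle \iota_0(b_{\bs{c}}),(\beta_{\chi'},s_{\rm ad})\rangle$, o� $\beta_{\chi'}(\phi)=\pi(h_{\rm sc}^0)h^{-1}\mu_{\chi'}(\phi)^{-1}$ (remarque 3 de ce m�me num�ro). Le c\oe ur de la preuve est alors de r�exprimer $\lambda(t)$ comme un accouplement du m�me type et de v�rifier l'�galit� des deux classes de cocycles dans ${\rm H}^{1,0}(W_F;\hat{T}_0\xrightarrow{1-\hat{\theta}}\hat{T}_{0,{\rm ad}})$. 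L'�l�ment $t\in T(F)$ se lit naturellement dans $T_0(F)$; la contribution de $b'$ dans $\psi^T(\phi)$ fait appara\^{\i}tre $\mu_{\chi'}(\phi)^{-1}$, tandis que celle de $\underline{h}$ combin�e � l'action de $\tilde{w}$ (ou, dualement, celle de $\tilde{s}$ sur $\hat{T}$) engendre � la fois la composante $s_{\rm ad}$ et le facteur $\pi(h_{\rm sc}^0)h^{-1}$, ce dernier provenant de la section de Springer normalis�e $\hat{n}$ et du cocycle $\hat{r}_{T_0}$ intervenant dans $V_{T_0}$.

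Le principal obstacle est la v�rification pr�cise de cette identification de cocycles, qui exige de manipuler soigneusement le rel�vement $y\in \hat{G}_{\rm SC}$, les bons rel�vements $s_{\rm sc}$ et $h_{\rm sc}$, ainsi que la modification $h_{\rm sc}^0$ d�crite � la remarque 3 de \ref{action de C sur les transferts usuels}. Le point crucial est la compatibilit� entre l'action de $\tilde{w}$ sur le tore $T$ (intervenant dans la d�finition de $t$) et l'action duale de $\tilde{s}$ sur $\hat{T}$ (intervenant dans la d�finition de $\eta_{\chi'}$), modulo la conjugaison par $y$. C'est essentiellement l'�galit� ${\rm Int}_{y^{-1}}(\bs{h})=\underline{h}\phi$, jointe � l'identification $\hat{T}_{0,{\rm sc}}^{\hat{\theta}}=(1-\phi_{T_0})(\hat{T}_{0,{\rm sc}}^{\hat{\theta}})$ (cons�quence de l'ellipticit�) qui a d�j� permis de simplifier $\beta_{\chi'}$ dans la remarque 3, qui fournit le pont cohomologique reliant les deux expressions.
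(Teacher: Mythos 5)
Your proposal identifies the right overall spirit — express both characters as Kottwitz--Shelstad pairings and compare the corresponding cocycle classes — but one of your intermediate claims is wrong, and the actual cohomological computation is announced rather than carried out.

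The incorrect claim is that $\omega_{\tau'}|_{G_\sharp(F)}=\omega_{\bs{T}'}$ follows from Remarque 4 of \ref{action du groupe C}. That remark only shows that $\omega_{\tau'}|_{G_\sharp(F)}$ is \emph{non ramifi\'e}; it identifies no specific character. Establishing $\omega_{\tau'}|_{G_\sharp(F)}=\omega_{\bs{T}'}$ independently would already require the propositions of \ref{s�paration des donn�es} and \ref{action de C sur les transferts usuels} (or essentially the content of the present proposition), so invoking the remark as if it settled this point is circular. The paper avoids the issue entirely: its lemma expresses ${^{\bs{c}}(\Theta_{\bs{\tau}'})}$ as $\langle\jmath(\bs{c}),\bar{\eta}\rangle^{-1}\Theta_{\bs{\tau}'}$, a formula that visibly factors through $\overline{\ES{C}}=\ES{C}/\pi_{\ES{C}}(G_{\rm SC}(F))$, so the identity is automatic on $\pi_{\ES{C}}(G_{\rm SC}(F))$ and one never needs a separate argument on $G_\sharp(F)$.

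The more serious gap is that the step you label as ``le principal obstacle'' is in fact the proof, and it is missing. Writing $\lambda(t)=\langle t,\mu_\lambda\rangle$ is not yet a pairing of the same type as $\langle\iota_0(b_{\bs c}),\eta_{\chi'}\rangle$; one must (a) prove a lemma identifying $\omega_{\tau'}(\bs c)$ with $\langle\jmath(\bs c),\bar\eta\rangle^{-1}$, where $\bar\eta$ is built from $(\mu_\lambda,\xi_{\rm ad}^{-1})$ via the decomposition $n=y^{-1}s\theta(y)=\xi\pi(u)$, and (b) prove the equality $\langle\jmath_0(\bs c),\bar\eta_0\rangle=\langle\jmath(\bs c),\bar\eta\rangle$. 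For (b) the paper constructs the fibred complex
$$
(T_{0,{\rm sc}}\times T_{\rm sc})/{\rm diag}_{-}(Z(G_{\rm SC}))\longrightarrow (T_0\times T)/{\rm diag}_{-}(Z(G)),
$$
translates the desired equality into the statement that $(\hat\eta_0,\hat\eta)$ lies in the image of the dual map, and then exhibits explicit elements $(t_0,t,t_{\rm sc})$ and $(d_{0,{\rm sc}},d)$ verifying three compatibility conditions involving the good lifts $s_{\rm sc}$, $h_{\rm sc}$ and the auxiliary element $h_{0,{\rm sc}}$. Your proposal names the key actors ($y$, $\underline h$, $h_{\rm sc}^0$, $\mu_{\chi'}$) and correctly points to the relation ${\rm Int}_{y^{-1}}(\bs h)=\underline h\phi$ as the bridge, but it supplies neither the lemma nor the fibred-complex argument nor the final verification, so the proof is incomplete.
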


\begin{proof}Reprenons les constructions et les notations de la preuve de la proposition de \ref{action de C sur les transferts usuels}. Dans cette preuve, on a défini un homomorphisme $\ES{C}\rightarrow {\rm H}^{1,0}(\Gamma_F; Z(G_{\rm SC})\buildrel 1-\theta\over{\longrightarrow}Z(\hat{G}))$, 
que l'on note ici $\jmath_Z$. On a donc $\jmath_Z(\bs{c})= b_{\bs{c}}$, $\bs{c}\in \ES{C}$. On a aussi un tore maximal $\hat{T}_0$ de $\hat{G}$, muni de l'action galoisienne non ramifiée telle que $\phi_{T_0}= {\rm Int}_h \circ \phi$, où $\phi$ est l'action naturelle du Frobenius sur $\hat{G}$. Ainsi, $\hat{T}_0^{\hat{\theta},\circ}$ muni de son action galoisienne, s'identifie à $\hat{T}'$ muni de son action galoisienne. Fixons un élément $h_{0,{\rm sc}}\in \hat{G}_{\rm SC}^{\hat{\theta}}$ qui normalise $\hat{T}$ et dont l'action sur ce tore co\"{\i}ncide avec ${\rm Int}_h$ (par exemple l'élément $h_{\rm sc}^0$ de la remarque 3 de \ref{action de C sur les transferts usuels}). Notons 
$\bar{\beta}_0$ le cocycle non ramifié de $W_F$ dans $\hat{T}_0$ tel que $\bar{\beta}_0(\phi)= b'h\pi(h_{0,{\rm sc}})^{-1}$, où $b'=\mu_{\chi'}(\phi)$ est la valeur en $\phi$ d'un cocycle non ramifié $\mu_{\chi'}: W_F \rightarrow \hat{T}'$ associé à $\chi'$. D'après \ref{l'application tau}, on peut supposer que $b'$ appartient à $Z(\hat{G})\cap \hat{T}'$. Le couple $(\bar{\beta}_0,s_{\rm ad}^{-1})$ détermine un élément de ${\rm H}^{1,0}(W_F;\hat{T}_0\xrightarrow{1-\hat{\theta}}\hat{T}_{0,{\rm ad}})$, que l'on note $\bar{\eta}_0$. Cet élément n'est autre que l'inverse de l'élément noté $\eta_{\chi'}$ dans la preuve de la proposition de \ref{action de C sur les transferts usuels}. D'autre part, notons $\jmath_0: \ES{C} \rightarrow {\rm H}^1(W_F; T_{0,{\rm sc}}\xrightarrow{1-\theta} \hat{T}_0)$ le composé de $\jmath_Z$ et de l'homomorphisme
$$
\iota_0: {\rm H}^{1,0}(\Gamma_F; Z(G_{\rm SC})\xrightarrow{1-\theta}Z(\hat{G}))
\rightarrow 
{\rm H}^1(W_F; T_{0,{\rm sc}}\xrightarrow{1-\theta} \hat{T}_0).
$$
Alors le caractère $\omega_{\chi'}$ est donné par
$$\omega_{\chi'}(\bs{c})= \langle \jmath_0(\bs{c}), \bar{\eta}_0\rangle^{-1},\quad \bs{c}\in \ES{C}.
\leqno{(1)}
$$ 

\'Ecrivons $\tau'=(T,\lambda,\tilde{r})$. Rappelons comment ce triplet a été construit. On a choisi un élément $y\in \hat{G}_{\rm SC}$ tel que ${\rm Int}_{y^{-1}}(\bs{h})= \underline{h}\phi$ pour un élément $\underline{h}\in \hat{T}$, et $\lambda$ est le caractère unitaire et non ramifié de $T(F)$ associé au cocycle non ramifié $W_F\rightarrow \hat{T}$ dont la valeur en $\phi$ est $b'\underline{h}$. L'automorphisme ${\rm Int}_y$ induit une application bijective de $R^{\wt{G},\omega}(\lambda)\rightarrow \wt{\bs{S}}(\varphi'\!,a)$, où $\varphi': W_F\rightarrow {^LG}$ est le paramètre tempéré et non ramifié déduit de $\mu_{\chi'}$ via l'isomorphisme ${^LT'}\simeq \ES{T}'\;(\subset {^LG})$. L'élément $\tilde{r}'$ est celui correspondant à l'image de $\tilde{s}\in \wt{S}_{\varphi'\!,a}$ dans le $S$--groupe tordu $\wt{\bs{S}}(\varphi'\!,a)= \wt{S}_{\varphi'\!,a}/S_{\varphi'}^\circ Z(\hat{G})^{\Gamma_F}$. L'automorphisme ${\rm Int}_{\tilde{r}}$ conserve $T$ et agit sur ce tore par $w\theta$ pour un certain élément $w \in W^{\Gamma_F}$. On a des complexes en dualité
$$
T_{\rm sc}\xrightarrow{1-w\theta} T,\quad \hat{T} \xrightarrow{1- \hat{\theta}w^{-1} }\hat{T}_{\rm ad}.
$$
Remarquons que $1-w\theta$ se restreint en $1-\theta$ sur $Z(G)$. On a donc un homomorphisme
$$
\iota:{\rm H}^{1,0}(\Gamma_F; Z(G_{\rm SC})\xrightarrow{1-\theta} Z(G))
\rightarrow {\rm H}^{1,0}(\Gamma_F; T_{\rm sc} \xrightarrow{1-w\theta} T)
$$
qui, composé avec $\jmath_Z$, donne un homomorphisme
$$
\jmath: \ES{C}\rightarrow {\rm H}^{1,0}(\Gamma_F; T_{\rm sc} \xrightarrow{1-w\theta} T). 
$$
D'autre part, l'automorphisme ${\rm Int}_{y^{-1}}$ de $\hat{G}$ envoie le tore $\hat{U}'= S_{\varphi'}^0$  sur $\hat{U}=\hat{T}^{\Gamma_F,\circ}$, et $\wt{S}_{\varphi',a}$ sur $\wt{S}_{\psi,a}$, où $\psi: W_F\rightarrow {^LG}$ est le paramètre tempéré non ramifié donné par $\psi(\phi)=b'\underline{h}\rtimes \phi$. Les éléments de $\wt{S}_{\psi,a}$ sont de la forme $x\hat{\theta}$ où $x$ est un élément de $N_{\hat{G}}(\hat{T})$ qui opère sur $\hat{T}$ par un élément de $W^{\Gamma_F}$. En particulier, 
puisque ${\rm Int}_{y^{-1}}(\tilde{s})= y^{-1}s\theta(y)\hat{\theta}$ appartient à $\wt{S}_{\psi,a}$, l'élément $n= y^{-1}s\theta(y)$ appartient à $N_{\hat{G}}(\hat{T})$, et d'après la remarque 2 de 
\ref{séries principales nr}, les deux termes $\hat{\theta}w^{-1}$ et ${\rm Int}_n\circ \hat{\theta}$ ont la même action sur $\hat{T}$. L'automorphisme ${\rm Int}_n$ de $\hat{T}$ commute à l'action galoisienne, et on sait que le groupe $W^{\Gamma_F}=W^\phi$ se relève dans la composante neutre $\hat{G}_{\rm SC}^{\phi,\circ}$ du sous--groupe $\hat{G}_{\rm SC}^\phi\subset \hat{G}_{\rm SC}$ des éléments qui sont fixés par $\phi$ (ou encore par $\Gamma_F$). On peut donc décomposer $n$ en $n=\xi\pi(u)$ où $\xi\in \hat{T}$ et $u\in \hat{G}_{\rm SC}^{\phi,\circ}$. Notons $\mu_\lambda: W_F\rightarrow \hat{T}$ le cocycle non ramifié dont la valeur en $\phi$ est $b'\underline{h}$. Il résulte de la description ci--dessus que $(1- {\rm Int}_n\circ \hat{\theta})(\mu_{\lambda})$ est le cocycle non ramifié dont la valeur en $\phi$ est $a(\phi)^{-1}\xi \phi(\xi)^{-1}$. Soit $\xi_{\rm ad}\in \hat{T}_{\rm ad}$ la projection de $\xi$ sur $\hat{G}_{\rm AD}$. Le couple $(\mu_\lambda, \xi_{\rm ad}^{-1})$ détermine donc un élément $\bar{\eta}\in {\rm H}^{1,0}(W_F; \hat{T}\xrightarrow{1-{\rm Int}_n\circ \hat{\theta}} \hat{T}_{\rm ad})$. 

Soit $\bs{\tau}'\in \ES{E}_{\rm ell}^{\rm nr}$ un relèvement de $\tau'$. 

\begin{monlem}
Pour $\bs{c}\in \ES{C}$, on a l'égalité ${^{\bs{c}}(\Theta_{\bs{\tau}'}})= \langle \jmath(\bs{c}), \bar{\eta} \rangle^{-1}\Theta_{\bs{\tau}'} $
\end{monlem} 

\begin{proof}
Rappelons que l'homomorphisme $\jmath_Z$ se factorise en un isomorphisme de $\overline{\ES{C}}=\ES{\ES{C}}/\pi_{\ES{C}}(G_{\rm SC}(F))$ sur ${\rm H}^{1,0}(\Gamma_F; Z(G_{\rm SC})\xrightarrow{1-\theta} Z(G))$. Par suite, l'égalité de l'énoncé est claire si $\bs{c}\in \pi(G_{\rm SC}(F))$. Comme le groupe $G_{\rm AD}(F)$ est produit de $T_{\rm ad}(F)$ et de l'image naturelle de $G_{\rm SC}(F)$ dans $G_{\rm AD}(F)$, il suffit de prouver l'égalité de l'énoncé pour $\bs{c}= \bs{q}(z,g)$ avec $g_{\rm ad}\in T_{\rm ad}(F)$, donc $g\in T(\overline{F})$. Soit donc $(z,g)\in C$ tel que $g_{\rm ad}\in T_{\rm ad}(F)$. Posons $\bs{c}=\bs{q}(z,g)\in \ES{C}$ et $t= zg^{-1} {\rm Int}_w\circ \theta(g)\in T(\overline{F})$. Alors $t\in T(F)$ et l'on a (\ref{action du groupe C}, lemme 3)
$$
{^{\bs{c}}(\Theta_{\bs{\tau}'})}= \lambda(t) \Theta_{\bs{\tau}'}.\leqno{(2)}
$$
Pour prouver le lemme, il faut donc montrer l'égalité
$$
\lambda(t)= \langle \jmath(\bs{c}), \bar{\eta} \rangle^{-1}.\leqno{(3)}
$$ 
On a $\lambda(t)= \langle t, \mu_\lambda \rangle$, où l'accouplement est celui entre $T(F)$ et ${\rm H}^1(W_F; \hat{T})$. Rappelons la définition de $\jmath(\bs{c})$. On écrit $g= z_g \pi(g_{\rm sc})$ avec $z_g\in Z(G)$ et $g_{\rm sc}\in G_{\rm SC}$, et l'on note $\sigma \mapsto \alpha(\sigma)$ le cocycle galoisien à valeurs dans $Z(G_{\rm SC})$ défini par $\alpha(\sigma)=g_{\rm sc}\sigma(g_{\rm sc})^{-1}$. Alors $\jmath(\bs{c})$ est l'élément de ${\rm H}^{1,0}(\Gamma_F; T_{\rm sc}\xrightarrow{1-\theta} T)$ défini par le couple $(\alpha, z^{-1}(1-\theta)(z_g))$. Puisque $g_{\rm sc}$ appartient à $T_{\rm sc}$, on vérifie que ce couple est cohomologue à $(1,t^{-1})$. On a des homomorphismes
$$
{\rm H}^0(\Gamma_F; T)\rightarrow {\rm H}^{1,0}(\Gamma_F; T_{\rm sc} \xrightarrow{1-w\theta} T)
$$
et
$$
{\rm H}^1(W_F,\hat{T})\rightarrow {\rm H}^{1,0}(W_F; \hat{T} \xrightarrow{1-{\rm Int}_n\circ \hat{\theta}} \hat{T}_{\rm ad})
$$
qui, d'après \cite[A.3.13]{KS1}, sont en dualité (le signe dispara\^{\i}t d'après une correction ultérieure \cite{KS2}). On en déduit l'égalité (3).
\end{proof}  

Pour démontrer la proposition, il reste à prouver l'égalité
$$
\langle \jmath_0(\bs{c}), \bar{\eta}_0\rangle = \langle \jmath (\bs{c}), \bar{\eta}\rangle, \quad \bs{c}\in \ES{C}. \leqno{(4)}
$$ 
On a le diagramme commutatif exact suivant de complexes de tores
$$
\xymatrix{
Z(G_{\rm SC}) \ar[d]_{{\rm diag}_{-}}\ar[r]^{1-\theta}&  Z(G)\ar[d]^{{\rm diag}_{-}}\\
T_{0,{\rm sc}}\times T_{\rm sc}\ar[d]\ar[r]^{(1-\theta)\times (1-w\theta)} & T_0\times T \ar[d]\\
(T_{0,{\rm sc}}\times T_{\rm sc})/{\rm diag}_{-}(Z(G_{\rm SC})) \ar[r]& (T_0\times T)/{\rm diag}_{-}(Z(G))
}
$$
où ${\rm diag}_{-}$ est le plongement antidiagonal, les flèches verticales du bas sont les projections naturelles, et la flèche horizontale du bas est celle déduite de $(1-\theta)\times (1-w\theta)$ par passage aux quotients. On en déduit une suite exacte de groupes de cohomologie
$$
{\rm H}^{1,0}(\Gamma_F; Z(G_{\rm SC})\xrightarrow{1-\theta}Z(G))\rightarrow 
{\rm H}^{1,0}(\Gamma_F; T_{0,{\rm sc}}\times T_{\rm sc}\xrightarrow{(1-\theta)\times (1-w\theta)}T_0\times T)\rightarrow\cdots 
$$
$$
\cdots \rightarrow {\rm H}^{1,0}(\Gamma_F; (T_{0,{\rm sc}}\times T_{\rm sc})/{\rm diag}_{-}(Z(G_{\rm SC}))\xrightarrow{(1-\theta)\times (1-w\theta)} (T_0\times T)/{\rm diag}_{-}(Z(G)).
$$
Le couple $(\bar{\eta}_0,\bar{\eta})$ définit une forme linéaire sur le groupe central, et (4) signifie qu'elle annule l'image du premier groupe, ou encore qu'elle se factorise en une forme linéaire sur le dernier groupe. Posons $X=(T_0\times T)/{\rm diag}_{-}(Z(G))$, $Y= (T_{0,{\rm sc}}\times T_{\rm sc})/{\rm diag}_{-}(Z(G_{\rm SC}))$, et notons $\hat{X} \xrightarrow{f} \hat{Y}$ le complexe dual du  complexe de tores $Y\xrightarrow{(1-\theta)\times (1-w\theta)} X$. On a dualement un homomorphisme
$$
{\rm H}^{1,0}(W_F; \hat{X} \xrightarrow{f} \hat{Y})\rightarrow {\rm H}^{1,0}(W_F;\hat{T}_0\xrightarrow{1-\hat{\theta}} \hat{T}_{0,{\rm ad}})\times {\rm H}^{1,0}(W_F; \hat{T}\xrightarrow{1-{\rm Int}_n\circ \hat{\theta}} \hat{T}_{\rm ad}),\leqno{(5)}
$$
et il suffit de prouver que
\begin{enumerate}
\item[(6)]le couple $(\hat{\eta}_0,\hat{\eta})$ appartient à l'image de l'homomorphisme (5).
\end{enumerate}
Commen\c{c}ons par décrire les groupes $\hat{X}$ et $\hat{Y}$. Le second est facile à identifier: on a $\hat{Y}= (\hat{T}_{0,{\rm sc}}\times \hat{T})/ {\rm diag}(Z(\hat{G}_{\rm SC}))$, où ${\rm diag}$ est le plongement diagonal. Le premier est plus compliqué. Algébriquement, $\hat{X}$ est le groupe des $(t_0,t,t_{\rm sc})\in \hat{T}_0\times \hat{T}\times \hat{T}_{\rm sc}$ tels que $t_0t^{-1}= \pi(t_{\rm sc})$ (algébriquement, on a $\hat{T}_0=\hat{T}$). La structure galoisienne est bizarre (cf. \cite[2.2]{Stab I}), mais elle est évidemment non ramifiée et cela nous suffira. L'homomorphisme $f$ est défini de la fa\c{c}on suivante. Soit $(t_0,t,t_{\rm sc})\in \hat{X}$. On écrit $t=z_t \pi(t^*_{\rm sc})$ avec $z_t\in Z(\hat{G})$ et $t^*_{\rm sc}\in \hat{T}_{\rm sc}$. Alors on a $t_0= z_t \pi(t^*_{\rm sc}t_{\rm sc})$ et $f(t_0,t,t_{\rm sc})$ est l'image de $((1-\theta)(t^*_{\rm sc}t_{\rm sc}),(1-{\rm Int}_n\circ \hat{\theta})(t^*_{\rm sc}))$ dans $\hat{Y}$. En se remémorant les définitions, on voit que, pour prouver (6), il suffit de trouver un élément $(t_0,t,t_{\rm sc})\in \hat{X}$ et un couple $(d_{0,{\rm sc}},d)\in \hat{T}_{0,{\rm sc}}\times \hat{T}$ vérifiant les conditions suivantes: 
\begin{enumerate}
\item[(7)]$t_0= b'h\pi(h_{0,{\rm sc}})^{-1}$, $t=b'\underline{h}$;
\item[(8)] l'image de $d_{0,{\rm sc}}$ dans $\hat{T}_{0,{\rm ad}}$ est $s_{\rm ad}^{-1}$ et l'image de $d$ dans 
$\hat{T}_{\rm ad}$ est $\xi_{\rm ad}^{-1}$;
\item[(9)] il existe $\zeta\in Z(\hat{G}_{\rm SC})$ tel que $(\phi_{T_0}(d_{0,{\rm sc}})d_{0,{\rm sc}}^{-1}, \phi_T(d)d^{-1})= (\zeta,\zeta)f(t_0,t,t_{\rm sc})$.
\end{enumerate}
On écrit $h=z_h\pi(h_{\rm sc})$ et $s=z_s\pi(s_{\rm sc})$ avec $z_h,\,z_s\in Z(\hat{G})$ et $h_{\rm sc},\, s_{\rm sc}\in \hat{G}_{\rm SC}$. Soit $a_{\rm sc}$ l'élément de $Z(\hat{G}_{\rm SC})$ défini par $$
a_{\rm sc}= s_{\rm sc}\hat{\theta}(h_{\rm sc})\phi(s_{\rm sc})^{-1}h_{\rm sc}^{-1}.
$$
On pose $\underline{h}_{\rm sc}= y^{-1}h_{\rm sc}\phi(y)$. On définit $t_0$ et $t$ par (7), et on pose 
$t_{\rm sc}= h_{\rm sc}h_{0,{\rm sc}}^{-1}\underline{h}_{\rm sc}$. On vérifie que $\pi(t_{\rm sc})= t_0t^{-1}$, donc $t_{\rm sc}$ appartient bien à $\hat{T}_{\rm sc}$. On pose $d_{0,{\rm sc}}= s_{\rm sc}^{-1}$ et $n_{\rm sc}= y^{-1}s_{\rm sc}\hat{\theta}(y)$. L'image de $n_{\rm sc}$ dans $\hat{G}_{\rm AD}$ est la même que celle de $n=\xi \pi(u)$. Donc $n_{\rm sc} = \xi_{\rm sc}u$, où $\xi_{\rm sc}$ est un élément de $\hat{T}_{\rm sc}$ qui se projette sur $\xi_{\rm ad}$ (dans $\hat{G}_{\rm AD}$). On pose $d=\pi(\xi_{\rm sc})^{-1}$. Alors (8) est vérifié. On calcule $f(t_0,t,t_{\rm sc})$ en posant $t'_{\rm sc}= \underline{h}_{\rm sc}$. On a $t'_{\rm sc}t_{\rm sc}= h_{\rm sc}h_{0,{\rm sc}}^{-1}$, donc
$$
f(t_0,t,t_{\rm sc})= ((1-\hat{\theta})(h_{\rm sc}h_{0,{\rm sc}}^{-1}),(1-{\rm Int}_n\circ \hat{\theta})(\underline{h}_{\rm sc})).
$$
On se rappelle que $h_{0,{\rm sc}}$ est fixé par $\hat{\theta}$. Par conséquent $(1-\hat{\theta})(h_{\rm sc}h_{0,{\rm sc}}^{-1})=(1-\hat{\theta})(h_{\rm sc})$, et ce terme vaut (calcul) $a_{\rm sc}^{-1}\phi_{T_0}(d_{0,{\rm sc}})d_{0,{\rm sc}}^{-1}$. On a
$$
(1-{\rm Int}_n\circ \hat{\theta})(\underline{h}_{\rm sc})=\underline{h}_{\rm sc} n_{\rm sc} \hat{\theta}(\underline{h}_{\rm sc})^{-1} n_{\rm sc}^{-1}.
$$
En conjuguant par $y^{-1}$ l'égalité $s_{\rm sc}\hat{\theta}h_{\rm sc}\phi=a_{\rm sc}h_{\rm sc}\phi s_{\rm sc}\hat{\theta}$, on obtient l'égalité
$$n_{\rm sc}\hat{\theta}\underline{h}_{\rm sc}\phi= a_{\rm sc} \underline{h}_{\rm sc}\phi n_{\rm sc}\hat{\theta}.
$$
On voit alors que
\begin{eqnarray*}
\underline{h}_{\rm sc}n_{\rm sc}\hat{\theta}(\underline{h}_{\rm sc})^{-1}n_{\rm sc}^{-1}&=&a_{\rm sc}^{-1}\underline{h}_{\rm sc}n_{\rm sc}\phi(n_{\rm sc})^{-1}\underline{h}_{\rm sc}^{-1}\\
&=& a_{\rm sc}^{-1}\underline{h}_{\rm sc}\xi_{\rm sc}\phi(\xi_{\rm sc})^{-1}\underline{h}_{\rm sc}^{-1}\\
&=&a_{\rm sc}^{-1}\xi_{\rm sc}\phi(\xi_{\rm sc})^{-1}\\
&=& a_{\rm sc}^{-1}\phi_T(d)d^{-1}.
\end{eqnarray*}
Mais alors la condition (9) est vérifiée avec $\zeta =a_{\rm sc}$. Cela achève la démonstration de la proposition. 
\end{proof}

%


\end{document}